\newenvironment{tz}{\begin{center}\begin{tikzpicture}[scale=1]}{\end{tikzpicture}\end{center}}
\tikzstyle{d}=[double distance=.3ex]
\newcommand{\arrowdot}{
\ensuremath{\begin{tikzpicture}
\node (A) at (0,-.4) {};
\node (B) at (.4,-.4) {};
\draw[->, line width=.1ex] (0,-.6) -- (.4,-.6);
\node[shape=circle, fill=black, scale=0.35] (A) at  (.17,-.6) {};
\end{tikzpicture}
}}
\tikzset{over/.style={auto=false,fill=white,inner sep=1.5pt, minimum size=0, outer sep=0}, 
pro/.style={postaction={decorate,decoration={
        markings,
        mark=at position .5 with {\node at (0,0) {$\bullet$};}
      }},
      inner sep=.9ex,
      },n/.style={double equal sign distance, -implies},
  }
\newcommand{\vcong}{\rotatebox{270}{$\cong$}}
\tikzset{%
node distance=1.5cm, la/.style={scale=0.8}, rr/.style={xshift=1.5cm},
space/.style={xshift=.5cm},
    symbol/.style={%
        draw=none,
        every to/.append style={%
            edge node={node [sloped, allow upside down, auto=false]{$#1$}}},
            
    }
}
\newcommand{\sq}[5]{{#1}\colon({#4} \; ^{{#2}}_{\substack{{#3}}} \; {#5})}
\newcommand{\pushprod}[7]{{#1}\,\square_{#7}\, {#4}\colon {#2}\otimes_{#7} {#6}\coprod_{{#2}\otimes_{#7} {#5}} {#3}\otimes_{#7} {#5}\to {#3}\otimes_{#7} {#6}}
\newcommand{\pushout}[3]{{#1}\,\square_{#3}\, {#2}}
\newtheorem{thm}{Theorem}[section] 
\newtheorem{cor}[thm]{Corollary}
\newtheorem{lemma}[thm]{Lemma}
\newtheorem{prop}[thm]{Proposition}
\declaretheorem[name=Theorem,numbered=yes]{theoremA}
\theoremstyle{definition}
\newtheorem{defn}[thm]{Definition}
\newtheorem{notation}[thm]{Notation}
\theoremstyle{remark}
\newtheorem{rmk}[thm]{Remark}
\newcounter{diagram}
\newenvironment{diagram}{\setcounter{diagram}{\value{thm}}\refstepcounter{diagram}\begin{center}\hfill\begin{tikzpicture}}
    {\end{tikzpicture}\hfill{\normalfont(\thediagram)}\end{center}\refstepcounter{thm}}
\numberwithin{diagram}{section}
\crefname{lem}{Lemma}{Lemmas}
\crefname{thm}{Theorem}{Theorems}
\crefname{theoremA}{Theorem}{Theorems}
\crefname{defn}{Definition}{Definitions}
\crefname{prop}{Proposition}{Propositions}
\crefname{rmk}{Remark}{Remarks}
\crefname{cor}{Corollary}{Corollaries}
\crefname{ex}{Example}{Examples}
\crefname{notation}{Notation}{Notations}
\newlist{rome}{enumerate}{7}
\setlist[rome]{label=(\roman*)}
\newcommand{\Bicats}{\mathrm{Bicat}_s}
\newcommand{\TwoCat}{2\mathrm{Cat}}
\newcommand{\Set}{\mathrm{Set}}
\newcommand{\Cat}{\mathrm{Cat}}
\newcommand{\id}{\mathrm{id}}
\newcommand{\DblCat}{\mathrm{DblCat}}
\newcommand{\wkDblCat}{\mathrm{wkDblCat}_s}
\renewcommand{\AA}{\mathbb{A}}
\newcommand{\BB}{\mathbb{B}}
\newcommand{\bC}{\mathbb{C}}
\newcommand{\A}{\mathcal{A}}
\newcommand{\B}{\mathcal{B}}
\newcommand{\C}{\mathcal{C}}
\newcommand{\cv}{\mathcal{V}}
\newcommand{\vtwo}{\mathbb{V}\mathbbm{2}}
\newcommand{\LV}{\mathbb{L}}
\newcommand{\bfH}{\mathbf{H}}
\newcommand{\bfV}{\mathbf{V}}
\newcommand{\bbH}{\mathbb{H}}
\newcommand{\bbV}{\mathbb{V}}
\newcommand{\cm}{\mathcal{M}}
\newcommand{\cn}{\mathcal{N}}
\newcommand{\cc}{\mathcal{C}}
\newcommand{\cp}[1]{\mathrm{Path}({#1})}
\newcommand{\ps}{{\mathrm{ps}}}
\newcommand{\Psd}{{\mathrm{Ps}}}
\newcommand{\Gray}{{\mathrm{Gr}}}
\newcommand{\Eadj}{E_{\mathrm{adj}}}
\newcommand{\op}{\mathrm{op}}
\title{A 2Cat-inspired model structure for double categories}
\author[L.\ Moser]{Lyne Moser}
\address{Max Planck Institute for Mathematics, Vivatsgasse 7, 53111 Bonn, Germany}
\email{moser@mpim-bonn.mpg.de}
\author[M.\ Sarazola]{Maru Sarazola}
\address{Department of Mathematics, Cornell University, Ithaca NY, 14853, USA}
\email{mes462@cornell.edu}
\author[P.\ Verdugo]{Paula Verdugo}
\address{Department of Mathematics, Macquarie University, NSW 2109, Australia}
\email{paula.verdugo@hdr.mq.edu.au}
\begin{document}

\maketitle

\begin{abstract}
    We construct a model structure on the category $\mathrm{DblCat}$ of double categories and double functors. Unlike previous model structures for double categories, it recovers the homotopy theory of 2-categories through the horizontal embedding $\mathbb{H}\colon2\mathrm{Cat}\to\mathrm{DblCat}$, which is both left and right Quillen, and homotopically fully faithful. Furthermore, we show that Lack's model structure on $2\mathrm{Cat}$ is both left- and right-induced along $\mathbb{H}$ from our model structure on $\mathrm{DblCat}$. In addition, we obtain a $2\mathrm{Cat}$-enrichment of our model structure on~$\mathrm{DblCat}$, by using a variant of the Gray tensor product. 
    
    Under certain conditions, we prove a Whitehead theorem, characterizing our weak equivalences as the double functors which admit an inverse pseudo double functor up to horizontal pseudo natural equivalence. This retrieves the Whitehead theorem for 2-categories.
    
    Analogous statements hold for the category $\mathrm{wkDblCat}_s$ of weak double categories and strict double functors, whose homotopy theory recovers that of bicategories. Moreover, we show that the full embedding $\mathrm{DblCat}\to\mathrm{wkDblCat}_s$ is a Quillen equivalence.
\end{abstract}

\setcounter{tocdepth}{1}
\tableofcontents

\section{Introduction}

In category theory as well as homotopy theory, we strive to find the correct notion of ``sameness'', often with a specific context or perspective in mind. When working with categories themselves, it is commonly agreed that having an isomorphism between categories is much too strong a requirement, and we instead concur that the right condition to demand is the existence of an \emph{equivalence} of categories.

There are many ways one can justify this in practice, but, at heart, it is due to  the fact that the category $\Cat$ of categories and functors actually forms a $2$-category, with $2$-cells given by the natural transformations. Therefore, instead of asking that a functor $F\colon \A\to \B$ has an inverse $G\colon \B\to \A$ such that their composites are \emph{equal} to the identities, it is more natural to ask for the existence of \emph{natural isomorphisms} $\id_{\A}\cong GF$ and $FG\cong \id_{\B}$. In particular, this characterizes $F$ as a functor that is surjective on objects up to isomorphism, and fully faithful on morphisms.

Ever since Quillen's seminal work \cite{Quillen}, and even more so in the last two decades, we have come to expect that any reasonable notion of equivalence in a category should lend itself to defining the class of weak equivalences of a model structure. This is in fact the case of the categorical equivalences: the category $\Cat$ can be endowed with a model structure, called the \emph{canonical model structure}, in which the weak equivalences are precisely the equivalences of categories.

Going one dimension up and focusing  on $2$-categories, the $2$-functors themselves now form a $2$-category, with higher cells given by the pseudo natural transformations, and the so-called modifications between them. We can then define a $2$-functor $F\colon  \A\to \B$ to be a \emph{biequivalence} if it has an inverse $G\colon \B\to \A$ together with pseudo natural equivalences $\id_{\A}\simeq GF$ and ${FG\simeq \id_{\B}}$, i.e., equivalences in the corresponding $2$-categories of $2$-dimensional functors. Note that this inverse $G$ is in general a pseudo functor rather than a $2$-functor. Furthermore, a Whitehead theorem for $2$-categories \cite[Theorem 7.4.1]{JohYau} is available, and characterizes the biequivalences as the $2$-functors that are surjective on objects up to equivalence, full on morphisms up to invertible $2$-cell, and fully faithful on $2$-cells.
 
As in the case of the equivalences of categories, the biequivalences of $2$-categories are part of the data of a model structure. Indeed, in \cite{Lack2Cat,LackBicat}, Lack defines a model structure on the category $\TwoCat$ of $2$-categories and $2$-functors in which the weak equivalences are precisely the biequivalences; we henceforth refer to it as the \emph{Lack model structure}. In particular, the canonical homotopy theory of categories embeds reflectively in this homotopy theory of $2$-categories. 

In this paper, we consider another type of $2$-dimensional objects, called \emph{double categories}, which have both horizontal and vertical morphisms between pairs of objects, related by $2\text{-dimensional}$ cells called \emph{squares}. These are more structured than $2$-categories, in the sense that a $2$-category $\A$ can be seen as a horizontal double category $\bbH\A$ with only trivial vertical morphisms. As a consequence, the study of various notions of $2$-category theory benefits from a passage to double categories. For example, a $2$-limit of a $2$-functor~$F$ does not coincide with a $2$-terminal object in the slice $2$-category of cones, as shown in~\cite[~Counter-example 2.12]{clingmanMoser}. However, by considering the $2$-functor $F$ as a horizontal double functor~$\bbH F$, Grandis and Par\'e prove that a $2$-limit of $F$ is precisely a double terminal object in the slice double category of cones over $\bbH F$; see \cite[\S 4.2]{GrandisPare,GraPar19bis} and \cite[Theorem 5.6.5]{Grandis}. 

This horizontal embedding of $2$-categories into double categories is fully faithful, and we expect to have a homotopy theory of double categories that contains that of $2$-categories; constructing such a homotopy theory is the aim of this paper. 

The idea of defining a model structure on the category of double categories is scarcely a new one. In \cite{FP}, Fiore and Paoli construct a Thomason model structure on the category $\DblCat$ of double categories and double functors (more precisely, on the category of $n$-fold categories), and in \cite{FPP}, Fiore, Paoli, and Pronk construct several categorical model structures on $\DblCat$. However, the horizontal embedding of $2$-categories does not induce a Quillen pair between the Lack model structure on $\TwoCat$ and any of these model structures on $\DblCat$; this follows from \cref{Hjnotwe}. Some intuition is provided by the fact that their categorical model structures on $\DblCat$ are constructed from the canonical model structure on $\Cat$. As a result, the weak equivalences in each of these model structures induce two equivalences of categories: one between the categories of objects and horizontal morphisms, and one between the categories of vertical morphisms and squares. However, a biequivalence between $2$-categories does not generally induce an equivalence between the underlying categories. Therefore, the horizontal embedding of $\TwoCat$ into $\DblCat$ will not preserve weak equivalences. 

In order to remedy this loss of higher data, we aim to extract from a double category~$\AA$ two $2$-categories whose underlying categories are precisely the ones mentioned above. First, we can promote the underlying category of objects and horizontal morphisms of $\AA$ to a $2$-category by using the right adjoint to the horizontal embedding $\bbH$: this is a well-known construction given by the underlying horizontal $2$-category~$\bfH \AA$, whose $2$-cells are given by those squares of $\AA$ with trivial vertical boundaries. As shown by Ehresmann and Ehresmann in~\cite{TwoEhresmann}, the category $\DblCat$ is cartesian closed, and we denote by $[-,-]$ its internal hom double categories. We can then alternatively describe the underlying horizontal $2$-category~$\bfH\AA$ as the $2$-category $\bfH[\mathbbm{1}, \AA]$, where $\mathbbm 1$ denotes the terminal category. 

From this same perspective, the category of vertical morphisms and squares can be seen as the underlying horizontal category of the double category $[\vtwo,\AA]$, where $\vtwo$ is the free double category on a vertical morphism. To promote this to a $2$-category we can simply consider instead the underlying horizontal $2$-category $\bfH[\vtwo,\AA]$; this defines a new functor~$\cv$ that sends a double category~$\AA$ to a $2$-category $\cv\AA$ of vertical morphisms, squares, and $2$-cells as described in \cref{defn_cv}. 

Using these constructions, we introduce a new notion of weak equivalences between double categories, that we suggestively call \emph{double biequivalences}; these are given by the double functors $F$ such that the induced $2$-functors $\bfH F$ and $\cv F$ are biequivalences in $\TwoCat$. This provides a $2$-categorical analogue of notions of equivalences between double categories already present in the literature. Notably, double biequivalences are the natural $2$-categorical version of equivalences described by Grandis in \cite[Theorem 4.4.5 (iv)]{Grandis}, which are precisely the double functors inducing equivalences between the categories of objects and horizontal morphisms, and the categories of vertical morphisms and squares.

Since biequivalences can be characterized as the $2$-functors which are surjective on objects up to equivalence, full on morphisms up to invertible $2$-cell, and fully faithful on $2$-cells, our double biequivalences admit a similar description. To give such a description, we introduce new notions of weak invertibility for horizontal morphisms and squares in a double category~$\AA$; namely, those of \emph{horizontal equivalences} and \emph{weakly horizontally invertible squares}, which correspond to the equivalences in the $2$-categories $\bfH \AA$ and $\cv\AA$, respectively. These notions were independently developed by Grandis and Par\'e in \cite[\S 2]{GraPar19}, where the weakly horizontally invertible squares are called \emph{equivalence cells}. Now the double biequivalences can be described as the double functors which are surjective on objects up to horizontal equivalence, full on horizontal morphisms up to vertically invertible square, surjective on vertical morphisms up to weakly horizontally invertible square, and fully faithful on squares. 

Double biequivalences are designed in such a way that a $2$-functor $F\colon \A\to \B$ is a biequivalence if and only if its associated horizontal double functor $\bbH F\colon \bbH \A\to \bbH\B$ is a double biequivalence. This can be seen as a first step towards showing that the homotopy theory of $2$-categories sits inside that of double categories. Note that ``surjectivity'' rather than ``fullness'' on vertical morphisms is necessary to achieve our goal of defining a model structure on~$\DblCat$ compatible with the horizontal embedding $\bbH\colon \TwoCat\to \DblCat$. Indeed, as we want~$\bbH$ to preserve weak equivalences, and as the $2$-category $\Eadj$ given by the free-living adjoint equivalence is biequivalent to the terminal category $\mathbbm 1$, the double functor $\bbH\Eadj\to \mathbbm 1$ should be a weak equivalence in $\DblCat$. It is then straightforward to check that such a double functor cannot be full on vertical morphisms, as there is no vertical morphism between the two distinct objects of the horizontal double category $\bbH\Eadj$. 

Our first main result, \cref{thm:modelstructonDblCat}, provides a model structure on the category of double categories in which the weak equivalences are precisely the double biequivalences, and which is obtained as a right-induced model structure from two copies of the Lack model structure on $\TwoCat$ along the functor $(\bfH,\cv)$.

\begin{theoremA} \label{MS-intro}
Consider the adjunction 
\begin{tz}
\node (A) at (0,0) {$\TwoCat\times \TwoCat$};
\node (B) at (3.25,0) {$\DblCat$};
\node at ($(B.east)-(0,4pt)$) {,};
\draw[->] ($(A.east)+(0,.25cm)$) to [bend left] node[above,scale=0.8]{$\bbH\sqcup \LV$} ($(B.west)+(0,.25cm)$);
\draw[->] ($(B.west)+(0,-.25cm)$) to [bend left] node[below,scale=0.8]{$(\bfH,\cv)$} ($(A.east)-(0,.25cm)$);
\node[scale=0.8] at ($(A.east)!0.5!(B.west)$) {$\bot$};
\end{tz}
where each copy of $\TwoCat$ is endowed with the Lack model structure. Then the right-induced model structure on $\DblCat$ exists. In particular, a double functor is a weak equivalence in this model structure if and only if it is a double biequivalence.
\end{theoremA}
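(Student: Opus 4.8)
The plan is to obtain this as a right-induced (transferred) model structure, using the standard transfer theorem in the guise of Quillen's path-object argument. Since Lack's model structure on $\TwoCat$ is cofibrantly generated, so is the product model structure on $\TwoCat\times\TwoCat$; and $\DblCat$, being the category of internal categories in $\Cat$, is locally presentable and hence permits the small object argument. By a standard transfer theorem it then suffices to equip $\DblCat$ with the candidate classes of fibrations and weak equivalences --- namely the double functors $F$ for which $\bfH F$ and $\cv F$ are fibrations, resp.\ biequivalences, in the Lack model structure --- and to check that, with respect to these, $\DblCat$ admits a fibrant replacement functor and a functorial path object for fibrant objects. The first is immediate: every $2$-category is fibrant in the Lack model structure and $\bfH\mathbbm 1=\cv\mathbbm 1=\mathbbm 1$, so every double functor $\AA\to\mathbbm 1$ is a candidate fibration; thus every double category is fibrant and the identity is a fibrant replacement functor.

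For the path object I would take $P\AA:=[\bbH\Eadj,\AA]$, using the cartesian closed structure of $\DblCat$ and with $\Eadj$ the $2$-category containing an adjoint equivalence, together with the factorization $\AA\to P\AA\to\AA\times\AA$ of the diagonal obtained by applying $[\bbH(-),\AA]$ to the $2$-functors $\mathbbm 1\sqcup\mathbbm 1\to\Eadj\to\mathbbm 1$. The crux is to identify the images of this factorization under $\bfH$ and $\cv$. Since $\bbH$ preserves finite products (which is straightforward to check), one obtains a natural isomorphism $\bfH[\bbH X,-]\cong(\bfH-)^{X}$ for every $2$-category $X$; combined with $[X,[Y,-]]\cong[X\times Y,-]$ and the definition $\cv=\bfH[\vtwo,-]$, this yields natural isomorphisms $\bfH P\AA\cong(\bfH\AA)^{\Eadj}$ and $\cv P\AA\cong(\cv\AA)^{\Eadj}$ under which the factorization of the diagonal of $\AA$ becomes Lack's path-object factorization of the diagonal of $\bfH\AA$, resp.\ of $\cv\AA$. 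As the latter is a biequivalence followed by a fibration, $\AA\to P\AA$ is a double biequivalence and $P\AA\to\AA\times\AA$ is a candidate fibration, and the whole construction is manifestly functorial in $\AA$.

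This verifies the hypotheses of the transfer theorem, so the right-induced model structure on $\DblCat$ exists, cofibrantly generated by the images under $\bbH\sqcup\LV$ of the generating (trivial) cofibrations of $\TwoCat\times\TwoCat$. By construction its weak equivalences are the double functors $F$ with $(\bfH,\cv)F$ a weak equivalence in $\TwoCat\times\TwoCat$, i.e.\ with $\bfH F$ and $\cv F$ both biequivalences, which is exactly the definition of a double biequivalence. I expect the main obstacle to lie in the second paragraph: checking that $[\bbH\Eadj,\AA]$ genuinely is a path object, which amounts to carefully matching the internal homs of $\DblCat$ with the functors $\bfH$ and $\cv$ and with Lack's path objects in $\TwoCat$ --- in particular the identification $\cv[\bbH\Eadj,\AA]\cong(\cv\AA)^{\Eadj}$. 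The alternative route, verifying the acyclicity condition directly (every transfinite composite of pushouts of images of generating trivial cofibrations is a double biequivalence), would instead require an explicit analysis of such pushouts in $\DblCat$ and looks considerably less transparent.
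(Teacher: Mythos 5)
Your overall strategy is exactly the paper's: transfer along $(\bfH,\cv)$ via a path-object argument, noting that all objects on both sides are fibrant. But the specific path object you propose does not work, and this is not a cosmetic issue.

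You take $P\AA := [\bbH\Eadj,\AA]$ using the \emph{cartesian} internal hom of $\DblCat$. Tracing through your own isomorphisms, $\bfH[\bbH\Eadj,\AA]$ is the 2-category of 2-functors $\Eadj\to\bfH\AA$, \emph{strict} 2-natural transformations, and modifications — the cotensor for the cartesian product on $\TwoCat$, not for the Gray tensor product. The map $W\colon\bfH\AA\to\bfH P\AA$ is then not a biequivalence in general. Concretely, an object of $\bfH P\AA$ is an adjoint equivalence $(A_0,A_1,f,g,\eta,\epsilon)$ in $\bfH\AA$, and an equivalence in $\bfH P\AA$ from the constant adjoint equivalence at some $A$ to this object would have components $h_0\colon A\to A_0$, $h_1\colon A\to A_1$ subject to the \emph{strict} naturality constraints $h_1 = f h_0$ and $h_0 = g h_1$, hence $h_0 = (gf)h_0$ on the nose. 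Taking $\AA = \bbH\Eadj$ (so $\bfH\AA=\Eadj$) and $B=\id_{\Eadj}\in\bfH P\AA$, the morphisms $0\to 0$ in $\Eadj$ are the freely generated powers $(gf)^n$, so $h_0=(gf)h_0$ forces $(gf)^{n}=(gf)^{n+1}$, which is false. So (b1) fails for $\bfH W$; this is precisely the reason the Lack model structure on $\TwoCat$ is not monoidal for the cartesian product, and your phrase ``Lack's path-object factorization'' silently assumes the wrong monoidal structure.

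The fix is the one the paper uses: take $\cp\AA := [\bbH\Eadj,\AA]_{\ps}$, the internal hom of the Gray-like closed monoidal structure on $\DblCat$ whose horizontal morphisms are horizontal \emph{pseudo} natural transformations. Then the analogues of your identifications become $\bfH[\bbH\Eadj,\AA]_{\ps}\cong\Psd[\Eadj,\bfH\AA]$ and $\cv[\bbH\Eadj,\AA]_{\ps}\cong\Psd[\Eadj,\cv\AA]$ (this is \cref{lem:HVpreservehom}; the second uses that $[\vtwo,-]_{\ps}=[\vtwo,-]$), and now one may invoke monoidality of the Lack model structure with respect to the \emph{Gray} tensor product: since $\mathbbm 1\to\Eadj$ is a trivial cofibration and $\mathbbm 1\sqcup\mathbbm 1\to\Eadj$ is a cofibration, and $\bfH\AA,\cv\AA$ are fibrant, the induced maps on $\Psd[-,-]$ are, respectively, a trivial fibration and a fibration, whence $W$ is a double biequivalence and $P$ a double fibration. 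With this substitution the rest of your argument, including the fibrancy check and the identification of the weak equivalences, goes through as in the paper.
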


Since the Lack model structure on $\TwoCat$ is cofibrantly generated, so is the model structure on $\DblCat$ constructed above. Moreover, every double category is fibrant, since all objects are fibrant in $\TwoCat$.

By taking a closer look at the homotopy equivalences in our model structure on $\DblCat$, we identify them as the double functors $F\colon \AA\to \BB$ such that there is a double functor $G\colon \BB\to \AA$ and two horizontal pseudo natural equivalences $\id_\AA\simeq GF$ and $FG\simeq \id_\BB$. In particular, the usual Whitehead theorem for model structures (see \cite[Lemma 4.24]{DS}) allows us to identify the double biequivalences between cofibrant double categories as the homotopy equivalences described above. 

In fact, we show in \cref{thm:Whitehead1} that a more lax version of this result, involving a horizontally pseudo double functor $G$, holds for an even larger class of double categories containing the cofibrant objects; this mirrors the definition of biequivalences in $\TwoCat$, which further supports the fact that our double biequivalences provide a good notion of weak equivalences between double categories. As a corollary, we retrieve the Whitehead theorem for $2$-categories mentioned above. 
\begin{theoremA}
Let $\AA$ and $\BB$ be double categories such that the underlying vertical category $U\bfV\BB$ is a disjoint union of copies of $\mathbbm 1$ and $\mathbbm 2$. Then a double functor $F\colon \AA\to \BB$ is a double biequivalence if and only if there is a normal horizontally pseudo double functor ${G\colon \BB\to \AA}$, and horizontal pseudo natural equivalences $\eta\colon \id_\AA\simeq GF$ and $\epsilon\colon FG\simeq \id_\BB$. 
\end{theoremA}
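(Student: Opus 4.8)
The plan is to prove the two implications separately; the reverse implication is short, while the forward one is an explicit construction of $G$. For the \emph{reverse} implication, given $G$, $\eta$, $\epsilon$, I would verify directly the four conditions characterizing double biequivalences recalled above. Surjectivity on objects up to horizontal equivalence is immediate since each $\epsilon_B\colon FGB\to B$ is a horizontal equivalence; surjectivity on vertical morphisms up to weakly horizontally invertible square follows by sending $u$ to $G(u)$ together with the vertical naturality square of $\epsilon$ at $u$, which is a weakly horizontally invertible square from $FG(u)$ to $u$ by the equivalence structure on $\epsilon$; and fullness on horizontal morphisms up to vertically invertible square, together with full faithfulness on squares, follow by a standard conjugation argument—after replacing $\eta,\epsilon$ by adjoint equivalences, one transports the given $1$-cell or square of $\BB$ along $G$, conjugates by $\eta$, $\epsilon$ and their naturality data, and applies the triangle identities. (Alternatively, one checks that $\bfH$ and $\cv$ send normal horizontally pseudo double functors to pseudofunctors and horizontal pseudo natural equivalences to pseudo natural equivalences, so that $\bfH F$ and $\cv F$ are biequivalences by the Whitehead theorem for $2$-categories.)

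For the \emph{forward} implication, assume $F$ is a double biequivalence and write $U\bfV\BB=\coprod_i C_i$ with each $C_i$ a copy of $\mathbbm 1$ or $\mathbbm 2$; I build $G$ one component at a time. If $C_i$ is a copy of $\mathbbm 1$, i.e.\ a single object $B$ with no non-identity vertical morphism, I use surjectivity on objects up to horizontal equivalence to pick $GB\in\AA$ and a horizontal equivalence $e_B\colon FGB\to B$, promoted to an adjoint equivalence with inverse $\bar e_B$; I set $G(\id_B)=\id_{GB}$ and take $\phi_{\id_B}$ to be the identity square on $e_B$. If $C_i$ is a copy of $\mathbbm 2$, i.e.\ a non-identity vertical morphism $u\colon B_0\to B_1$, I use surjectivity on vertical morphisms up to weakly horizontally invertible square to pick \emph{simultaneously} $GB_0,GB_1\in\AA$, a vertical morphism $G(u)\colon GB_0\to GB_1$, and a weakly horizontally invertible square $\phi_u$ from $FG(u)$ to $u$, naming its horizontal boundaries $e_{B_0},e_{B_1}$ (automatically horizontal equivalences). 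On a horizontal morphism $g\colon B\to B'$ I use fullness up to vertically invertible square to lift $\bar e_{B'}\circ g\circ e_B$ to $G(g)\colon GB\to GB'$ together with a vertically invertible globular square $\gamma_g$ relating $FG(g)$ to $\bar e_{B'}\circ g\circ e_B$, taking $G(\id_B)=\id_{GB}$ with $\gamma_{\id_B}$ the unit of $e_B\dashv\bar e_B$, which secures normality. On a square $\beta$ of $\BB$ with horizontal boundaries $g,h$ and vertical boundaries $u,u'$, I invoke full faithfulness of $F$ on squares to let $G(\beta)$ be the unique square with the prescribed boundaries whose image under $F$ is obtained by pasting $\beta$ horizontally between $\phi_u$ and a weak inverse of $\phi_{u'}$ and then pasting $\gamma_g$ on top and the inverse of $\gamma_h$ on the bottom; the horizontal compositors $\mu_{g_2,g_1}\colon G(g_2)\circ G(g_1)\Rightarrow G(g_2g_1)$ are produced the same way, from the globular squares of $\BB$ assembled out of the $\gamma$'s and the counits of the adjoint equivalences $e\dashv\bar e$.

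It then remains to check that $G$ is a well-defined normal horizontally pseudo double functor and to assemble the two horizontal pseudo natural equivalences: $\epsilon\colon FG\simeq\id_\BB$ has components $e_B$, with naturality data read off from the $\gamma$'s, the $\phi$'s and the counits, while $\eta\colon\id_\AA\simeq GF$ has components $\eta_A\colon A\to GFA$ chosen (via fullness of $F$) to lift a weak inverse of $e_{FA}$. Every such verification—functoriality of $G$ for horizontal and for vertical composition, the pseudofunctor coherences for the $\mu$'s, compatibility of $G$ with horizontal and vertical composition of squares, and the transformation axioms for $\eta$ and $\epsilon$—reduces, because $F$ is faithful on squares, to an identity of squares in $\BB$ that follows from interchange and the coherence of the chosen adjoint equivalences. \textbf{The main obstacle} is organizing this web of coherences so that it closes up, and the decisive point—the reason for the hypothesis on $U\bfV\BB$—is that all the conditions touching the \emph{vertical} direction (strict functoriality of $G$ on vertical morphisms, and vertical naturality of $\eta$ and $\epsilon$) amount to identities indexed by vertical composites of vertical morphisms of $\BB$; since every such composite is trivial—each vertical morphism being an identity or an isolated arrow of a $\mathbbm 2$-component—these identities hold automatically once $\phi_{\id_B}$ is the identity square on $e_B$, and one never needs $G$ to be pseudo in the vertical direction. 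Finally, specializing to horizontal double categories $\AA=\bbH\A$ and $\BB=\bbH\B$, whose underlying vertical categories are disjoint unions of copies of $\mathbbm 1$, recovers the Whitehead theorem for $2$-categories as a corollary.
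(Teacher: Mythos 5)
Your proof is correct and takes essentially the same route as the paper. The forward construction of $G$ proceeds component-by-component exactly as in the paper: treat the $\mathbbm 1$- and $\mathbbm 2$-components of $U\bfV\BB$ separately using (db1) and (db3), lift horizontal morphisms after conjugation by the chosen adjoint equivalences using (db2), and produce the action on squares and the compositors by full faithfulness (db4); your explanation of why the hypothesis on $U\bfV\BB$ makes vertical strictness and vertical naturality automatic matches the paper's reasoning. The only difference is that the paper dispatches the reverse implication by citing its companion paper (via \cref{prop:horeqarewe} and [WHI]), whereas you sketch a self-contained verification of (db1)--(db4), with a second alternative via $\bfH$, $\cv$, \cref{charequiv}, and the $2$-categorical Whitehead theorem; both of your sketches are sound, though the $\bfH/\cv$ route requires checking that these functors carry normal horizontally pseudo double functors and horizontal pseudo natural equivalences to normal pseudofunctors and pseudo natural equivalences, which is a small extra lemma not needed in the direct verification.
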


This Whitehead Theorem is reminiscent of a result by Grandis \cite[Theorem 4.4.5]{Grandis} which  characterizes the $1$-categorical version of our double biequivalences under a different assumption on the double categories involved; namely, that of horizontal invariance. In \cite[Definition 2.8]{WHI}, the authors introduce a notion of \emph{weakly horizontally invariant} double categories, and use them to prove yet another Whitehead Theorem for double biequivalences; see \cite[Theorem 8.1]{WHI}. Moreover, the weakly horizontally invariant double categories are identified as the fibrant objects in a different model structure on $\DblCat$, whose study is the purpose of \cite{WHI}.

We now address our original motivation of constructing a homotopy theory for double categories that contains that of $2$-categories through the horizontal embedding. Our model structure on $\DblCat$ successfully achieves this goal, and moreover, exhibits the greatest possible compatibility with respect to the horizontal embedding $\bbH\colon\TwoCat\to\DblCat$ that one could hope for, as studied in \cref{Sec:Quillenpairs}. Indeed, the functor $\bbH$ is both left and right Quillen, and homotopically fully faithful. This implies that $\bbH$ embeds the homotopy theory of $2$-categories in that of double categories in a reflective and coreflective way. Furthermore, the Lack model structure can be shown to be both left- and right-induced along $\bbH$ from our model structure on $\DblCat$. As a consequence, a $2$-functor $F$ is a cofibration, fibration or weak equivalence in $\TwoCat$ if and only if the double functor $\bbH F$ is a cofibration, fibration or weak equivalence in~$\DblCat$, respectively. This completely determines the model structure on $\TwoCat$ through its image under $\bbH$. 

\begin{theoremA}\label{theoremC}
The adjunctions 
\begin{tz}
\node[](A) {$\TwoCat$};
\node[right of=A,rr](B) {$\DblCat$};
\draw[->] ($(B.west)+(0,.25cm)$) to [bend right=50] node[above,la]{$L$} ($(A.east)+(0,.25cm)$);
\draw[->] (A) to node[la,over] {$\bbH$} (B);
\draw[->] ($(B.west)+(0,-.25cm)$) to [bend left=50] node[below,la]{$\bfH$} ($(A.east)-(0,.25cm)$);
\node[la] at ($(A.east)!0.5!(B.west)+(0,.35cm)$) {$\bot$};
\node[la] at ($(A.east)!0.5!(B.west)-(0,.35cm)$) {$\bot$};
\end{tz}
are both Quillen pairs between the Lack model structure on $\TwoCat$ and the model structure on~$\DblCat$ of \cref{MS-intro}. Moreover, the functor $\bbH$ is homotopically fully faithful, and the Lack model structure on $\TwoCat$ is both left- and right-induced from our model structure on~$\DblCat$ along $\bbH$. 
\end{theoremA}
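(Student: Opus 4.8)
My plan rests on two features of the setup: the embedding $\bbH\colon\TwoCat\to\DblCat$ is fully faithful, so the unit $\id_{\TwoCat}\Rightarrow\bfH\bbH$ of $\bbH\dashv\bfH$ is invertible; and, by \cref{MS-intro}, a double functor is a fibration (resp.\ trivial fibration) in $\DblCat$ exactly when both $\bfH$ and $\cv$ send it to a fibration (resp.\ trivial fibration) in the Lack model structure. The technical heart is the claim that $\cv\bbH\colon\TwoCat\to\TwoCat$ preserves biequivalences, fibrations, and trivial fibrations. To establish this I would first make $\cv\bbH\A=\bfH[\vtwo,\bbH\A]$ explicit: since $\bbH\A$ has only trivial vertical morphisms, $\cv\bbH\A$ has the same objects as $\A$, has hom-category $[\mathbbm 2,\A(a,b)]$ from $a$ to $b$ with composition inherited from $\A$, and $\cv\bbH F$ is $F$ on objects and $[\mathbbm 2,F_{a,b}]$ on hom-categories. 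A direct computation with this composition shows that a $1$-cell $(\gamma_0,\gamma_1,\chi)$ of $\cv\bbH\B$ is an equivalence precisely when $\gamma_0$ (equivalently $\gamma_1$) is an equivalence of $\B$ and $\chi$ is invertible. With this identification, Lack's descriptions of the fibrations (equivalence-lifting on objects together with each hom-functor being an isofibration) and of the trivial fibrations (surjectivity on objects together with each hom-functor being a surjective-on-objects equivalence) reduce the object-level conditions for $\cv\bbH F$ to the equivalence-lifting and local isofibration properties of $F$, while the local conditions follow because $[\mathbbm 2,-]\colon\Cat\to\Cat$ is right adjoint to $-\times\mathbbm 2$, which visibly preserves cofibrations and trivial cofibrations of the canonical model structure, so that $[\mathbbm 2,-]$ preserves isofibrations and trivial fibrations. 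The biequivalence case is handled analogously, and underlies the characterization, recalled in the introduction, of those $\bbH F$ that are double biequivalences.

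Granting this claim, the two Quillen pairs are immediate. Since the model structure on $\DblCat$ is right-induced along $(\bfH,\cv)$, that functor is right Quillen; composing with the right Quillen projection $\TwoCat\times\TwoCat\to\TwoCat$ onto the first factor shows that $\bfH$ is right Quillen, so $(\bbH,\bfH)$ is a Quillen pair, and in particular $\bbH$ is left Quillen. For $(L,\bbH)$ I would check that $\bbH$ is right Quillen: if $F$ is a Lack fibration (resp.\ trivial fibration), then $\bfH\bbH F\cong F$ and, by the technical claim, $\cv\bbH F$ are Lack fibrations (resp.\ trivial fibrations), so $\bbH F$ is a fibration (resp.\ trivial fibration) in $\DblCat$. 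For homotopical full faithfulness, $\bbH$ is left Quillen with right adjoint $\bfH$, yielding a derived adjunction $\mathbb L\bbH\dashv\mathbb R\bfH$; since every double category is fibrant, the derived unit at $\A$ is computed, using a cofibrant replacement $q\colon Q\A\xrightarrow{\sim}\A$, by the zig-zag $\A\xleftarrow{q}Q\A\xrightarrow{\eta_{Q\A}}\bfH\bbH Q\A$ whose second arrow is an isomorphism; hence the derived unit is invertible in $\mathrm{Ho}(\TwoCat)$, so $\mathbb L\bbH$ is fully faithful.

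It remains to identify the Lack model structure as both left- and right-induced along $\bbH$. A model structure is determined by its weak equivalences together with either its fibrations or its cofibrations, and the Lack model structure exists, so it suffices to match classes. Using $\bfH\bbH f\cong f$ and the technical claim: $\bbH f$ is a double biequivalence iff $f$ and $\cv\bbH f$ are biequivalences iff $f$ is a biequivalence, and $\bbH f$ is a fibration in $\DblCat$ iff $f$ and $\cv\bbH f$ are Lack fibrations iff $f$ is a Lack fibration; this exhibits the Lack model structure as right-induced along $L\dashv\bbH$. For the cofibrations: if $f$ is a Lack cofibration then $\bbH f$ is a cofibration in $\DblCat$ since $\bbH$ is left Quillen; conversely, suppose $\bbH f$ is a cofibration, let $p$ be any Lack trivial fibration, and take a lifting problem of $f$ against $p$. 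Since $\bbH$ is right Quillen, $\bbH p$ is a trivial fibration in $\DblCat$, so the image lifting problem of $\bbH f$ against $\bbH p$ admits a solution; and since $\bbH$ is fully faithful, that solution is the image of a $2$-functor solving the original problem. Hence $f$ has the left lifting property against every Lack trivial fibration, so it is a Lack cofibration, and the Lack model structure is left-induced along $\bbH\dashv\bfH$.

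The one genuinely non-formal step is the technical claim, and within it the explicit description of $\cv\bbH$ that lets one read off preservation of fibrations and trivial fibrations; once that is in hand, every remaining assertion follows by routine manipulation of right-induced model structures together with the ordinary and derived full faithfulness of $\bbH$.
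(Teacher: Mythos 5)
Your proof is correct, and the overall shape agrees with the paper's: use $\bfH\bbH = \id$, get $\bbH \dashv \bfH$ Quillen by composing with a projection, verify $\bbH$ is also right Quillen, read off homotopical full faithfulness from the trivial derived unit, and deduce left- and right-inducedness of the Lack model structure. Where you differ is in the technical heart. The paper's \cref{thm:LHQuillenadj} verifies that $\bbH F$ satisfies (df1)--(df3) and (dt1)--(dt4) directly, invoking \cref{lem:weaklyhorinvvsverinv} to convert weakly horizontally invertible squares in $\bbH\B$ into invertible $2$-cells. You instead make the composite $\cv\bbH$ explicit --- same objects as $\A$, hom-categories $\A(A,B)^{\mathbbm 2}$ --- and check that $\cv\bbH$ preserves Lack (trivial) fibrations and biequivalences; combined with $\bfH\bbH=\id$ and \cref{charequiv,charfib} this gives the same conclusion. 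These are two packagings of the same insight (your description of equivalences in $\cv\bbH\B$ is exactly \cref{lem:weaklyhorinvvsverinv}, and your claim about local isofibrations via $[\mathbbm 2,-]$ being right Quillen on $\Cat$ is a clean way to dispatch (f2)). One small bonus of your route: by proving $\cv\bbH$ preserves biequivalences directly you bypass the appeal to Ken Brown's lemma in the paper's proof of \cref{thm:LHrightinduced}. Your left-inducedness argument uses full faithfulness of $\bbH$ to descend the lift, while the paper applies $\bfH$ and uses $\bfH\bbH P = P$; both work. Nothing is missing, though the ``direct computation'' establishing the equivalence characterization in $\cv\bbH\B$ and the object-level lifting in (f1) for $\cv\bbH F$ (which needs both (f1) and (f2) for $F$, as you note) would of course need to be carried out in full.
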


Having established the exceptional behavior of our model structure with the horizontal embedding, we want to further investigate its relation with the Lack model structure on~$\TwoCat$. Lack shows in \cite{Lack2Cat} that the model structure on $\TwoCat$ is monoidal with respect to the Gray tensor product. In the double categorical setting, there is an analogous monoidal structure on $\DblCat$ given by the Gray tensor product constructed by B\"ohm in \cite{Bohm}. However, this monoidal structure is not compatible with our model structure on $\DblCat$ (see \cref{NotcompatiblewithGray}), since it treats the vertical and horizontal directions symmetrically, while our model structure does not. Nevertheless, restricting this Gray tensor product for double categories in one of the variables to $\TwoCat$ via $\bbH$ removes this symmetry and provides an enrichment of $\DblCat$ over~$\TwoCat$ that is compatible with our model structure. More precisely, this enrichment is given by the hom $2$-categories of double functors, horizontal pseudo natural transformations, and modifications between them, denoted by $\bfH[-,-]_\ps$.

\begin{theoremA}
The model structure on $\DblCat$ of \cref{MS-intro} is a $\TwoCat$-enriched model structure, where the enrichment is given by $\bfH[-,-]_\ps$. 
\end{theoremA}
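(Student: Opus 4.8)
The plan is to verify the axioms of an enriched model structure in the sense of the pushout-product axiom for the $\TwoCat$-enriched hom $\bfH[-,-]_\ps$, together with the unit axiom. Concretely, writing $\otimes_\Gray$ for the left adjoint to $\bfH[-,-]_\ps$ (the variant of B\"ohm's Gray tensor product restricted through $\bbH$ in one slot, so that $A \otimes_\Gray \BB$ makes sense for a $2$-category $A$ and a double category $\BB$), the main task is to show: for a cofibration $i\colon A\to B$ in the Lack model structure on $\TwoCat$ and a cofibration $j\colon \AA\to\BB$ in $\DblCat$, the pushout-product
\[
i \,\square\, j\colon A\otimes_\Gray\BB \coprod_{A\otimes_\Gray\AA} B\otimes_\Gray\AA \to B\otimes_\Gray\BB
\]
is a cofibration in $\DblCat$, which is moreover a weak equivalence (a double biequivalence) if either $i$ or $j$ is. The unit axiom requires checking that $Q\mathbbm{1}\otimes_\Gray\BB \to \mathbbm{1}\otimes_\Gray\BB \cong \BB$ is a double biequivalence for a cofibrant replacement $Q\mathbbm{1}\to\mathbbm{1}$ of the $2$-category $\mathbbm{1}$; since every $2$-category is fibrant and $\mathbbm{1}$ is cofibrant in the Lack model structure, the unit axiom is automatic.

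First I would set up the adjunction: since $\bfH[-,-]_\ps$ is a $\TwoCat$-valued bifunctor contravariant in the first and covariant in the second double-category variable, I would identify its left adjoint in the first variable as a tensor $-\otimes_\Gray-\colon \TwoCat\times\DblCat\to\DblCat$, built from B\"ohm's Gray tensor product of double categories by precomposing one slot with $\bbH$. I would record the standard two-variable-adjunction bookkeeping that reduces the pushout-product statement for cofibrations to a statement about the generating cofibrations: it suffices to check the pushout-product axiom on generating (trivial) cofibrations of the two model structures. On the $\TwoCat$ side these are Lack's explicit generating (trivial) cofibrations; on the $\DblCat$ side, by \cref{MS-intro} the model structure is cofibrantly generated with generating sets obtained by applying the left adjoint $\bbH\sqcup\LV$ to Lack's generating (trivial) cofibrations in each of the two copies of $\TwoCat$. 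So the whole verification reduces to finitely many explicit maps.

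The key computational input, and the heart of the argument, is a description of $\bbH A \otimes_\Gray \BB$ — or equivalently of the right adjoint $\bfH[\bbH A, -]_\ps$ applied to a double category. I would prove a "Gray-type" formula: $\bfH[\bbH A,\BB]_\ps$ is the $2$-category whose objects are double functors $\bbH A\to\BB$, whose morphisms are horizontal pseudo natural transformations, and whose $2$-cells are modifications, and then unwind what a horizontal pseudo natural transformation out of a horizontal double category $\bbH A$ is — this is where the asymmetry between the horizontal and vertical directions is exploited, and it is precisely the step that fails for B\"ohm's symmetric Gray tensor product (cf.\ \cref{NotcompatiblewithGray}). From this I would deduce that $\bbH(-)\otimes_\Gray-$ interacts well with the functors $\bfH$ and $\cv$ used to right-induce the model structure: concretely, I expect natural isomorphisms (or at least biequivalences) of the form $\bfH(\bbH A\otimes_\Gray\BB)\cong \bbH A\otimes_{\Gray}^{2\mathrm{Cat}}\bfH\BB$ and an analogous identity for $\cv$, where $\otimes_\Gray^{2\mathrm{Cat}}$ is the ordinary Gray tensor product on $\TwoCat$. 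Granting such compatibilities, the pushout-product statement in $\DblCat$ is detected by $\bfH$ and $\cv$ and thereby reduced to the corresponding pushout-product statement in $\TwoCat$, which is exactly Lack's monoidality of the Lack model structure with respect to the Gray tensor product \cite{Lack2Cat}.

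I expect the main obstacle to be establishing these compatibility isomorphisms between $\otimes_\Gray$ (restricted through $\bbH$) and the functors $\bfH$, $\cv$ that define the right-induction — in particular the one involving $\cv\AA = \bfH[\vtwo,\AA]$, since it requires commuting the Gray tensor with the internal hom $[\vtwo,-]$ in $\DblCat$, and the Gray tensor product is not closed in the relevant variable in the naive way; one must instead argue directly with horizontal pseudo natural transformations out of $\vtwo$ into $\bbH A\otimes_\Gray\BB$ and match them against squares-with-$2$-cells data. A secondary but real subtlety is the cofibration half of the pushout-product axiom: one must check that $\bbH A\otimes_\Gray-$ sends cofibrations of $\DblCat$ to cofibrations and that the relevant pushout-corner maps are cofibrations; since cofibrations in $\DblCat$ are not as transparently described as the weak equivalences, I would handle this by reducing to generating cofibrations as above and computing the (few) resulting pushout-products explicitly, checking each lands in the saturation of the generating cofibrations of $\DblCat$. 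Once both halves are in hand, the $\TwoCat$-enriched model structure axioms follow formally, and the promised retrieval — that restricting to $\bbH\TwoCat$ recovers Lack's Gray-monoidal structure — is immediate from the compatibility isomorphisms.
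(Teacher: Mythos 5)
The skeleton of your plan is sound — reduce to generating cofibrations via \cref{lem:gencofIJ} and transport to Lack's monoidality on $\TwoCat$ (\cref{2Catenrichedmodel}) — and you correctly flag $\bfH[\bbH\A,\BB]_\ps\cong\Psd[\A,\bfH\BB]$ and its $\cv$-analogue (this is \cref{lem:HVpreservehom}) as the crucial structural fact. But the reduction you propose does not go through. You plan to ``detect'' the pushout-product axiom via $\bfH$ and $\cv$ using isomorphisms of the shape $\bfH(\bbH\A\otimes_\Gray\BB)\cong\A\otimes_2\bfH\BB$. The pushout-product axiom has two halves: the pushout-product of cofibrations is a \emph{cofibration}, and it is a weak equivalence when one input is. The functors $\bfH$ and $\cv$ detect fibrations and weak equivalences — that is what ``right-induced'' means — but they do not detect cofibrations. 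So your detection scheme can at best handle the weak-equivalence half. You acknowledge the cofibration half as a ``secondary but real subtlety'' and defer it, but it is not secondary: it is the actual content of the axiom.

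The paper handles both halves at once by working with the \emph{left} adjoints instead: $\A\otimes\bbH\B\cong\bbH(\A\otimes_2\B)$ (\cref{lem:Handotimes}) and $\A\otimes\vtwo\cong\bbH\A\times\vtwo=\LV\A$ (\cref{lem:Vtwoandotimes}), so that tensoring against a generating $\DblCat$-cofibration stays inside the image of $\bbH$ or $\LV$. Then $\pushout{i}{\bbH j}{}\cong\bbH(\pushout{i}{j}{2})$ and $\pushout{i}{\LV j}{}\cong\LV(\pushout{i}{j}{2})$ by \cref{lem:pushprodcof}; since $\pushout{i}{j}{2}$ is a (trivial) cofibration by Lack's theorem and $\bbH,\LV$ are left Quillen, the claim follows. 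If you insist on $\bfH,\cv$-detection, you must pass to the adjoint pullback-cotensor formulation of the axiom, which is a fibration/weak-equivalence statement and hence \emph{is} detectable by $\bfH$ and $\cv$: there \cref{lem:HVpreservehom} reduces the claim directly to $\TwoCat$. That route does work, but note it uses the hom-compatibility of \cref{lem:HVpreservehom}, not your tensor isomorphism $\bfH(\bbH\A\otimes_\Gray\BB)\cong\A\otimes_2\bfH\BB$ — the latter is an oplax-structure assertion on the right adjoint $\bfH$, it is not the adjoint transpose of \cref{lem:HVpreservehom} (that transpose is precisely \cref{lem:Handotimes}), and nothing in your sketch establishes it.
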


The fact that horizontal pseudo natural transformations play a key role was to be expected, since they are the type of transformations that detect our weak equivalences, as established in our version of the Whitehead theorem above.

Just as the composition in $2$-categories can be weakened to obtain the notion of \emph{bicategories}, double categories also admit a weaker version, called \emph{weak double categories}, where horizontal composition is associative and unital up to vertically invertible squares. A bicategory $\B$ can then be seen as a horizontal weak double category $\bbH^{w} \B$. In \cite{LackBicat}, Lack shows that the category $\Bicats$ of bicategories and strict functors admits a model structure, in which the weak equivalences are again the biequivalences. Moreover, the full embedding of $2$-categories into bicategories induces a Quillen equivalence.

Similarly, we endow the category $\wkDblCat$ of weak double categories and strict double functors with a model structure, whose weak equivalences are again the double biequivalences, and show that analogue statements to those in \cref{MS-intro,theoremC} hold in this weaker setting. Furthermore, the full embedding of $\DblCat$ into $\wkDblCat$ is part of a Quillen equivalence, mirroring the Quillen equivalence between $\TwoCat$ and $\Bicats$ of~\cite{LackBicat}.

\begin{theoremA}
The inclusion $I\colon \DblCat\to \wkDblCat$ is a Quillen equivalence. 
\end{theoremA}

The model structures on $\TwoCat$, $\Bicats$, $\DblCat$, and $\wkDblCat$ interact as expected: we have a commutative square of right Quillen and homotopically fully faithful functors relating the model structures introduced in this paper and Lack's model structures.

\begin{tz}
\node (A) at (0,0) {$\TwoCat$};
\node (B) at (2.5,0) {$\Bicats$};
\node (C) at (0,-1.5) {$\DblCat$};
\node (D) at (2.5,-1.5) {$\wkDblCat$};
\draw[->] (A) to node[above,scale=0.8] {$I_2$} node[below,scale=0.8] {$\simeq_\mathrm{Q}$} (B);
\draw[->] (C) to node[above,scale=0.8] {$I$} node[below,scale=0.8] {$\simeq_\mathrm{Q}$} (D);
\draw[->] (A) to node[left, scale=0.8] {$\bbH$} (C);
\draw[->] (B) to node[right, scale=0.8] {$\bbH^{w}$} (D);
\end{tz}

\subsection*{Acknowledgements}
The authors would like to thank Martina Rovelli for reading an early version of this paper and providing many helpful comments; especially, for suggesting at the beginning of this project that we could induce a model structure on $\DblCat$ from two copies of $\TwoCat$. Philip Hackney and Martina Rovelli pointed out that the Lack model structure was further left-induced along the horizontal embedding from our model structure, and J\'er\^ome Scherer pointed us towards a better construction of path objects. The authors are also grateful to tslil clingman, for suggesting a construction that became our functor $\cv\colon \DblCat\to \TwoCat$, and to Viktoriya Ozornova and J\'er\^ome Scherer, for spotting many typos in previous versions.

This work started when the first- and third-named authors were at the Mathematical Sciences Research Institute in Berkeley, California, during the Spring 2020 semester. During the realization of this work, the first-named author was supported by the Swiss National Science Foundation under the project P1ELP2\_188039 and the Max Planck Institute of Mathematics. The third-named author was supported by an international Macquarie University Research Excellence Scholarship.

\section{Double categorical preliminaries} \label{section:prelim}

In this section, we recall the basic notions about double categories, and also introduce non-standard definitions and terminology that will be used throughout the paper. The reader familiar with double categories may wish to jump directly to \cref{defn_cv}.

\begin{defn}
A \textbf{double category} $\AA$ consists of
\begin{rome}
\item objects $A$, $B$, $C$, $\ldots$,
\item horizontal morphisms $a\colon A\to B$ with composition denoted by $b\circ a$ or $ba$,
\item vertical morphisms $u\colon A\arrowdot A'$ with composition denoted by $v\bullet u$ or $vu$,
\item squares (or cells) $\alpha\colon (u \; ^{a}_{\substack{b}} \; v)$ of the form 
\begin{tz}
\node (A) at (0,0) {$A$};
\node (B) at (1.5,0) {$B$};
\node (A') at (0,-1.5) {$A'$};
\node (B') at (1.5,-1.5) {$B'$};
\draw[->] (A) to node[above,scale=0.8] {$a$} (B);
\draw[->] (A') to node[below,scale=0.8] {$b$} (B');
\draw[->] (A) to node[left,scale=0.8] {$u\;$} (A');
\draw[->] (B) to node[right,scale=0.8] {$\;v$} (B');

\node at (0,-.75) {$\bullet$};
\node at (1.5, -.75) {$\bullet$};

\node[scale=0.8] at (.75,-.75) {$\alpha$};
\end{tz}
with both horizontal composition along their vertical boundaries and vertical composition along their horizontal boundaries, and
\item horizontal identities $\id_A\colon A\to A$ and vertical identities $e_A\colon A\arrowdot A$ for each object~$A$, vertical identity squares $\sq{e_a}{a}{a}{\id_A}{\id_B}$ for each horizontal morphism $a\colon A\to B$, horizontal identity squares $\sq{\id_u}{\id_A}{\id_{A'}}{u}{u}$ for each vertical morphism $u\colon A\arrowdot A'$, and identity squares $\square_A=\id_{e_A}=e_{\id_A}$ for each object $A$,
\end{rome}
such that all compositions are unital and associative, and such that the horizontal and vertical compositions of squares satisfy the interchange law.
\end{defn}

\begin{defn}
Let $\AA$ and $\BB$ be double categories. A \textbf{double functor} $F\colon \AA\to \BB$ consists of maps on objects, horizontal morphisms, vertical morphisms, and squares, which are compatible with domains and codomains and preserve all double categorical compositions and identities strictly.
\end{defn}

\begin{notation}
We write $\DblCat$ for the category of double categories and double functors.
\end{notation}

The category of double categories is cartesian closed, and therefore there is a double category whose objects are the double functors. We describe the horizontal morphisms, vertical morphisms, and squares of this hom double category.

\begin{defn} \label{def:modif}
Let $F,G,F',G'\colon \AA\to \BB$ be four double functors.

A \textbf{horizontal natural transformation} $h\colon F\Rightarrow G$ consists of 
\begin{rome}
    \item a horizontal morphism $h_A\colon FA\to GA$ in $\BB$, for each object $A\in \AA$, and
    \item a square $\sq{h_u}{h_A}{h_{A'}}{Fu}{Gu}$ in $\BB$, for each vertical morphism $u\colon A\arrowdot A'$ in $\AA$,
\end{rome}  
such that the assignment of squares is functorial with respect to the composition of vertical morphisms, and these data satisfy a naturality condition with respect to horizontal morphisms and squares.

Similarly, a \textbf{vertical natural transformation} $r\colon F\Rightarrow F'$ consists of 
\begin{rome}
    \item a vertical morphism $r_A\colon FA\arrowdot F'A$ in $\BB$, for each object $A\in \AA$, and
    \item a square $\sq{r_a}{Fa}{F'a}{r_A}{r_B}$ in $\BB$, for each horizontal morphism $a\colon A\to B$ in $\AA$,
\end{rome}  
satisfying transposed conditions.

Given another horizontal natural transformation $k\colon F'\Rightarrow G'$ and another vertical natural transformation $s\colon G\Rightarrow G'$, a \textbf{modification} $\sq{\mu}{h}{k}{r}{s}$ consists of 
\begin{rome}
    \item a square $\sq{\mu_A}{h_A}{k_A}{r_A}{s_A}$ in $\BB$, for each object $A\in \AA$,
\end{rome} 
satisfying horizontal and vertical coherence conditions with respect to the squares of the natural transformations $h$, $k$, $r$, and $s$.

See \cite[\S 3.2.7]{Grandis} for more explicit definitions.
\end{defn}

\begin{defn}\label{internalHom}
Let $\AA$ and $\BB$ be double categories. We define the \textbf{hom double category} $[\AA,\BB]$ whose 
\begin{rome}
    \item objects are the double functors $\AA\to \BB$,
    \item horizontal morphisms are the horizontal natural transformations,
    \item vertical morphisms are the vertical natural transformations, and
    \item squares are the modifications.
\end{rome} 
\end{defn}

\begin{prop}[{\cite[Proposition 2.11]{FPP}}] \label{prop:adjinternalhom}
For every double category $\AA$, there is an adjunction
\begin{tz}
\node (A) at (0,0) {$\DblCat$};
\node (B) at (2.75,0) {$\DblCat$};
\draw[->] ($(A.east)+(0,.25cm)$) to [bend left] node[above,scale=0.8]{$-\times \AA$} ($(B.west)+(0,.25cm)$);
\draw[->] ($(B.west)+(0,-.25cm)$) to [bend left] node[below,scale=0.8]{$[\AA,-]$} ($(A.east)-(0,.25cm)$);
\node[scale=0.8] at ($(A.east)!0.5!(B.west)$) {$\bot$};
\node at ($(B.east)-(0,4pt)$) {.};
\end{tz}
\end{prop}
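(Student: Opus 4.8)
Since the cartesian closedness of $\DblCat$ is already available (as recalled before \cref{internalHom}), the content of the statement is that the particular double category $[\AA,-]$ described in \cref{internalHom} is right adjoint to $-\times\AA$. The plan is therefore to exhibit a bijection
\[
\DblCat(\BB\times\AA,\bC)\;\cong\;\DblCat(\BB,[\AA,\bC])
\]
natural in $\BB$ and $\bC$; the desired adjunction then follows by the usual parametrized-adjoint argument, applied with $\BB$ and $\bC$ ranging over $\DblCat$.

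First I would describe the forward transpose. Given a double functor $F\colon\BB\times\AA\to\bC$, define $\widehat F\colon\BB\to[\AA,\bC]$ by: sending an object $B$ to the double functor $\widehat F(B)\colon\AA\to\bC$ with $A\mapsto F(B,A)$, $a\mapsto F(\id_B,a)$, $u\mapsto F(e_B,u)$, and $\alpha\mapsto F(\square_B,\alpha)$; sending a horizontal morphism $b\colon B\to B'$ to the horizontal natural transformation $\widehat F(b)$ whose component at $A$ is $F(b,\id_A)$ and whose square at a vertical morphism $u$ is $F(e_b,\id_u)$; sending a vertical morphism $r\colon B\arrowdot B'$ to the vertical natural transformation $\widehat F(r)$ with components $F(r,e_A)$ and squares $F(\id_r,e_a)$; and sending a square $\beta$ of $\BB$ to the modification $\widehat F(\beta)$ with components $F(\beta,\square_A)$. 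Here every entry is read off from the product structure of $\BB\times\AA$, so each clause is literally an instance of $F$ evaluated on a square of $\BB\times\AA$ of a specified shape. Conversely, given $G\colon\BB\to[\AA,\bC]$, define $\check G\colon\BB\times\AA\to\bC$ by evaluation: $(B,A)\mapsto G(B)(A)$, a horizontal morphism $(b,a)\mapsto G(b)_{A'}\circ G(B)(a)=G(B')(a)\circ G(b)_A$, a vertical morphism $(r,u)\mapsto G(r)_{A'}\bullet G(B)(u)=G(B')(u)\bullet G(r)_A$, and a square $(\beta,\alpha)$ to the appropriate pasting in $\bC$ of the modification component of $G(\beta)$, the square $G(B)(\alpha)$ or $G(B')(\alpha)$, and the transformation squares $G(b)_{u}$, $G(r)_{a}$; the naturality clauses packaged into \cref{def:modif} guarantee that the two possible composites agree.

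It then remains to verify that $\widehat F$ is indeed a double functor into $[\AA,\bC]$ and that $\check G$ is indeed a double functor out of $\BB\times\AA$, that $F\mapsto\widehat F$ and $G\mapsto\check G$ are mutually inverse, and that this bijection is natural in $\BB$ and $\bC$. Mutual inverseness is immediate once the formulas are written out, since both transposes are defined by unambiguous evaluation/insertion of identities; naturality in $\BB$ and $\bC$ is a direct check comparing the effect of precomposition by a double functor $\BB'\to\BB$ (resp. postcomposition by $\bC\to\bC'$) on the two sides.

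\textbf{Main obstacle.} The only real work is bookkeeping: checking that $\widehat F$ lands in $[\AA,\bC]$ amounts to showing that each $\widehat F(b)$ satisfies the functoriality-in-$u$ and naturality axioms of a horizontal natural transformation, that each $\widehat F(r)$ satisfies the transposed axioms of a vertical natural transformation, that each $\widehat F(\beta)$ satisfies the modification coherences, and that $\widehat F$ itself preserves horizontal and vertical composites and identities — and symmetrically for $\check G$. Every one of these reduces to functoriality of $F$ (resp. $G$) together with the interchange law in $\bC$, but because a double category carries data on four levels, each verification splits into four parallel ones; the difficulty is purely in organising the case analysis, not in any single step. (Alternatively, one could deduce the statement by identifying $\DblCat$ with category objects in $\Cat$ and invoking the Ehresmann–Ehresmann construction of the internal hom directly, but the explicit transpose above is what makes the identification with the $[\AA,\bC]$ of \cref{internalHom} transparent.)
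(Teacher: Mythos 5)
Your proposal is correct. There is no proof in the paper to compare it to: the proposition is quoted directly from Fiore--Paoli--Pronk, who carry out the verification after Ehresmann--Ehresmann established the cartesian closedness of $\DblCat$; the present paper relies on that citation. What you sketch is the standard argument the cited source gives: define $\widehat F$ by inserting identities, with $\widehat F(b)_u = F(e_b,\id_u)$, $\widehat F(r)_a = F(\id_r,e_a)$, and $\widehat F(\beta)_A = F(\beta,\square_A)$, each landing with the correct boundaries; define $\check G$ by evaluation and pasting; and verify mutual inverseness and naturality in $\BB$ and $\bC$. You correctly identify the only remaining work as a large but routine check of the coherence axioms, each instance being a consequence of functoriality of $F$ (resp.\ $G$) and interchange in $\bC$. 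In short, your sketch is a faithful account of the proof the paper outsources to the reference.
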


It is possible to relax the strictness in \cref{def:modif,internalHom} to obtain the following notions.

\begin{defn} \label{def:pseudohortransf}
Let $F,G\colon \AA\to \BB$ be double functors. A \textbf{horizontal pseudo natural transformation} $h\colon F\Rightarrow G$ consists of 
\begin{rome}
    \item a horizontal morphism $h_A\colon FA\to GA$ in $\BB$, for each object $A\in \AA$, 
    \item a square $\sq{h_u}{h_A}{h_{A'}}{Fu}{Gu}$ in $\BB$, for each vertical morphism $u\colon A\arrowdot A'$ in $\AA$, and
    \item a vertically invertible square $\sq{h_a}{(Ga) h_A}{h_B (Fa)}{e_{FA}}{e_{GB}}$ in $\BB$, for each horizontal morphism $a\colon A\to B$ in $\AA$, expressing a pseudo naturality condition for horizontal morphisms.
\end{rome} 
These assignments of squares are functorial with respect to compositions of horizontal and vertical morphisms, and these data satisfy a naturality condition with respect to squares.

Similarly, one can define a transposed notion of \textbf{vertical pseudo natural transformation} between double functors.

A \textbf{modification} in a square of horizontal and vertical pseudo natural transformations is defined similarly to \cref{def:modif}, with the horizontal and vertical coherence conditions taking the pseudo data of the transformations into account.

See \cite[\S 3.8]{Grandis} or \cite[\S 2.2]{Bohm} for precise definitions.
\end{defn}

\begin{defn} \label{def:pseudohomdouble}
Let $\AA$ and $\BB$ be double categories. We define the \textbf{pseudo hom double category} $[\AA,\BB]_{\ps}$ whose
\begin{rome}
    \item objects are the double functors $\AA\to \BB$,
    \item horizontal morphisms are the horizontal pseudo natural transformations,
    \item vertical morphisms are the vertical pseudo natural transformations, and
    \item squares are the modifications.
\end{rome}
\end{defn}

Similar to the case of \cref{prop:adjinternalhom}, we will see in \cref{prop:Bohm} that the double functor $[\AA,-]_{\ps}$ admits a left adjoint $-\otimes_\Gray \AA$ \textemdash the Gray tensor product on $\DblCat$, introduced by B\"ohm in \cite{Bohm}.

As mentioned in the introduction, there is a full horizontal embedding of the category $\TwoCat$ of $2$-categories and $2$-functors into $\DblCat$. This is given by the following functor.

\begin{defn}
We define the \textbf{horizontal embedding functor} $\bbH\colon\TwoCat\to\DblCat$. It takes a $2$-category $\A$ to the double category $\bbH\A$ having the same objects as $\A$, the morphisms of $\A$ as horizontal morphisms, only identities as vertical morphisms, and squares
\begin{tz}
\node (A) at (0,0) {$A$};
\node (C) at (0,-1.5) {$A$};
\node (B) at (1.5,0) {$B$};
\node (D) at (1.5,-1.5) {$B$};
\draw[->] (A) to node[above,scale=0.8] {$a$} (B);
\draw[->] (C) to node[below,scale=0.8] {$b$} (D);
\draw[d] (A) to (C);
\draw[d] (B) to (D);
\node at (0,-.75) {$\bullet$};
\node at (1.5,-.75) {$\bullet$};

\node[scale=0.8] at (.75,-.75) {$\alpha$};
\end{tz}
given by the $2$-cells $\alpha\colon a\Rightarrow b$ in $\A$. It sends a $2$-functor $F\colon \A\to \B$ to the double functor $\bbH F\colon \bbH \A\to \bbH\B$ that acts as $F$ does on the corresponding data.
\end{defn}

The functor $\bbH$ admits a right adjoint given by the following.

\begin{defn}
We define the functor $\bfH\colon\DblCat\to\TwoCat$. It takes a double category $\AA$ to its \textbf{underlying horizontal $2$-category} $\bfH\AA$, i.e., the $2$-category whose objects are the objects of $\AA$, whose morphisms are the horizontal morphisms of $\AA$, and whose $2$-cells $\alpha\colon a\Rightarrow b$ are given by the squares in $\AA$ of the form 
\begin{tz}
\node (A) at (0,0) {$A$};
\node (C) at (0,-1.5) {$A$};
\node (B) at (1.5,0) {$B$};
\node (D) at (1.5,-1.5) {$B$};
\node at ($(D.east)-(0,4pt)$) {.};
\draw[->] (A) to node[above,scale=0.8] {$a$} (B);
\draw[->] (C) to node[below,scale=0.8] {$b$} (D);
\draw[d] (A) to (C);
\draw[d] (B) to (D);
\node at (0,-.75) {$\bullet$};
\node at (1.5,-.75) {$\bullet$};

\node[scale=0.8] at (.75,-.75) {$\alpha$};
\end{tz}
It sends a double functor $F\colon \AA\to \BB$ to the $2$-functor $\bfH F\colon \bfH \AA\to \bfH\BB$ that acts as $F$ does on the corresponding data.
\end{defn}

\begin{prop}[{\cite[Proposition 2.5]{FPP}}]\label{prop:adjHH}
The functors $\bfH$ and $\bbH$ form an adjunction
\begin{tz}
\node (A) at (0,0) {$\TwoCat$};
\node (B) at (2.5,0) {$\DblCat$};
\node at ($(B.east)-(0,4pt)$) {.};
\draw[->] ($(A.east)+(0,.25cm)$) to [bend left] node[above,scale=0.8]{$\bbH$} ($(B.west)+(0,.25cm)$);
\draw[->] ($(B.west)+(0,-.25cm)$) to [bend left] node[below,scale=0.8]{$\bfH$} ($(A.east)-(0,.25cm)$);
\node[scale=0.8] at ($(A.east)!0.5!(B.west)$) {$\bot$};
\end{tz}
Moreover, the unit $\eta\colon \id\Rightarrow \bfH\bbH$ is the identity.
\end{prop}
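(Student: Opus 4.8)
The plan is to exhibit a bijection
\[
\DblCat(\bbH\A,\BB)\;\cong\;\TwoCat(\A,\bfH\BB),
\]
natural in the $2$-category $\A$ and the double category $\BB$. Before that, I would record the observation behind the ``moreover'': by construction $\bfH\bbH\A$ has the same objects and the same morphisms as $\A$, and its $2$-cells are the squares of $\bbH\A$ with trivial vertical boundary; since \emph{every} vertical morphism of $\bbH\A$ is an identity, these squares are precisely the $2$-cells of $\A$. Hence $\bfH\bbH\A=\A$ on the nose, compatibly with $2$-functors, so the unit $\eta\colon\id\Rightarrow\bfH\bbH$ may be taken to be the identity natural transformation. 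This immediately gives the second claim, and also simplifies the verification of the adjunction.

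For the bijection, in one direction I would restrict a double functor $F\colon\bbH\A\to\BB$: it acts on objects and on horizontal morphisms (which are the morphisms of $\A$), and since $F$ preserves vertical identities it sends each square $\alpha$ of $\bbH\A$ — that is, each $2$-cell of $\A$ — to a square of $\BB$ with vertical boundaries $e_{FA},e_{FB}$, i.e.\ to a $2$-cell of $\bfH\BB$. One checks the resulting data form a $2$-functor $\A\to\bfH\BB$: it respects composition and identities of $1$-cells because $F$ does, and it respects the vertical and horizontal composites of $2$-cells and the identity $2$-cells because in $\bbH\A$ these are, respectively, the vertical composition of squares, the horizontal composition of squares along the identity vertical morphisms, and the identity squares, all preserved by $F$. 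In the other direction, given a $2$-functor $G\colon\A\to\bfH\BB$ I would extend it to $\bar G\colon\bbH\A\to\BB$ by letting $\bar G$ agree with $G$ on objects, on horizontal morphisms, and on squares (a $2$-cell of $\bfH\BB$ \emph{being} a square of $\BB$ with trivial vertical boundary), and sending the unique vertical morphism $e_A$ of $\bbH\A$ to $e_{GA}$ and its identity square to the vertical identity square of $\BB$ on the relevant horizontal morphism.

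The only substantive point is to verify that $\bar G$ is a genuine double functor: it must preserve horizontal and vertical composition of horizontal morphisms and of squares, the interchange law, and all the identity data ($\id_A$, $e_A$, $e_a$, $\id_{e_A}$, $\square_A$). Since every vertical morphism of $\bbH\A$ is an identity, all vertical composites in $\bbH\A$ are trivial and each of these conditions reduces to functoriality of $G$ on $1$-cells together with its preservation of identity $2$-cells and of the vertical and horizontal (Godement) composites of $2$-cells — all of which hold because $G$ is a $2$-functor. The two assignments $F\mapsto(F\text{ restricted})$ and $G\mapsto\bar G$ are visibly mutually inverse, as on each class of cell one undoes the other, and they are natural in $\A$ and $\BB$ because post- and precomposition act componentwise on objects, (horizontal) morphisms and squares. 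This yields $\bbH\dashv\bfH$. Equivalently, one can phrase the same computation by taking the counit $\epsilon_\BB\colon\bbH\bfH\BB\to\BB$ to be the evident inclusion (identity on objects and horizontal morphisms, sending each trivial-vertical-boundary square to itself) and checking the two triangle identities, both of which collapse to identities using $\bfH\bbH=\id$ and $\eta=\id$.

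There is essentially no obstacle here: the content is entirely the bookkeeping in the previous paragraph, i.e.\ confirming that the extension $\bar G$ respects every piece of the double-categorical structure. What makes this routine rather than delicate is precisely the fact that $\bbH\A$ carries no non-identity vertical morphisms, so all of the ``vertical'' coherence — composition of vertical morphisms, horizontal composition of squares, and the families of identity squares $e_a$, $\id_u$, $\square_A$ — is forced and follows automatically from $G$ being a $2$-functor.
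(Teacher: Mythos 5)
Your proof is correct. Note, however, that the paper does not supply its own argument for this statement: it is cited from \cite[Proposition~2.5]{FPP}, so there is no paper-internal proof to compare against. Your direct verification — observing $\bfH\bbH=\id$ on the nose, then constructing the natural bijection $\DblCat(\bbH\A,\BB)\cong\TwoCat(\A,\bfH\BB)$ by restriction and extension, using that $\bbH\A$ has only identity vertical morphisms so every square has trivial vertical boundary and every double-functor axiom collapses to a $2$-functor axiom — is the standard argument and fills in what the paper delegates to the reference. One small imprecision worth tidying: you describe $\bar G$ as sending the identity square on $e_A$ to ``the vertical identity square of $\BB$ on the relevant horizontal morphism,'' but that square is the \emph{horizontal} identity square $\id_{e_{GA}}$ (equivalently $\square_{GA}$, since $\id_{e_A}=e_{\id_A}=\square_A$); this is already forced by your requirement that $\bar G$ agree with $G$ on squares, so the separate clause is redundant and its wording could mislead. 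The remainder, including the check that the restriction and extension are mutually inverse and natural, and the alternative phrasing via the counit inclusion $\bbH\bfH\BB\hookrightarrow\BB$ with both triangle identities collapsing, is sound.
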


\begin{rmk}
Similarly, we can define a functor $\bbV\colon \TwoCat\to \DblCat$, sending a $2$-category to its associated vertical double category with only trivial horizontal morphisms, and a functor $\bfV\colon \DblCat\to \TwoCat$, sending a double category to its underlying vertical $2$-category. They also form an adjunction $\bbV\dashv \bfV$. 
\end{rmk}

We now introduce a new functor between $\DblCat$ and $\TwoCat$ that extracts, from a double category, a $2$-category whose objects and morphisms are the vertical morphisms and squares; this is the functor $\cv$ mentioned in the introduction. In order to do this, we use the category~$\vtwo$, where $\mathbbm{2}$ is the ($2$-)category $\{0\to 1\}$ free on a morphism. This double category $\vtwo$ is therefore the double category free on a vertical morphism.

\begin{defn}\label{defn_cv}
We define the functor $\cv\colon \DblCat\to \TwoCat$ to be the functor $\bfH[\vtwo,-]$. More explicitly, it sends a double category $\AA$ to the $2$-category $\cv \AA=\bfH[\vtwo,\AA]$ given by the following data.
\begin{rome}
    \item An object in $\mathcal V \AA$ is a vertical morphism $u\colon A\arrowdot A'$ in $\AA$.
    \item A morphism $(a,b,\alpha)\colon u\to v$ is a square in $\AA$ of the form
\begin{tz}
\node (A) at (0,0) {$A$};
\node (B) at (1.5,0) {$B$};
\node (A') at (0,-1.5) {$A'$};
\node (B') at (1.5,-1.5) {$B'$};
\node at ($(B'.east)-(0,4pt)$) {.};
\draw[->] (A) to node[above, scale=0.8] {$a$} (B);
\draw[->] (A') to node[below, scale=0.8] {$b$} (B');
\draw[->] (A) to node[left,scale=0.8] {$u\;$} (A');
\draw[->] (B) to node[right,scale=0.8] {$\;v$} (B');

\node at (0,-.75) {$\bullet$};
\node at (1.5, -.75) {$\bullet$};

\node[scale=0.8] at (.75,-.75) {$\alpha$};
\end{tz}
    \item A $2$-cell $(\sigma_0,\sigma_1)\colon (a,b,\alpha)\Rightarrow (c,d,\beta)$ consists of two squares $\sq{\sigma_0}{a}{c}{e_A}{e_B}$ and $\sq{\sigma_1}{b}{d}{e_{A'}}{e_{B'}}$ in $\AA$ such that the following pasting equality holds.
\begin{tz}
\node (A) at (0,0){$A$};
\node (B) at (1.5,0){$B$};
\node (A') at (0,-1.5){$A$};
\node (B') at (1.5,-1.5){$B$};
\node (A'') at (0,-3){$A'$};
\node (B'') at (1.5,-3){$B'$};
\draw[d] (A) to (A');
\draw[d] (B) to (B');
\draw[->] (A) to node[scale=0.8,above]{$a$} (B);
\draw[->] (A') to node[scale=0.8,above]{$c$} (B');
\draw[->] (A') to node[scale=0.8,left]{$u\;$} (A'');
\draw[->] (B') to node[scale=0.8,right]{$\;v$}(B'');
\draw[->] (A'') to node[scale=0.8,below]{$d$} (B'');

\node at (0,-.75) {$\bullet$};
\node at (1.5,-.75) {$\bullet$};
\node at (0,-2.25) {$\bullet$};
\node at (1.5,-2.25) {$\bullet$};

\node[scale=0.8] at (.75,-.75) {$\sigma_0$};
\node[scale=0.8] at (.75,-2.25) {$\beta$};

\node at (2.5,-1.5) {$=$};

\node (A) at (3.5,0){$A$};
\node (B) at (5,0){$B$};
\node (A') at (3.5,-1.5){$A'$};
\node (B') at (5,-1.5){$B'$};
\node (A'') at (3.5,-3){$A'$};
\node (B'') at (5,-3){$B'$};
\draw[d] (A') to (A'');
\draw[d] (B') to (B'');
\draw[->] (A) to node[scale=0.8,above]{$a$} (B);
\draw[->] (A') to node[scale=0.8,above]{$b$} (B');
\draw[->] (A) to node[scale=0.8,left]{$u\;$} (A');
\draw[->] (B) to node[scale=0.8,right]{$\;v$}(B');
\draw[->] (A'') to node[scale=0.8,below]{$d$} (B'');

\node at (3.5,-.75) {$\bullet$};
\node at (5,-.75) {$\bullet$};
\node at (3.5,-2.25) {$\bullet$};
\node at (5,-2.25) {$\bullet$};

\node[scale=0.8] at (4.25,-.75) {$\alpha$};
\node[scale=0.8] at (4.25,-2.25) {$\sigma_1$};

\end{tz}
\end{rome}
\end{defn}

\begin{prop} \label{prop:adjLV}
The functor $\cv$ has a left adjoint $\LV$
\begin{tz}
\node (A) at (0,0) {$\TwoCat$};
\node (B) at (2.5,0) {$\DblCat$};
\draw[->] ($(A.east)+(0,.25cm)$) to [bend left] node[above,scale=0.8]{$\LV$} ($(B.west)+(0,.25cm)$);
\draw[->] ($(B.west)+(0,-.25cm)$) to [bend left] node[below,scale=0.8]{$\cv$} ($(A.east)-(0,.25cm)$);
\node[scale=0.8] at ($(A.east)!0.5!(B.west)$) {$\bot$};
\end{tz}
given by $\LV=\bbH(-)\times\vtwo$.
\end{prop}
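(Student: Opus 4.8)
The plan is to realize $\cv$ as a composite of right adjoints and read off the corresponding composite of left adjoints. By \cref{defn_cv}, the functor $\cv$ is by definition the composite
\[
\DblCat \xrightarrow{\ [\vtwo,-]\ } \DblCat \xrightarrow{\ \bfH\ } \TwoCat .
\]
By \cref{prop:adjinternalhom} instantiated at $\AA = \vtwo$, the functor $[\vtwo,-]$ admits the left adjoint $-\times\vtwo$; by \cref{prop:adjHH}, the functor $\bfH$ admits the left adjoint $\bbH$. Hence $\cv$ admits a left adjoint given by the composite $(-\times\vtwo)\circ\bbH$, which is precisely $\bbH(-)\times\vtwo = \LV$.

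Concretely, for a $2$-category $\A$ and a double category $\BB$, composing the adjunction bijections of \cref{prop:adjHH} and of \cref{prop:adjinternalhom} produces bijections
\[
\TwoCat(\A,\cv\BB) = \TwoCat\big(\A,\bfH[\vtwo,\BB]\big) \cong \DblCat\big(\bbH\A,[\vtwo,\BB]\big) \cong \DblCat(\bbH\A\times\vtwo,\BB) = \DblCat(\LV\A,\BB),
\]
where the first displayed isomorphism is the counit--unit bijection of $\bbH\dashv\bfH$ and the second that of $-\times\vtwo\dashv[\vtwo,-]$. Since each of these bijections is natural in both of its variables, so is the composite, and this assembles into the claimed adjunction $\LV\dashv\cv$.

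There is essentially no obstacle here beyond bookkeeping; the one point to watch is the order of composition, namely that a left adjoint of $\bfH\circ[\vtwo,-]$ is $(-\times\vtwo)\circ\bbH$ rather than $\bbH\circ(-\times\vtwo)$, and that the former is exactly the stated formula $\bbH(-)\times\vtwo$. If one wishes to make the adjunction explicit, the unit $\A\to\cv\LV\A$ can be unwound from the bijection above — using that the unit of $\bbH\dashv\bfH$ is the identity — to see that it sends an object $a$ of $\A$ to the (unique) vertical morphism $(a,0)\arrowdot(a,1)$ of $\bbH\A\times\vtwo$ and a morphism of $\A$ to the evident square; but this description is not needed to prove the statement.
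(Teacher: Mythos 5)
Your proof is correct and takes essentially the same route as the paper: decompose $\cv = \bfH\circ[\vtwo,-]$ and compose the left adjoints $-\times\vtwo$ and $\bbH$ from \cref{prop:adjinternalhom} and \cref{prop:adjHH}. The explicit hom-set bijection chain and the remark on the unit are correct supplementary detail but not a different argument.
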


\begin{proof}
By definition, the functor $\cv\colon \DblCat\to \TwoCat$ is given by the composite
\begin{tz}
\node(A) at (-.5,0) {$\DblCat$};
\node(B) at (2,0) {$\DblCat$};
\node (C) at (4,0) {$\TwoCat$.};
\draw[->] (A) to node[above,scale=0.8] {$[\vtwo,-]$} (B);
\draw[->] (B) to node[above,scale=0.8] {$\bfH$} (C);
\end{tz}
Since $[\vtwo,-]$ has a left adjoint $-\times \vtwo$ given by \cref{prop:adjinternalhom}, and $\bfH$ has a left adjoint~$\bbH$ given by \cref{prop:adjHH}, it follows that $\cv$ has a left adjoint given by the composite of the two left adjoints, namely,  $\LV=\bbH(-)\times\vtwo$.
\end{proof}

\begin{notation} \label{not:pseudohom}
Given two $2$-categories $\A$ and $\B$, we denote by $\Psd(\A,\B)$ the $2$-category of $2$-functors from $\A$ to $\B$, pseudo natural transformations, and modifications. 
\end{notation}

The following technical result, which exhibits the behavior of the functors $\bbH$, $\bfH$, and $\cv$ with respect to pseudo homs, will be of use when we prove the existence of the desired model structure. 

\begin{lemma} \label{lem:HVpreservehom}
Let $\B$ be a $2$-category and $\AA$ be a double category. Then there are isomorphisms of $2$-categories 
\[ \bfH[\bbH\B,\AA]_\ps\cong \Psd[\B,\bfH\AA] \ \ \text{and} \ \ \cv[\bbH\B,\AA]_\ps \cong \Psd[\B,\cv\AA] \]
natural in $\B$ and $\AA$. 
\end{lemma}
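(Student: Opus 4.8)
The plan is to establish both isomorphisms by exhibiting explicit, mutually inverse $2$-functors, checking naturality at the end. I would treat the two isomorphisms in parallel, since the argument for $\cv$ will be the $[\vtwo,-]$-twisted analogue of the argument for $\bfH$. The starting observation is that $\bbH$ is left adjoint to $\bfH$ (\cref{prop:adjHH}) with identity unit, so on underlying $1$-categories we have the hom-set bijection $\DblCat(\bbH\B,\AA)\cong\TwoCat(\B,\bfH\AA)$, and this already matches objects on both sides of the first claimed isomorphism. The content of the lemma is that this bijection upgrades to an isomorphism of $2$-categories once we put the pseudo-transformation structure on both sides: horizontal pseudo natural transformations of double functors $\bbH\B\to\AA$ should correspond to pseudo natural transformations of $2$-functors $\B\to\bfH\AA$, and modifications to modifications.

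\medskip

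First I would unwind what a horizontal pseudo natural transformation $h\colon F\Rightarrow G$ between double functors $\bbH\B\to\AA$ consists of, using \cref{def:pseudohortransf}. Since $\bbH\B$ has only trivial vertical morphisms, the square-data $h_u$ indexed by vertical morphisms of $\bbH\B$ reduces to the data $h_{e_b}$ for identity vertical morphisms, which by functoriality and the unit axioms is forced to be (horizontal) identity square data; so the only surviving data are the horizontal morphisms $h_{(-)}$ on objects and the vertically invertible squares $h_b$ indexed by the morphisms $b$ of $\B$. A square in $\AA$ with trivial vertical boundaries is exactly a $2$-cell of $\bfH\AA$, so each $h_b$ is a $2$-cell $Gb\circ h_A\Rightarrow h_B\circ Fb$ in $\bfH\AA$, and vertical invertibility of the square translates precisely to invertibility of this $2$-cell. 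Matching the functoriality-in-$b$ and naturality-in-squares axioms of \cref{def:pseudohortransf} against the axioms defining a pseudo natural transformation in $\Psd(\B,\bfH\AA)$ (\cref{not:pseudohom}) gives the bijection on horizontal morphisms; the same bookkeeping for modifications (whose only data is a square $\mu_A$ per object, which is a $2$-cell of $\bfH\AA$ once the boundaries are horizontal) gives the bijection on $2$-cells, and one checks the two composites of $2$-cells correspond on the nose. For the second isomorphism, I would instead use the composite description $\cv=\bfH[\vtwo,-]$ together with the cartesian closure of $\DblCat$: a horizontal pseudo transformation between double functors $\bbH\B\to[\vtwo,\AA]$ should match a horizontal pseudo transformation between the adjuncts $\bbH\B\times\vtwo\to\AA$, and then running the previous analysis relative to $\vtwo$ — i.e.\ tracking the extra vertical-morphism component coming from the nontrivial vertical morphism of $\vtwo$ — identifies these with pseudo transformations $\B\to\cv\AA$. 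Concretely, I expect it cleanest to observe $[\bbH\B,[\vtwo,\AA]]_\ps\cong[\vtwo,[\bbH\B,\AA]_\ps]$ (the Gray/cartesian internal-hom exchange, or a direct check on the level of the defining data), apply $\bfH$, and then use the first isomorphism together with $\bfH[\vtwo,-]=\cv$; alternatively one can simply mirror the direct argument.

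\medskip

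The main obstacle I anticipate is purely one of bookkeeping: verifying that the coherence axioms for a horizontal pseudo natural transformation in \cref{def:pseudohortransf} — functoriality of the square assignments with respect to both horizontal and vertical composition, plus the naturality condition with respect to squares of $\bbH\B$ — correspond exactly, and with no leftover conditions, to the two axioms (respect for composition of $1$-morphisms, and naturality with respect to $2$-cells) defining a pseudo natural transformation of $2$-functors. The subtle point is to confirm that every piece of double-categorical data indexed by the vertical direction of $\bbH\B$ is genuinely degenerate, so that nothing is lost or spuriously added; this is where one must be careful that $h_{e_b}$, the square indexed by an identity vertical morphism, is forced by the transformation axioms to be the horizontal identity square $\id_{h_A}$ and imposes no further constraint. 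Once this degeneracy is pinned down, the correspondence of modifications and of the $2$-categorical compositions is immediate, and naturality in $\B$ and $\AA$ follows because every construction above was defined by formulas in the data, with no choices. I would state the $\bfH$-case in full and then say the $\cv$-case "follows by the same argument applied to $[\vtwo,\AA]$ in place of $\AA$, using $\cv=\bfH[\vtwo,-]$ and the cartesian closure of $\DblCat$", rather than repeating the diagram chase.
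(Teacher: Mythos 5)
Your proposal is correct and follows essentially the same route as the paper: both establish the first isomorphism by noting that the adjunction $\bbH\dashv\bfH$ handles objects and that the lack of nontrivial vertical morphisms in $\bbH\B$ makes the remaining data of a horizontal pseudo natural transformation match that of a pseudo natural transformation into $\bfH\AA$, and both derive the second isomorphism by combining $[\vtwo,-]_\ps=[\vtwo,-]$ with the Gray internal-hom interchange and then applying the first isomorphism to $[\vtwo,\AA]_\ps$. The only cosmetic difference is that you spell out the degeneracy of the $h_{e_B}$-data more explicitly than the paper, which dispatches it as ``canonically the same.''
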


\begin{proof}
We first consider the isomorphism $\bfH[\bbH\B,\AA]_\ps\cong \Psd[\B,\bfH\AA]$. On objects, this follows from the adjunction $\bbH\dashv \bfH$ given in \cref{prop:adjHH}. On morphisms, as there are no non-trivial vertical morphisms in $\bbH\B$, horizontal pseudo natural transformations out of $\bbH\B$ are canonically the same as pseudo natural transformations out of $\B$. The argument for $2$-morphisms is similar. 

For the second isomorphism, first note that $[\vtwo,\AA]_\ps=[\vtwo,\AA]$, since there are no non-trivial horizontal morphisms in $\vtwo$, and therefore horizontal pseudo natural transformations out of $\vtwo$ correspond to horizontal (strict) natural transformations out of $\vtwo$. Therefore, we have that 
\begin{align*}
    \cv[\bbH\B,\AA]_\ps&=\bfH[\vtwo,[\bbH\B,\AA]_\ps]_\ps  \cong \bfH[\bbH\B,[\vtwo,\AA]_\ps]_\ps \\
    & \cong \Psd[\B,\bfH[\vtwo,\AA]_\ps] =\Psd[\B,\cv\AA],
\end{align*} 
where the first isomorphism follows from the symmetry of the Gray tensor product on $\DblCat$; see \cref{prop:Bohm} below.
\end{proof}

We conclude this section by introducing new notions of weak invertibility for horizontal morphisms and squares in a double category, together with some technical results that will be of use later in the paper. We do not prove these results here, but instead refer the reader to work by the first author \cite[Appendix A]{Moserforth}. These notions and results were independently developed by Grandis and Par\'e in \cite[\S 2]{GraPar19}.

\begin{defn} \label{def:horeq}
A horizontal morphism $a\colon A\to B$ in a double category $\AA$ is a \textbf{horizontal equivalence} if it is an equivalence in the $2$-category $\bfH \AA$. 
\end{defn}

\begin{defn} \label{def:weakinvsq}
A square $\sq{\alpha}{a}{b}{u}{v}$ in a double category $\AA$ is \textbf{weakly horizontally invertible} if it is an equivalence in the $2$-category $\cv \AA$. See \cite[Definition 2.3]{WHI} for a more detailed description. 
\end{defn}

\begin{rmk}
In particular, the horizontal boundaries $a$ and $b$ of a weakly horizontally invertible square $\alpha$ are horizontal equivalences, which we refer to as the \emph{horizontal equivalence data} of $\alpha$. 
\end{rmk}

Since an equivalence in a $2$-category can always be promoted to an adjoint equivalence (see, for example, \cite[Lemma 2.1.11]{RiehlVerity}), we get the following result.

\begin{lemma}
Every horizontal equivalence can be promoted to a horizontal \emph{adjoint} equivalence. Similarly, every weakly horizontally invertible square can be promoted to one with horizontal \emph{adjoint} equivalence data.
\end{lemma}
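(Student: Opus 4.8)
The plan is to deduce both assertions from the cited fact that an equivalence in a $2$-category extends to an adjoint equivalence, applied respectively to the $2$-categories $\bfH\AA$ and $\cv\AA$.

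For the first assertion I would argue as follows. By \cref{def:horeq}, a horizontal equivalence $a\colon A\to B$ in $\AA$ is precisely an equivalence in the $2$-category $\bfH\AA$; moreover, an invertible $2$-cell in $\bfH\AA$ is exactly a vertically invertible square with trivial vertical boundaries, so an adjoint equivalence in $\bfH\AA$ is exactly what one should call a horizontal adjoint equivalence. Hence \cite[Lemma 2.1.11]{RiehlVerity} applied to $\bfH\AA$ gives the claim directly.

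For the second assertion, I would first recall that, by \cref{def:weakinvsq}, a weakly horizontally invertible square $\sq{\alpha}{a}{b}{u}{v}$ is nothing but an equivalence $\alpha\colon u\to v$ in the $2$-category $\cv\AA$: a weak inverse $\beta$ is an inverse up to isomorphism, and the four vertically invertible cells $\eta_a,\eta_b,\epsilon_a,\epsilon_b$ assemble into invertible $2$-cells $\eta=(\eta_a,\eta_b)\colon\id_u\Rightarrow\beta\alpha$ and $\epsilon=(\epsilon_a,\epsilon_b)\colon\alpha\beta\Rightarrow\id_v$ in $\cv\AA$, the pasting equalities in \cref{def:weakinvsq} being exactly the $2$-cell condition of \cref{defn_cv}. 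Applying \cite[Lemma 2.1.11]{RiehlVerity} to $\cv\AA$, we keep the square $\alpha$ fixed and promote the witnessing data to an adjoint equivalence $\alpha\colon u\to v$ in $\cv\AA$; since the new unit and counit remain invertible $2$-cells of $\cv\AA$, their components remain vertically invertible squares, so $\alpha$ still carries horizontal equivalence data in the sense of the remark following \cref{def:weakinvsq}. It then remains to check that the triangle identities holding in $\cv\AA$ restrict to the triangle identities for the two tuples $(a,a',\eta_a,\epsilon_a)$ and $(b,b',\eta_b,\epsilon_b)$ in $\bfH\AA$. The cleanest way to see this is to observe that the assignments $u\mapsto A$, $(a,b,\alpha)\mapsto a$, $(\sigma_0,\sigma_1)\mapsto\sigma_0$ and $u\mapsto A'$, $(a,b,\alpha)\mapsto b$, $(\sigma_0,\sigma_1)\mapsto\sigma_1$ define $2$-functors $\partial_0,\partial_1\colon\cv\AA\to\bfH\AA$: composition of morphisms and horizontal composition of $2$-cells in $\cv\AA$ are computed by horizontal composition of squares, while vertical composition of $2$-cells is computed by vertical composition of squares, and these operations restricted to a single boundary are precisely the operations of $\bfH\AA$. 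Since $2$-functors preserve adjoint equivalences, $\partial_0$ and $\partial_1$ send the adjoint equivalence $\alpha$ in $\cv\AA$ to the desired horizontal adjoint equivalences, with data $(a,a',\eta_a,\epsilon_a)$ and $(b,b',\eta_b,\epsilon_b)$ respectively.

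The only step that calls for care is the verification that $\partial_0$ and $\partial_1$ are genuinely $2$-functors, in particular their compatibility with the identity $2$-cells $(e_a,e_b)$ and with whiskering; this is a direct unwinding of \cref{defn_cv} with no real content. Alternatively, one can avoid $\partial_0,\partial_1$ entirely and simply note that an equality of $2$-cells of $\cv\AA$ is a pair of equalities of squares of $\AA$, one over the shared top boundary and one over the shared bottom boundary, so that each triangle identity in $\cv\AA$ has the corresponding triangle identity over $a$, respectively over $b$, as its two components. I do not anticipate any genuine obstacle beyond this bookkeeping.
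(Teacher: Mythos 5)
Your proof is correct, and it follows the route the paper intends: the lemma is stated in the paper with no proof environment at all, preceded only by the one-line remark that it follows from the fact that equivalences in a $2$-category promote to adjoint equivalences, applied to $\bfH\AA$ and $\cv\AA$. Your write-up fills in the one genuine detail the paper leaves implicit, namely that an adjoint equivalence $\alpha\colon u\to v$ in $\cv\AA$ has as its boundary data two honest adjoint equivalences in $\bfH\AA$; the observation that this follows because the boundary assignments $\partial_0,\partial_1\colon\cv\AA\to\bfH\AA$ are $2$-functors (and $2$-functors preserve adjoint equivalences) is the right way to see it. For what it's worth, $\partial_0$ and $\partial_1$ can also be recognized without any bookkeeping as $\bfH$ applied to the two evaluation double functors $[\vtwo,\AA]\to[\mathbbm 1,\AA]\cong\AA$ induced by the endpoint inclusions $\mathbbm 1\to\vtwo$, so their $2$-functoriality is automatic from \cref{defn_cv}.
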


Finally, we conclude with a result concerning weakly horizontally invertible squares.

\begin{lemma}[{\cite[Lemma A.2.1]{Moserforth}}] \label{lem:weaklyhorinvvsverinv}
A square whose horizontal boundaries are horizontal equivalences, and whose vertical boundaries are identities, is weakly horizontally invertible if and only if it is vertically invertible. 
\end{lemma}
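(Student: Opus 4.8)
The plan starts from the observation that a square $\sq{\alpha}{a}{b}{e_A}{e_B}$ whose vertical boundaries are identities is precisely a $2$-cell $\alpha\colon a\Rightarrow b$ in $\bfH\AA$, and that under this identification being a vertically invertible square is the same as being an invertible $2$-cell of $\bfH\AA$: a vertical inverse is forced to have identity vertical boundaries, and the vertical identity squares are exactly the identity $2$-cells of $\bfH\AA$. Thus the task reduces to showing that, for horizontal equivalences $a,b\colon A\to B$, a $2$-cell $\alpha\colon a\Rightarrow b$ of $\bfH\AA$ is an equivalence in $\cv\AA$ if and only if it is invertible in $\bfH\AA$.

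For the implication ``invertible $\Rightarrow$ weakly horizontally invertible'', I would choose a pseudo-inverse $a'$ of $a$ with invertible $2$-cells $\eta_a\colon\id_A\Rightarrow a'a$ and $\epsilon_a\colon aa'\Rightarrow\id_B$ in $\bfH\AA$ (available since $a$ is a horizontal equivalence, and vertically invertible as squares), put $b':=a'$, take $\beta$ to be the vertical identity square on $a'$ — which has the required boundaries — and transport the data along $\alpha$ by setting $\eta_b:=(a'\ast\alpha)\cdot\eta_a$ and $\epsilon_b:=\epsilon_a\cdot(\alpha^{-1}\ast a')$, where $\ast$ denotes whiskering and $\cdot$ vertical composition of $2$-cells. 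All four cells are invertible $2$-cells of $\bfH\AA$, hence vertically invertible squares, and since $\beta$ is an identity the two pasting equalities of \cref{def:weakinvsq} collapse to the defining equation for $\eta_b$ and to $\epsilon_a\cdot(\alpha^{-1}\ast a')\cdot(\alpha\ast a')=\epsilon_a$, which holds because $\alpha^{-1}\cdot\alpha=\id_a$. This exhibits $\alpha$ as weakly horizontally invertible.

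For the converse, I would unpack ``$\alpha$ is an equivalence in $\cv\AA$'' (equivalently, read off the data of \cref{def:weakinvsq}): it yields a square $\sq{\beta}{a'}{b'}{e_B}{e_A}$ together with invertible $2$-cells $\eta_a\colon\id_A\Rightarrow a'a$, $\epsilon_a\colon aa'\Rightarrow\id_B$, $\eta_b\colon\id_A\Rightarrow b'b$, $\epsilon_b\colon bb'\Rightarrow\id_B$ of $\bfH\AA$ (so $a'$, $b'$ are pseudo-inverses of $a$, $b$, in particular equivalences), and the two pasting identities, which rearrange to say that the horizontal composites of $\alpha$ with $\beta$ — a $2$-cell $a'a\Rightarrow b'b$ and a $2$-cell $aa'\Rightarrow bb'$ — are both invertible. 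The naive idea of cancelling $a'$ or $b'$ by whiskering goes nowhere, because whiskering by equivalences reflects invertibility rather than creating it; the trick is to use both conditions together. By interchange the first composite factors as $(b'\ast\alpha)\cdot(\beta\ast a)$ and the second as $(b\ast\beta)\cdot(\alpha\ast a')$, so their invertibility makes $\beta\ast a$ a split monomorphism and $b\ast\beta$ a split epimorphism in the relevant hom-categories of $\bfH\AA$. Now left-whiskering $\beta\ast a$ by $b$ and right-whiskering $b\ast\beta$ by $a$ produce one and the same $2$-cell $b\ast\beta\ast a\colon ba'a\Rightarrow bb'a$, which is therefore simultaneously a split mono and a split epi, hence invertible. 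Since whiskering by the equivalences $b$, $a$ and $b'$ reflects invertibility, one concludes in turn that $\beta\ast a$, then $\beta$, then $b'\ast\alpha$ (recovered from the invertible composite $a'a\Rightarrow b'b$ by cancelling the now-invertible $\beta\ast a$), and finally $\alpha$ itself are invertible $2$-cells of $\bfH\AA$. I expect this last direction, and specifically getting past the circularity with the ``split mono $+$ split epi $=$ iso'' observation, to be the main point of the argument.
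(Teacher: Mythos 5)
Your proof is correct. The paper itself contains no proof of this lemma (it is stated with a citation to \cite[Lemma A.2.1]{Moserforth}), so there is nothing here to compare your argument against; I can only assess its soundness. Your unpacking of the two pasting equalities in \cref{def:weakinvsq} is accurate: since both vertical boundaries are identities and the $\id_u$, $\id_v$ cells collapse to identity squares, the equalities reduce exactly to $\eta_b=(\beta\ast\alpha)\cdot\eta_a$ and $\epsilon_a=\epsilon_b\cdot(\alpha\ast\beta)$, i.e., to the invertibility of $\beta\ast\alpha$ and $\alpha\ast\beta$ in $\bfH\AA$. The easy direction, setting $b'=a'$, $\beta=e_{a'}$, $\eta_b=(a'\ast\alpha)\cdot\eta_a$ and $\epsilon_b=\epsilon_a\cdot(\alpha^{-1}\ast a')$, does verify both pasting equalities: the second collapses to $\epsilon_a\cdot(\alpha^{-1}\ast a')\cdot(\alpha\ast a')=\epsilon_a$ as you observe.

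The crux is the converse, and your identification of the $2$-cell $b\ast\beta\ast a$ as a simultaneous split monomorphism (being the post-whiskering by $b$ of the split mono $\beta\ast a$) and split epimorphism (being the pre-whiskering by $a$ of the split epi $b\ast\beta$), hence an isomorphism, is exactly the right way to combine the two hypotheses and escape the circularity. The subsequent peeling-off via the fact that whiskering by the equivalences $b$, $a$, and $b'$ reflects isomorphisms is valid; that fact is standard, obtained by conjugating with the unit or counit of an equivalence using the interchange law. The only facts you invoke tacitly are functoriality of whiskering (to transport split monos and split epis) and the iso-reflecting property just mentioned, both of which hold in any strict $2$-category such as $\bfH\AA$. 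No gaps.
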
 

\begin{rmk}
It follows that, for a $2$-category $\A$, a weakly horizontally invertible square in the double category $\bbH \A$ corresponds to an invertible $2$-cell in $\A$. 
\end{rmk}

\section{Model structure for double categories}\label{Sec:modelstructDblCat}

This section contains our first main result, which proves the existence of a model structure on $\DblCat$ that is right-induced along the functor $(\bfH,\cv)\colon \DblCat\to \TwoCat\times \TwoCat$, where both copies of $\TwoCat$ are endowed with the Lack model structure. After recalling the main features of the Lack model structure on $\TwoCat$ in \cref{subsec:LackMS}, we define in \cref{subsec:MSDblCat} notions of \emph{double biequivalences} and \emph{double fibrations} in $\DblCat$, which extend the notions of biequivalences and fibrations in $\TwoCat$. These appear to be exactly the weak equivalences and fibrations of the right-induced model structure mentioned above. We then prove, using a result inspired by the Quillen Path Object Argument, that this right-induced model structure on $\DblCat$ exists.

\subsection{Lack model structure on \texorpdfstring{$\TwoCat$}{2Cat}}\label{subsec:LackMS}

We start by recalling the main features of Lack's model structure on $\TwoCat$; see \cite{Lack2Cat,LackBicat}. Its class of weak equivalences is given by the \emph{biequivalences}, and we refer to the fibrations in this model structure as \emph{Lack fibrations}.

\begin{defn} \label{biequiv}
Given $2$-categories $\A$ and $\B$, a $2$-functor $F\colon \A\to \B$ is a \textbf{biequivalence} if
\begin{enumerate}
    \item[(b1)] for every object $B\in \B$, there is an object $A\in \A$ and an equivalence $B\xrightarrow{\simeq} FA$ in~$\B$,
    \item[(b2)] for every pair of objects $A,C\in \A$ and every morphism $b\colon FA\to FC$ in $\B$, there is a morphism $a\colon A\to C$ in $\A$ and an invertible $2$-cell $b\cong Fa$ in $\B$, and
    \item[(b3)] for every pair of morphisms $a,c\colon A\to C$ in $\A$ and every $2$-cell $\beta\colon Fa\Rightarrow Fc$ in $\B$, there is a unique $2$-cell $\alpha\colon a\Rightarrow c$ in $\A$ such that $F\alpha=\beta$.
\end{enumerate}
\end{defn}

\begin{defn} \label{Lackfib}
Given $2$-categories $\A$ and $\B$, a $2$-functor $F\colon \A\to \B$ is a \textbf{Lack fibration} if
\begin{enumerate}
    \item[(f1)] for every object $C\in \A$ and every equivalence $b\colon B\xrightarrow{\simeq} FC$ in $\B$, there is an equivalence $a\colon A\xrightarrow{\simeq} C$ in~$\A$ such that $Fa=b$, and
    \item[(f2)] for every morphism $c\colon A\to C$ in $\A$ and every invertible $2$-cell $\beta\colon b\cong Fc$ in $\B$, there is an invertible $2$-cell $\alpha\colon a\cong c$ in $\A$ such that $F\alpha=\beta$. 
\end{enumerate}
\end{defn}

There exists a model structure on $\TwoCat$ determined by the classes above.

\begin{thm}[{\cite[Theorem 4]{LackBicat}}] \label{thm:LackMS}
There is a cofibrantly generated model structure on~$\TwoCat$, called the \emph{Lack model structure}, 
in which the weak equivalences are the biequivalences and the fibrations are the Lack fibrations. 
\end{thm}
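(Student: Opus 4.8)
The statement is due to Lack \cite{Lack2Cat,LackBicat}, and the plan is to follow his strategy. Since $\TwoCat$ is locally finitely presentable --- hence cocomplete and amenable to the small object argument --- it suffices, by the recognition theorem for cofibrantly generated model structures, to produce sets $I$ and $J$ of $2$-functors such that, writing $\mathcal W$ for the class of biequivalences: $\mathcal W$ satisfies two-out-of-three and is closed under retracts; the maps with the right lifting property against $I$ are exactly those that lie in $\mathcal W$ and have the right lifting property against $J$; the maps with the right lifting property against $J$ are exactly the Lack fibrations; and every relative $J$-cell complex lies in $\mathcal W$ and has the left lifting property against the $I$-injectives. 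Granting the two ``right-lifting'' identifications, the acyclicity half of the recognition criterion becomes automatic, so the argument reduces to identifying the $I$- and $J$-injectives, checking the formal closure properties of $\mathcal W$, and proving that relative $J$-cell complexes are biequivalences.

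For the generators I would take $I$ to consist of $\emptyset\to\mathbbm1$ together with the images, under the suspension functor $\Cat\to\TwoCat$ sending a category $C$ to the $2$-category with two objects $0,1$, hom-category $C$ from $0$ to $1$, and all other hom-categories trivial, of the three generating cofibrations $\emptyset\to\mathbbm1$, $\mathbbm1\sqcup\mathbbm1\to\mathbbm2$, $\mathbbm2\sqcup_{\mathbbm1\sqcup\mathbbm1}\mathbbm2\to\mathbbm2$ of the canonical model structure on $\Cat$. A direct lifting analysis then shows that a $2$-functor is $I$-injective exactly when it is surjective on objects and restricts to a trivial fibration of categories on every hom-category; using that equivalences in a $2$-category promote to adjoint equivalences \cite[Lemma 2.1.11]{RiehlVerity}, one checks that this class agrees with the intersection of $\mathcal W$ with the class of Lack fibrations (such a $2$-functor visibly satisfies (b1)--(b3) of \cref{biequiv} and (f1)--(f2) of \cref{Lackfib}; conversely a biequivalence that is a Lack fibration is surjective on objects --- lift the equivalences of (b1) using (f1) --- and induces a trivial fibration on each hom-category, being a hom-categorywise equivalence by (b2)--(b3) and a hom-categorywise isofibration by (f2)). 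For $J$ I would take the suspension of the generating trivial cofibration $\mathbbm1\to\mathbb I$ of $\Cat$, with $\mathbb I$ the free-standing isomorphism, which forces (f2), together with one or two maps built from the free-living adjoint equivalence $\Eadj$, chosen so as to force (f1); a further lifting analysis gives that the $J$-injectives are the Lack fibrations. Closure of $\mathcal W$ under retracts is immediate, and two-out-of-three is a diagram chase using the description of biequivalences as the $2$-functors that induce equivalences on all hom-categories and are surjective on objects up to equivalence.

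The main obstacle will be the last point: showing that every relative $J$-cell complex lies in $\mathcal W$ and in the class of cofibrations. The latter is easy, since each generator of $J$ is a cofibration --- the suspended generator because $\mathbbm1\to\mathbb I$ is a cofibration in $\Cat$, and the $\Eadj$-generators by a direct lifting check against trivial fibrations, essentially because $\Eadj$ is freely generated. For membership in $\mathcal W$, closure of biequivalences under transfinite composition is a transfinite induction exploiting well-foundedness of the ordinals: any cell of a filtered colimit of $2$-categories already occurs at some stage, and since each witnessing piece of equivalence data is finite it is connected to the image of the initial stage by a finite composite of equivalences. This leaves the genuine content: a single pushout of a generator of $J$ along an arbitrary $2$-functor must be a biequivalence. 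For the suspended generator this is a computation inside the affected hom-category. The delicate case is the pushout of an $\Eadj$-generator along a $2$-functor picking an object $A$ of a $2$-category $\A$; the result $\A'$ is $\A$ with a new object and a freely adjoined adjoint equivalence between $A$ and it, and one must verify the inclusion $\A\hookrightarrow\A'$ is a biequivalence. Essential surjectivity up to equivalence is clear, but the local conditions require a normal-form analysis of the free $2$-category $\A'$: every $1$-cell of $\A'$ between objects of $\A$ that passes through the new object is, via the unit and counit, isomorphic to one avoiding it, and every $2$-cell between $1$-cells of $\A$ is uniquely induced from $\A$; the retraction $\A'\to\A$ collapsing the new data onto $A$ organizes this check. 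Carrying out this rewriting is the crux of the whole construction. (Alternatively one could invoke general results on homotopy theories of categories enriched in a well-behaved monoidal model category, applied to $\TwoCat=\Cat\text{-}\Cat$; but the explicit generating sets will be convenient to record anyway.)
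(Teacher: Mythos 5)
The paper does not prove this statement; it simply cites Lack's \cite[Theorem 4]{LackBicat}. Your sketch is a faithful reconstruction of Lack's argument (recognition theorem, generating cofibrations via $\emptyset\to\mathbbm1$ plus suspensions of the generating cofibrations of $\Cat$, generating trivial cofibrations via the suspended $\mathbbm1\to\mathbb{I}$ and $\mathbbm1\to\Eadj$, identification of $I$- and $J$-injectives, and the normal-form analysis for the pushout of $\mathbbm1\to\Eadj$), so it matches the cited proof rather than proposing a different route.
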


\begin{rmk}\label{rmk:2catsarefibrant}
Note that every $2$-category is fibrant in the Lack model structure.
\end{rmk}

Recall that a \emph{monoidal model category} is a closed monoidal category which admits a model structure compatible with the internal homs in the following sense: the pullback-hom of a cofibration and a fibration is a fibration, which is trivial if one of them is a weak equivalence (see \cite[Definition 5.1]{Moser}). The Lack model structure on $\TwoCat$ is monoidal with respect to the Gray tensor product, whose definition we now recall. 

\begin{defn} \label{def:Gray2}
The \textbf{Gray tensor product} $\otimes_2\colon \TwoCat\times \TwoCat\to \TwoCat$ is defined by the following universal property: for all $2$-categories $\A$, $\B$, and $\C$, there is an isomorphism
\[ \TwoCat(\A\otimes_2\B,\C)\cong \TwoCat(\A, \Psd[\B,\C]) \] natural in $\A$, $\B$, and $\cc$, where $\Psd[-,-]$ is the $2$-category given in \cref{not:pseudohom}.
\end{defn}

\begin{thm}[{\cite[Theorem 7.5]{Lack2Cat}}]\label{2Catenrichedmodel}
The category $\TwoCat$ endowed with the Lack model structure is a monoidal model category with respect to the closed monoidal structure given by the Gray tensor product.
\end{thm}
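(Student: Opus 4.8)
This is Lack's theorem \cite[Theorem 7.5]{Lack2Cat}; we indicate how one would prove it. The plan is to verify the two axioms of a monoidal model category --- the unit axiom and the pushout--product axiom (cf.\ \cite[Definition 5.1]{Moser}). The unit for $\otimes_2$ is the terminal $2$-category $\mathbbm 1$, which is cofibrant, so the unit axiom holds automatically. For the pushout--product axiom, note that $\otimes_2$ is cocontinuous in each variable (it is a left adjoint in each variable by \cref{def:Gray2}) and that the Lack model structure is cofibrantly generated by \cref{thm:LackMS}; hence, by the standard reduction for cofibrantly generated monoidal model categories (see, e.g., \cite{Hovey}), it suffices to check the axiom on generators. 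Writing $I$ and $J$ for Lack's sets of generating cofibrations and generating trivial cofibrations, and $i\,\square\,j\colon (A'\otimes_2 B)\cup_{A\otimes_2 B}(A\otimes_2 B')\to A'\otimes_2 B'$ for the pushout--product of $i\colon A\to A'$ and $j\colon B\to B'$, one must show:
\begin{rome}
    \item $i\,\square\,i'$ is a cofibration for all $i,i'\in I$;
    \item $i\,\square\,j$ is a trivial cofibration for all $i\in I$ and $j\in J$.
\end{rome}

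For (i) I would combine Lack's explicit generators with his characterization of cofibrations: a $2$-functor is a cofibration exactly when it is injective on objects and relatively free on $1$-cells, so in particular a $2$-category is cofibrant precisely when its underlying category is free on a graph. The key observation is that the underlying $1$-category of $\A\otimes_2\B$ is the \emph{funny} (pushout) tensor product of the underlying categories of $\A$ and $\B$ --- the interchange $2$-cell $\gamma$ comparing $(a\otimes\id)(\id\otimes b)$ with $(\id\otimes b)(a\otimes\id)$ imposes no relation among $1$-cells --- and that the funny tensor product of free categories is again free. Since all $2$-categories occurring in $I$ ($\emptyset$, $\mathbbm 1$, $\mathbbm 1\sqcup\mathbbm 1$, $\mathbbm 2$, and their $2$-dimensional analogues) are cofibrant, one checks that each $i\,\square\,i'$ is relatively free on $1$-cells, hence a cofibration; this is a finite bookkeeping verification.

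The main obstacle is (ii). By (i) and the usual closure properties of pushout--products, $i\,\square\,j$ is a cofibration, so it remains to show it is a biequivalence. I would argue as follows. For $i\colon A\to A'$ in $I$ the $2$-categories $A$ and $A'$ are small and cofibrant, so $A\otimes_2 j$ and $A'\otimes_2 j$ are cofibrations; if moreover they are biequivalences, then the coprojection $A'\otimes_2 B\to (A'\otimes_2 B)\cup_{A\otimes_2 B}(A\otimes_2 B')$, being the pushout of the trivial cofibration $A\otimes_2 j$, is a trivial cofibration, hence a biequivalence, and two-out-of-three along
\[ A'\otimes_2 B\longrightarrow (A'\otimes_2 B)\cup_{A\otimes_2 B}(A\otimes_2 B')\xrightarrow{\,i\,\square\,j\,} A'\otimes_2 B', \]
whose composite is the biequivalence $A'\otimes_2 j$, shows $i\,\square\,j$ is a biequivalence. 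Thus everything reduces to showing $A\otimes_2 j$ is a biequivalence for every small cofibrant $A$ and every $j\in J$. Each $j\colon X\hookrightarrow X'$ in $J$ is an inclusion of small cofibrant $2$-categories built from the interval $2$-category $\Eadj$, and admits a retraction $r\colon X'\to X$ that is itself a biequivalence with $rj=\id_X$ (the basic case being $\Eadj\to\mathbbm 1$); since $A\otimes_2 j$ is then a section of $A\otimes_2 r$, two-out-of-three reduces the claim to showing $A\otimes_2 r$ is a biequivalence, the representative case being the projection $A\otimes_2\Eadj\to A$ induced by $\Eadj\to\mathbbm 1$. This is the technical heart: unwinding the explicit descriptions of $\otimes_2$ and of $\Eadj$, one checks that in $A\otimes_2\Eadj$ the objects $(a,0)$ and $(a,1)$ are joined by an equivalence coming from the generating $1$-cell of $\Eadj$ --- so the projection is surjective on objects up to equivalence --- and that the invertible interchange cells $\gamma$, together with the unit and counit of the adjoint equivalence in $\Eadj$, make the projection full on $1$-cells up to invertible $2$-cell and fully faithful on $2$-cells. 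This verification is direct but somewhat involved. (Equivalently, one could phrase the argument dually in terms of the closed structure, showing that $\Psd[-,-]$ is a right Quillen bifunctor; the two formulations are exchanged by adjunction.)
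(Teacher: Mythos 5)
The paper does not supply a proof of this statement; it is imported verbatim as a citation to Lack's Theorem~7.5. So there is no ``paper's own proof'' to compare against, only Lack's argument in \cite{Lack2Cat}.

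Your sketch is a correct reconstruction of the main lines of Lack's proof. The unit axiom is indeed trivial because $\mathbbm 1$ is cofibrant; the reduction of the pushout--product axiom to generators is the standard cofibrant-generation argument; and your key technical observations are the right ones. For (i), the point that the underlying category of $\A\otimes_2\B$ is the funny tensor product of underlying categories, and that the funny tensor product of free categories is free (so Gray tensoring preserves cofibrancy via Lack's Theorem~4.8 characterization), is exactly the engine of Lack's argument. For (ii), your reduction by two-out-of-three and pushout stability to the single computation that $\A\otimes_2\Eadj\to\A$ is a biequivalence for $\A$ cofibrant isolates the genuine technical heart. Your closing parenthetical about working dually with $\Psd[-,-]$ is also well placed: this is in fact the form in which the present paper invokes the result (in the proof of \cref{prop:pathobj}, it is the trivial fibration $\Psd[\Eadj,\B]\to\Psd[\mathbbm 1,\B]$ that is used).

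Two small imprecisions worth flagging, neither fatal. First, ``a $2$-functor is a cofibration exactly when it is injective on objects and relatively free on $1$-cells'' is an approximation; the actual characterization (Lack's Lemma~4.1, quoted in this paper before \cref{char:cof}) is the left lifting property of the underlying functor against surjective-on-objects, full functors, and the ``relatively free'' description holds only up to retract (compare \cref{surjfull}). Second, when you invoke a retraction $r\colon X'\to X$ with $rj=\id$ for $j\in J$, you should note that the fact that $r$ is a biequivalence already follows from two-out-of-three once $j$ is known to be a trivial cofibration; the point of introducing $r$ is purely to transfer the problem from $\A\otimes_2 j$ to $\A\otimes_2 r$, whose basic instance $\A\otimes_2(\Eadj\to\mathbbm 1)$ is the computation you then carry out. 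Both of these are stylistic rather than substantive.
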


\subsection{Constructing the model structure for \texorpdfstring{$\DblCat$}{DblCat}} \label{subsec:MSDblCat}

We define double biequivalences in~$\DblCat$ inspired by the characterization of biequivalences in $\TwoCat$ in terms of $2$-functors that are bi-essentially surjective on objects, essentially full on morphisms, and fully faithful on $2$-cells. Our convention of regarding $2$-categories as horizontal double categories justifies the choice of directions when emulating this characterization of biequivalences in the context of double categories. Thus, a double biequivalence will be required to be bi-essentially surjective on objects up to a \emph{horizontal equivalence} (see \cref{def:horeq}), essentially full on horizontal morphisms, and fully faithful on squares. However, this does not take into account the vertical structure of double categories, and so we need to add a condition of bi-essential surjectivity on vertical morphisms given up to a \emph{weakly horizontally invertible square} (see \cref{def:weakinvsq}). 

\begin{defn}\label{doublequiv}
Given double categories $\AA$ and $\BB$, a double functor $F\colon \AA\to \BB$ is a \textbf{double biequivalence} if 
\begin{enumerate}
    \item[(db1)] for every object $B\in \BB$, there is an object $A\in \AA$ and a horizontal equivalence $B\xrightarrow{\simeq} FA$ in $\BB$,
    \item[(db2)] for every pair of objects $A,C\in \AA$ and every horizontal morphism $b\colon FA\to FC$ in $\BB$, there is a horizontal morphism $a\colon A\to C$ in $\AA$ and a vertically invertible square in $\BB$ of the form
\begin{tz}
\node (A) at (0,0) {$FA$};
\node (B) at (1.5,0) {$FC$};
\node (A') at (0,-1.5) {$FA$};
\node (B') at (1.5,-1.5) {$FC$};
\node at ($(B'.east)-(0,4pt)$) {,};
\draw[->] (A) to node[above, scale=0.8] {$b$} (B);
\draw[->] (A') to node[below, scale=0.8] {$Fa$} (B');
\draw[d] (A) to (A');
\draw[d] (B) to (B');

\node at (0,-.75) {$\bullet$};
\node at (1.5, -.75) {$\bullet$};

\node[scale=0.8] at (.75,-.75) {$\vcong$};
\end{tz}
    \item[(db3)] for every vertical morphism $v\colon B\arrowdot B'$ in $\BB$, there is a vertical morphism $u\colon A\arrowdot A'$ in $\AA$ and a weakly horizontally invertible square in $\BB$ of the form
\begin{tz}
\node (A) at (0,0) {$B$};
\node (B) at (1.5,0) {$FA$};
\node (A') at (0,-1.5) {$B'$};
\node (B') at (1.5,-1.5) {$FA'$};
\node at ($(B'.east)-(0,4pt)$) {,};
\draw[->] (A) to node[above, scale=0.8] {$\simeq$} (B);
\draw[->] (A') to node[below, scale=0.8] {$\simeq$} (B');
\draw[->] (A) to node[left,scale=0.8] {$v\;$} (A');
\draw[->] (B) to node[right,scale=0.8] {$\;Fu$} (B');

\node at (0,-.75) {$\bullet$};
\node at (1.5, -.75) {$\bullet$};

\node[scale=0.8] at (.75,-.75) {$\simeq$};
\end{tz}
    \item[(db4)] for every data in $\AA$ as below left, and every square in $\BB$ as below right,
\begin{tz}
\node (A) at (0,0) {$A$};
\node (B) at (1.5,0) {$C$};
\node (A') at (0,-1.5) {$A'$};
\node (B') at (1.5,-1.5) {$C'$};
\draw[->] (A) to node[above, scale=0.8] {$a$} (B);
\draw[->] (A') to node[below, scale=0.8] {$c$} (B');
\draw[->] (A) to node[left,scale=0.8] {$u\;$} (A');
\draw[->] (B) to node[right,scale=0.8] {$\;u'$} (B');

\node at (0,-.75) {$\bullet$};
\node at (1.5, -.75) {$\bullet$};

\node (A) at (4,0) {$FA$};
\node (B) at (5.5,0) {$FC$};
\node (A') at (4,-1.5) {$FA'$};
\node (B') at (5.5,-1.5) {$FC'$};
\draw[->] (A) to node[above, scale=0.8] {$Fa$} (B);
\draw[->] (A') to node[below, scale=0.8] {$Fc$} (B');
\draw[->] (A) to node[left,scale=0.8] {$Fu\;$} (A');
\draw[->] (B) to node[right,scale=0.8] {$\;Fu'$} (B');

\node at (4,-.75) {$\bullet$};
\node at (5.5, -.75) {$\bullet$};

\node[scale=0.8] at (4.75,-.75) {$\beta$};
\end{tz}
    there is a unique square $\sq{\alpha}{a}{c}{u}{u'}$ in $\mathbb A$ such that $F\alpha=\beta$.
\end{enumerate}
\end{defn}

\begin{rmk}
In $\TwoCat$, one can prove that a $2$-functor $F\colon\A\to\B$ is a biequivalence if and only if there is a pseudo functor $G\colon\B\to\A$ together with pseudo natural equivalences $\id_\A\simeq GF$ and $FG\simeq \id_\B$. Under certain hypotheses, we can show a similar characterization of double biequivalences using  \emph{horizontal} pseudo natural equivalences. This is done in \cref{subsec:whitehead}.
\end{rmk}

Similarly to the definition of double biequivalence, we take inspiration from the Lack fibrations to define a notion of \emph{double fibrations}.

\begin{defn}\label{doublefib}
Given double categories $\AA$ and $\BB$, a double functor $F\colon \AA\to \BB$ is a \textbf{double fibration} if 
\begin{enumerate}
    \item[(df1)] for every object $C\in \AA$ and every horizontal equivalence $b\colon B\xrightarrow{\simeq} FC$ in $\BB$, there is a horizontal equivalence $a\colon A\xrightarrow{\simeq} C$ in $\AA$ such that $Fa=b$,
    \item[(df2)] for every horizontal morphism $c\colon A\to C$ in $\AA$ and for every vertically invertible square $\sq{\beta}{b}{Fc}{e_{FA}}{e_{FC}}$ in $\BB$ as depicted below left, there is a vertically invertible square $\sq{\alpha}{a}{c}{e_A}{e_C}$ in $\AA$ as depicted below right such that $F\alpha=\beta$,
\begin{tz}
\node (A) at (0,0) {$FA$};
\node (B) at (1.5,0) {$FC$};
\node (A') at (0,-1.5) {$FA$};
\node (B') at (1.5,-1.5) {$FC$};
\draw[->] (A) to node[above, scale=0.8] {$b$} (B);
\draw[->] (A') to node[below, scale=0.8] {$Fc$} (B');
\draw[d] (A) to (A');
\draw[d] (B) to (B');

\node at (0,-.75) {$\bullet$};
\node at (1.5, -.75) {$\bullet$};

\node[scale=0.8] at (.6,-.75) {$\beta$};
\node[scale=0.8] at (.9,-.75) {$\vcong$};

\node (A) at (4,0) {$A$};
\node (B) at (5.5,0) {$C$};
\node (A') at (4,-1.5) {$A$};
\node (B') at (5.5,-1.5) {$C$};
\draw[->] (A) to node[above, scale=0.8] {$a$} (B);
\draw[->] (A') to node[below, scale=0.8] {$c$} (B');
\draw[d] (A) to (A');
\draw[d] (B) to (B');

\node at (4,-.75) {$\bullet$};
\node at (5.5, -.75) {$\bullet$};

\node[scale=0.8] at (4.6,-.75) {$\alpha$};
\node[scale=0.8] at (4.9,-.75) {$\vcong$};
\end{tz}
    \item[(df3)] for every vertical morphism $u'\colon C\arrowdot C'$ in $\AA$ and every weakly horizontally invertible square $\sq{\beta}{\simeq}{\simeq}{v}{Fu'}$ in $\BB$ as depicted below left, there is a weakly horizontally invertible square $\sq{\alpha}{\simeq}{\simeq}{u}{u'}$ in $\AA$ as depicted below right such that $F\alpha=\beta$.
\begin{tz}
\node (A) at (0,0) {$B$};
\node (B) at (1.5,0) {$FC$};
\node (A') at (0,-1.5) {$B'$};
\node (B') at (1.5,-1.5) {$FC'$};
\draw[->] (A) to node[above, scale=0.8] {$\simeq$} (B);
\draw[->] (A') to node[below, scale=0.8] {$\simeq$} (B');
\draw[->] (A) to node[left,scale=0.8] {$v\;$} (A');
\draw[->] (B) to node[right,scale=0.8] {$\;Fu'$} (B');

\node at (0,-.75) {$\bullet$};
\node at (1.5, -.75) {$\bullet$};

\node[scale=0.8] at (.6,-.75) {$\beta$};
\node[scale=0.8] at (.9,-.75) {$\simeq$};

\node (A) at (4,0) {$A$};
\node (B) at (5.5,0) {$C$};
\node (A') at (4,-1.5) {$A'$};
\node (B') at (5.5,-1.5) {$C'$};
\draw[->] (A) to node[above, scale=0.8] {$\simeq$} (B);
\draw[->] (A') to node[below, scale=0.8] {$\simeq$} (B');
\draw[->] (A) to node[left,scale=0.8] {$u\;$} (A');
\draw[->] (B) to node[right,scale=0.8] {$\;u'$} (B');

\node at (4,-.75) {$\bullet$};
\node at (5.5, -.75) {$\bullet$};

\node[scale=0.8] at (4.6,-.75) {$\alpha$};
\node[scale=0.8] at (4.9,-.75) {$\simeq$};
\end{tz}
\end{enumerate}
\end{defn}

By requiring that a double functor is both a double biequivalence and a double fibration, we get a notion of \emph{double trivial fibration}, which can be described as follows. 

\begin{defn} \label{doubletrivfib}
Given double categories $\AA$ and $\BB$, a double functor $F\colon \AA\to \BB$ is a \textbf{double trivial fibration} if it satisfies (db4) of \cref{doublequiv}, and the following conditions:
\begin{enumerate}
    \item[(dt1)] for every object $B\in \BB$, there is an object $A\in \AA$ such that $B=FA$,
    \item[(dt2)] for every pair of objects $A,C\in \AA$ and every horizontal morphism $b\colon FA\to FC$ in $\BB$, there is a horizontal morphism $a\colon A\to C$ in $\AA$ such that $b=Fa$, and
    \item[(dt3)] for every vertical morphism $v\colon B\arrowdot B'$ in $\BB$, there is a vertical morphism $u\colon A\arrowdot A'$ in $\AA$ such that $v=Fu$.
\end{enumerate}
\end{defn}

\begin{rmk}
Note that (dt2) says that a double trivial fibration is \emph{full} on horizontal morphisms, while (dt3) says that a double trivial fibration is only \emph{surjective} on vertical morphisms.
\end{rmk}

We can characterize double biequivalences and double fibrations through biequivalences and Lack fibrations in $\TwoCat$. Recall the functors $\bfH, \cv\colon \DblCat\to \TwoCat$ defined in \cref{section:prelim}, which respectively  extract from a double category its underlying horizontal $2$-category and a $2$-category whose objects and morphisms are given by its vertical morphisms and squares. 
Then double biequivalences and double fibrations can be characterized as the double functors whose images under both $\bfH$ and $\cv$ are biequivalences or Lack fibrations. This is intuitively sound, since horizontal equivalences and weakly horizontally invertible squares were defined to be the equivalences in the $2$-categories induced by $\bfH$ and~$\cv$, respectively. We state these characterizations here, and defer their proofs to \cref{subsec:charfibeq}.

\begin{prop}\label{charequiv}
Given double categories $\AA$ and $\BB$, a double functor $F\colon \AA\to \BB$ is a double biequivalence in $\DblCat$ if and only if the $2$-functors $\bfH F\colon \bfH\AA\to \bfH \BB$ and $\cv F\colon \cv \AA\to \cv\BB$ are biequivalences in $\TwoCat$.
\end{prop}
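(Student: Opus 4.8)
The plan is to prove both implications by carefully matching the four conditions (db1)--(db4) defining a double biequivalence against the conditions (b1)--(b3) that characterize biequivalences in $\TwoCat$, applied to $\bfH F$ and $\cv F$ separately. Throughout, I would lean on the observation, built into Definitions~\ref{def:horeq} and~\ref{def:weakinvsq}, that the equivalences in $\bfH\AA$ are exactly the horizontal equivalences of $\AA$ and the equivalences in $\cv\AA$ are exactly the weakly horizontally invertible squares; this is precisely what makes the translation clean.

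First I would unpack $\bfH F$: its objects/morphisms/$2$-cells are the objects/horizontal morphisms/squares-with-trivial-vertical-boundary of $\AA$. So (b1) for $\bfH F$ says every object $B\in\BB$ admits an object $A$ and an equivalence $B\xrightarrow{\simeq}\bfH FA = FA$ in $\bfH\BB$, i.e.\ a horizontal equivalence $B\xrightarrow{\simeq}FA$ in $\BB$ --- exactly (db1). Condition (b2) for $\bfH F$ gives, for each $b\colon FA\to FC$, a horizontal morphism $a\colon A\to C$ and an invertible $2$-cell $b\cong Fa$ in $\bfH\BB$; by the remark following Lemma~\ref{lem:weaklyhorinvvsverinv} such an invertible $2$-cell is a vertically invertible square of the shape in (db2). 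Condition (b3) for $\bfH F$ is the full-faithfulness on squares with trivial vertical boundaries, which is the restriction of (db4) to that case.

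Next I would unpack $\cv F$: its objects are vertical morphisms of $\AA$, its morphisms are arbitrary squares of $\AA$, and its $2$-cells are the pairs $(\sigma_0,\sigma_1)$ of Definition~\ref{defn_cv}. Then (b1) for $\cv F$ says every vertical morphism $v\colon B\arrowdot B'$ of $\BB$ admits $u\colon A\arrowdot A'$ in $\AA$ and an equivalence $v\xrightarrow{\simeq}\cv F(u)=Fu$ in $\cv\BB$ --- by definition of $\cv$, an equivalence in $\cv\BB$ between $v$ and $Fu$ is precisely a weakly horizontally invertible square with vertical boundaries $v$ and $Fu$ (and horizontal boundaries some horizontal equivalences), which is exactly the datum of (db3). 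Condition (b2) for $\cv F$ says every morphism $\beta\colon Fu\to Fu'$ in $\cv\BB$ --- that is, every square $\sq{\beta}{Fa}{Fc}{Fu}{Fu'}$ --- admits a morphism $\alpha\colon u\to u'$ and an invertible $2$-cell $\beta\cong\cv F(\alpha)$ in $\cv\BB$. Here one must check this is automatically implied by the combination of (db2) and (db4): given $\beta$, use (db2) to replace the horizontal boundaries $Fa, Fc$ by $Fa', Fc'$ up to vertically invertible squares, then use (db4) to lift the resulting square uniquely to $\alpha$ in $\AA$, and reassemble the invertible $2$-cell in $\cv\BB$ from the vertically invertible squares; conversely (b2) for $\cv F$ together with (b2) for $\bfH F$ recovers (db2) and part of (db4). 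Finally (b3) for $\cv F$ is full-faithfulness on the pairs $(\sigma_0,\sigma_1)$, which I would show is equivalent to full-faithfulness on all squares, i.e.\ (db4): a $2$-cell in $\cv\BB$ is a compatible pair of squares with trivial vertical boundaries, and (db4) lets one lift each component uniquely, while the pasting equation of Definition~\ref{defn_cv} is preserved and reflected by $F$ since $F$ is a strict double functor.

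The main obstacle is the bookkeeping around (db2)/(db4) versus (b2)/(b3) for $\cv F$: the naive expectation ``(db$i$) $\Leftrightarrow$ (b$i$)'' fails to hold conditionwise, and one must be careful that essential surjectivity of $\cv F$ on morphisms (b2) does not demand anything beyond what (db2) and (db4) already give, and that full-faithfulness of $\cv F$ on $2$-cells together with full-faithfulness of $\bfH F$ assembles back into the single full-faithfulness statement (db4). I would handle this by proving the two implications separately rather than biconditionally term-by-term: assuming $F$ is a double biequivalence, verify (b1)--(b3) for both $\bfH F$ and $\cv F$ using (db1)--(db4); assuming $\bfH F$ and $\cv F$ are both biequivalences, verify (db1)--(db4) by combining conditions from both. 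A secondary point to be careful about is that the horizontal equivalences appearing as boundaries in (db3) must be promotable to adjoint equivalences (Lemma~\ref{uniqueweakinverse}) so that the ``morphism in $\cv\BB$'' and ``equivalence in $\cv\BB$'' data line up with the strict definition of $\cv$; this is exactly the content already isolated in the lemmas at the end of Section~\ref{section:prelim}, so no new work is needed there.
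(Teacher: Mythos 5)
Your proposal is correct and takes essentially the same route as the paper: unpack (b1)--(b3) for $\bfH F$ and $\cv F$, match (db1)--(db3) directly against them, and assemble (db4) from (b3) for $\bfH F$ together with (b2) and (b3) for $\cv F$ (the paper packages this last step as Lemma~\ref{Fullfaithcell} after isolating the intermediate conditions in Remarks~\ref{Hequiv} and~\ref{Vequiv}). One small point of care: your claim that (b3) for $\cv F$ alone is ``equivalent to'' (db4) overstates the situation, since the existence half of (db4) also requires the essential fullness (b2) for $\cv F$ to replace arbitrary horizontal boundaries by ones in the image of $F$ before lifting; your closing paragraph does, however, acknowledge the combined dependence correctly.
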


\begin{prop} \label{charfib}
Given double categories $\AA$ and $\BB$, a double functor $F\colon \AA\to \BB$ is a double fibration in $\DblCat$ if and only if the $2$-functors $\bfH F\colon \bfH\AA\to \bfH \BB$ and $\cv F\colon \cv \AA\to \cv\BB$ are Lack fibrations in~$\TwoCat$.
\end{prop}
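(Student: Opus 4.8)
The plan is to prove \cref{charfib} by unwinding the definitions of $\bfH$, $\cv$, and Lack fibration, and matching the three conditions (df1)--(df3) in \cref{doublefib} against conditions (f1)--(f2) applied to $\bfH F$ and to $\cv F$. The organizing principle is that (df1)--(df2) are precisely the Lack fibration conditions for $\bfH F$, while (df3) together with (df2) repackage to give the Lack fibration conditions for $\cv F$; the interplay between (df2) and (df3) for the $\cv$-half is the subtle point, since a Lack fibration condition in $\cv\BB$ involves lifting both equivalences (objects of $\cv\BB$ are vertical morphisms) and invertible $2$-cells (which by \cref{lem:weaklyhorinvvsverinv} correspond to vertically invertible squares with trivial vertical boundary).

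First I would prove the forward direction assuming $F$ is a double fibration. For $\bfH F$: an equivalence in $\bfH\BB$ is by \cref{def:horeq} exactly a horizontal equivalence in $\BB$, so (df1) gives (f1) for $\bfH F$ directly; an invertible $2$-cell in $\bfH\BB$ is a vertically invertible square with identity vertical boundaries (as noted in the remark after \cref{lem:weaklyhorinvvsverinv}), so (df2) gives (f2) for $\bfH F$. For $\cv F$: an object of $\cv\BB$ is a vertical morphism $v$, and an equivalence $v \xrightarrow{\simeq} Fu'$ in $\cv\BB$ is, by \cref{def:weakinvsq}, exactly a weakly horizontally invertible square with vertical boundaries $v$ and $Fu'$; thus (df3) supplies the object-equivalence lifts required by (f1) for $\cv F$. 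The remaining condition (f2) for $\cv F$ asks to lift an invertible $2$-cell in $\cv\BB$ between morphisms $(Fa, Fb, F\alpha) \to (Fc, Fd, F\beta)$; by the description of $2$-cells in \cref{defn_cv} such an invertible $2$-cell is a pair $(\sigma_0,\sigma_1)$ of vertically invertible squares with trivial vertical boundaries, i.e. invertible $2$-cells in $\bfH\BB$ of the appropriate shape, and (df2) lifts each of $\sigma_0$ and $\sigma_1$; one then checks the lifted pair still satisfies the pasting equality of \cref{defn_cv}, which follows from faithfulness of $F$ on squares together with the fact that $F$ of the lifted pasting equals the original. (I should also double-check the coherence that the lifted equivalence data in (df1)/(df3) can be taken compatibly; here \cref{uniqueweakinverse} is the relevant tool for pinning down weak inverses.)

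For the converse, assume $\bfH F$ and $\cv F$ are Lack fibrations. Then (f1) for $\bfH F$ is literally (df1), and (f2) for $\bfH F$ is literally (df2) once one rewrites invertible $2$-cells in $\bfH\BB$ as vertically invertible squares with trivial vertical boundaries. Condition (df3) is exactly (f1) for $\cv F$ after translating equivalences in $\cv\BB$ into weakly horizontally invertible squares via \cref{def:weakinvsq}. So all of (df1)--(df3) are recovered, and $F$ is a double fibration.

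The main obstacle I anticipate is the bookkeeping in the $\cv F$ direction: verifying that the conditions (f1) and (f2) for $\cv F$ decompose cleanly into (df2) and (df3) for $F$, and in particular that lifting a weakly horizontally invertible square (an equivalence in $\cv\BB$) really only needs the data provided by (df3) and not some auxiliary lift of its horizontal equivalence data. I expect this to come down to carefully using that an equivalence in a $2$-category can be upgraded to an adjoint equivalence (the lemma following the remark after \cref{def:weakinvsq}) together with the uniqueness in \cref{uniqueweakinverse}, so that the horizontal boundaries and their coherence cells are determined rather than requiring separate lifts. Everything else is a routine, if somewhat lengthy, diagram-chasing translation between the double-categorical conditions and the $2$-categorical ones.
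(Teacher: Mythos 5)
There is a genuine gap in the forward direction, specifically in your treatment of condition (f2) for $\cv F$.

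First, you misstate what (f2) for $\cv F$ asks: in a Lack fibration, the invertible $2$-cell $\beta\colon b\cong Fc$ has its \emph{target} in the image of $F$ but its \emph{source} $b$ is arbitrary. Translated to $\cv F$, this means: given a square $\alpha'\colon (u\;^{a'}_{c'}\;u')$ in $\AA$, a square $\beta\colon(Fu\;^{b}_{d}\;Fu')$ in $\BB$, and an invertible $2$-cell $(\tau_0,\tau_1)\colon(b,d,\beta)\Rightarrow (Fa',Fc',F\alpha')$ in $\cv\BB$, you must produce a square $\alpha\colon(u\;^{a}_{c}\;u')$ in $\AA$ together with an invertible $2$-cell $(\sigma_0,\sigma_1)\colon (a,c,\alpha)\Rightarrow(a',c',\alpha')$ in $\cv\AA$ that maps to $(\tau_0,\tau_1)$, and in particular with $F\alpha=\beta$. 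You write the $2$-cell as going between two squares both in the image of $F$, which is not the correct reading; crucially, your plan never constructs the square $\alpha$, which is the source morphism of the lifted $2$-cell and does not exist a priori.

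Second, your claimed verification that ``the lifted pair satisfies the pasting equality'' rests on ``faithfulness of $F$ on squares,'' but double fibrations are \emph{not} faithful on squares (only double trivial fibrations and double biequivalences carry a uniqueness/full-faithfulness condition; compare \cref{doublefib} with \cref{doubletrivfib}). Without faithfulness, lifting $\tau_0$ and $\tau_1$ componentwise via (df2) gives squares $\sigma_0,\sigma_1$ with no reason to satisfy any compatibility on their own. The correct move, and what the paper does after isolating the relevant condition as (vf2) in \cref{Vfib}, is the opposite: do not \emph{check} a pasting equality, \emph{force} it by defining $\alpha$ to be the vertical composite $\sigma_1^{-1}\cdot\alpha'\cdot\sigma_0$ (in the sense of the pasting diagram in the proof of \cref{charfib}). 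Then the pasting equality in $\cv\AA$ holds by construction, and the given pasting equality in $\BB$ for $(\tau_0,\tau_1)$ is exactly what guarantees $F\alpha=\beta$.

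The rest of your decomposition is correct and matches the paper: (df1)--(df2) are precisely (f1)--(f2) for $\bfH F$, and (df3) is precisely (f1) for $\cv F$; the converse direction is immediate from this dictionary, since (f2) for $\cv F$ is extra information not needed to recover the three conditions of \cref{doublefib}.
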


As a corollary, we get a similar characterization for double trivial fibrations.

\begin{cor} \label{chartrivfib}
Given double categories $\AA$ and $\BB$, a double functor $F\colon \AA\to \BB$ is a double trivial fibration in $\DblCat$ if and only if the $2$-functors $\bfH F\colon \bfH\AA\to \bfH \BB$ and $\cv F\colon \cv \AA\to \cv\BB$ are trivial fibrations in the Lack model structure on $\TwoCat$.
\end{cor}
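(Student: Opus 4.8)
The plan is to deduce this corollary directly from \cref{charequiv} and \cref{charfib}, using the elementary fact that in any model category, the trivial fibrations are precisely the maps which are simultaneously fibrations and weak equivalences. Concretely, $F$ is a double trivial fibration if and only if it is both a double biequivalence and a double fibration (this should either be \cref{doubletrivfib} by construction, or follow by directly comparing the lists (dt1)--(dt4) with (db1)--(db4) and (df1)--(df3)). By \cref{charequiv}, $F$ is a double biequivalence iff $\bfH F$ and $\cv F$ are biequivalences in $\TwoCat$; by \cref{charfib}, $F$ is a double fibration iff $\bfH F$ and $\cv F$ are Lack fibrations. Combining these, $F$ is a double trivial fibration iff $\bfH F$ is both a biequivalence and a Lack fibration, \emph{and} $\cv F$ is both a biequivalence and a Lack fibration. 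Since the Lack model structure on $\TwoCat$ has the biequivalences as weak equivalences and the Lack fibrations as fibrations (\cref{thm:LackMS}), a $2$-functor is a biequivalence and a Lack fibration precisely when it is a trivial fibration in the Lack model structure. This gives exactly the claimed statement.

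The one genuinely content-bearing point to address is the equivalence ``$F$ is a double trivial fibration $\iff$ $F$ is a double biequivalence and a double fibration.'' One direction is formal: if $F$ satisfies (dt1)--(dt4), then one reads off (db1) from (dt1) by taking the identity as the required horizontal equivalence, (db2) from (dt2) with the vertically invertible square taken to be $e_b$, (db3) from (dt3) with the weakly horizontally invertible square taken to be an identity square, (db4) is literally (dt4), and similarly (df1)--(df3) follow from (dt1)--(dt3) by taking the required lifts on the nose. The converse direction --- that a double functor which is both a double biequivalence and a double fibration satisfies the strict surjectivity conditions (dt1)--(dt3) --- is the analogue of the corresponding fact for the Lack model structure, and should be handled by transporting that $2$-categorical statement through \cref{charequiv} and \cref{charfib}: apply those propositions to rephrase the hypothesis as ``$\bfH F$ and $\cv F$ are trivial fibrations in $\TwoCat$,'' then use the explicit description of trivial fibrations in the Lack model structure (strict surjectivity on objects, strict fullness on morphisms, full faithfulness on $2$-cells) to recover (dt1)--(dt4) for $F$.

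I expect the main obstacle --- such as it is --- to be bookkeeping: making sure the conditions (dt1)--(dt4) for $F$ really are the conjunction of the biequivalence and fibration conditions after one strips away the ``up to equivalence / up to invertible square'' slack, and in particular checking that strict fullness on horizontal morphisms (dt2) together with strict surjectivity on vertical morphisms (dt3) is exactly what the two underlying $2$-functors being trivial fibrations forces. Since \cref{charequiv} and \cref{charfib} already do the real work of comparing the double-categorical conditions with the $2$-categorical ones, this corollary is essentially immediate once those are granted, and the proof can be kept to a few lines: cite \cref{charequiv}, \cref{charfib}, and the characterization of trivial fibrations as fibrations that are weak equivalences.
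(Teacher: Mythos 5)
Your proposal is correct and matches the paper's approach exactly: the paper's proof consists of noting (as a "routine exercise") that a double trivial fibration is the same as a double functor that is both a double biequivalence and a double fibration, then citing \cref{charequiv} and \cref{charfib}. Your more detailed sketch of how to handle the "routine exercise" — including the observation that the converse direction (dt1)--(dt3) can be recovered by passing through $\bfH F$ and $\cv F$ being trivial fibrations in $\TwoCat$ and using the strictness of trivial fibrations there — is a sensible way to fill in that step, and is consistent with the intended argument.
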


\begin{proof}
It is a routine exercise to check that a double functor that is both a double biequivalence and a double fibration is precisely a double trivial fibration as defined in \cref{doubletrivfib}. Therefore, this follows directly from \cref{charequiv,charfib}.
\end{proof}

In order to build a model structure on $\DblCat$ with these classes of morphisms as its weak equivalences and (trivial) fibrations, we make use of the notion of \emph{right-induced model structure}. Given a model category $\cm$ and an adjunction 
\begin{diagram}[baseline=(current bounding box.center)]
\label{adjunction}
\node (A) at (0,0) {$\mathcal M$};
\node (B) at (2,0) {$\mathcal N$};
\node at ($(B.east)-(0,4pt)$) {,};
\draw[->] ($(A.east)+(0,.25cm)$) to [bend left] node[above,scale=0.8]{$L$} ($(B.west)+(0,.25cm)$);
\draw[->] ($(B.west)+(0,-.25cm)$) to [bend left] node[below,scale=0.8]{$R$} ($(A.east)-(0,.25cm)$);
\node[scale=0.8] at ($(A.east)!0.5!(B.west)$) {$\bot$};
\end{diagram}
we can, under certain conditions, induce a model structure on $\mathcal N$ along the right adjoint~$R$, in which a weak equivalence (resp.~fibration) is a morphism $F$ in $\mathcal{N}$ such that $RF$ is a weak equivalence (resp.\ fibration) in $\mathcal{M}$. 

\cref{charequiv,charfib} suggest that the model structure on $\DblCat$ we desire, with double biequivalences as the weak equivalences and double fibrations as the fibrations, corresponds to the right-induced model structure, if it exists, along the adjunction
\begin{tz}
\node (A) at (0,0) {$\TwoCat\times \TwoCat$};
\node (B) at (3.25,0) {$\DblCat$};
\node at ($(B.east)-(0,4pt)$) {,};
\draw[->] ($(A.east)+(0,.25cm)$) to [bend left] node[above,scale=0.8]{$\bbH\sqcup \LV$} ($(B.west)+(0,.25cm)$);
\draw[->] ($(B.west)+(0,-.25cm)$) to [bend left] node[below,scale=0.8]{$(\bfH,\cv)$} ($(A.east)-(0,.25cm)$);
\node[scale=0.8] at ($(A.east)!0.5!(B.west)$) {$\bot$};
\end{tz}
where each copy of $\TwoCat$ is endowed with the Lack model structure. To prove the existence of this model structure, we use results by Garner, Hess, K\c{e}dziorek, Riehl, and Shipley in~\cite{GKR,HKRS}. In particular, we use the following theorem, inspired by the original Quillen Path Object Argument \cite{Quillen}. 

\begin{thm} \label{path}
Let $\cm$ be an accessible model category, and let $\cn$ be a locally presentable category. Suppose we have an adjunction $L\dashv R$ between them as in (\ref{adjunction}). Suppose moreover that every object in $\cm$ is fibrant and that, for every object $X\in \cn$, there is a factorization
\[ X \stackrel{W}{\longrightarrow} \mathrm{Path}(X) \stackrel{P}{\longrightarrow} X\times X \]
of the diagonal morphism in $\cn$ such that $RP$ is a fibration in $\cm$ and $RW$ is a weak equivalence in $\cm$. Then the right-induced model structure on $\cn$ exists.
\end{thm}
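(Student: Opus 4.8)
The plan is to reduce the statement to the acyclicity criterion of \cite{HKRS} (as corrected in \cite{GKR}), and then to verify acyclicity via a version of Quillen's path object argument \cite{Quillen}. Since $\cm$ is an accessible model category and $\cn$ is locally presentable, the two weak factorization systems of $\cm$ right-induce along $R$ to weak factorization systems on $\cn$ whose right classes consist, respectively, of the morphisms $f$ of $\cn$ such that $Rf$ is a fibration of $\cm$, and of those such that $Rf$ is a trivial fibration of $\cm$. By \cite{HKRS,GKR}, declaring a morphism of $\cn$ to be a weak equivalence exactly when $R$ sends it to a weak equivalence of $\cm$ then assembles these two weak factorization systems into the desired right-induced model structure \emph{if and only if} the following acyclicity condition holds: every morphism $j$ of $\cn$ with the left lifting property against all morphisms $f$ such that $Rf$ is a fibration of $\cm$ has the property that $Rj$ is a weak equivalence of $\cm$. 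It therefore remains to verify this condition, and this is where the hypotheses on fibrant objects and path objects enter.

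So fix such a morphism $j\colon A\to B$. First I would form the mapping path object of $j$ from the given factorization $B\xrightarrow{W}\mathrm{Path}(B)\xrightarrow{P}B\times B$ of the diagonal: writing $P=(P_0,P_1)$, let $M$ be the pullback in $\cn$ of $P_0\colon\mathrm{Path}(B)\to B$ along $j$, let $u\colon A\to M$ be the morphism induced by $\mathrm{id}_A\colon A\to A$ and $Wj\colon A\to\mathrm{Path}(B)$, and let $q\colon M\to B$ be the composite of the projection $M\to\mathrm{Path}(B)$ with $P_1$. Since $P_1W=\mathrm{id}_B$, one obtains a factorization $j=q\circ u$ in $\cn$. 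The key point is that applying $R$ turns this into a (weak equivalence, fibration) factorization of $Rj$ in $\cm$: as $R$ is a right adjoint it preserves the pullback, so $RM$ is the mapping path object of $Rj$ built from $RP\colon R\,\mathrm{Path}(B)\to RB\times RB$. Using that $RP$ is a fibration, that $RW$ is a weak equivalence, and that every object of $\cm$ is fibrant---so that the projections $RB\times RB\to RB$ are fibrations---one checks that $RP_0$ is a trivial fibration, whence its pullback $RM\to RA$ is a trivial fibration and its section $Ru$ is a weak equivalence; and one checks that $RM\to RA\times RB$ is a pullback of the fibration $RP$ (along $Rj\times\mathrm{id}$), hence a fibration, so that $Rq$---obtained by further composing with the fibration $RA\times RB\to RB$, again using fibrancy of $RA$---is a fibration.

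The acyclicity condition then follows from the retract argument: the square with top edge $u$, bottom edge $\mathrm{id}_B$, left edge $j$, and right edge $q$ commutes, the morphism $j$ has the left lifting property against all morphisms sent to fibrations by $R$, and $q$ is such a morphism; a lift in this square exhibits $j$ as a retract of $u$ in the arrow category of $\cn$, so $Rj$ is a retract of $Ru$ in the arrow category of $\cm$, and since the weak equivalences of $\cm$ are closed under retracts and $Ru$ is one, so is $Rj$. This completes the verification of acyclicity, and hence produces the right-induced model structure.

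I expect the main obstacle to be precisely this acyclicity step---manufacturing, for an \emph{arbitrary} $j$ in the relevant left class, an internal factorization of $j$ that $R$ carries to a (weak equivalence, fibration) factorization; the path-object hypothesis and the assumption that all objects of $\cm$ are fibrant are exactly what is needed to pull this off, the latter being used to turn product projections into fibrations. By contrast, the existence of the two right-induced weak factorization systems and the formal assembly of the model structure once acyclicity is known are entirely contained in \cite{HKRS,GKR}.
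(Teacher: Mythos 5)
Your proof is correct. The paper's own proof is a one-line citation: it invokes a dual-form version of the HKRS acyclicity theorem (Theorem~6.2 of the referenced paper by Moser, which dualizes Theorem~2.2.1 of \cite{HKRS}) and observes that, since every object of $\cm$ is fibrant, the fibrant-replacement conditions of that theorem trivialize to exactly the path-object hypotheses given here. What you do is unfold that black box: you reduce to the HKRS/GKR acyclicity criterion and verify it directly via the mapping path object of $j$, using fibrancy of all objects of $\cm$ to conclude first that $RP_0$ is a trivial fibration (so $Ru$ is a weak equivalence) and then that $Rq$ is a fibration (via the pullback of $RP$ along $Rj\times\mathrm{id}$ followed by a projection), finishing with the retract argument. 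This is precisely the content of the cited theorem in the all-objects-fibrant case, so the underlying approach is the same; the gain from your version is a self-contained argument rather than an appeal to a reference.
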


\begin{proof}
This follows directly from \cite[Theorem 6.2]{Moser}, which is the dual of \cite[Theorem~2.2.1]{HKRS}. Indeed, if every object in $\cm$ is fibrant, then the underlying fibrant replacement of conditions~(i) and (ii) of \cite[Theorem 6.2]{Moser} are trivially given by the identity.
\end{proof}

Our strategy is then to construct a path object $\cp\AA$ for a double category $\AA$ together with double functors $W$ and $P$ factorizing the diagonal morphism $\AA\to \AA\times \AA$, such that their images under $(\bfH,\cv)$ give a weak equivalence and a fibration in $\TwoCat\times \TwoCat$ respectively. 

\begin{defn} \label{def:pathobject}
Let $\AA$ be a double category. We define a \textbf{path object} for $\AA$ as the double category $\cp\AA\coloneqq [\bbH\Eadj,\AA]_\ps$, where the $2$-category $\Eadj$ is the free-living adjoint equivalence. It comes with a factorization of the diagonal double functor 
\[ \AA\xrightarrow{W} \cp\AA\xrightarrow{P} \AA\times\AA,  \] 
 where $W$ is the double functor $\AA\cong [\mathbbm 1,\AA]_\ps\to [\bbH \Eadj,\AA]_\ps=\cp\AA$ induced by the unique map $\bbH \Eadj\to \mathbbm 1$ and $P$ is the double functor $\cp\AA=[\bbH \Eadj,\AA]_\ps\to [\mathbbm 1 \sqcup \mathbbm 1,\AA]_\ps\cong \AA\times \AA$ induced by the inclusion $\mathbbm 1\sqcup \mathbbm 1\to \bbH\Eadj$ at the two endpoints. Note that, since the composite $\mathbbm 1\sqcup \mathbbm 1\to \bbH\Eadj\to \mathbbm 1$ is the unique map, the composite $PW$ is the diagonal double functor $\AA\to\AA\times\AA$.
\end{defn}

\begin{prop} \label{prop:pathobj}
For every double category $\AA$, the path object of \cref{def:pathobject} 
\[ \AA\xrightarrow{W} \cp\AA\xrightarrow{P} \AA\times\AA,  \] 
is such that $(\bfH,\cv)W$ is a weak equivalence and $(\bfH,\cv)P$ is a fibration in $\TwoCat\times \TwoCat$.
\end{prop}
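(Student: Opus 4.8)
The plan is to apply $\bfH$ and $\cv$ to the factorization $\AA\xrightarrow{W}\cp\AA\xrightarrow{P}\AA\times\AA$ of \cref{def:pathobject}, use \cref{lem:HVpreservehom} to recognize the resulting maps as the two structure maps of a path object for $\bfH\AA$, resp.\ for $\cv\AA$, in the Lack model structure on $\TwoCat$, and deduce that these are genuine path objects from the fact that the Lack model structure is monoidal (\cref{2Catenrichedmodel}).

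Since the model structure on $\TwoCat\times\TwoCat$ is the product one, a morphism there is a weak equivalence, resp.\ a fibration, exactly when both of its components are; so it suffices to show that $\bfH W$ and $\cv W$ are biequivalences and that $\bfH P$ and $\cv P$ are Lack fibrations. Using that $\bbH\mathbbm1=\mathbbm1$ and $\bbH(\mathbbm1\sqcup\mathbbm1)=\mathbbm1\sqcup\mathbbm1$ (the latter since $\bbH$ preserves coproducts), the isomorphisms $[\mathbbm1,\AA]_\ps\cong\AA$ and $[\mathbbm1\sqcup\mathbbm1,\AA]_\ps\cong\AA\times\AA$, and the naturality in the first variable of the isomorphisms of \cref{lem:HVpreservehom}, I would identify — for $\C$ equal to $\bfH\AA$ or to $\cv\AA$ — the map $\bfH W$, resp.\ $\cv W$, with the $2$-functor $\Psd[p,\C]$ precomposing with the unique $2$-functor $p\colon\Eadj\to\mathbbm1$, and the map $\bfH P$, resp.\ $\cv P$, with the $2$-functor $\Psd[i,\C]$ precomposing with the inclusion $i\colon\mathbbm1\sqcup\mathbbm1\to\Eadj$ of the two objects, using the standard identifications $\Psd[\mathbbm1,\C]\cong\C$ and $\Psd[\mathbbm1\sqcup\mathbbm1,\C]\cong\C\times\C$. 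The proposition then reduces to showing that, for every $2$-category $\C$, the $2$-functor $\Psd[i,\C]$ is a Lack fibration and $\Psd[p,\C]$ is a biequivalence.

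For this I would use two facts about $\Eadj$ in the Lack model structure: the inclusion $i\colon\mathbbm1\sqcup\mathbbm1\to\Eadj$ is a cofibration, and the inclusion $j\colon\mathbbm1\to\Eadj$ of a single object is a \emph{trivial} cofibration. Both $i$ and $j$ are cofibrations because $\Eadj$ is obtained from $\mathbbm1\sqcup\mathbbm1$, resp.\ from $\mathbbm1$, by successively pushing out generating cofibrations of Lack's model structure (adjoining an object, two $1$-cells, invertible $2$-cells, and imposing the triangle identities), and $j$ is a biequivalence because the two objects of $\Eadj$ are joined by an adjoint equivalence. As every $2$-category is fibrant, the map $\C\to\mathbbm1$ is a fibration, so the pullback-hom axiom of \cref{2Catenrichedmodel} applied to this fibration and to $i$, resp.\ to $j$, shows, since $\mathbbm1$ is terminal, that $\Psd[\Eadj,\C]\to\Psd[\mathbbm1\sqcup\mathbbm1,\C]$ is a fibration, resp.\ that $\Psd[\Eadj,\C]\to\Psd[\mathbbm1,\C]$ is a trivial fibration. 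The first of these is $\Psd[i,\C]$, which is therefore a Lack fibration; and for $\Psd[p,\C]$, the identity $p\circ j=\id_{\mathbbm1}$ gives $\Psd[j,\C]\circ\Psd[p,\C]=\id_\C$, so since $\Psd[j,\C]$ is in particular a biequivalence, the two-out-of-three property makes $\Psd[p,\C]$ a biequivalence.

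I expect the main obstacle to be organizational rather than conceptual: one must keep careful track of the isomorphisms of \cref{lem:HVpreservehom} so that they genuinely identify $\bfH W,\cv W,\bfH P$ and $\cv P$ with precomposition $2$-functors, and one must confirm the cofibrancy statements about $\Eadj$ in Lack's model structure. If one would rather not appeal to the monoidal model structure, one can instead verify the defining conditions (b1)--(b3) and (f1)--(f2) directly for $\Psd[p,\C]$ and $\Psd[i,\C]$, using that a pseudofunctor $\Eadj\to\C$ is exactly an adjoint equivalence in $\C$ and that a pseudo natural transformation between two constant such pseudofunctors amounts to a single $1$-cell of $\C$.
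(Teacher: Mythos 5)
Your argument matches the paper's proof essentially step for step: both use \cref{lem:HVpreservehom} to identify $\bfH W,\cv W,\bfH P,\cv P$ with the precomposition $2$-functors $\Psd[p,\C]$ and $\Psd[i,\C]$ for $\C=\bfH\AA$ or $\cv\AA$, then invoke monoidality of the Lack model structure (\cref{2Catenrichedmodel}) with the cofibration $\mathbbm1\sqcup\mathbbm1\to\Eadj$ and the trivial cofibration $\mathbbm1\to\Eadj$, and finish with the retract/two-out-of-three argument from $p\circ j=\id_{\mathbbm1}$. The only difference is cosmetic: you spell out the cofibrancy facts about $\Eadj$ in slightly more detail, whereas the paper simply cites them.
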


\begin{proof}
We first prove that $\bfH W$ and $\cv W$ are biequivalences in $\TwoCat$. By \cref{lem:HVpreservehom}, we have commutative squares
\begin{tz}
\node (A) at (0,0) {$\bfH[\mathbbm 1,\AA]_\ps$};
\node (B) at (0,-1.5) {$\Psd[\mathbbm 1,\bfH\AA]$};
\node (C) at (3,0) {$\bfH[\bbH\Eadj,\AA]_\ps$};
\node (D) at (3,-1.5) {$\Psd[\Eadj,\bfH\AA]$};

\draw[->] (A) to node[above,scale=0.8] {$\bfH W$} (C);
\draw[->] (A) to node[left,scale=0.8] {$\cong$} (B);
\draw[->] (C) to node[right,scale=0.8] {$\cong$} (D);
\draw[->] (B) to node[below,scale=0.8] {$(\bfH W)^\sharp$} (D);

\node (A) at (6,0) {$\cv[\mathbbm 1,\AA]_\ps$};
\node (B) at (6,-1.5) {$\Psd[\mathbbm 1,\cv\AA]$};
\node (C) at (9,0) {$\cv[\bbH\Eadj,\AA]_\ps$};
\node (D) at (9,-1.5) {$\Psd[\Eadj,\cv\AA]$};

\draw[->] (A) to node[above,scale=0.8] {$\cv W$} (C);
\draw[->] (A) to node[left,scale=0.8] {$\cong$} (B);
\draw[->] (C) to node[right,scale=0.8] {$\cong$} (D);
\draw[->] (B) to node[below,scale=0.8] {$(\cv W)^\sharp$} (D);
\end{tz}
where the $2$-functors $(\bfH W)^\sharp$ and $(\cv W)^\sharp$ are induced by the unique map $\Eadj\to \mathbbm 1$. As the inclusion $\mathbbm 1\to \Eadj$ is a trivial cofibration in $\TwoCat$ and $\bfH\AA$ and $\cv\AA$ are fibrant $2$-categories, by monoidality of the Lack model structure, we get that the induced $2$-functors \[ R\colon \Psd[\Eadj,\bfH\AA] \to \Psd[\mathbbm 1,\bfH\AA] \ \ \text{and} \ \ S\colon \Psd[\Eadj,\cv\AA] \to \Psd[\mathbbm 1,\cv\AA] \]
 are trivial fibrations in $\TwoCat$. As $R(\bfH W)^\sharp$ and $S(\cv W)^\sharp$ compose to the identity, by $2$-out-of-$3$, we get that $(\bfH W)^\sharp$ and $(\cv W)^\sharp$ are biequivalences. Again, by $2$-out-of-$3$ applied to the commutative squares above, we conclude that $\bfH W$ and $\cv W$ are biequivalences. 

Similarly, one can show that $\bfH P$ and $\cv P$ are Lack fibrations in $\TwoCat$, since the $2$-functor $\mathbbm 1\sqcup \mathbbm 1\to \Eadj$ is a cofibration in $\TwoCat$. Therefore, the induced $2$-functors
\[ \Psd[\mathbbm 1\sqcup \mathbbm 1,\bfH\AA] \to \Psd[\Eadj,\bfH\AA] \ \ \text{and} \ \ \Psd[\mathbbm 1\sqcup \mathbbm 1,\cv\AA] \to \Psd[\Eadj,\cv\AA] \]
are fibrations in $\TwoCat$, by monoidality of the Lack model structure.
\end{proof}

We are finally ready to prove the existence of the right-induced model structure on $\DblCat$ along the adjunction $\bbH\sqcup \LV\dashv (\bfH,\cv)$.

\begin{thm}\label{thm:modelstructonDblCat}
Consider the adjunction 
\begin{tz}
\node (A) at (0,0) {$\TwoCat\times \TwoCat$};
\node (B) at (3.25,0) {$\DblCat$};
\node at ($(B.east)-(0,4pt)$) {,};
\draw[->] ($(A.east)+(0,.25cm)$) to [bend left] node[above,scale=0.8]{$\bbH\sqcup \LV$} ($(B.west)+(0,.25cm)$);
\draw[->] ($(B.west)+(0,-.25cm)$) to [bend left] node[below,scale=0.8]{$(\bfH,\cv)$} ($(A.east)-(0,.25cm)$);
\node[scale=0.8] at ($(A.east)!0.5!(B.west)$) {$\bot$};
\end{tz}
where each copy of $\TwoCat$ is endowed with the Lack model structure. Then the right-induced model structure on $\DblCat$ exists. In particular, a double functor is a weak equivalence (resp.~fibration) in this model structure if and only if it is a double biequivalence (resp.~double fibration).
\end{thm}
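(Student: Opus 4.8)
The plan is to obtain this as a direct application of the Path Object Argument \cref{path}, applied to the adjunction $\bbH\sqcup\LV\dashv(\bfH,\cv)$ with $\cm=\TwoCat\times\TwoCat$ (carrying the product of two copies of the Lack model structure) and $\cn=\DblCat$. Most of the hypotheses of \cref{path} have already been assembled above, so the proof amounts to collecting them and then identifying the resulting classes of maps.

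First one checks the structural hypotheses. The category $\DblCat$ is locally presentable: it is equivalent to the category of internal categories in $\Cat$, equivalently the category of models of a finite-limit sketch. The Lack model structure on $\TwoCat$ is cofibrantly generated by \cref{thm:LackMS} and $\TwoCat$ is locally presentable, so it is combinatorial, hence accessible; since a finite product of accessible (indeed combinatorial) model categories is again one, $\TwoCat\times\TwoCat$ is an accessible model category. Every object of $\TwoCat$ is fibrant (\cref{rmk:2catsarefibrant}), hence so is every object of $\TwoCat\times\TwoCat$. For the path-object hypothesis, given a double category $\AA$ take the factorization $\AA\xrightarrow{W}\cp\AA\xrightarrow{P}\AA\times\AA$ of the diagonal from \cref{def:pathobject}, with $\cp\AA=[\bbH\Eadj,\AA]_\ps$; by \cref{prop:pathobj} the double functor $(\bfH,\cv)W$ is a weak equivalence and $(\bfH,\cv)P$ is a fibration in $\TwoCat\times\TwoCat$. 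Thus \cref{path} applies and produces the right-induced model structure on $\DblCat$.

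It remains to identify its weak equivalences and fibrations. By the definition of a right-induced model structure, a double functor $F$ is a weak equivalence (resp.\ fibration) exactly when $(\bfH,\cv)F=(\bfH F,\cv F)$ is a weak equivalence (resp.\ fibration) in $\TwoCat\times\TwoCat$, i.e.\ when both $\bfH F$ and $\cv F$ are biequivalences (resp.\ Lack fibrations). By \cref{charequiv} the former condition says precisely that $F$ is a double biequivalence, and by \cref{charfib} the latter says precisely that $F$ is a double fibration.

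The genuinely nontrivial input is \cref{prop:pathobj}, which rests on \cref{lem:HVpreservehom} — so that $\bfH$ and $\cv$ convert $[\bbH\Eadj,\AA]_\ps$ into the pseudo-functor $2$-categories $\Psd[\Eadj,\bfH\AA]$ and $\Psd[\Eadj,\cv\AA]$ — together with the monoidality of the Lack model structure (\cref{2Catenrichedmodel}) applied to the (trivial) cofibration $\mathbbm 1\to\Eadj$ and the cofibration $\mathbbm 1\sqcup\mathbbm 1\to\Eadj$ in $\TwoCat$. Granting that, the theorem is formal. The only points requiring a moment's care are confirming that $(\bfH,\cv)$ really is right adjoint to $\bbH\sqcup\LV$ — immediate from $\bbH\dashv\bfH$ (\cref{prop:adjHH}), $\LV\dashv\cv$ (\cref{prop:adjLV}), and the universal property of coproducts in $\DblCat$ — and the accessibility/local-presentability bookkeeping noted above.
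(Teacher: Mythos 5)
Your proposal is correct and follows essentially the same route as the paper's proof: locate the right-induced model structure via the Path Object Argument (\cref{path}), using \cref{prop:pathobj} to supply the required factorization of the diagonal, and then identify the weak equivalences and fibrations through \cref{charequiv} and \cref{charfib}. The only cosmetic difference is that the paper states the identification of the classes before invoking the existence argument, whereas you do it afterward; otherwise the ingredients, and the way they are combined, coincide.
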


\begin{proof}
We first describe the weak equivalences and fibrations in the right-induced model structure on $\DblCat$. These are given by the double functors $F$ such that $(\bfH,\cv)F$ is a weak equivalence (resp.~fibration) in $\TwoCat\times \TwoCat$, or equivalently, such that both $\bfH F$ and $\cv F$ are biequivalences (resp.~Lack fibrations) in $\TwoCat$.  Then it follows from \cref{charequiv,charfib} that the weak equivalences (resp.~fibrations) in $\DblCat$ are precisely the double biequivalences (resp.~double fibrations). 

We now prove the existence of the model structure. For this purpose, we want to apply \Cref{path} to our setting. First note that $\TwoCat$ and $\DblCat$ are locally presentable, and that the Lack model structure on $\TwoCat$ is cofibrantly generated. In particular, this implies that the product $\TwoCat\times \TwoCat$ endowed with two copies of the Lack model structure is combinatorial, hence accessible. Moreover, every pair of $2$-categories is fibrant in~${\TwoCat\times \TwoCat}$, since every object is fibrant in the Lack model structure. Finally, for every double category~$\AA$, \cref{prop:pathobj} gives a factorization 
\[ \AA\xrightarrow{W} \cp\AA\xrightarrow{P} \AA\times \AA  \]
such that $W$ is a double biequivalence and $P$ is a double fibration. By \cref{path}, this proves that the right-induced model structure along $(\bfH,\cv)$ on $\DblCat$ exists.
\end{proof}

\begin{rmk}
 Note that every double category is fibrant in this model structure. Indeed, this follows directly from the fact that it is right-induced from a model structure in which every object is fibrant.
\end{rmk}

\section{Generating (trivial) cofibrations and cofibrant objects}\label{section:generatingcofibs}

In this section, we take a closer look at the (trivial) cofibrations and cofibrant objects in our model structure on $\DblCat$. In \cref{subsec:gensets}, we show that this model structure is cofibrantly generated. For this, we give sets of generating cofibrations and generating trivial cofibrations, arising from corresponding sets in the Lack model structure on $\TwoCat$. Moreover, a careful study of the lifting properties of (trivial) cofibrations in $\DblCat$ reveals the existence of smaller and more descriptive sets of generating (trivial) cofibrations.

In \cref{subsec:cofibr}, we turn our attention to the cofibrant double categories. In the Lack model structure on $\TwoCat$, the cofibrant objects are precisely the $2$-categories whose underlying categories are free, by \cite[Theorem 4.8]{Lack2Cat}. We get a similar characterization of the cofibrant objects in our model structure. Since double trivial fibrations are full on horizontal morphisms and fully faithful on squares, the underlying horizontal category of a cofibrant double category is also free. However, the underlying vertical category is not only required to be free, but, in addition, it cannot contain any composition of morphisms. This is intuitively coming from the fact that double trivial fibrations are only surjective on vertical morphisms instead of full. To prove this result, we first characterize cofibrations in $\DblCat$ through their underlying horizontal and vertical ($1$-)functors.

\subsection{Generating sets of (trivial) cofibrations} \label{subsec:gensets}

Recall from \cref{thm:LackMS} that the Lack model structure on $\TwoCat$ is cofibrantly generated. As our model structure on $\DblCat$ is right-induced from two copies of $\TwoCat$ along the adjunction $\bbH\sqcup \LV \dashv (\bfH,\cv)$, it is also cofibrantly generated with sets of generating (trivial) cofibrations induced by the left adjoint $\bbH\sqcup \LV$, as stated in the next result. 

\begin{prop}\label{lem:gencofIJ}
Let $\mathcal I_2$ and $\mathcal J_2$ denote sets of generating cofibrations and generating trivial cofibrations, respectively, for the Lack model structure on $\TwoCat$. Then, the sets of morphisms in $\DblCat$
\[ \mathcal I=\{ \bbH i,\; \bbH i\times \vtwo\mid i\in \mathcal I_2 \}, \ \ \text{and} \ \  \mathcal J=\{ \bbH j,\; \bbH j\times \vtwo\mid j\in \mathcal J_2\} \]
give sets of generating cofibrations and generating trivial cofibrations, respectively, for the model structure on $\DblCat$ of \cref{thm:modelstructonDblCat}.
\end{prop}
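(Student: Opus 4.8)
The plan is to verify the usual recognition criterion for an \emph{already existing} model structure: since the model structure of \cref{thm:modelstructonDblCat} is known to exist, it suffices to show that $\mathcal I$ and $\mathcal J$ permit the small object argument, that the maps with the right lifting property against $\mathcal I$ are exactly the trivial fibrations, and that the maps with the right lifting property against $\mathcal J$ are exactly the fibrations. The first point is immediate because $\DblCat$ is locally presentable (as already used in the proof of \cref{thm:modelstructonDblCat}), so the small object argument applies to any small set of morphisms. It is also worth recording that $\mathcal I$ and $\mathcal J$ are precisely the images under the left adjoint $\bbH\sqcup\LV$ of the generating cofibrations and generating trivial cofibrations of $\TwoCat\times\TwoCat$, which may be taken to be $\{(i,\id_\emptyset),(\id_\emptyset,i)\mid i\in\mathcal I_2\}$ and $\{(j,\id_\emptyset),(\id_\emptyset,j)\mid j\in\mathcal J_2\}$: since $\bbH$ and $\LV$ are left adjoints they preserve the initial double category, and a coproduct with $\id_\emptyset$ is trivial, so $(\bbH\sqcup\LV)(i,\id_\emptyset)=\bbH i$ and $(\bbH\sqcup\LV)(\id_\emptyset,i)=\LV i=\bbH i\times\vtwo$.

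For the lifting identifications, I would use the adjunctions $\bbH\dashv\bfH$ of \cref{prop:adjHH} and $\LV\dashv\cv$ of \cref{prop:adjLV}. Transposing lifting problems along these adjunctions shows that a double functor $F$ has the right lifting property against $\bbH i$ if and only if $\bfH F$ has the right lifting property against $i$, and that $F$ has the right lifting property against $\bbH i\times\vtwo=\LV i$ if and only if $\cv F$ has the right lifting property against $i$. Hence $F$ is right orthogonal to all of $\mathcal I$ exactly when both $\bfH F$ and $\cv F$ are right orthogonal to $\mathcal I_2$, i.e.\ are trivial fibrations in the Lack model structure; by the definition of the right-induced model structure on $\DblCat$ (equivalently, by \cref{chartrivfib}) this says exactly that $F$ is a trivial fibration in $\DblCat$. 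The identical argument with $\mathcal J_2$ and \cref{charfib} in place of $\mathcal I_2$ and \cref{chartrivfib} identifies the maps right orthogonal to $\mathcal J$ with the double fibrations, that is, the fibrations in $\DblCat$.

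Given these two identifications the conclusion is formal: in the model structure of \cref{thm:modelstructonDblCat} the cofibrations are exactly the maps with the left lifting property against all trivial fibrations, hence against all $\mathcal I$-injectives, and therefore — by the retract argument, valid since $\DblCat$ is locally presentable so that $\mathcal I$ permits the small object argument — they coincide with the saturation of $\mathcal I$; likewise the trivial cofibrations coincide with the saturation of $\mathcal J$. Thus $\mathcal I$ and $\mathcal J$ are sets of generating cofibrations and generating trivial cofibrations. I do not expect a genuine obstacle here; the one step that needs care is the transposition of lifting problems along the composite adjunction $\LV=\bbH(-)\times\vtwo\dashv\cv=\bfH[\vtwo,-]$, for which the explicit description of $\cv$ in \cref{defn_cv} together with \cref{prop:adjLV} is exactly what is required.
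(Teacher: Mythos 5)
Your argument is correct, and it arrives at the same conclusion by a slightly different packaging of the same underlying idea. The paper's proof simply cites the standard fact that, for a right-induced model structure along $L\dashv R$, the image under $L$ of a set of generating (trivial) cofibrations is again a set of generating (trivial) cofibrations; it then takes the natural generating set for $\TwoCat\times\TwoCat$ to consist of pairs $(i,i')$ and does a small coproduct argument to reduce $\bbH i\sqcup\LV i'$ to $\mathcal I$. You instead unfold that black box: you verify the recognition criterion directly by transposing lifting problems along $\bbH\dashv\bfH$ and $\LV\dashv\cv$ separately, identifying $\mathcal I$-injectives with double trivial fibrations via \cref{chartrivfib} and $\mathcal J$-injectives with double fibrations via \cref{charfib}, and closing with the retract argument. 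You also choose the cleaner generating set $\{(i,\id_\emptyset),(\id_\emptyset,i)\}$ for the product, which removes the need for the paper's coproduct step. The content is the same — both rest on adjoint transposition plus the characterization of (trivial) fibrations in the transferred model structure — but your route is more self-contained and a bit more explicit than appealing to the general transfer result.
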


\begin{proof}
Since the model structure on $\DblCat$ is right-induced from two copies of the Lack model structure on $\TwoCat$ along the adjunction $\bbH\sqcup \LV\dashv (\bfH,\cv)$, sets of generating cofibrations and of generating trivial cofibrations can be given by the images under the left adjoint $\bbH\sqcup \LV$ of the fixed sets of generating cofibrations and generating trivial cofibrations in~$\TwoCat\times \TwoCat$.

Let $i$ and $i'$ be generating cofibrations in $\mathcal I_2$ in $\TwoCat$. Then $\bbH i$ and $\LV i=\bbH i\times \vtwo$ are cofibrations in $\DblCat$. To see this apply $\bbH\sqcup \LV$ to the cofibrations $(i,\id_\emptyset)$ and $(\id_\emptyset,i)$, respectively. Similarly, $\bbH i'$ and $\LV i'=\bbH i'\times \vtwo$ are cofibrations in $\DblCat$. Since coproducts of cofibrations are cofibrations, then $(\bbH\sqcup \LV) (i,i')=\bbH i\sqcup \LV i'$ can be obtained from $\bbH i$ and $\LV i'=\bbH i'\times \vtwo$. This shows that $\mathcal I$ is a set of generating cofibrations of $\DblCat$. 

Similarly, we can show that $\mathcal J$ is a set of generating trivial cofibrations of $\DblCat$.
\end{proof}

However, we can find sets of generating (trivial) cofibrations, which are both smaller and more descriptive than the ones given above, by using the characterization of fibrations and trivial fibrations in our model structure given in \cref{charfib,chartrivfib}. 

\begin{notation} 
Let $\mathbb S$ be the double category free on a square, $\delta \mathbb S$ be its boundary, and $\mathbb S_2$ be the double category free on two squares with the same boundary.
\begin{tz}
    \node at (-.8,-.75) {$\mathbb S=$};
    \node (A) at (0,0) {$0$};
    \node (B) at (1.5,0) {$1$};
    \node (A') at (0,-1.5) {$0'$};
    \node (B') at (1.5,-1.5) {$1'$};
    \node at (1.9,-.75) {;};
    \draw[->] (A) to (B);
    \draw[->] (A') to (B');
    \draw[->] (A) to (A');
    \draw[->] (B) to (B');
    \node[scale=0.8] at (.75,-.75) {$\alpha$};
    \node at (0,-.75) {$\bullet$};
    \node at (1.5,-.75) {$\bullet$};
    
    \node at (3,-.75) {$\delta \mathbb S=$};
    \node (A) at (3.8,0) {$0$};
    \node (B) at (5.3,0) {$1$};
    \node (A') at (3.8,-1.5) {$0'$};
    \node (B') at (5.3,-1.5) {$1'$};
    \node at (5.7,-.75) {;};
    \draw[->] (A) to (B);
    \draw[->] (A') to (B');
    \draw[->] (A) to (A');
    \draw[->] (B) to (B');
    \node at (3.8,-.75) {$\bullet$};
    \node at (5.3,-.75) {$\bullet$};
    
    \node at (6.8,-.75) {$\mathbb S_2=$};
    \node (A) at (7.6,0) {$0$};
    \node (B) at (9.1,0) {$1$};
    \node (A') at (7.6,-1.5) {$0'$};
    \node (B') at (9.1,-1.5) {$1'$};
    \draw[->] (A) to (B);
    \draw[->] (A') to (B');
    \draw[->] (A) to (A');
    \draw[->] (B) to (B');
    \node[scale=0.8] at (8.1,-.75) {$\alpha_0$};
    \node[scale=0.8] at (8.6,-.75) {$\alpha_1$};
    \node at (7.6,-.75) {$\bullet$};
    \node at (9.1,-.75) {$\bullet$};
\end{tz}
We fix notation for the following double functors, which form a set of generating cofibrations for our model structure on $\DblCat$:
\begin{itemize}
    \item the unique map $I_1\colon \emptyset\to \mathbbm{1}$,
    \item the inclusion $I_2\colon \mathbbm{1}\sqcup \mathbbm{1}\to \bbH\mathbbm{2}$, 
    \item the unique map $I_3\colon \emptyset\to \vtwo$, 
    \item the inclusion $I_4\colon \delta \mathbb S\to \mathbb S$, and
    \item the double functor $I_5\colon \mathbb S_2\to \mathbb S$ sending both squares in $\mathbb S_2$ to the non-trivial square of~$\mathbb S$.
\end{itemize}
 We also fix notation for the following double functors, which form a set of generating trivial cofibrations for our model structure on $\DblCat$: 
\begin{itemize}
    \item the inclusion $J_1 \colon \mathbbm{1}\to \bbH \Eadj $, where the $2$-category $\Eadj$ is the free-living adjoint equivalence,
    \item the inclusion $J_2 \colon \bbH\mathbbm{2}\to \bbH C_\mathrm{inv} $, where the $2$-category $C_\mathrm{inv}$ is the free-living invertible $2$-cell, and 
    \item the inclusion $J_3 \colon \vtwo\to \bbH \Eadj \times \vtwo$; note that the double category $\bbH \Eadj \times \vtwo$ is the free-living weakly horizontally invertible square (with horizontal adjoint equivalence data).
\end{itemize}
\end{notation}

\begin{prop}\label{prop:alternate_gen_cofibs}
In the model structure on $\DblCat$ of \cref{thm:modelstructonDblCat}, a set of generating cofibrations is given by 
\[ \mathcal I'=\{ I_1\colon \emptyset\to \mathbbm{1},\;I_2\colon \mathbbm{1}\sqcup \mathbbm{1}\to \bbH\mathbbm{2},\;I_3\colon \emptyset\to \vtwo,\;I_4\colon \delta \mathbb S\to \mathbb S,\;I_5\colon \mathbb S_2\to \mathbb S \} \]
and a set of generating trivial cofibrations is given by 
\[ \mathcal J'=\{J_1\colon \mathbbm{1}\to \bbH \Eadj,\;J_2\colon \bbH\mathbbm{2}\to \bbH C_\mathrm{inv},\;J_3\colon \vtwo\to \bbH \Eadj\times \vtwo \}. \]
\end{prop}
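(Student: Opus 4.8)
The plan is to invoke the standard recognition criterion for cofibrantly generated model structures: since $\DblCat$ is locally presentable, the small object argument applies to any set of maps, so it suffices to show that $\mathcal I'$ consists of cofibrations and that a double functor has the right lifting property against $\mathcal I'$ exactly when it is a trivial fibration, and dually that $\mathcal J'$ consists of trivial cofibrations and that a double functor has the right lifting property against $\mathcal J'$ exactly when it is a fibration. The ``consists of (trivial) cofibrations'' parts are cheap: $I_1=\bbH(\emptyset\to\mathbbm 1)$, $I_2=\bbH(\mathbbm 1\sqcup\mathbbm 1\to\mathbbm 2)$ and $I_3=\bbH(\emptyset\to\mathbbm 1)\times\vtwo$ already belong to the generating set $\mathcal I$ of \cref{lem:gencofIJ}, and likewise $J_1=\bbH(\mathbbm 1\to\Eadj)$, $J_2=\bbH(\mathbbm 2\to C_{\mathrm{inv}})$ and $J_3=\bbH(\mathbbm 1\to\Eadj)\times\vtwo$ belong to $\mathcal J$; for $I_4$ and $I_5$ one checks by hand that they lift against every double trivial fibration, the (unique) lift being supplied by the uniqueness in clause (dt4) of \cref{doubletrivfib}.

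For the lifting characterizations I would unwind the lifting problems one generator at a time, using \cref{chartrivfib} (the trivial fibrations are exactly the double trivial fibrations of \cref{doubletrivfib}) together with \cref{charfib} and \cref{thm:modelstructonDblCat} (the fibrations are exactly the double fibrations of \cref{doublefib}). On the cofibration side: the right lifting property against $I_1\colon\emptyset\to\mathbbm 1$ is surjectivity on objects (dt1); against $I_2\colon\mathbbm 1\sqcup\mathbbm 1\to\bbH\mathbbm 2$ it is fullness on horizontal morphisms (dt2); against $I_3\colon\emptyset\to\vtwo$ it is surjectivity on vertical morphisms (dt3); against $I_4\colon\delta\mathbb{S}\to\mathbb{S}$ it is fullness on squares with a prescribed frame, and against $I_5\colon\mathbb{S}_2\to\mathbb{S}$ it is faithfulness on squares, so that against the pair $\{I_4,I_5\}$ it is full faithfulness on squares (dt4); hence the $\mathcal I'$-injective double functors are precisely the double trivial fibrations, i.e.\ the trivial fibrations. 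On the trivial-cofibration side: the right lifting property against $J_2\colon\bbH\mathbbm 2\to\bbH C_{\mathrm{inv}}$ is precisely condition (df2); a lifting problem against $J_1\colon\mathbbm 1\to\bbH\Eadj$, resp.\ against $J_3\colon\vtwo\to\bbH\Eadj\times\vtwo$, amounts to lifting a horizontal adjoint equivalence ending at an object in the image, resp.\ a weakly horizontally invertible square with a chosen vertical boundary in the image, together with all of its equivalence data. Using the lemmas of \cref{section:prelim} that promote horizontal equivalences, resp.\ weakly horizontally invertible squares, to adjoint equivalences, resp.\ to squares carrying adjoint-equivalence data, one sees that having these lifting properties forces (df1), resp.\ (df3); conversely $J_1,J_2,J_3$ are trivial cofibrations by the previous paragraph, so every fibration has the right lifting property against $\mathcal J'$. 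Therefore the $\mathcal J'$-injective double functors are exactly the double fibrations, i.e.\ the fibrations, and the recognition criterion concludes.

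The bulk of the work — and the main obstacle — is the case analysis of the second paragraph: one must correctly see $\delta\mathbb{S}$, $\mathbb{S}$, $\mathbb{S}_2$, $\bbH\Eadj$, $\bbH C_{\mathrm{inv}}$ and $\bbH\Eadj\times\vtwo$ as the ``classifying'' double categories for, respectively, a square with prescribed frame, two parallel squares, an adjoint equivalence, an invertible $2$-cell, and a weakly horizontally invertible square, and then match the resulting lifting conditions precisely against the clauses (dt1)--(dt4) and (df1)--(df3). The genuinely delicate step is with $J_1$ and $J_3$, where the lifting problem asks one to lift an entire adjoint equivalence (resp.\ weakly horizontally invertible square) together with its coherence data, whereas (df1) and (df3) record only the bare lift; bridging the two is exactly the purpose of the promotion-to-adjoint-equivalence lemmas of \cref{section:prelim} and, when the lifted data must be pinned down, of the uniqueness of weak inverses in \cref{uniqueweakinverse}. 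A minor caveat is that this argument invokes \cref{charfib,chartrivfib} ahead of their proofs, though their statements are already available.
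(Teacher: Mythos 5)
Your proposal is correct and follows the same line the paper takes, namely matching the right lifting property against each generator to the clauses (dt1)--(dt4) and (df1)--(df3); the paper simply records this as a routine check. Two small remarks: your shortcut for the backward direction of the $\mathcal J'$-characterization (each $J_i$ lies in $\mathcal J$, hence is a trivial cofibration, hence is lifted against by every fibration) is a valid and clean way to avoid re-proving half of the equivalence, and the ``caveat'' you raise about citing \cref{charfib,chartrivfib} ahead of their proofs is not a genuine concern --- the paper itself relies on those statements throughout \cref{section:generatingcofibs}, having stated them in \cref{Sec:modelstructDblCat} and deferred only the proofs to \cref{section:weakequivsandfibrations}.
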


\begin{proof}
It is a routine exercise to check that a double functor is a double trivial fibration as defined in \cref{doubletrivfib} if and only if it has the right-lifting property with respect to the cofibrations in $\mathcal I'$, and that a double functor is a double fibration as defined in \cref{doublefib} if and only if it has the right-lifting property with respect to the trivial cofibrations of $\mathcal J'$. This shows that $\mathcal I'$ and $\mathcal J'$ are sets of generating cofibrations and generating trivial cofibration for~$\DblCat$, respectively.
\end{proof}

\subsection{Cofibrations and cofibrant double categories} \label{subsec:cofibr}

Our next goal is to provide a characterization of the cofibrations in $\DblCat$. 
In \cite[Lemma 4.1]{Lack2Cat}, Lack shows that a $2$-functor is a cofibration in $\TwoCat$ if and only if its underlying functor has the left lifting property with respect to all surjective on objects and full functors. A similar result applies to our model structure; indeed, we show that a double functor is a cofibration in $\DblCat$ if and only if its underlying horizontal and vertical functors satisfy respective lifting properties.

\begin{notation} \label{Notation:underlying}
We write $U\colon \TwoCat\to \Cat$ for the functor that sends a $2$-category to its underlying category.
\end{notation}

\begin{rmk}
The functor $U\bfH\colon \DblCat\to \Cat$, which sends a double category to its underlying category of objects and horizontal morphisms, has a right adjoint. It is given by the functor $R_h\colon \Cat\to \DblCat$ that sends a category $\cc$ to the double category with the same objects as $\cc$, horizontal morphisms given by the morphisms of $\cc$, a unique vertical morphism between every pair of objects, and a unique square $\sq{!}{f}{g}{!}{!}$ for every pair of morphisms $f,g$ in $\cc$.  
\end{rmk}

\begin{rmk}
The functor $U\bfV\colon \DblCat\to \Cat$, which sends a double category to its underlying category of objects and vertical morphisms, has a right adjoint. It is given by the functor $R_v\colon \Cat\to \DblCat$ that sends a category $\cc$ to the double category with the same objects as~$\cc$, a unique horizontal morphism between every pair of objects, vertical morphisms given by the morphisms of $\cc$, and a unique square $\sq{!}{!}{!}{u}{v}$ for every pair of morphisms $u,v$ in $\cc$.
\end{rmk}

\begin{prop} \label{char:cof}
A double functor $F\colon \AA\to \BB$ is a cofibration in $\DblCat$ if and only if 
\begin{rome}
 \item the underlying horizontal functor $U\bfH F\colon U\bfH \AA\to U\bfH\BB$ has the left lifting property with respect to all surjective on objects and full functors, and
 \item the underlying vertical functor $U\bfV F\colon U\bfV \AA\to U\bfV\BB$ has the left lifting property with respect to all surjective on objects and surjective on morphisms functors.
\end{rome}
\end{prop}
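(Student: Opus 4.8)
The plan is to characterize cofibrations via a small-object-type argument, leveraging the explicit generating set $\mathcal{I}'$ from \cref{prop:alternate_gen_cofibs}. First I would recall that cofibrations in $\DblCat$ are precisely the retracts of transfinite composites of pushouts of maps in $\mathcal{I}'$, equivalently the maps with the left lifting property against all double trivial fibrations. By \cref{chartrivfib}, a double functor $P\colon \mathbb{X}\to\mathbb{Y}$ is a double trivial fibration iff $\bfH P$ and $\cv P$ are trivial fibrations in the Lack model structure on $\TwoCat$; combining this with Lack's characterization \cite[Lemma 4.1]{Lack2Cat}, $P$ is a double trivial fibration iff $U\bfH P$ is surjective on objects and full, $U\cv P$ is surjective on objects and full, \emph{and} $P$ is fully faithful on squares — but in fact one can read off directly from \cref{doubletrivfib} the cleaner statement that $P$ is a double trivial fibration iff $U\bfH P$ is surjective on objects and full, $U\bfV P$ is surjective on objects and surjective on morphisms, and $P$ is fully faithful on squares (conditions (dt1)–(dt4)). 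So the content of the proposition is that this combined right-lifting condition translates, under adjunction, into the two stated lifting conditions (i) and (ii) on the underlying horizontal and vertical functors.

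The key step is an adjunction argument. For the forward direction, suppose $F$ is a cofibration. To check (i), I would take a surjective-on-objects-and-full functor $G\colon \mathcal{C}\to\mathcal{D}$ in $\Cat$ and a commutative square with $U\bfH F$ on the left and $G$ on the right; transposing across the adjunction $U\bfH\dashv R_h$, this becomes a lifting problem of $F$ against $R_h G$, and I would check that $R_h G\colon R_h\mathcal{C}\to R_h\mathcal{D}$ is a double trivial fibration — this is immediate from the explicit description of $R_h$, since $U\bfH R_h G = G$ is surjective on objects and full, $U\bfV R_h G$ is the map on codiscrete vertical structures hence surjective on objects and morphisms, and fully-faithfulness on squares holds because $R_h\mathcal{C}$ and $R_h\mathcal{D}$ have unique squares with any given boundary. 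Hence $F$ lifts against $R_h G$, and transposing back gives the lift for (i). The argument for (ii) is analogous using $U\bfV\dashv R_v$, noting that $U\bfV R_v G = G$ and that $R_v\mathcal{C}$, $R_v\mathcal{D}$ are codiscrete in the horizontal direction and have unique squares with given vertical boundary.

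For the converse, suppose $F$ satisfies (i) and (ii); I want to show $F$ lifts against every double trivial fibration $P\colon\mathbb{X}\to\mathbb{Y}$. Given a lifting square, I would build the lift one layer at a time: first use (i) applied to $U\bfH P$ (which is surjective on objects and full by (dt1)–(dt2)) to construct the lift on objects and horizontal morphisms; then use (ii) applied to $U\bfV P$ (which is surjective on objects and surjective on morphisms by (dt1), (dt3)) to construct the lift on vertical morphisms, compatibly with the object-assignment already chosen — here one must be slightly careful that the two lifts agree on objects, which can be arranged by first fixing the object-assignment via one of the lifting problems and then solving the remaining morphism-lifting problems relative to it (concretely, factor the generating cofibrations so that $I_1$ is handled first, then $I_2$ and $I_3$ build horizontal and vertical morphisms over a fixed object set). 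Finally, the assignment on squares is forced and unique by full-faithfulness on squares (dt4), and one checks functoriality. The main obstacle I anticipate is precisely this coherence bookkeeping — ensuring the horizontal lift from (i) and the vertical lift from (ii) can be chosen to agree on objects and then assemble into a genuine double functor with the square-assignment dictated by (dt4) — rather than any deep conceptual difficulty; the cleanest way to manage it is probably to phrase the whole argument in terms of lifting against the explicit set $\mathcal{I}'$ and handle the five generators $I_1,\dots,I_5$ in turn, observing that lifts against $I_1,I_2$ are governed by (i), lifts against $I_3$ by (ii) (with $I_1$ already fixing objects), and lifts against $I_4,I_5$ are automatic from square full-faithfulness.
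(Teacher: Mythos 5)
Your forward direction matches the paper's: transpose across $U\bfH\dashv R_h$ and $U\bfV\dashv R_v$, and check $R_h G$, $R_v G$ are double trivial fibrations. That part is fine.

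The converse, however, has a genuine gap, and it lies exactly where you flag the ``coherence bookkeeping.'' You propose to lift horizontal data first via (i) and then lift vertical morphisms ``compatibly with the object-assignment already chosen'' via (ii). But condition (ii) is a left lifting property against \emph{surjective on objects and surjective on morphisms} functors, and surjectivity on morphisms does not allow you to lift a vertical morphism of $\mathbb{Y}$ with \emph{prescribed} source and target in $\mathbb{X}$: (dt3) only guarantees \emph{some} preimage $u\colon X_0\arrowdot X_1$, with no control over the endpoints $X_0,X_1$. So once the object assignment is pinned down by the horizontal step, the vertical lifting problem you need to solve is one that (ii) simply does not address. Your suggested fix --- handle $I_1$ first to fix objects, then $I_3$ --- does not help, because $I_3\colon\emptyset\to\vtwo$ has empty domain and hence does not express a lifting problem over a fixed object set; more fundamentally, the set $\mathcal I'$ detects which $P$ are trivial fibrations, it does not decompose the domain $\BB$ of a general $F$ into generators.

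The paper resolves this by reversing the order, exploiting the asymmetry between the two directions: it first applies (ii) to get a lift $L_v$ of the vertical data (objects and vertical morphisms, with no endpoint constraints imposed in advance), and \emph{then} applies (i) to choose a horizontal lift $L_h$ agreeing with $L_v$ on objects. That second, constrained lift is possible because $U\bfH P$ is \emph{full} --- not merely surjective on morphisms --- so any object assignment already satisfying the required equations extends to a compatible assignment on horizontal morphisms. This is precisely the fullness/surjectivity asymmetry that distinguishes (dt2) from (dt3), and the whole argument hinges on placing the ``fullness step'' second so it can absorb the pre-chosen objects. With the order as you propose, the argument breaks; with the order reversed, it goes through, and the final step (squares determined by full-faithfulness (dt4)) is then as you describe.
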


\begin{proof}
Suppose first that $F\colon \AA\to \BB$ is a cofibration in $\DblCat$, i.e., it has the left lifting property with respect to all double trivial fibrations. In order to show (i), let $P\colon \mathcal X\to \mathcal Y$ be a surjective on objects and full functor. By the adjunction $U\bfH \dashv R_h$, saying that $U\bfH F$ has the left lifting property with respect to $P$ is equivalent to saying that $F$ has the left lifting property with respect to $R_hP$. We now prove this latter statement. 

Note that the double functor $R_hP\colon R_h\mathcal X\to R_h\mathcal Y$ is surjective on objects and full on horizontal morphisms, since $P$ is so. Moreover, by construction of $R_h$, there is exactly one vertical morphism and one square for each boundary in both its source and target; therefore $R_hP$ is surjective on vertical morphisms and fully faithful on squares. Hence~$R_h P$ is a double trivial fibration, and $F$ has the left lifting property with respect to $R_h P$ since it is a cofibration in $\DblCat$. 

Similarly, one can show that (ii) holds, by considering the adjunction $U\bfV \dashv R_v$ and replacing fullness by surjectivity on morphisms. 

Now suppose that $F\colon \AA\to \BB$ satisfies (i) and (ii). Let $P\colon \mathbb X\to \mathbb Y$ be a double trivial fibration and consider a commutative square as below left. We want to find a lift $L\colon \BB\to \mathbb X$ in this square as depicted below. Using~(ii), since $U\bfV P$ is surjective on objects and surjective on morphisms, we have a lift~$L_v$ in the below middle diagram. Now, using (i), since $U\bfH P$ is surjective on objects and full, we can choose a lift $L_h$ in the below right diagram such that $L_h$ coincides with $L_v$ on objects. This comes from the fact that, by fullness of~$U\bfH P$, we can first choose an assignment on objects and then choose a compatible assignment on morphisms.
\begin{tz}
\node[](1) {$\AA$}; 
\node[right of=1](2) {$\mathbb X$}; 
\node[below of=1](3) {$\BB$};
\node[below of=2](4) {$\mathbb Y$}; 
\draw[->] (1) to node[above,la]{$G$} (2);
\draw[->] (1) to node[left,la]{$F$} (3); 
\draw[->] (2) to node[right,la]{$P$} (4);
\draw[->] (3) to node[below,la]{$Q$} (4);
\draw[->,dashed] (3) to node[pos=0.4,above,la,xshift=-5pt]{$L$} (2);

\node[right of=2,xshift=1cm](1) {$U\bfV \AA$}; 
\node[right of=1,xshift=.5cm](2) {$U\bfV \mathbb X$}; 
\node[below of=1](3) {$U\bfV \BB$};
\node[below of=2](4) {$U\bfV \mathbb Y$}; 
\draw[->] (1) to node[above,la]{$U\bfV G$} (2);
\draw[->] (1) to node[left,la]{$U\bfV F$} (3); 
\draw[->] (2) to node[right,la]{$U\bfV P$} (4);
\draw[->] (3) to node[below,la]{$U\bfV Q$} (4);
\draw[->,dashed] (3) to node[pos=0.4,above,la,xshift=-5pt]{$L_v$} (2);

\node[right of=2,xshift=1cm](1) {$U\bfH \AA$}; 
\node[right of=1,xshift=.5cm](2) {$U\bfH \mathbb X$}; 
\node[below of=1](3) {$U\bfH \BB$};
\node[below of=2](4) {$U\bfH \mathbb Y$}; 
\draw[->] (1) to node[above,la]{$U\bfH G$} (2);
\draw[->] (1) to node[left,la]{$U\bfH F$} (3); 
\draw[->] (2) to node[right,la]{$U\bfH P$} (4);
\draw[->] (3) to node[below,la]{$U\bfH Q$} (4);
\draw[->,dashed] (3) to node[pos=0.4,above,la,xshift=-5pt]{$L_h$} (2);
\end{tz}
Then, since $P\colon \mathbb X\to \mathbb Y$ is fully faithful on squares, the assignment on objects, horizontal morphisms, and vertical morphisms given by $L_h$ and $L_v$ uniquely extend to a double functor $L\colon \BB\to \mathbb Y$, which gives the desired lift.
\end{proof}

\begin{rmk}
Note that the functor $\bfH\colon \DblCat\to \TwoCat$ preserves cofibrations, since a $2$-functor is a cofibration in $\TwoCat$ if and only if its underlying functor has the left lifting property with respect to all surjective on objects and full functors by \cite[Lemma 4.1]{Lack2Cat}.
\end{rmk}

To understand what it means for a double functor to satisfy (i) and (ii) of \cref{char:cof}, we state a characterization of the functors in $\Cat$ which have the left lifting property with respect to all surjective on objects and full (resp.~surjective on morphisms) functors.

\begin{lemma} \label{surjfull}
A functor $F\colon \A\to \B$ has the left lifting property with respect to surjective on objects and full (resp.\ surjective) on morphisms functors if and only if
\begin{rome}
\item the functor $F$ is injective on objects and faithful, and
\item there are functors $I\colon \B\to \cc$ and $R\colon \cc\to \B$ such that $RI=\id_{\B}$, where the category $\cc$ is obtained from the image of $F$ by freely adjoining objects and then freely adjoining morphisms between specified objects (resp.\ by freely adjoining objects and morphisms). 
\end{rome}
Moreover, a functor $\emptyset\to \A$ has the left lifting property with respect to surjective on objects and full (resp.\ surjective) on morphisms functors if and only if the category $\A$ is free (resp.\ a disjoint union of copies of $\mathbbm{1}$ and $\mathbbm{2}$). 
\end{lemma}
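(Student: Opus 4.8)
The plan is to treat the two halves of the lemma separately, and within each, to prove both directions by direct lifting arguments. I will write out the argument for the ``surjective on objects and full'' case in detail; the ``surjective on objects and surjective on morphisms'' case follows the same template, replacing every instance of ``freely adjoin morphisms between specified objects'' with ``freely adjoin morphisms'' and every appeal to fullness with an appeal to surjectivity on morphisms.

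First I would establish the forward direction of (i)--(ii). Assume $F$ has the left lifting property with respect to all surjective-on-objects and full functors. To see $F$ is injective on objects and faithful, it suffices to lift against a suitably chosen surjective-on-objects full functor: for injectivity on objects, use the fold map $\mathbbm 2 \sqcup_{\{0,1\}} \mathbbm 2 \to \mathbbm 2$ or rather the functor that collapses two objects to one (this is surjective on objects and full), and observe that a lifting square with this on the right forces $F$ to distinguish the images; for faithfulness, use the analogous functor that identifies two parallel morphisms. For (ii), I would construct $\cc$ explicitly: factor $F$ through its image $F(\A) \subseteq \B$, then let $\cc$ be the category obtained from $F(\A)$ by freely adjoining a new object for every object of $\B$ not in the image of $F$, and then freely adjoining a generating morphism $x \to y$ for each morphism of $\B$ whose source or target lies outside $F(\A)$ (more precisely, for a chosen generating set; the point is that $\cc$ is free relative to $F(\A)$ over the ``new'' data). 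There is a canonical functor $R\colon \cc \to \B$ extending the inclusion $F(\A) \hookrightarrow \B$, and $R$ is surjective on objects and full by construction. Now $F$ factors as $\A \to \cc \xrightarrow{R} \B$, and lifting the square with $F$ on the left, this factorization $\A \to \cc$ on top, and $R$ on the right, against $R$ itself — wait, we need $F$ on the left and a trivial fibration (here: a surjective-on-objects full functor) on the right; so we take the square with top edge $\A \to \cc$, left edge $F$, bottom edge $\id_\B$, right edge $R$, and the lift is the desired $I\colon \B \to \cc$ with $RI = \id_\B$.

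For the converse, assume (i) and (ii) hold, and let $P\colon \mathcal X \to \mathcal Y$ be surjective on objects and full; given a commutative square with $F$ on the left and $P$ on the right, I would construct the lift by transporting along $I$ and $R$. The factorization $F\colon \A \to \cc \xrightarrow{R} \B$ together with $I\colon \B \to \cc$, $RI = \id_\B$, reduces the problem to lifting against $P$ the ``inclusion of the image plus free data'' functor $\A \to \cc$; because $\cc$ is built from $F(\A)$ by freely adjoining objects and then morphisms between specified objects, and $P$ is surjective on objects (handling the adjoined objects) and full (handling the adjoined morphisms, whose sources and targets have already been lifted), one can extend the given map $\A \to \mathcal X$ over each adjoined generator one at a time, using injectivity on objects and faithfulness of $F$ (equivalently of $\A \to \cc$) to guarantee the extension is well-defined. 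Precomposing the resulting functor $\cc \to \mathcal X$ with $I\colon \B \to \cc$ gives the lift $\B \to \mathcal X$; commutativity of both triangles is checked using $RI = \id_\B$ and the compatibility of $R$ with $P$.

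Finally, the ``moreover'' clause is the special case $\A = \emptyset$, $\B$ arbitrary: conditions (i) becomes vacuous, and (ii) says there is a retraction $R I = \id_\B$ where $\cc$ is freely generated (in the full case: free on a graph, i.e.\ a free category; in the surjective case: freely generated with no composites, i.e.\ a disjoint union of copies of $\mathbbm 1$ and $\mathbbm 2$). Since a retract of a free category is free — retracts of free categories are free by the same argument Lack uses in \cite[Lemma 4.1, Theorem 4.8]{Lack2Cat}, or directly because a retract of a free-on-a-graph category is again free — and a retract of a disjoint union of $\mathbbm 1$'s and $\mathbbm 2$'s is again of that form, the characterization follows. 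The main obstacle I anticipate is organizing the one-generator-at-a-time extension in the converse direction so that it is manifestly well-defined: the subtlety is that a morphism of $\cc$ adjoined ``between specified objects'' may compose nontrivially with morphisms already in $F(\A)$, so the lift of that generator must be chosen compatibly with the already-fixed lifts of its boundary, and one must check that the freeness of the adjunction means no relations are imposed that could obstruct this choice — this is exactly where fullness (resp.\ surjectivity on morphisms) of $P$ is used, and where the distinction between the two cases genuinely matters.
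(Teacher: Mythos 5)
The paper's proof is essentially a citation to Lack's Lemma 4.11 (for the ``full'' case), with the ``surjective'' variant declared analogous and the ``moreover'' clause following from closure of free categories and disjoint unions of $\mathbbm 1$'s and $\mathbbm 2$'s under retracts. Your proposal unwinds that citation into a direct argument, and the overall shape --- factor $F$ through a cellular extension $\cc$ of its image, lift the identity against the resulting surjective-full functor $R\colon\cc\to\B$ to get the section $I$, and conversely lift against $P$ one freely-adjoined generator at a time and transport along $I$ and $R$ --- is exactly the argument behind Lack's lemma. Your treatment of the converse direction and of the ``moreover'' clause is sound.

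There is, however, a gap in your forward argument for condition~(i). The ``functor that collapses two objects to one'' is surjective on objects, but if the two-object domain is taken \emph{discrete} it is not full (the hom-set between the two distinct objects is empty, while the corresponding hom-set in $\mathbbm 1$ is a singleton); you need the \emph{codiscrete} two-object category, for which the test then does succeed, since any object assignment out of $\A$ into a codiscrete category is automatically a functor. The faithfulness detector is more seriously off: to set up a lifting square against the fold map $\mathbbm 2\sqcup_{\mathbbm 1\sqcup\mathbbm 1}\mathbbm 2\to\mathbbm 2$ for a general $F\colon\A\to\B$ you would need both a functor $\A\to\mathbbm 2\sqcup_{\mathbbm 1\sqcup\mathbbm 1}\mathbbm 2$ separating the two parallel morphisms $f\neq g$ with $Ff=Fg$ (impossible if $f$ or $g$ factors non-trivially in $\A$) and a compatible functor $\B\to\mathbbm 2$ (impossible if, say, $\B$ is a groupoid); so no universal small detector of this shape works. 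The standard fix, and what the argument behind Lack's lemma actually does, is to read~(i) off the same retract structure you already build: factor $F$ as a cellular map $j\colon\A\to\cc'$ (freely adjoining objects and then generating morphisms, relative to $\A$ itself, not yet its image) followed by a surjective-on-objects-and-full (resp.\ surjective-on-both) functor; the LLP lift against the latter exhibits $F$ as a retract of $j$ in the arrow category; such a $j$ is visibly injective on objects and faithful, and both properties are closed under retracts, yielding~(i) with no ad hoc detectors. With~(i) in hand, $\A\cong F(\A)$ and the factorization can be rewritten relative to the image, recovering~(ii) in the form stated. One further small imprecision: for $R\colon\cc\to\B$ to be full you must adjoin a generator for every morphism of $\B$ not already in $F(\A)$, including those with both endpoints in $F(\A)$, not only those ``whose source or target lies outside $F(\A)$''.
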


\begin{proof}
The statement about ``full on morphisms'' follows directly from \cite[Corollary 4.12]{Lack2Cat}. For the ``surjective on morphisms'' case, the proof is analogous, replacing $\mathbbm 1\sqcup \mathbbm 1\to \mathbbm 2$ by $\emptyset\to \mathbbm 2$. 

The second statement about $\emptyset\to \A$ follows from the fact that a retract of a free category is itself free, and similarly for disjoint unions of copies of $\mathbbm{1}$ and $\mathbbm{2}$.
\end{proof}

\begin{rmk} \label{injectivityofcof}
From \cref{char:cof,surjfull}, it is straightforward to see that a cofibration in $\DblCat$ is in particular injective on objects, and faithful on horizontal morphisms and vertical morphisms.
\end{rmk}

Finally, we use the above results to obtain a characterization of the cofibrant double categories in terms of their underlying horizontal and vertical categories. 

\begin{prop} \label{char:cofibrant}
A double category $\AA$ is cofibrant if and only if its underlying horizontal category $U\bfH \AA$ is free and its underlying vertical category $U\bfV \AA$ is a disjoint union of copies of $\mathbbm{1}$ and $\mathbbm{2}$.
\end{prop}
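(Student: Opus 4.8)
The plan is to apply \cref{char:cof} to the unique double functor $\emptyset\to\AA$, and then translate the two resulting lifting properties into concrete structural statements using the ``moreover'' part of \cref{surjfull}.

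First I would note that $\AA$ is cofibrant precisely when the initial double functor $\emptyset\to\AA$ is a cofibration. By \cref{char:cof}, this holds if and only if the functor $U\bfH(\emptyset\to\AA)\colon\emptyset\to U\bfH\AA$ has the left lifting property with respect to all surjective on objects and full functors, and the functor $U\bfV(\emptyset\to\AA)\colon\emptyset\to U\bfV\AA$ has the left lifting property with respect to all surjective on objects and surjective on morphisms functors. Here I am using that $U\bfH$ and $U\bfV$ both preserve the initial object, so that the underlying horizontal and vertical functors of $\emptyset\to\AA$ are the unique functors $\emptyset\to U\bfH\AA$ and $\emptyset\to U\bfV\AA$.

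Next I would invoke the second statement of \cref{surjfull}, applied to each of these two initial functors. The functor $\emptyset\to U\bfH\AA$ has the left lifting property with respect to surjective on objects and full functors if and only if $U\bfH\AA$ is free; and the functor $\emptyset\to U\bfV\AA$ has the left lifting property with respect to surjective on objects and surjective on morphisms functors if and only if $U\bfV\AA$ is a disjoint union of copies of $\mathbbm 1$ and $\mathbbm 2$. Combining these two equivalences with the reduction of the previous paragraph immediately yields the claimed characterization.

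Since every ingredient has already been established, there is essentially no obstacle here; the statement is a direct corollary of \cref{char:cof} and \cref{surjfull}. The only point requiring a moment's care is the observation that $U\bfH$ and $U\bfV$ send the initial double category to the initial category, so that \cref{char:cof}(i) and (ii) become lifting properties for maps out of $\emptyset$ and the ``moreover'' clause of \cref{surjfull} applies verbatim.
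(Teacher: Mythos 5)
Your proof is correct and follows essentially the same route as the paper: reduce to \cref{char:cof} applied to $\emptyset\to\AA$, then invoke the ``moreover'' clause of \cref{surjfull} for each of the two initial functors. The extra remark that $U\bfH$ and $U\bfV$ preserve the initial object is a reasonable, if routine, point of care.
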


\begin{proof}
This follows directly from \cref{char:cof,surjfull} applied to the unique double functor $\emptyset\to \AA$.
\end{proof}

\section{Fibrations, weak equivalences, and Whitehead theorems}\label{section:weakequivsandfibrations}

The purpose of this section is to describe the weak equivalences and fibrations of our model structure. \cref{subsec:charfibeq} provides proofs of \cref{charequiv,charfib}, which claim that the weak equivalences and fibrations of the right-induced model structure on $\DblCat$ of \cref{thm:modelstructonDblCat} are precisely the double biequivalences of \cref{doublequiv} and the double fibrations of \cref{doublefib}. 

In \cref{subsec:whitehead} we turn our attention to another characterization of the weak equivalences, known as the Whitehead theorem. Recall that, in the $2$-categorical case, a $2$-functor is a biequivalence if and only if it has a pseudo inverse up to pseudo natural equivalence (see \cite[Theorem~7.4.1]{JohYau}). A similar statement does not hold in general for double biequivalences. However, when assuming that the target double category of a double functor has no composites of vertical morphisms (a condition that the cofibrant double categories in our model structure satisfy), we can show that a double functor is a double biequivalence if and only if it has a pseudo inverse up to horizontal pseudo natural equivalence. In particular, we retrieve the usual Whitehead theorem for model categories applied to our setting, and also the Whitehead theorem for $2$-categories stated above. Another version of the Whitehead theorem for double biequivalences is given in \cite[Theorem 8.1]{WHI}, which in turn holds for the fibrant objects of the model structure on $\DblCat$ defined therein.

\subsection{Characterizations of weak equivalences and fibrations} \label{subsec:charfibeq}

We first prove \cref{charequiv}, dealing with weak equivalences. In order to characterize the double functors $F$ such that $(\bfH,\cv)F$ is a weak equivalence in $\TwoCat\times\TwoCat$, we express what it means for~$\bfH F$ and $\cv F$ to be biequivalences in $\TwoCat$; this is done by translating (b1-3) of \cref{biequiv} for these $2$-functors.

\begin{rmk}\label{Hequiv}
Let $F\colon \AA\to \BB$ be a double functor. Then $\bfH F\colon \bfH \AA\to \bfH \BB$ is a biequivalence in $\TwoCat$ if and only if $F$ satisfies (db1-2) of \cref{doublequiv}, and the following condition:
\begin{enumerate}
    \item[(hb3)] for every pair of horizontal morphisms $a,c\colon A\to C$ in $\AA$ and every square in $\BB$ of the form
\begin{tz}
\node (A) at (0,0) {$FA$};
\node (B) at (1.5,0) {$FC$};
\node (A') at (0,-1.5) {$FA$};
\node (B') at (1.5,-1.5) {$FC$};
\node at ($(B'.east)-(0,4pt)$) {,};
\draw[->] (A) to node[above, scale=0.8] {$Fa$} (B);
\draw[->] (A') to node[below, scale=0.8] {$Fc$} (B');
\draw[d] (A) to (A');
\draw[d] (B) to (B');

\node at (0,-.75) {$\bullet$};
\node at (1.5, -.75) {$\bullet$};

\node[scale=0.8] at (.75,-.75) {$\beta$};
\end{tz}
    there is a unique square $\sq{\alpha}{a}{c}{e_A}{e_C}$ in $\AA$ such that $F\alpha=\beta$.
\end{enumerate}
\end{rmk}

\begin{rmk}\label{Vequiv}
Let $F\colon \AA\to \BB$ be a double functor. Then $\cv F\colon \cv \AA\to \cv \BB$ is a biequivalence in~$\TwoCat$ if and only if $F$ satisfies (db3) of \cref{doublequiv}, and the following conditions: 
\begin{enumerate}
    \item[(vb2)] for every pair of vertical morphisms $u\colon A\arrowdot A'$ and $u'\colon C\arrowdot C'$ in $\AA$ and every square $\sq{\beta}{b}{d}{Fu}{Fu'}$ in $\BB$, there is a square $\sq{\alpha}{a}{c}{u}{u'}$ in $\AA$ and two vertically invertible squares in $\BB$ such that the following pasting equality holds,
\begin{tz}
\node (A) at (0,0){$FA$};
\node (B) at (1.5,0){$FC$};
\node (A') at (0,-1.5){$FA$};
\node (B') at (1.5,-1.5){$FC$};
\node (A'') at (0,-3){$FA'$};
\node (B'') at (1.5,-3){$FC'$};
\draw[d] (A) to (A');
\draw[d] (B) to (B');
\draw[->] (A) to node[scale=0.8,above]{$b$} (B);
\draw[->] (A') to node[scale=0.8,above]{$Fa$} (B');
\draw[->] (A') to node[scale=0.8,left]{$Fu\;$} (A'');
\draw[->] (B') to node[scale=0.8,right]{$\;Fu'$}(B'');
\draw[->] (A'') to node[scale=0.8,below]{$Fc$} (B'');

\node at (0,-.75) {$\bullet$};
\node at (1.5,-.75) {$\bullet$};
\node at (0,-2.25) {$\bullet$};
\node at (1.5,-2.25) {$\bullet$};

\node[scale=0.8] at (.75,-.75) {$\vcong$};
\node[scale=0.8] at (.75,-2.25) {$F\alpha$};

\node at (2.5,-1.5) {$=$};

\node (A) at (3.5,0){$FA$};
\node (B) at (5,0){$FC$};
\node (A') at (3.5,-1.5){$FA'$};
\node (B') at (5,-1.5){$FC'$};
\node (A'') at (3.5,-3){$FA'$};
\node (B'') at (5,-3){$FC'$};
\draw[d] (A') to (A'');
\draw[d] (B') to (B'');
\draw[->] (A) to node[scale=0.8,above]{$b$} (B);
\draw[->] (A') to node[scale=0.8,below]{$d$} (B');
\draw[->] (A) to node[scale=0.8,left]{$Fu\;$} (A');
\draw[->] (B) to node[scale=0.8,right]{$\;Fu'$}(B');
\draw[->] (A'') to node[scale=0.8,below]{$Fc$} (B'');

\node at (3.5,-.75) {$\bullet$};
\node at (5,-.75) {$\bullet$};
\node at (3.5,-2.25) {$\bullet$};
\node at (5,-2.25) {$\bullet$};

\node[scale=0.8] at (4.25,-.75) {$\beta$};
\node[scale=0.8] at (4.25,-2.25) {$\vcong$};
\end{tz}
    \item[(vb3)] for every tuple of squares $\sq{\alpha}{a}{c}{u}{u'}$ and $\sq{\alpha'}{a'}{c'}{u}{u'}$ in $\AA$, and $\tau_0$ and $\tau_1$ in $\BB$ as in the pasting equality below left, there are unique squares $\sq{\sigma_0}{a}{a'}{e_A}{e_C}$ and $\sq{\sigma_1}{c}{c'}{e_{A'}}{e_{C'}}$ in $\AA$ satisfying the pasting equality below right, and with the property that $F\sigma_0=\tau_0$ and $F\sigma_1=\tau_1$.
\end{enumerate}
\begin{tz}
\node (A) at (0,0){$FA$};
\node (B) at (1.5,0){$FC$};
\node (A') at (0,-1.5){$FA$};
\node (B') at (1.5,-1.5){$FC$};
\node (A'') at (0,-3){$FA'$};
\node (B'') at (1.5,-3){$FC'$};
\draw[d] (A) to (A');
\draw[d] (B) to (B');
\draw[->] (A) to node[scale=0.8,above]{$Fa$} (B);
\draw[->] (A') to node[scale=0.8,above]{$Fa'$} (B');
\draw[->] (A') to node[scale=0.8,left]{$Fu\;$} (A'');
\draw[->] (B') to node[scale=0.8,right]{$\;Fu'$}(B'');
\draw[->] (A'') to node[scale=0.8,below]{$Fc'$} (B'');

\node at (0,-.75) {$\bullet$};
\node at (1.5,-.75) {$\bullet$};
\node at (0,-2.25) {$\bullet$};
\node at (1.5,-2.25) {$\bullet$};

\node[scale=0.8] at (.75,-.75) {$\tau_0$};
\node[scale=0.8] at (.75,-2.25) {$F\alpha'$};

\node at (2.5,-1.5) {$=$};

\node (A) at (3.5,0){$FA$};
\node (B) at (5,0){$FC$};
\node (A') at (3.5,-1.5){$FA'$};
\node (B') at (5,-1.5){$FC'$};
\node (A'') at (3.5,-3){$FA'$};
\node (B'') at (5,-3){$FC'$};
\draw[d] (A') to (A'');
\draw[d] (B') to (B'');
\draw[->] (A) to node[scale=0.8,above]{$Fa$} (B);
\draw[->] (A') to node[scale=0.8,below]{$Fc$} (B');
\draw[->] (A) to node[scale=0.8,left]{$Fu\;$} (A');
\draw[->] (B) to node[scale=0.8,right]{$\;Fu'$}(B');
\draw[->] (A'') to node[scale=0.8,below]{$Fc'$} (B'');

\node at (3.5,-.75) {$\bullet$};
\node at (5,-.75) {$\bullet$};
\node at (3.5,-2.25) {$\bullet$};
\node at (5,-2.25) {$\bullet$};

\node[scale=0.8] at (4.25,-.75) {$F\alpha$};
\node[scale=0.8] at (4.25,-2.25) {$\tau_1$};

\node (A) at (8,0){$A$};
\node (B) at (9.5,0){$C$};
\node (A') at (8,-1.5){$A$};
\node (B') at (9.5,-1.5){$C$};
\node (A'') at (8,-3){$A'$};
\node (B'') at (9.5,-3){$C'$};
\draw[d] (A) to (A');
\draw[d] (B) to (B');
\draw[->] (A) to node[scale=0.8,above]{$a$} (B);
\draw[->] (A') to node[scale=0.8,above]{$a'$} (B');
\draw[->] (A') to node[scale=0.8,left]{$u\;$} (A'');
\draw[->] (B') to node[scale=0.8,right]{$\;u'$}(B'');
\draw[->] (A'') to node[scale=0.8,below]{$c'$} (B'');

\node at (8,-.75) {$\bullet$};
\node at (9.5,-.75) {$\bullet$};
\node at (8,-2.25) {$\bullet$};
\node at (9.5,-2.25) {$\bullet$};

\node[scale=0.8] at (8.75,-.75) {$\sigma_0$};
\node[scale=0.8] at (8.75,-2.25) {$\alpha'$};

\node at (10.5,-1.5) {$=$};

\node (A) at (11.5,0){$A$};
\node (B) at (13,0){$C$};
\node (A') at (11.5,-1.5){$A'$};
\node (B') at (13,-1.5){$C'$};
\node (A'') at (11.5,-3){$A'$};
\node (B'') at (13,-3){$C'$};
\draw[d] (A') to (A'');
\draw[d] (B') to (B'');
\draw[->] (A) to node[scale=0.8,above]{$a$} (B);
\draw[->] (A') to node[scale=0.8,below]{$c$} (B');
\draw[->] (A) to node[scale=0.8,left]{$u\;$} (A');
\draw[->] (B) to node[scale=0.8,right]{$\;u'$}(B');
\draw[->] (A'') to node[scale=0.8,below]{$c'$} (B'');

\node at (11.5,-.75) {$\bullet$};
\node at (13,-.75) {$\bullet$};
\node at (11.5,-2.25) {$\bullet$};
\node at (13,-2.25) {$\bullet$};

\node[scale=0.8] at (12.25,-.75) {$\alpha$};
\node[scale=0.8] at (12.25,-2.25) {$\sigma_1$};

\end{tz}
\end{rmk}

The reader may have noticed that condition (db4) in \cref{doublequiv} regarding fully faithfulness on squares has not been mentioned so far. The following lemma says that the additional conditions (hb3) and (vb2-3) introduced in \cref{Hequiv,Vequiv} actually imply (db4).

\begin{lemma} \label{Fullfaithcell}
Suppose that $F\colon \AA\to \BB$ is a double functor satisfying (hb3) of \cref{Hequiv}, and~(vb2-3) of \cref{Vequiv}. Then $F$ satisfies (db4) of \cref{doublequiv}.
\end{lemma}

\begin{proof}
Suppose $\sq{\beta}{Fa}{Fc}{Fu}{Fu'}$ is a square in $\BB$ as in (db4) of \cref{doublequiv}. By (vb2) of \cref{Vequiv}, there is a square $\sq{\overline{\alpha}}{\overline{a}}{\overline{c}}{u}{u'}$ in $\AA$ and two vertically invertible squares $\psi_0$, $\psi_1$ in $\BB$ such that the following pasting equality holds.
\begin{tz}
\node (A) at (0,0){$FA$};
\node (B) at (1.5,0){$FC$};
\node (A') at (0,-1.5){$FA$};
\node (B') at (1.5,-1.5){$FC$};
\node (A'') at (0,-3){$FA'$};
\node (B'') at (1.5,-3){$FC'$};
\draw[d] (A) to (A');
\draw[d] (B) to (B');
\draw[->] (A) to node[scale=0.8,above]{$Fa$} (B);
\draw[->] (A') to node[scale=0.8,above]{$F\overline{a}$} (B');
\draw[->] (A') to node[scale=0.8,left]{$Fu\;$} (A'');
\draw[->] (B') to node[scale=0.8,right]{$\;Fu'$}(B'');
\draw[->] (A'') to node[scale=0.8,below]{$F\overline{c}$} (B'');

\node at (0,-.75) {$\bullet$};
\node at (1.5,-.75) {$\bullet$};
\node at (0,-2.25) {$\bullet$};
\node at (1.5,-2.25) {$\bullet$};

\node[scale=0.8] at (.9,-.75) {$\vcong$};
\node[scale=0.8] at (.6,-.75) {$\psi_0$};
\node[scale=0.8] at (.75,-2.25) {$F\overline\alpha$};

\node at (2.5,-1.5) {$=$};

\node (A) at (3.5,0){$FA$};
\node (B) at (5,0){$FC$};
\node (A') at (3.5,-1.5){$FA'$};
\node (B') at (5,-1.5){$FC'$};
\node (A'') at (3.5,-3){$FA'$};
\node (B'') at (5,-3){$FC'$};
\draw[d] (A') to (A'');
\draw[d] (B') to (B'');
\draw[->] (A) to node[scale=0.8,above]{$Fa$} (B);
\draw[->] (A') to node[scale=0.8,below]{$Fc$} (B');
\draw[->] (A) to node[scale=0.8,left]{$Fu\;$} (A');
\draw[->] (B) to node[scale=0.8,right]{$\;Fu'$}(B');
\draw[->] (A'') to node[scale=0.8,below]{$F\overline{c}$} (B'');

\node at (3.5,-.75) {$\bullet$};
\node at (5,-.75) {$\bullet$};
\node at (3.5,-2.25) {$\bullet$};
\node at (5,-2.25) {$\bullet$};

\node[scale=0.8] at (4.25,-.75) {$\beta$};
\node[scale=0.8] at (4.4,-2.25) {$\vcong$};
\node[scale=0.8] at (4.1,-2.25) {$\psi_1$};
\end{tz}
By (hb3) of \cref{Hequiv}, there are unique squares $\sq{\varphi_0}{a}{\overline{a}}{e_A}{e_C}$ and $\sq{\varphi_1}{c}{\overline{c}}{e_{A'}}{e_{C'}}$ in $\AA$ such that $F\varphi_0=\psi_0$ and $F\varphi_1=\psi_1$. Moreover, the squares $\varphi_0$ and $\varphi_1$ are vertically invertible by the unicity condition in (hb3). Therefore, the square $\alpha$ given by the following vertical pasting
\begin{tz}
\node (A) at (-3.5,-1.5) {$A$};
\node (B) at (-2,-1.5) {$C$};
\node (A') at (-3.5,-3) {$A'$};
\node (B') at (-2,-3) {$C'$};
\draw[->] (A) to node[above, scale=0.8] {$a$} (B);
\draw[->] (A') to node[below, scale=0.8] {$c$} (B');
\draw[->] (A) to node[left,scale=0.8] {$u\;$} (A');
\draw[->] (B) to node[right,scale=0.8] {$\;u'$} (B');

\node at (-3.5,-2.25) {$\bullet$};
\node at (-2, -2.25) {$\bullet$};

\node[scale=0.8] at (-2.75,-2.25) {$\alpha$};

\node at (-1,-2.25) {$=$};

\node (A) at (0,0){$A$};
\node (B) at (1.5,0){$C$};
\node (A') at (0,-1.5){$A$};
\node (B') at (1.5,-1.5){$C$};
\node (A'') at (0,-3){$A'$};
\node (B'') at (1.5,-3){$C'$};
\node (A''') at (0,-4.5){$A'$};
\node (B''') at (1.5,-4.5){$C'$};
\draw[d] (A) to (A');
\draw[d] (B) to (B');
\draw[->] (A) to node[scale=0.8,above]{$a$} (B);
\draw[->] (A') to node[scale=0.8,above]{$\overline{a}$} (B');
\draw[->] (A') to node[scale=0.8,left]{$u\;$} (A'');
\draw[->] (B') to node[scale=0.8,right]{$\;u'$}(B'');
\draw[->] (A'') to node[scale=0.8,below]{$\overline{c}$} (B'');
\draw[->] (A''') to node[scale=0.8,below]{$c$} (B''');
\draw[d] (A'') to (A''');
\draw[d] (B'') to (B''');

\node at (0,-.75) {$\bullet$};
\node at (1.5,-.75) {$\bullet$};
\node at (0,-2.25) {$\bullet$};
\node at (1.5,-2.25) {$\bullet$};
\node at (0,-3.75) {$\bullet$};
\node at (1.5,-3.75) {$\bullet$};

\node[scale=0.8] at (.9,-.75) {$\vcong$};
\node[scale=0.8] at (.6,-.75) {$\varphi_0$};
\node[scale=0.8] at (.75,-2.25) {$\overline\alpha$};
\node[scale=0.8] at (1,-3.8) {$\vcong$};
\node[scale=0.8] at (.6,-3.75) {$\varphi_1^{-1}$};
\end{tz}
is such that $F\alpha=\beta$. This settles the matter of the existence of the square $\alpha$. Now suppose there are two squares $\sq{\alpha}{a}{c}{u}{u'}$ and $\sq{\alpha'}{a}{c}{u}{u'}$ in $\AA$ such that $F\alpha=\beta=F\alpha'$. Take $\tau_0=e_{Fa}$ and $\tau_1=e_{Fc}$ in (vb3) of \cref{Vequiv}. This gives unique squares $\sigma_0$ and $\sigma_1$ in $\AA$ such that the following pasting equality holds
\begin{tz}
\node (A) at (0,0){$A$};
\node (B) at (1.5,0){$C$};
\node (A') at (0,-1.5){$A$};
\node (B') at (1.5,-1.5){$C$};
\node (A'') at (0,-3){$A'$};
\node (B'') at (1.5,-3){$C'$};
\draw[d] (A) to (A');
\draw[d] (B) to (B');
\draw[->] (A) to node[scale=0.8,above]{$a$} (B);
\draw[->] (A') to node[scale=0.8,above]{$a$} (B');
\draw[->] (A') to node[scale=0.8,left]{$u\;$} (A'');
\draw[->] (B') to node[scale=0.8,right]{$\;u'$}(B'');
\draw[->] (A'') to node[scale=0.8,below]{$c$} (B'');

\node at (0,-.75) {$\bullet$};
\node at (1.5,-.75) {$\bullet$};
\node at (0,-2.25) {$\bullet$};
\node at (1.5,-2.25) {$\bullet$};

\node[scale=0.8] at (.75,-.75) {$\sigma_0$};
\node[scale=0.8] at (.75,-2.25) {$\alpha'$};

\node at (2.5,-1.5) {$=$};

\node (A) at (3.5,0){$A$};
\node (B) at (5,0){$C$};
\node (A') at (3.5,-1.5){$A'$};
\node (B') at (5,-1.5){$C'$};
\node (A'') at (3.5,-3){$A'$};
\node (B'') at (5,-3){$C'$};
\draw[d] (A') to (A'');
\draw[d] (B') to (B'');
\draw[->] (A) to node[scale=0.8,above]{$a$} (B);
\draw[->] (A') to node[scale=0.8,below]{$c$} (B');
\draw[->] (A) to node[scale=0.8,left]{$u\;$} (A');
\draw[->] (B) to node[scale=0.8,right]{$\;u'$}(B');
\draw[->] (A'') to node[scale=0.8,below]{$c$} (B'');

\node at (3.5,-.75) {$\bullet$};
\node at (5,-.75) {$\bullet$};
\node at (3.5,-2.25) {$\bullet$};
\node at (5,-2.25) {$\bullet$};

\node[scale=0.8] at (4.25,-.75) {$\alpha$};
\node[scale=0.8] at (4.25,-2.25) {$\sigma_1$};

\end{tz}
     and $F\sigma_0=e_{Fa}$ and $F\sigma_1=e_{Fc}$. By unicity in (hb3), we must have $\sigma_0=e_a$ and $\sigma_1=e_c$. Replacing $\sigma_0$ and $\sigma_1$ by $e_a$ and $e_c$ in the pasting diagram above implies that~$\alpha=\alpha'$. This proves unicity. 
\end{proof}

We can now use the above results to prove \cref{charequiv}, giving the desired characterization of the weak equivalences in our model structure on $\DblCat$.

\begin{proof}[Proof of \cref{charequiv}]
Suppose that $F\colon \AA\to \BB$ is a double functor such that both $\bfH F$ and $\cv F$ are biequivalences in $\TwoCat$. By \cref{Hequiv,Vequiv}, we directly have that $F$ satisfies (db1-3) of \cref{doublequiv}. Moreover, by \cref{Fullfaithcell}, we also have that $F$ satisfies (db4) of \cref{doublequiv}. This shows that $F$ is a double biequivalence. 

Now suppose that $F\colon \AA\to \BB$ is a double biequivalence. We want to show that both~$\bfH F$ and $\cv F$ are biequivalences in $\TwoCat$. To show that $\bfH F$ is a biequivalence, it suffices to show that (hb3) of \cref{Hequiv} is satisfied; this follows directly from taking $u$ and $u'$ to be vertical identities in (db4) of \cref{doublequiv}. 

It remains to show that $\cv F$ is a biequivalence; we do so by proving (vb2-3) of \cref{Vequiv}. To prove (vb2), let $u\colon A\arrowdot A'$ and $u'\colon C\arrowdot C'$ be vertical morphisms in $\AA$ and $\beta$ be a square in $\BB$ of the form
\begin{tz}
\node (A) at (3.5,0){$FA$};
\node (B) at (5,0){$FC$};
\node (A') at (3.5,-1.5){$FA'$};
\node (B') at (5,-1.5){$FC'$};
\node at ($(B'.east)-(0,4pt)$) {.};
\draw[->] (A) to node[scale=0.8,above]{$b$} (B);
\draw[->] (A') to node[scale=0.8,below]{$d$} (B');
\draw[->] (A) to node[scale=0.8,left]{$Fu\;$} (A');
\draw[->] (B) to node[scale=0.8,right]{$\;Fu'$}(B');

\node at (3.5,-.75) {$\bullet$};
\node at (5,-.75) {$\bullet$};

\node[scale=0.8] at (4.25,-.75) {$\beta$};
\end{tz}
By (db2) of \cref{doublequiv}, there are horizontal morphisms $a\colon A\to C$ and $c\colon A'\to C'$ in~$\AA$ and vertically invertible squares $\sq{\varphi_0}{b}{Fa}{e_{FA}}{e_{FC}}$ and $\sq{\varphi_1}{d}{Fc}{e_{FA'}}{e_{FC'}}$ in $\BB$. By (db4) of \cref{doublequiv}, there is a unique square $\sq{\alpha}{a}{c}{u}{u'}$ in $\mathbb A$ such that 
\begin{tz}
\node (A) at (-3.5,-1.5) {$FA$};
\node (B) at (-2,-1.5) {$FC$};
\node (A') at (-3.5,-3) {$FA'$};
\node (B') at (-2,-3) {$FC'$};
\draw[->] (A) to node[above, scale=0.8] {$Fa$} (B);
\draw[->] (A') to node[below, scale=0.8] {$Fc$} (B');
\draw[->] (A) to node[left,scale=0.8] {$Fu\;$} (A');
\draw[->] (B) to node[right,scale=0.8] {$\;Fu'$} (B');

\node at (-3.5,-2.25) {$\bullet$};
\node at (-2, -2.25) {$\bullet$};

\node[scale=0.8] at (-2.75,-2.25) {$F\alpha$};

\node at (-1,-2.25) {$=$};

\node (A) at (0,0){$FA$};
\node (B) at (1.5,0){$FC$};
\node (A') at (0,-1.5){$FA$};
\node (B') at (1.5,-1.5){$FC$};
\node (A'') at (0,-3){$FA'$};
\node (B'') at (1.5,-3){$FC'$};
\node (A''') at (0,-4.5){$FA'$};
\node (B''') at (1.5,-4.5){$FC'$};
\node at ($(B'''.east)-(0,4pt)$) {,};
\draw[d] (A) to (A');
\draw[d] (B) to (B');
\draw[->] (A) to node[scale=0.8,above]{$Fa$} (B);
\draw[->] (A') to node[scale=0.8,above]{$b$} (B');
\draw[->] (A') to node[scale=0.8,left]{$Fu\;$} (A'');
\draw[->] (B') to node[scale=0.8,right]{$\;Fu'$}(B'');
\draw[->] (A'') to node[scale=0.8,below]{$d$} (B'');
\draw[->] (A''') to node[scale=0.8,below]{$Fc$} (B''');
\draw[d] (A'') to (A''');
\draw[d] (B'') to (B''');

\node at (0,-.75) {$\bullet$};
\node at (1.5,-.75) {$\bullet$};
\node at (0,-2.25) {$\bullet$};
\node at (1.5,-2.25) {$\bullet$};
\node at (0,-3.75) {$\bullet$};
\node at (1.5,-3.75) {$\bullet$};

\node[scale=0.8] at (1,-.75) {$\vcong$};
\node[scale=0.8] at (.6,-.7) {$\varphi_0^{-1}$};
\node[scale=0.8] at (.75,-2.25) {$\beta$};
\node[scale=0.8] at (.9,-3.75) {$\vcong$};
\node[scale=0.8] at (.6,-3.75) {$\varphi_1$};
\end{tz}
which gives (vb2). Finally, we prove (vb3). Suppose we have a pasting equality in~$\BB$ as below left.
\begin{tz}
\node (A) at (0,0){$FA$};
\node (B) at (1.5,0){$FC$};
\node (A') at (0,-1.5){$FA$};
\node (B') at (1.5,-1.5){$FC$};
\node (A'') at (0,-3){$FA'$};
\node (B'') at (1.5,-3){$FC'$};
\draw[d] (A) to (A');
\draw[d] (B) to (B');
\draw[->] (A) to node[scale=0.8,above]{$Fa$} (B);
\draw[->] (A') to node[scale=0.8,above]{$Fa'$} (B');
\draw[->] (A') to node[scale=0.8,left]{$Fu\;$} (A'');
\draw[->] (B') to node[scale=0.8,right]{$\;Fu'$}(B'');
\draw[->] (A'') to node[scale=0.8,below]{$Fc'$} (B'');

\node at (0,-.75) {$\bullet$};
\node at (1.5,-.75) {$\bullet$};
\node at (0,-2.25) {$\bullet$};
\node at (1.5,-2.25) {$\bullet$};

\node[scale=0.8] at (.75,-.75) {$\tau_0$};
\node[scale=0.8] at (.75,-2.25) {$F\alpha'$};

\node at (2.5,-1.5) {$=$};

\node (A) at (3.5,0){$FA$};
\node (B) at (5,0){$FC$};
\node (A') at (3.5,-1.5){$FA'$};
\node (B') at (5,-1.5){$FC'$};
\node (A'') at (3.5,-3){$FA'$};
\node (B'') at (5,-3){$FC'$};
\draw[d] (A') to (A'');
\draw[d] (B') to (B'');
\draw[->] (A) to node[scale=0.8,above]{$Fa$} (B);
\draw[->] (A') to node[scale=0.8,below]{$Fc$} (B');
\draw[->] (A) to node[scale=0.8,left]{$Fu\;$} (A');
\draw[->] (B) to node[scale=0.8,right]{$\;Fu'$}(B');
\draw[->] (A'') to node[scale=0.8,below]{$Fc'$} (B'');

\node at (3.5,-.75) {$\bullet$};
\node at (5,-.75) {$\bullet$};
\node at (3.5,-2.25) {$\bullet$};
\node at (5,-2.25) {$\bullet$};

\node[scale=0.8] at (4.25,-.75) {$F\alpha$};
\node[scale=0.8] at (4.25,-2.25) {$\tau_1$};

\node (A) at (8,0){$A$};
\node (B) at (9.5,0){$C$};
\node (A') at (8,-1.5){$A$};
\node (B') at (9.5,-1.5){$C$};
\node (A'') at (8,-3){$A'$};
\node (B'') at (9.5,-3){$C'$};
\draw[d] (A) to (A');
\draw[d] (B) to (B');
\draw[->] (A) to node[scale=0.8,above]{$a$} (B);
\draw[->] (A') to node[scale=0.8,above]{$a'$} (B');
\draw[->] (A') to node[scale=0.8,left]{$u\;$} (A'');
\draw[->] (B') to node[scale=0.8,right]{$\;u'$}(B'');
\draw[->] (A'') to node[scale=0.8,below]{$c'$} (B'');

\node at (8,-.75) {$\bullet$};
\node at (9.5,-.75) {$\bullet$};
\node at (8,-2.25) {$\bullet$};
\node at (9.5,-2.25) {$\bullet$};

\node[scale=0.8] at (8.75,-.75) {$\sigma_0$};
\node[scale=0.8] at (8.75,-2.25) {$\alpha'$};

\node at (10.5,-1.5) {$=$};

\node (A) at (11.5,0){$A$};
\node (B) at (13,0){$C$};
\node (A') at (11.5,-1.5){$A'$};
\node (B') at (13,-1.5){$C'$};
\node (A'') at (11.5,-3){$A'$};
\node (B'') at (13,-3){$C'$};
\draw[d] (A') to (A'');
\draw[d] (B') to (B'');
\draw[->] (A) to node[scale=0.8,above]{$a$} (B);
\draw[->] (A') to node[scale=0.8,below]{$c$} (B');
\draw[->] (A) to node[scale=0.8,left]{$u\;$} (A');
\draw[->] (B) to node[scale=0.8,right]{$\;u'$}(B');
\draw[->] (A'') to node[scale=0.8,below]{$c'$} (B'');

\node at (11.5,-.75) {$\bullet$};
\node at (13,-.75) {$\bullet$};
\node at (11.5,-2.25) {$\bullet$};
\node at (13,-2.25) {$\bullet$};

\node[scale=0.8] at (12.25,-.75) {$\alpha$};
\node[scale=0.8] at (12.25,-2.25) {$\sigma_1$};

\end{tz}
Applying (db4) of \cref{doublequiv} to $\tau_0$ and $\tau_1$ gives unique squares $\sq{\sigma_0}{a}{a'}{e_A}{e_C}$ and $\sq{\sigma_1}{c}{c'}{e_{A'}}{e_{C'}}$ in $\AA$ such that $F\sigma_0=\tau_0$ and $F\sigma_1=\tau_1$. Moreover, by unicity in (db4) of \cref{doublequiv}, we have the pasting equality above right, since applying $F$ to each vertical composite yields the same squares in $\BB$. This proves~(vb3), and thus concludes the proof.
\end{proof}

Now we turn our attention to \cref{charfib}, dealing with  fibrations. Our treatment is analogous to that of weak equivalences: in order to characterize the functors $F$ such that $(\bfH,\cv)F$ is a fibration, we express what it means for $\bfH F$ and $\cv F$ to be Lack fibrations in~$\TwoCat$; this is done by translating (f1-2) of \cref{Lackfib} for these $2$-functors.

\begin{rmk}\label{Hfib}
Let $F\colon \AA\to \BB$ be a double functor. Then $\bfH F\colon \bfH \AA\to \bfH \BB$ is a fibration in~$\TwoCat$ if and only if $F$ satisfies (df1-2) of \cref{doublefib}.
\end{rmk}

\begin{rmk}\label{Vfib}
Let $F\colon \AA\to \BB$ be a double functor. Then $\cv F\colon \cv \AA\to \cv \BB$ is a fibration in~$\TwoCat$ if and only if $F$ satisfies (df3) of \cref{doublefib}, and the following condition:
\begin{enumerate}
    \item[(vf2)] for every square $\sq{\alpha'}{a'}{c'}{u}{u'}$ in $\AA$ and every square $\sq{\beta}{b}{d}{Fu}{Fu'}$ in $\BB$, together with vertically invertible squares $\tau_0$ and $\tau_1$ in $\BB$ as in the pasting equality below left, there is a square $\sq{\alpha}{a}{c}{u}{u'}$ in $\AA$, together with vertically invertible squares $\sigma_0$ and $\sigma_1$ in $\AA$ as in the pasting equality below right, such that $F\alpha=\beta$, $F\sigma_0=\tau_0$, and $F\sigma_1=\tau_1$.
\end{enumerate}
\begin{tz}
\node (A) at (0,0){$FA$};
\node (B) at (1.5,0){$FC$};
\node (A') at (0,-1.5){$FA$};
\node (B') at (1.5,-1.5){$FC$};
\node (A'') at (0,-3){$FA'$};
\node (B'') at (1.5,-3){$FC'$};
\draw[d] (A) to (A');
\draw[d] (B) to (B');
\draw[->] (A) to node[scale=0.8,above]{$b$} (B);
\draw[->] (A') to node[scale=0.8,above]{$Fa'$} (B');
\draw[->] (A') to node[scale=0.8,left]{$Fu\;$} (A'');
\draw[->] (B') to node[scale=0.8,right]{$\;Fu'$}(B'');
\draw[->] (A'') to node[scale=0.8,below]{$Fc'$} (B'');

\node at (0,-.75) {$\bullet$};
\node at (1.5,-.75) {$\bullet$};
\node at (0,-2.25) {$\bullet$};
\node at (1.5,-2.25) {$\bullet$};

\node[scale=0.8] at (.6,-.75) {$\tau_0$};
\node[scale=0.8] at (.9,-.75) {$\vcong$};
\node[scale=0.8] at (.75,-2.25) {$F\alpha'$};

\node at (2.5,-1.5) {$=$};

\node (A) at (3.5,0){$FA$};
\node (B) at (5,0){$FC$};
\node (A') at (3.5,-1.5){$FA'$};
\node (B') at (5,-1.5){$FC'$};
\node (A'') at (3.5,-3){$FA'$};
\node (B'') at (5,-3){$FC'$};
\draw[d] (A') to (A'');
\draw[d] (B') to (B'');
\draw[->] (A) to node[scale=0.8,above]{$b$} (B);
\draw[->] (A') to node[scale=0.8,below]{$d$} (B');
\draw[->] (A) to node[scale=0.8,left]{$Fu\;$} (A');
\draw[->] (B) to node[scale=0.8,right]{$\;Fu'$}(B');
\draw[->] (A'') to node[scale=0.8,below]{$Fc'$} (B'');

\node at (3.5,-.75) {$\bullet$};
\node at (5,-.75) {$\bullet$};
\node at (3.5,-2.25) {$\bullet$};
\node at (5,-2.25) {$\bullet$};

\node[scale=0.8] at (4.25,-.75) {$\beta$};
\node[scale=0.8] at (4.1,-2.25) {$\tau_1$};
\node[scale=0.8] at (4.4,-2.25) {$\vcong$};

\node (A) at (8,0){$A$};
\node (B) at (9.5,0){$C$};
\node (A') at (8,-1.5){$A$};
\node (B') at (9.5,-1.5){$C$};
\node (A'') at (8,-3){$A'$};
\node (B'') at (9.5,-3){$C'$};
\draw[d] (A) to (A');
\draw[d] (B) to (B');
\draw[->] (A) to node[scale=0.8,above]{$a$} (B);
\draw[->] (A') to node[scale=0.8,above]{$a'$} (B');
\draw[->] (A') to node[scale=0.8,left]{$u\;$} (A'');
\draw[->] (B') to node[scale=0.8,right]{$\;u'$}(B'');
\draw[->] (A'') to node[scale=0.8,below]{$c'$} (B'');

\node at (8,-.75) {$\bullet$};
\node at (9.5,-.75) {$\bullet$};
\node at (8,-2.25) {$\bullet$};
\node at (9.5,-2.25) {$\bullet$};

\node[scale=0.8] at (8.6,-.75) {$\sigma_0$};
\node[scale=0.8] at (8.9,-.75) {$\vcong$};
\node[scale=0.8] at (8.75,-2.25) {$\alpha'$};

\node at (10.5,-1.5) {$=$};

\node (A) at (11.5,0){$A$};
\node (B) at (13,0){$C$};
\node (A') at (11.5,-1.5){$A'$};
\node (B') at (13,-1.5){$C'$};
\node (A'') at (11.5,-3){$A'$};
\node (B'') at (13,-3){$C'$};
\draw[d] (A') to (A'');
\draw[d] (B') to (B'');
\draw[->] (A) to node[scale=0.8,above]{$a$} (B);
\draw[->] (A') to node[scale=0.8,below]{$c$} (B');
\draw[->] (A) to node[scale=0.8,left]{$u\;$} (A');
\draw[->] (B) to node[scale=0.8,right]{$\;u'$}(B');
\draw[->] (A'') to node[scale=0.8,below]{$c'$} (B'');

\node at (11.5,-.75) {$\bullet$};
\node at (13,-.75) {$\bullet$};
\node at (11.5,-2.25) {$\bullet$};
\node at (13,-2.25) {$\bullet$};

\node[scale=0.8] at (12.25,-.75) {$\alpha$};
\node[scale=0.8] at (12.1,-2.25) {$\sigma_1$};
\node[scale=0.8] at (12.4,-2.25) {$\vcong$};
\end{tz}
\end{rmk}

We can now use the above remarks to provide a proof of \cref{charfib}, giving the desired characterization of the fibrations in our model structure. 

\begin{proof}[Proof of \cref{charfib}]
It is clear that if a double functor $F\colon \AA\to \BB$ is such that both~$\bfH F$ and $\cv F$ are Lack fibrations in $\TwoCat$, then it is a double fibration, by \cref{Hfib,Vfib}.

Suppose now that $F\colon \AA\to \BB$ is a double fibration. By \cref{Hfib}, we directly get that~$\bfH F$ is a Lack fibration in $\TwoCat$. To show that $\cv F$ is also a Lack fibration, it suffices to show that (vf2) of \cref{Vfib} is satisfied. Let $\sq{\alpha'}{a'}{c'}{u}{u'}$ be a square in $\AA$ and $\sq{\beta}{b}{d}{Fu}{Fu'}$ be a square in $\BB$, together with vertically invertible squares $\tau_0$ and $\tau_1$ in $\BB$ as in the leftmost pasting equality diagram in (vf2). By (df2) of \cref{doublefib}, there are vertically invertible squares $\sq{\sigma_0}{a}{a'}{e_A}{e_C}$ and $\sq{\sigma_1}{c}{c'}{e_{A'}}{e_{C'}}$ in $\AA$ such that $F\sigma_0=\tau_0$ and $F\sigma_1=\tau_1$. Then the square $\alpha$ given by the vertical composite 
\begin{tz}
\node (A) at (-3.5,-1.5) {$A$};
\node (B) at (-2,-1.5) {$C$};
\node (A') at (-3.5,-3) {$A'$};
\node (B') at (-2,-3) {$C'$};
\draw[->] (A) to node[above, scale=0.8] {$a$} (B);
\draw[->] (A') to node[below, scale=0.8] {$c$} (B');
\draw[->] (A) to node[left,scale=0.8] {$u\;$} (A');
\draw[->] (B) to node[right,scale=0.8] {$\;u'$} (B');

\node at (-3.5,-2.25) {$\bullet$};
\node at (-2, -2.25) {$\bullet$};

\node[scale=0.8] at (-2.75,-2.25) {$\alpha$};

\node at (-1,-2.25) {$=$};

\node (A) at (0,0){$A$};
\node (B) at (1.5,0){$C$};
\node (A') at (0,-1.5){$A$};
\node (B') at (1.5,-1.5){$C$};
\node (A'') at (0,-3){$A'$};
\node (B'') at (1.5,-3){$C'$};
\node (A''') at (0,-4.5){$A'$};
\node (B''') at (1.5,-4.5){$C'$};
\draw[d] (A) to (A');
\draw[d] (B) to (B');
\draw[->] (A) to node[scale=0.8,above]{$a$} (B);
\draw[->] (A') to node[scale=0.8,above]{$a'$} (B');
\draw[->] (A') to node[scale=0.8,left]{$u\;$} (A'');
\draw[->] (B') to node[scale=0.8,right]{$\;u'$}(B'');
\draw[->] (A'') to node[scale=0.8,below]{$c'$} (B'');
\draw[->] (A''') to node[scale=0.8,below]{$c$} (B''');
\draw[d] (A'') to (A''');
\draw[d] (B'') to (B''');

\node at (0,-.75) {$\bullet$};
\node at (1.5,-.75) {$\bullet$};
\node at (0,-2.25) {$\bullet$};
\node at (1.5,-2.25) {$\bullet$};
\node at (0,-3.75) {$\bullet$};
\node at (1.5,-3.75) {$\bullet$};

\node[scale=0.8] at (.9,-.75) {$\vcong$};
\node[scale=0.8] at (.6,-.7) {$\sigma_0$};
\node[scale=0.8] at (.75,-2.25) {$\alpha'$};
\node[scale=0.8] at (1,-3.75) {$\vcong$};
\node[scale=0.8] at (.6,-3.75) {$\sigma_1^{-1}$};
\end{tz}
is such that $F\alpha=\beta$, which proves (vf2).
\end{proof}

\subsection{Homotopy equivalences and the Whitehead theorem} \label{subsec:whitehead}

Any model category satisfies a Whitehead theorem, stating that the weak equivalences between cofibrant-fibrant objects are precisely the homotopy equivalences; i.e., the morphisms $f\colon X\to Y$ such that there is a morphism $g\colon Y\to X$ with the property that $fg$ and $gf$ are homotopic to the identity. 
We begin by studying what the notion of homotopy entails in our setting; for this, let us first introduce the notion of horizontal pseudo natural equivalences.

\begin{defn} \label{def:pseudoeq}
Let $F,G\colon \AA\to \BB$ be double functors. A horizontal pseudo natural transformation $h\colon F\Rightarrow G$ is a \textbf{horizontal pseudo natural equivalence} if 
\begin{rome}
\item the horizontal morphism $h_A\colon FA\to GA$ is a horizontal equivalence in $\BB$, for each object $A\in \AA$, and
\item the square $\sq{h_u}{h_A}{h_{A'}}{Fu}{Gu}$ is weakly horizontally invertible in $\BB$, for each vertical morphism $u\colon A\arrowdot A'$ in $\AA$.
\end{rome} 
If the horizontal morphisms $h_A\colon FA\to GA$ are in addition horizontal adjoint equivalences in $\BB$, we say that $h$ is a \textbf{horizontal pseudo natural adjoint equivalence}. 

We write $h\colon F\simeq G$ for such a horizontal pseudo natural transformation.
\end{defn}

\begin{rmk} \label{rem:psdareeq}
By \cite[Lemma A.3.3]{Moserforth}, a horizontal pseudo natural (adjoint) equivalence as above is precisely an (adjoint) equivalence in the $2$-category $\bfH[\AA,\BB]_\ps$, or equivalently, a horizontal (adjoint) equivalence in the double category $[\AA,\BB]_\ps$.
\end{rmk}

With this definition in hand, and recalling that every double category is fibrant, we can see that two double functors between cofibrant double categories are homotopic via the path object constructed in \cref{def:pathobject} whenever they are related by a horizontal pseudo natural equivalence. 

\begin{prop} \label{lem:righthomotopy}
Let $\AA$ and $\BB$ be cofibrant double categories and let $F,G\colon \AA\to \BB$ be double functors. Then $F$ and $G$ are homotopic via the path object $\cp \BB$ of \cref{def:pathobject} if and only if there is a horizontal pseudo natural adjoint equivalence $F\simeq G$.
\end{prop}

\begin{proof}
Recall that the path object $\cp\BB$ of \cref{def:pathobject} is given by the pseudo hom double category $[\bbH \Eadj,\BB]_\ps$, where the $2$-category $\Eadj$ is the free-living adjoint equivalence $\{0\xrightarrow{\simeq} 1\}$.  Therefore, a homotopy between double functors $F,G\colon \AA\to \BB$ via $\cp \BB$ is a double functor $h\colon \AA\to [\bbH\Eadj,\BB]_\ps$
such that $Ph=(F,G)$ or, equivalently, a double functor \[\widehat{h}\colon \bbH\Eadj \to [\AA,\BB]_\ps\] such that $\widehat{h}(0)=F$ and $\widehat{h}(1)=G$. This corresponds to a horizontal pseudo natural adjoint equivalence $F\simeq G$ by \cref{rem:psdareeq}.
\end{proof}

\begin{rmk} \label{rem:usualWhitehead}
By the usual Whitehead theorem (see, for example, \cite[Lemma 4.24]{DS}), a morphism between cofibrant-fibrant objects in a model category is a weak equivalence if and only if it is a homotopy equivalence. Hence,  since all double categories are fibrant in the model structure of \cref{thm:modelstructonDblCat}, we can use \cref{lem:righthomotopy} to characterize double biequivalences between cofibrant objects in $\DblCat$ as those double functors which admit an inverse up to horizontal pseudo natural adjoint equivalence, i.e., double functors $F\colon \AA\to \BB$ such that there is a double functor $G\colon \BB\to \AA$ together with horizontal pseudo natural adjoint equivalences $\id_\AA\simeq GF$ and $FG\simeq \id_\BB$.
\end{rmk}

In our double categorical setting, we can prove a version of the Whitehead theorem for a wider class of weak equivalences, by only imposing a condition on their target double categories. However, in some cases, the homotopy inverse is not a strict double functor anymore, but it is rather pseudo in the horizontal direction.

\begin{defn}
A \textbf{horizontally pseudo double functor} $F\colon \AA\to \BB$ consists of maps on objects, horizontal morphisms, vertical morphisms, and squares, which are compatible with domains and codomains. These maps preserve identities and compositions of vertical morphisms and squares strictly, but they preserve identities and compositions of horizontal morphisms only up to vertically invertible squares. These are submitted to associativity, unitality, and naturality conditions. See \cite[Definition 3.5.1]{Grandis} for details (note, however, that our definition has reversed the roles of the horizontal and vertical directions). 

If $F$ strictly preserves horizontal identities, we say that $F$ is \textbf{normal}. 
\end{defn}

\begin{rmk}
Analogously to \cref{def:pseudohortransf,def:pseudoeq}, we have notions of horizontal and vertical pseudo natural transformations, modifications, and horizontal pseudo natural equivalences between horizontally pseudo double functors. See \cite[\S 3.8]{Grandis} for precise definitions; note that our definition has reversed the roles of the horizontal and vertical directions.
\end{rmk}

Our class of double biequivalences contains in particular the double functors that have a horizontally pseudo inverse up to horizontal pseudo natural equivalence. 

\begin{prop} \label{prop:horeqarewe}
Let $F\colon \AA\to \BB$ be a double functor. If there is a normal horizontally pseudo double functor ${G\colon \BB\to \AA}$ together with horizontal pseudo natural equivalences $\eta\colon \id_\AA\simeq GF$ and $\epsilon\colon FG\simeq \id_\BB$, then $F$ is a double biequivalence. 
\end{prop}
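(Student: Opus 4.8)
The plan is to deduce the statement from \cref{charequiv}: it suffices to prove that the $2$-functors $\bfH F\colon\bfH\AA\to\bfH\BB$ and $\cv F\colon\cv\AA\to\cv\BB$ are both biequivalences in $\TwoCat$. For this I will use the standard fact that a $2$-functor is a biequivalence if and only if it admits a pseudo functor as a pseudo inverse together with pseudo natural equivalences on both sides (this is what underlies the remark following \cref{doublequiv}; see also \cite[Theorem~7.4.1]{JohYau}). So the whole argument amounts to transporting the data $(G,\eta,\epsilon)$ along $\bfH$ and along $\cv=\bfH[\vtwo,-]$.

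First I would record that $\bfH$ extends to normal horizontally pseudo double functors and their horizontal pseudo natural transformations. Indeed, the comparison squares witnessing the pseudo-functoriality of $G$ are vertically invertible and have trivial vertical boundaries, hence are invertible $2$-cells of $\bfH\AA$; so $G$ induces a normal pseudo functor $\bfH G\colon\bfH\BB\to\bfH\AA$ with $\bfH G\circ\bfH F=\bfH(GF)$. Likewise, the horizontal pseudo natural equivalence $\eta\colon\id_\AA\simeq GF$ has underlying data $(\eta_A,\eta_a)$ which assembles into a pseudo natural transformation $\bfH\eta\colon\id_{\bfH\AA}\Rightarrow\bfH G\circ\bfH F$, and by condition~(i) of \cref{def:pseudoeq} each component $\eta_A$ is a horizontal equivalence, i.e.\ an equivalence in $\bfH\AA$ (\cref{def:horeq}); hence $\bfH\eta$ is a pseudo natural equivalence, and the same holds for $\bfH\epsilon$. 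Thus $\bfH F$ has a pseudo inverse up to pseudo natural equivalence, so it is a biequivalence.

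Next I would push the same argument through $[\vtwo,-]$. Since $\vtwo$ has no non-trivial horizontal morphisms, and using that $G$ is \emph{normal}, post-composition with $G$ sends a strict double functor $\vtwo\to\BB$ to a strict double functor $\vtwo\to\AA$, and more generally defines a normal horizontally pseudo double functor $[\vtwo,G]\colon[\vtwo,\BB]\to[\vtwo,\AA]$; the only composition it fails to preserve strictly is horizontal composition of the horizontal morphisms of $[\vtwo,\BB]$ --- that is, horizontal composition of squares of $\BB$ --- which $G$ preserves only up to its comparison cells. In the same vein, $\eta$ induces a horizontal pseudo natural transformation $[\vtwo,\eta]\colon\id_{[\vtwo,\AA]}\Rightarrow[\vtwo,G]\circ[\vtwo,F]$; its component at an object $u$ of $[\vtwo,\AA]$ --- i.e.\ at a vertical morphism $u$ of $\AA$ --- is the square $\eta_u$, which by condition~(ii) of \cref{def:pseudoeq} is weakly horizontally invertible, i.e.\ an equivalence in $\cv\AA$. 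Hence $[\vtwo,\eta]$ is a horizontal pseudo natural equivalence, as is $[\vtwo,\epsilon]$. Applying $\bfH$ exactly as in the previous paragraph, $\cv F=\bfH[\vtwo,F]$ acquires the pseudo functor $\bfH[\vtwo,G]$ as a pseudo inverse up to the pseudo natural equivalences $\bfH[\vtwo,\eta]$ and $\bfH[\vtwo,\epsilon]$, so it too is a biequivalence. By \cref{charequiv}, $F$ is a double biequivalence.

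The bulk of the work --- and the only real obstacle --- is verifying the two ``extends to'' claims: that the coherence axioms defining horizontally pseudo double functors and horizontal pseudo natural transformations (\cref{def:pseudohortransf} and the subsequent remark) are inherited by $\bfH G$, $[\vtwo,G]$, $\bfH\eta$, $[\vtwo,\eta]$, and $\bfH\epsilon$, $[\vtwo,\epsilon]$. This is a sequence of routine but somewhat lengthy two-dimensional pasting checks; the main care needed is in matching the horizontal/vertical conventions, which are transposed relative to those of \cite{Grandis}, and, for $[\vtwo,-]$, in checking that the comparison cells of $G$ glue along the correct boundaries --- here the normality of $G$ is what keeps the target of $[\vtwo,G]$ within strict double functors.
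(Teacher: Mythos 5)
Your argument is correct in outline, but it takes a genuinely different route from the paper's. The paper's proof of \cref{prop:horeqarewe} is a two-line citation: it observes that the hypotheses make $F$ a \emph{horizontal biequivalence} in the sense of the companion paper \cite[Definition 5.7]{WHI}, and then invokes \cite[Proposition 5.10]{WHI}, which states that horizontal biequivalences are double biequivalences (established there by a direct verification of (db1)--(db4)). Your approach instead stays inside the present paper: reduce via \cref{charequiv} to showing $\bfH F$ and $\cv F$ are biequivalences, and deduce each from the standard bicategorical Whitehead theorem by transporting the triple $(G,\eta,\epsilon)$ along $\bfH$ and along $[\vtwo,-]$. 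This buys self-containment and makes conceptually transparent why the conditions (i) and (ii) of \cref{def:pseudoeq} are exactly what is needed: (i) feeds the $\bfH$-side and (ii) feeds the $\cv$-side. The cost, as you acknowledge, is that you must check that $\bfH$ and $[\vtwo,-]$ carry normal horizontally pseudo double functors and horizontal pseudo natural transformations to the corresponding pseudo notions, including the observation that normality of $G$ is what keeps $[\vtwo,G]$ landing in strict double functors on objects; these coherence checks, while routine, are where all the real work sits, and your sketch asserts rather than performs them.

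One small imprecision worth flagging: you conclude that $[\vtwo,\eta]$ \emph{is a horizontal pseudo natural equivalence}, having only verified condition~(i) of \cref{def:pseudoeq} for it (that its object-components $\eta_u$ are equivalences in $\cv\AA$). Condition~(ii) --- that its components at vertical morphisms of $[\vtwo,\AA]$ are weakly horizontally invertible --- is neither verified nor actually needed: once you apply $\bfH$, the vertical data is discarded, and a horizontal pseudo natural transformation with equivalence object-components already gives a pseudo natural equivalence of $2$-categories. It would be cleaner to state only the weaker claim you use.
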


\begin{proof}
Under these assumptions, the double functor $F$ is in particular a horizontal biequivalence as introduced in \cite[Definition 8.7]{WHI}. Therefore $F$ is a double biequivalence by \cite[Proposition 8.10]{WHI}. 
\end{proof}

By only requiring that the target of a double biequivalence $F$ does not contain any non-trivial composites of vertical morphisms, we can construct a horizontally pseudo double functor which gives a homotopy inverse of $F$. As the construction of this homotopy inverse is practically identical to the one in \cite[Proposition 8.11]{WHI}, we only specify here the data of the pseudo inverse and of one of the horizontal pseudo natural equivalences, and refer the reader to the proof of \cite[Proposition 8.11]{WHI} for details.  

\begin{thm}\label{thm:Whitehead1}
Let $\AA$ and $\BB$ be double categories such that the underlying vertical category $U\bfV\BB$ is a disjoint union of copies of $\mathbbm 1$ and $\mathbbm 2$. Then a double functor $F\colon \AA\to \BB$ is a double biequivalence if and only if there is a normal horizontally pseudo double functor ${G\colon \BB\to \AA}$, and horizontal pseudo natural equivalences $\eta\colon \id_\AA\simeq GF$ and $\epsilon\colon FG\simeq \id_\BB$. 
\end{thm}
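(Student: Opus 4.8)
The backward direction is already handled by \cref{prop:horeqarewe}, which says that the existence of a normal horizontally pseudo double functor $G$ and horizontal pseudo natural equivalences $\eta$ and $\epsilon$ forces $F$ to be a double biequivalence (no hypothesis on $\BB$ needed for this implication). So the entire content is the forward direction: assuming $F\colon\AA\to\BB$ is a double biequivalence and that $U\bfV\BB$ is a disjoint union of copies of $\mathbbm 1$ and $\mathbbm 2$, we must construct $G$, $\eta$, $\epsilon$. The plan is to mimic the construction in \cite[Proposition 5.12]{WHI}, which the statement explicitly invites, and only to record the data of $G$ and of one of the equivalences, deferring the verification of coherence axioms to that reference.

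\textbf{Construction of $G$.} First I would fix, for every object $B\in\BB$, an object $GB\in\AA$ together with a horizontal adjoint equivalence $\epsilon_B\colon FGB\xrightarrow{\simeq} B$ in $\BB$; this is possible by (db1) of \cref{doublequiv} together with the fact that every horizontal equivalence can be promoted to a horizontal adjoint equivalence. Next, for a horizontal morphism $b\colon B\to B'$ in $\BB$, consider the composite $\epsilon_{B'}^{-1}\circ b\circ \epsilon_B\colon FGB\to FGB'$ (using the adjoint inverse $\epsilon_B^{-1}$); by (db2) there is a horizontal morphism $Gb\colon GB\to GB'$ in $\AA$ and a vertically invertible square comparing $F(Gb)$ with this composite — this vertically invertible square becomes the structure square $G_b$ witnessing that $G$ is only pseudofunctorial in the horizontal direction; normality is arranged by choosing $G(\id_B)=\id_{GB}$ and the corresponding square to be the obvious identity. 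For a vertical morphism $v\colon B\arrowdot B'$, the hypothesis that $U\bfV\BB$ is a disjoint union of copies of $\mathbbm 1$ and $\mathbbm 2$ is exactly what makes this assignment unambiguous and \emph{strictly} functorial: there are no nontrivial composites of vertical morphisms in $\BB$, so once we use (db3) to pick $Gv\colon GB\arrowdot GB'$ in $\AA$ together with a weakly horizontally invertible square relating $F(Gv)$ to $v$ (conjugated by the chosen equivalences), there is no composition constraint to check — and vertical identities are sent to vertical identities. Finally, on a square $\beta$ of $\BB$ with corners $B,B',\ldots$, we paste $\beta$ with the chosen comparison data at its four boundaries to land it in the image of $F$ on a suitable square, then apply the full faithfulness on squares, i.e.\ (db4), to define $G\beta$; uniqueness in (db4) is what gives strict functoriality of $G$ on squares (both vertical and horizontal composition) and naturality of the $G_b$.

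\textbf{The equivalences $\eta$ and $\epsilon$.} The transformation $\epsilon\colon FG\simeq\id_\BB$ is essentially built in: its component at $B$ is the chosen horizontal adjoint equivalence $\epsilon_B\colon FGB\to B$, and its component at a vertical morphism $v$ is obtained from the weakly horizontally invertible square chosen in the definition of $Gv$ (reinterpreted as a square $\sq{}{\epsilon_B}{\epsilon_{B'}}{FGv}{v}$), which is weakly horizontally invertible by construction; the pseudo-naturality squares $\epsilon_b$ for horizontal $b$ come from the comparison squares used to define $Gb$. This makes $\epsilon$ a horizontal pseudo natural equivalence by \cref{def:pseudoeq}. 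For $\eta\colon\id_\AA\simeq GF$, I would follow \cite[Proposition 5.12]{WHI}: using full faithfulness on squares (db4) and the data already chosen, one produces a horizontal equivalence $\eta_A\colon A\to GFA$ in $\AA$ for each $A$ and weakly horizontally invertible squares for each vertical morphism of $\AA$, compatibly; since this part is identical to the cited proof, I would state only the components and the triangle-type identity they satisfy and refer to \cite{WHI} for the checks.

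\textbf{Main obstacle.} The conceptual heart is not the construction of $G$ on cells — that is forced by (db4) — but verifying that $G$ is genuinely a \emph{normal horizontally pseudo} double functor, i.e.\ that the comparison squares $G_b$ satisfy the associativity, unitality, and naturality coherences of \cite[Definition 3.5.1]{Grandis}. The hypothesis on $U\bfV\BB$ is doing real work here: it guarantees there is no clash between the pseudo horizontal structure and a (would-be) composition of vertical morphisms, so that all the coherences reduce to identities of squares that can be checked by applying $F$ and invoking uniqueness in (db4). Since the argument is word-for-word the one in \cite[Proposition 5.12]{WHI} (with horizontal and vertical directions as in our conventions), the honest move — and the one the surrounding text signals — is to carry out the construction far enough to pin down $G$, $\eta$, $\epsilon$ explicitly and then cite \cite{WHI} for the remaining coherence verifications, with the backward implication supplied by \cref{prop:horeqarewe}.
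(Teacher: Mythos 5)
Your overall architecture matches the paper exactly: the backward implication is delegated to \cref{prop:horeqarewe}; for the forward implication you use (db1) to choose $GB$ and a horizontal adjoint equivalence $\epsilon_B\colon FGB\to B$, use (db3) to handle vertical morphisms (with the hypothesis on $U\bfV\BB$ removing any composition constraint), use (db2) for horizontal morphisms, use (db4) for squares, and defer $\eta$ and the coherence checks to \cite[Proposition 5.12]{WHI}. That is the paper's proof in outline.

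There is, however, one genuine confusion that leaves your $G$ incompletely specified. When you apply (db2) to $\epsilon'_{C}\circ b\circ\epsilon_B$ to produce $Gb$ and a vertically invertible square $\overline{\epsilon}_b$ with $F(Gb)\cong \epsilon'_C\,b\,\epsilon_B$, you claim this square \emph{is} the structure square of the horizontally pseudo double functor $G$. It cannot be: $\overline{\epsilon}_b$ is a square of $\BB$ (a $2$-cell of $\bfH\BB$), whereas the compositor $\Phi_{b,c}\colon Gc\circ Gb\Rightarrow G(cb)$ required by the definition of a horizontally pseudo double functor must be a vertically invertible square of $\AA$, and it simply is not recoverable from $\overline{\epsilon}_b$ alone. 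In the paper, $\overline{\epsilon}_b$ is used to build the pseudo-naturality square $\epsilon_b$ of the transformation $\epsilon$, while the compositor $\Phi_{b,c}$ is a \emph{separate} piece of data obtained by a second invocation of (db4): one pastes $\overline{\epsilon}_b^{-1}$, $\overline{\epsilon}_c^{-1}$, a counit $\nu_C$, and $\overline{\epsilon}_{cb}$ to get a vertically invertible square in $\BB$ with boundary $F(Gc)\circ F(Gb)\Rightarrow F(G(cb))$, and then lifts it uniquely along $F$. This step is missing from your proposal, and without it you have no candidate for $\Phi_{b,c}$, so the statement that ``$G$ is a normal horizontally pseudo double functor'' is not yet meaningful. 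Relatedly, your remark that ``uniqueness in (db4) is what gives strict functoriality of $G$ on squares (both vertical and horizontal composition)'' overstates matters: $G$ is only \emph{pseudo}-functorial for horizontal composition of squares, up to the $\Phi_{b,c}$'s. With the compositor construction added, the rest of your sketch lines up with the paper's.
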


\begin{proof}
By \cref{prop:horeqarewe}, we directly get the converse implication. 

Now suppose that $F$ is a double biequivalence. We highlight the definition of the horizontally pseudo double functor $G\colon \BB\to \AA$ and the horizontal pseudo natural equivalence $\epsilon\colon FG\Rightarrow \id_\BB$ on objects and vertical morphisms as it is the only part of the construction that differs from \cite[Proposition 8.11]{WHI}. One can easily check that the rest of the proof of \cite[Proposition 8.11]{WHI} does not depend on the weakly horizontally invariant condition that is not required in this statement, and thus can be applied verbatim.

To define $G$ and $\epsilon$ on objects and vertical morphisms, we give the values of $G$ and $\epsilon$ on each copy of $\mathbbm 1$ and $\mathbbm 2$ in $U\bfV\BB$.
\begin{itemize}
    \item Given a copy of the form $B\colon \mathbbm 1\to U\bfV\BB$, by (db1) applied to the object $B\in \BB$, we get an object $A\in \AA$ and a horizontal equivalence $f\colon FA\xrightarrow{\simeq} B$ in~$\BB$. We set $GB\coloneqq A$ and $\epsilon_B\coloneqq f\colon FGB\xrightarrow{\simeq} B$. 
    \item Given a copy of the form $v\colon \mathbbm 2\to U\bfV\BB$, by (db3) applied to the vertical morphism $v\colon B\arrowdot B'$ in $\BB$, we get a vertical morphism $u\colon A\arrowdot A'$ in $\AA$ and a weakly horizontally invertible square $\beta$ in $\BB$ as follows.
\begin{tz}
\node[](1) {$FA$}; 
\node[below of=1](2) {$FA'$}; 
\node[right of=1](3) {$B$}; 
\node[below of=3](4) {$B'$}; 
\draw[->] (1) to node[above,la] {$f$} node[below,la] {$\simeq$} (3);
\draw[->] (2) to node[above,la] {$\simeq$} node[below,la] {$g$} (4);
\draw[->,pro] (1) to node[left,la] {$Fu$} (2);
\draw[->,pro] (3) to node[right,la] {$v$} (4);
\node[la,xshift=-7pt] at ($(1)!0.5!(4)$) {$\beta$};
\node[la,xshift=7pt] at ($(1)!0.5!(4)$) {$\simeq$};
\end{tz}
    We set $GB\coloneqq A$, $GB'\coloneqq A'$, and $Gv\coloneqq u$, and we set $\epsilon_B\coloneqq f\colon FGB\xrightarrow{\simeq} B$, $\epsilon_{B'}\coloneqq g\colon FGB'\xrightarrow{\simeq} B'$, and $\sq{\epsilon_v\coloneqq \beta}{\epsilon_B}{\epsilon_{B'}}{FGv}{v}$.
\end{itemize}
As there are no composites of vertical morphisms in $\BB$, $G$ and $\epsilon$ are trivially compatible with vertical morphisms.  
\end{proof}

\begin{rmk}
If we further require that the double category $\BB$ in \cref{thm:Whitehead1} is cofibrant, we can construct the weak inverse $G\colon \BB\to \AA$ of $F$ in such a way that it is a strict double functor, since the underlying horizontal category of $\BB$ is free. This subsumes the usual Whitehead theorem mentioned in \cref{rem:usualWhitehead}.
\end{rmk}

Finally, as a horizontal double category has a discrete underlying vertical category, the result applies in particular to the case where $\BB$ is horizontal. We then retrieve the Whitehead theorem for $2$-categories, which can be found in \cite[Theorem 7.4.1]{JohYau}. 

\begin{cor}
Let $\A$ and $\B$ be $2$-categories. Then a $2$-functor $F\colon \A\to \B$ is a biequivalence if and only if there is a normal pseudo functor $G\colon \B\to \A$ together with pseudo natural equivalences $\eta\colon \id_\A\simeq GF$ and $\epsilon\colon FG\simeq \id_\B$. 
\end{cor}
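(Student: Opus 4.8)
The plan is to deduce this corollary directly from \cref{thm:Whitehead1} by specializing to horizontal double categories and transporting the statement along the horizontal embedding $\bbH$. First I would observe that for any $2$-category $\B$, the double category $\bbH\B$ has discrete underlying vertical category $U\bfV(\bbH\B)$, namely a disjoint union of copies of $\mathbbm 1$, which is certainly a disjoint union of copies of $\mathbbm 1$ and $\mathbbm 2$. Hence the hypothesis of \cref{thm:Whitehead1} is satisfied for $\BB = \bbH\B$, and we may apply it to the double functor $\bbH F\colon \bbH\A\to \bbH\B$.

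Next I would translate the double categorical data produced by \cref{thm:Whitehead1} back into $2$-categorical data. By construction, $\bbH$ is fully faithful (\cref{prop:adjHH} shows the unit is the identity), and since $\bbH\A$ has no nontrivial vertical morphisms, a normal horizontally pseudo double functor $G\colon \bbH\B\to \bbH\A$ has trivial action on vertical morphisms and squares with nontrivial vertical boundary; its remaining data — the action on objects, horizontal morphisms, and squares of the form depicted in the definition of $\bfH$, together with the compositor squares — is precisely the data of a normal pseudo functor $G'\colon \B\to \A$, using the description $\bfH(\bbH\A)\cong\A$ and the correspondence between squares in $\bbH\A$ with trivial vertical boundary and $2$-cells in $\A$ (recorded after \cref{lem:weaklyhorinvvsverinv}). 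One should note that $G$ need not be valued in a horizontal double category a priori, but since $\bbH\A$ \emph{is} horizontal, there is no choice: $G$ factors through $\bbH$ on the nose. Similarly, by \cref{rem:psdareeq} together with \cref{lem:HVpreservehom}, a horizontal pseudo natural equivalence between double functors $\bbH\A\to\bbH\B$ corresponds to an (adjoint) equivalence in $\bfH[\bbH\A,\bbH\B]_\ps\cong\Psd[\A,\bfH\bbH\B]\cong\Psd[\A,\B]$, i.e.\ to a pseudo natural equivalence of $2$-functors; under this correspondence $\eta\colon \id_{\bbH\A}\simeq (\bbH F)G$ and $\epsilon\colon (\bbH F)G\simeq\id_{\bbH\B}$ become pseudo natural equivalences $\id_\A\simeq G'F$ and $FG'\simeq\id_\B$.

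For the converse, I would use the fact (noted in the remark after \cref{section:prelim}'s discussion, and implicit in the previous reasoning) that $F\colon\A\to\B$ is a biequivalence if and only if $\bbH F$ is a double biequivalence; this is asserted in the introduction and follows from \cref{charequiv} since $\bfH\bbH F\cong F$ is a biequivalence and $\cv\bbH F$ is a biequivalence precisely when $F$ is (a weakly horizontally invertible square in $\bbH\B$ is an invertible $2$-cell, so $\cv\bbH\B$ is equivalent to the $2$-category of identities and invertible $2$-cells of $\B$, on which $F$ acts as a biequivalence iff $F$ does). Given a normal pseudo functor $G'\colon\B\to\A$ with pseudo natural equivalences $\eta\colon\id_\A\simeq G'F$ and $\epsilon\colon FG'\simeq\id_\B$, applying $\bbH$ — suitably interpreted to send a normal pseudo functor to a normal horizontally pseudo double functor, and a pseudo natural equivalence to a horizontal pseudo natural equivalence — yields the hypotheses of \cref{prop:horeqarewe}, so $\bbH F$ is a double biequivalence, hence $F$ is a biequivalence.

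The main obstacle will be bookkeeping rather than a genuine mathematical difficulty: one must carefully set up the correspondence between (normal) horizontally pseudo double functors into a horizontal double category and (normal) pseudo functors of $2$-categories, and verify that this correspondence carries horizontal pseudo natural equivalences to pseudo natural equivalences and vice versa. This is exactly the horizontally-pseudo analogue of \cref{lem:HVpreservehom}, and since neither the excerpt nor the cited references spell it out in the pseudo-functor generality, the cleanest route is probably to point out that the data simply match up coordinate by coordinate because $\bbH\A$ has no nontrivial vertical structure, so every coherence datum and condition for the horizontally pseudo double functor either becomes vacuous or reduces verbatim to the corresponding one for the pseudo functor.
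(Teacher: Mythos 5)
Your proposal takes essentially the same route as the paper's proof: apply \cref{thm:Whitehead1} to $\bbH F\colon\bbH\A\to\bbH\B$ (whose target has discrete underlying vertical category), and then translate the resulting data $(G',\eta',\epsilon')$ back to $2$-categorical data along $\bbH$, using the fact that $F$ is a biequivalence if and only if $\bbH F$ is a double biequivalence. The paper obtains this last fact by a forward reference to \cref{thm:LHrightinduced}, whose proof goes through Ken Brown's lemma and the right-Quillenness of $\bbH$ rather than any direct analysis of $\cv\bbH\B$; your use of \cref{prop:horeqarewe} for the converse is an unnecessary detour, since \cref{thm:Whitehead1} is already an iff, but it is not incorrect.

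The one point that needs repair is your parenthetical justification of that fact: you claim that $\cv\bbH\B$ ``is equivalent to the $2$-category of identities and invertible $2$-cells of $\B$.'' Unwinding \cref{defn_cv} for $\AA=\bbH\B$, the $2$-category $\cv\bbH\B$ has objects the objects of $\B$, has morphisms $B\to B'$ given by triples $(a,b,\alpha)$ with $a,b\colon B\to B'$ \emph{arbitrary} morphisms of $\B$ and $\alpha\colon a\Rightarrow b$ an \emph{arbitrary} $2$-cell, and has $2$-cells given by pairs $(\sigma_0,\sigma_1)$ of $2$-cells of $\B$ satisfying a commutativity condition. This is neither restricted to identity morphisms nor to invertible $2$-cells; what \cref{lem:weaklyhorinvvsverinv} actually gives you is only that the \emph{equivalences} in $\cv\bbH\B$ are those $(a,b,\alpha)$ with $a,b$ equivalences in $\B$ and $\alpha$ invertible. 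The conclusion that $\cv\bbH F$ is a biequivalence whenever $F$ is does hold, but your stated reason does not establish it; one either checks conditions (b1)--(b3) of \cref{biequiv} for $\cv\bbH F$ directly from those for $F$, or simply invokes \cref{thm:LHrightinduced} together with \cref{charequiv}, as the paper does.
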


\begin{proof}
Since $F$ is a biequivalence if and only if $\bbH F$ is a double biequivalence, as we will see in \cref{thm:leftandrightindLHH}, and $\bbH\B$ is horizontal, we can apply \cref{thm:Whitehead1} to $\bbH F\colon \bbH\A\to \bbH\B$. Then $\bbH F$ is a double biequivalence if and only if there is a normal horizontally pseudo double functor $G'\colon \bbH\B\to \bbH\A$ together with horizontal pseudo natural equivalences $\eta'\colon \id_{\bbH\A}\simeq G'(\bbH F)$ and $\epsilon'\colon (\bbH F)G'\simeq \id_{\bbH\B}$. As normal horizontally pseudo double functors and horizontal pseudo natural equivalence between double categories in the image of $\bbH$ are equivalently normal pseudo functors and pseudo natural equivalences between their preimages, the data $(G',\eta',\epsilon')$ for $\bbH F$ uniquely correspond to a data $(G,\eta,\epsilon)$ for $F$ as required. 
\end{proof}

\section{Quillen pairs between \texorpdfstring{$\DblCat$}{DblCat}, \texorpdfstring{$\TwoCat$}{2Cat}, and \texorpdfstring{$\Cat$}{Cat}} \label{Sec:Quillenpairs}

In this paper, the model structure on $\DblCat$ was constructed in such a way as to be compatible with the Lack model structure on $\TwoCat$. We show in \cref{subsec:QuillenpairH} that the horizontal embedding $\bbH\colon \TwoCat\to \DblCat$ is both left and right Quillen, and homotopically fully faithful. This implies that the functor $\bbH$ embeds the homotopy category of $2$-categories into that of double categories in a reflective and coreflective way. Among other things, this means that the functor $\bbH$ creates all homotopy limits and colimits. Finally, our last results in \cref{subsec:QuillenpairH} show that the Lack model structure on $\TwoCat$ is both left- and right-induced from our model structure on $\DblCat$ along $\bbH$. 

In \cref{subsec:QuillenpairCat}, we give a Quillen pair between $\Cat$ and $\DblCat$, which horizontally embeds the canonical homotopy theory of $\Cat$ into that of $\DblCat$ in a reflective way. We also show that the canonical model structure on $\Cat$ is right-induced from ours.

\subsection{Quillen pairs involving \texorpdfstring{$\bbH$}{H}}\label{subsec:QuillenpairH}

We present here the two Quillen pairs involving the functor $\bbH\colon \TwoCat\to \DblCat$ and its right and left adjoints.

\begin{prop} \label{QuillenadjHH}
The adjunction 
\begin{tz}
\node (A) at (0,0) {$\TwoCat$};
\node (B) at (2.5,0) {$\DblCat$};
\draw[->] ($(A.east)+(0,.25cm)$) to [bend left] node[above,scale=0.8]{$\bbH$} ($(B.west)+(0,.25cm)$);
\draw[->] ($(B.west)+(0,-.25cm)$) to [bend left] node[below,scale=0.8]{$\bfH$} ($(A.east)-(0,.25cm)$);
\node[scale=0.8] at ($(A.east)!0.5!(B.west)$) {$\bot$};
\end{tz}
is a Quillen pair, where $\TwoCat$ is endowed with the Lack model structure and $\DblCat$ is endowed with the model structure of \cref{thm:modelstructonDblCat}. Moreover, its derived unit is levelwise an identity; in particular, this says that the functor $\bbH$ is homotopically fully faithful. 
\end{prop}

\begin{proof}
Since $(\bfH,\cv)\colon \DblCat\to \TwoCat\times \TwoCat$ and the projection $\mathrm{pr}_1\colon \TwoCat\times \TwoCat\to \TwoCat$ are right Quillen, then so is their composite $\bfH\colon \DblCat\to \TwoCat$, which proves that $\bbH\dashv \bfH$ is a Quillen pair. Moreover, since every object in $\DblCat$ is fibrant, the derived unit of the adjunction $\bbH\dashv \bfH$ is given by the components of the unit at cofibrant objects, and is therefore levelwise an identity, by \cref{prop:adjHH}.
\end{proof}

The functor $\bbH\colon \TwoCat\to \DblCat$ also admits a left adjoint $L$. Indeed, this is given by the Adjoint Functor Theorem, since $\bbH$ preserves all limits and colimits, and the categories involved are locally presentable. The next theorem shows that $\bbH$ is also a right Quillen functor.

\begin{thm} \label{thm:LHQuillenadj} 
The adjunction 
\begin{tz}
\node (A) at (0,0) {$\DblCat$};
\node (B) at (2.5,0) {$\TwoCat$};
\draw[->] ($(A.east)+(0,.25cm)$) to [bend left] node[above,scale=0.8]{$L$} ($(B.west)+(0,.25cm)$);
\draw[->] ($(B.west)+(0,-.25cm)$) to [bend left] node[below,scale=0.8]{$\bbH$} ($(A.east)-(0,.25cm)$);
\node[scale=0.8] at ($(A.east)!0.5!(B.west)$) {$\bot$};
\end{tz}
is a Quillen pair, where $\TwoCat$ is endowed with the Lack model structure and $\DblCat$ is endowed with the model structure of \cref{thm:modelstructonDblCat}.
\end{thm}

\begin{proof}
We show that $\bbH$ is right Quillen, i.e.,~it preserves fibrations and trivial fibrations. 

Let $F\colon \A\to \B$ be a fibration in $\TwoCat$; we prove that $\bbH F\colon \bbH\A\to \bbH\B$ is a double fibration in $\DblCat$. Since $\bfH\bbH F=F$ and $F$ is a fibration, (df1-2) of \cref{doublefib} are satisfied. It remains to show (df3) of \cref{doublefib}. Let us consider a weakly horizontally invertible square in $\bbH\B$
\begin{tz}
\node (A) at (0,0) {$B$};
\node (B) at (1.5,0) {$FC$};
\node (A') at (0,-1.5) {$B$};
\node (B') at (1.5,-1.5) {$FC$}; 
\node at ($(B'.east)-(0,4pt)$) {.};
\draw[->] (A) to node[above, scale=0.8]{$\simeq$} node[below,scale=0.8] {$b$} (B);
\draw[->] (A') to node[above, scale=0.8]{$\simeq$} node[below, scale=0.8] {$d$} (B');
\draw[d] (A) to (A');
\draw[d] (B) to (B');

\node at (0,-.75) {$\bullet$};
\node at (1.5, -.75) {$\bullet$};

\node[scale=0.8] at (.6,-.75) {$\beta$};
\node[scale=0.8] at (.9,-.75) {$\vcong$};
\end{tz}
Note that its vertical boundaries must be trivial, since all vertical morphisms in $\bbH \B$ are identities. Then the square $\beta$ is, in particular, vertically invertible by \cref{lem:weaklyhorinvvsverinv}. Since $F$ is a fibration in $\TwoCat$, there is an equivalence $c\colon A\xrightarrow{\simeq} C$ such that $Fc=d$, by (f1) of \cref{Lackfib}. Now $\beta$ can be rewritten as 
\begin{tz}
\node (A) at (0,0) {$FA$};
\node (B) at (1.5,0) {$FC$};
\node (A') at (0,-1.5) {$FA$};
\node (B') at (1.5,-1.5) {$FC$};
\node at ($(B'.east)-(0,4pt)$) {.};
\draw[->] (A) to node[above, scale=0.8]{$\simeq$} node[below,scale=0.8] {$b$} (B);
\draw[->] (A') to node[above, scale=0.8]{$\simeq$} node[below, scale=0.8] {$Fc$} (B');
\draw[d] (A) to (A');
\draw[d] (B) to (B');

\node at (0,-.75) {$\bullet$};
\node at (1.5, -.75) {$\bullet$};

\node[scale=0.8] at (.6,-.75) {$\beta$};
\node[scale=0.8] at (.9,-.75) {$\vcong$};
\end{tz}
Then $\beta$ is equivalently an invertible $2$-cell $\beta\colon b\cong Fc$ in $\B$. Since $F$ is a fibration in $\TwoCat$, there is a morphism $a\colon A\to C$ in $\A$ and an invertible $2$-cell $\alpha \colon a\cong c$ in $\A$ such that $F\alpha=\beta$, by (f2) of \cref{Lackfib}. In particular, since $c$ is an equivalence in $\A$, then so is $a$. This gives a vertically invertible square in $\bbH\A$ of the form
\begin{tz}
\node (A) at (0,0) {$A$};
\node (B) at (1.5,0) {$C$};
\node (A') at (0,-1.5) {$A$};
\node (B') at (1.5,-1.5) {$C$};
\draw[->] (A) to node[above, scale=0.8]{$\simeq$} node[below, scale=0.8] {$a$} (B);
\draw[->] (A') to node[above, scale=0.8]{$\simeq$} node[below, scale=0.8] {$c$} (B');
\draw[d] (A) to (A');
\draw[d] (B) to (B');

\node at (0,-.75) {$\bullet$};
\node at (1.5, -.75) {$\bullet$};

\node[scale=0.8] at (.6,-.75) {$\alpha$};
\node[scale=0.8] at (.9,-.75) {$\vcong$};
\end{tz}
such that $F\alpha=\beta$; furthermore, by \cref{lem:weaklyhorinvvsverinv}, the square $\alpha$ is weakly horizontally invertible. This shows that $\bbH F$ is a double fibration. 

Now let $F\colon \A\to \B$ be a trivial fibration in $\TwoCat$. We show that $\bbH F\colon \bbH\A\to \bbH\B$ is a double trivial fibration in $\DblCat$. Since $\bfH\bbH F=F$ and $F$ is a trivial fibration, it satisfies (dt1-2) of \cref{doubletrivfib}. Then (dt3) of \cref{doubletrivfib} follows from the fact that~$F$ is surjective on objects, since all vertical morphisms are identities. Finally, (dt4) of \cref{doubletrivfib} is a direct consequence of $F$ being fully faithful on $2$-cells, since all squares in $\bbH\A$ and $\bbH\B$ are equivalently $2$-cells in $\A$ and $\B$, respectively. This shows that $\bbH F$ is a double trivial fibration, and concludes the proof of $L\dashv \bbH$ being a Quillen pair. 
\end{proof}

\begin{rmk}
As we have seen in \cref{QuillenadjHH}, the functor $\bbH$ is homotopically fully faithful, and therefore the derived counit of the adjunction $L\dashv \bbH$ is levelwise a  biequivalence. 
\end{rmk}

\begin{rmk} \label{rem:Hprescofibwe}
As a consequence of \cref{QuillenadjHH,thm:LHQuillenadj}, we can see that the functor $\bbH\colon \TwoCat\to \DblCat$ preserves all cofibrations, fibrations, and weak equivalences. Indeed, the fact that it preserves cofibrations and fibrations follows from the fact that $\bbH$ is both left and right Quillen, while the fact that it preserves weak equivalences is a consequence of Ken Brown's Lemma (see \cite[Lemma 1.1.12]{Hovey}), since all objects in $\TwoCat$ are fibrant. Moreover, since $\bbH$ is homotopically fully faithful by \cref{QuillenadjHH}, this says that the homotopy theory of $\TwoCat$ is reflectively and coreflectively embedded in that of~$\DblCat$ via the functor~$\bbH$.  
\end{rmk}

In fact, more is true: the Lack model structure on $\TwoCat$ is actually both left- and right-induced from our model structure on $\DblCat$ along the horizontal embedding $\bbH$, which implies that $\bbH$ also reflects cofibrations, fibrations, and weak equivalences. 

\begin{thm} \label{thm:leftandrightindLHH}
The Lack model structure on $\TwoCat$ is both left- and right-induced along the adjunctions
\begin{tz}
\node[](A) {$\TwoCat$};
\node[right of=A,rr](B) {$\DblCat$};
\node at ($(B.east)-(0,4pt)$) {,};
\draw[->] ($(B.west)+(0,.25cm)$) to [bend right=50] node[above,la]{$L$} ($(A.east)+(0,.25cm)$);
\draw[->] (A) to node[la,over] {$\bbH$} (B);
\draw[->] ($(B.west)+(0,-.25cm)$) to [bend left=50] node[below,la]{$\bfH$} ($(A.east)-(0,.25cm)$);
\node[la] at ($(A.east)!0.5!(B.west)+(0,.35cm)$) {$\bot$};
\node[la] at ($(A.east)!0.5!(B.west)-(0,.35cm)$) {$\bot$};
\end{tz}
where $\DblCat$ is endowed with the model structure of \cref{thm:modelstructonDblCat}.
\end{thm}

\begin{proof}
To show this result, it is enough to prove that a $2$-functor $F\colon \A\to \B$ is a biequivalence (resp.~Lack fibration, cofibration) in $\TwoCat$ if and only if the double functor $\bbH F\colon\bbH\A\to \bbH\B$ is a double biequivalence (resp.~double fibration, cofibration) in $\DblCat$, as a model structure is uniquely determined by its classes of weak equivalences and fibrations, or alternatively by its classes of weak equivalences and cofibrations.

By \cref{rem:Hprescofibwe}, we have that if $F$ is a biequivalence (resp.~Lack fibration, cofibration) in $\TwoCat$, then $\bbH F$ is a double biequivalence (resp.~double fibration, cofibration) in $\DblCat$, as $\bbH$ preserves all of these classes of morphisms.

Conversely, if $\bbH F$ is a double biequivalence (resp.~double fibration), then $\bfH\bbH F=F$ is a biequivalence (resp.~Lack fibration) by definition of the model structure on $\DblCat$.

It remains to show that if $\bbH F$ is a cofibration, then so is $F$. For this, suppose that $\bbH F$ is a cofibration in $\DblCat$; we show that $F$ has the left lifting property with respect to all trivial fibrations in $\TwoCat$. Let $P\colon \mathcal X\to \mathcal Y$ be a trivial fibration in $\TwoCat$ and suppose we have a commutative square as below.
\begin{tz}
\node (A) at (0,0) {$\A$};
\node (B) at (0,-1.5) {$\B$};
\node (A') at (1.5,0) {$\mathcal X$}; 
\node (B') at (1.5,-1.5) {$\mathcal Y$};

\draw[->] (A) to node[above,scale=0.8] {$G$} (A');
\draw[->] (A) to node[left,scale=0.8] {$F$} (B);
\draw[->] (B) to node[below,scale=0.8] {$H$} (B');
\draw[->] (A') to node[right,scale=0.8] {$P$} (B');
\end{tz}
Since $\bbH$ preserves trivial fibrations, we have that $\bbH P$ is a double trivial fibration. Then, as $\bbH F$ is a cofibration, there is a lift in the diagram below left. By the adjunction $\bbH\dashv \bfH$, this corresponds to a lift in the diagram below right, which concludes the proof. 
\[  \begin{tikzpicture}
\node (A) at (-.5,0) {$\bbH \A$};
\node (B) at (-.5,-1.5) {$\bbH \B$};
\node (A') at (1.5,0) {$\bbH \mathcal X$}; 
\node (B') at (1.5,-1.5) {$\bbH \mathcal Y$};

\draw[->] (A) to node[above,scale=0.8] {$\bbH G$} (A');
\draw[->] (A) to node[left,scale=0.8] {$\bbH F$} (B);
\draw[->] (B) to node[below,scale=0.8] {$\bbH H$} (B');
\draw[->] (A') to node[right,scale=0.8] {$\bbH P$} (B');
\draw[dotted,->] (B) to (A');

\node (A) at (4.5,0) {$\A$};
\node (B) at (4.5,-1.5) {$\B$};
\node (A') at (6.5,0) {$\bfH \bbH \mathcal X=\mathcal X$}; 
\node (B') at (6.5,-1.5) {$\bfH \bbH \mathcal Y=\mathcal Y$}; 

\draw[->] (A) to node[above,scale=0.8] {$G$} (A');
\draw[->] (A) to node[left,scale=0.8] {$F$} (B);
\draw[->] (B) to node[below,scale=0.8] {$H$} (B');
\draw[->] (A') to node[right,scale=0.8] {$\bfH \bbH P=P$} (B');
\draw[dotted,->] (B) to (A');
\end{tikzpicture}\qedhere \]
\end{proof}

We saw that the derived unit (resp.~counit) of the adjunction $\bbH\dashv \bfH$ (resp.~$L\dashv \bbH$) is levelwise a biequivalence. However, these adjunctions are not expected to be Quillen equivalences, since the homotopy theory of double categories should be richer than that of $2$-categories. This is indeed the case, as shown in the following remarks.

\begin{rmk}
The components of the derived counit of the adjunction $\bbH\dashv \bfH$ are not double biequivalences. To see this, consider the double category $\vtwo$ free on a vertical morphism. Since $\bfH\vtwo\cong \mathbbm{1}\sqcup \mathbbm{1}$ is cofibrant in $\TwoCat$, the component of the derived counit at $\vtwo$ is given by the component of the counit
\[ \epsilon_{\vtwo}\colon \bbH\bfH(\vtwo)\cong\mathbbm{1}\sqcup \mathbbm{1}\to \vtwo, \]
which is not a double biequivalence, as it does not satisfy (db3) of \cref{doublequiv}.
\end{rmk}

\begin{rmk}
The components of the derived unit of the adjunction $L\dashv \bbH$ are not double biequivalences. By \cref{prop:alternate_gen_cofibs}, the unique map $I_3\colon \emptyset\to \vtwo$ is a generating cofibration in $\DblCat$, so that $\vtwo$ is cofibrant. Since all objects in $\TwoCat$ are fibrant, the component of the derived unit at $\vtwo$ is given by the component of the unit
\[ \eta_{\vtwo}\colon \vtwo\to \bbH L(\vtwo)\cong \mathbbm{1}, \]
which is not a double biequivalence, as it does not satisfy (db2) of \cref{doublequiv}. Note that the isomorphism above comes from the fact that the left adjoint $L$ collapses the vertical structure and thus $L\vtwo\cong\mathbbm{1}$. 
\end{rmk}

\begin{rmk} \label{LVQuillenadj}
Since we induced the model structure on $\DblCat$ along $\bbH\sqcup \LV\dashv (\bfH,\cv)$, we also get that the adjunction $\LV\dashv \cv$ forms a Quillen pair between $\TwoCat$ and $\DblCat$. However, note that neither the derived unit nor counit of $\LV\dashv \cv$ are levelwise weak equivalences.
\end{rmk}

\subsection{Quillen pairs to \texorpdfstring{$\Cat$}{Cat}}\label{subsec:QuillenpairCat}

The category $\Cat$ of categories and functors also admits a model structure, called the \emph{canonical model structure}, in which the weak equivalences are the equivalences of categories and the fibrations are the isofibrations. As shown by Lack in~\cite{Lack2Cat}, with this model structure, the homotopy theory of categories is reflectively embedded in the homotopy theory of $2$-categories. Combining this result with the one of \cref{thm:LHQuillenadj}, we get that the homotopy theory of categories is also reflectively embedded in that of double categories.

\begin{notation} \label{Notation:discrete}
We write $D\colon \Cat\to \TwoCat$ for the functor that sends a category to the $2$-category with the same objects and morphisms, and with only identity $2$-cells. We write $P\colon \TwoCat\to \Cat$ for its left adjoint. In particular, the functor $P$ sends a $2$-category $\A$ to the category $P\A$ with the same objects as $\A$ and with hom sets $P\A(A,B)=\pi_0(\A(A,B))$, for every pair of objects $A,B\in \A$, where $\pi_0\colon\Cat\to\Set$ is the functor sending a category to its set of connected components.
\end{notation}

\begin{rmk} 
In \cite[Theorem 8.2]{Lack2Cat}, Lack shows that the adjunction $P\dashv D$ is a Quillen pair between the Lack model structure on $\TwoCat$ and the canonical model structure on $\Cat$, whose derived counit is levelwise a weak equivalence, but the derived unit is not. Composing this Quillen pair with the one of \cref{thm:LHQuillenadj}, we get a Quillen adjunction $PL\dashv \bbH D$ between the model structure on $\DblCat$ of \cref{thm:modelstructonDblCat} and the canonical model structure on $\Cat$ whose derived counit is levelwise an equivalence of categories. In particular, the horizontal embedding $\bbH D\colon \Cat\to \DblCat$ is homotopically fully faithful.
\end{rmk}

The above remark guarantees that the functors $D\colon \Cat\to \TwoCat$ and $\bbH D\colon \Cat\to \DblCat$ preserve fibrations and weak equivalences, since all categories are fibrant. Furthermore, the following results imply that these two functors create fibrations and weak equivalences, since the canonical model structure on $\Cat$ is right-induced from the ones on $\TwoCat$ and~$\DblCat$.

\begin{prop} \label{prop:PDrightinduced}
The canonical model structure on $\Cat$ is right-induced from the adjunction
\begin{tz}
\node (A) at (0,0) {$\TwoCat$};
\node (B) at (2.25,0) {$\Cat$};
\node at ($(B.east)-(0,4pt)$) {,};
\draw[->] ($(A.east)+(0,.25cm)$) to [bend left] node[above,scale=0.8]{$P$} ($(B.west)+(0,.25cm)$);
\draw[->] ($(B.west)+(0,-.25cm)$) to [bend left] node[below,scale=0.8]{$D$} ($(A.east)-(0,.25cm)$);
\node[scale=0.8] at ($(A.east)!0.5!(B.west)$) {$\bot$};
\end{tz}
where $\TwoCat$ is endowed with the Lack model structure.
\end{prop}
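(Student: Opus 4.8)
The plan is to verify directly that the weak equivalences and fibrations of the canonical model structure on $\Cat$ coincide with the classes specified by right-induction along $D$; since a model structure is determined by its weak equivalences and fibrations, this simultaneously shows that the right-induced model structure exists and that it is the canonical one. Concretely, I would prove that a functor $F\colon \C\to \D$ is an equivalence of categories if and only if $DF$ is a biequivalence in $\TwoCat$, and that $F$ is an isofibration if and only if $DF$ is a Lack fibration in $\TwoCat$.

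The one ingredient that makes everything go through is that $D\C$ has only identity $2$-cells, so that an equivalence in $D\C$ is exactly an isomorphism in $\C$, and an invertible $2$-cell in $D\C$ is exactly an identity. Granting this, I would unwind \cref{biequiv} for $DF\colon D\C\to D\D$: condition (b1) says every object of $\D$ admits an isomorphism to an object of the image, i.e.\ $F$ is essentially surjective; (b2) says every morphism $FA\to FC$ equals some $Fa$ (an invertible $2$-cell $b\cong Fa$ being an identity), i.e.\ $F$ is full; and (b3) says that whenever $Fa=Fc$ (which is what the existence of a $2$-cell $Fa\Rightarrow Fc$ amounts to) the unique lift, necessarily an identity, forces $a=c$, i.e.\ $F$ is faithful. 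Hence $DF$ is a biequivalence precisely when $F$ is an equivalence of categories.

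Similarly I would unwind \cref{Lackfib} for $DF$: after identifying equivalences in $D\C$ and $D\D$ with isomorphisms, condition (f1) is verbatim the defining lifting property of an isofibration, and condition (f2) is vacuous, since an invertible $2$-cell $\beta\colon b\cong Fc$ in $D\D$ forces $b=Fc$ and $\beta=\id$, and the required lift is then the identity $2$-cell on $c$. Thus $DF$ is a Lack fibration precisely when $F$ is an isofibration.

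No genuine obstacle is expected here; the only thing requiring attention is the bookkeeping in translating the $2$-categorical conditions through the locally discrete inclusion $D$, which is entirely routine once one notes that $D\C$ carries no nonidentity $2$-cells. Assembling the two class comparisons with the uniqueness of a model structure prescribed by its weak equivalences and fibrations gives the claim.
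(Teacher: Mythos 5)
Your proof is correct and follows essentially the same approach as the paper: both reduce the claim to showing $F$ is an equivalence (resp.\ isofibration) iff $DF$ is a biequivalence (resp.\ Lack fibration), and both rely on the key observation that $D\mathcal{A}$ has only identity $2$-cells, so that equivalences in $D\mathcal{A}$ are isomorphisms in $\mathcal{A}$ and invertible $2$-cells are identities. You simply carry out the unwinding of conditions (b1)--(b3) and (f1)--(f2) more explicitly than the paper, which states only that the two comparisons are ``easily seen to be true.''
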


\begin{proof}
Let $F\colon\mathcal{C}\to\mathcal{D}$ be a functor in $\Cat$. It suffices to show that $F$ is an equivalence (resp.~isofibration) if and only if $DF$ is a biequivalence (resp.~Lack fibration). Indeed, both statements can easily seen to be true, due to the fact that, for every category $\mathcal{A}$, the $2\text{-category}$~$D\mathcal{A}$ only has trivial $2$-cells, and thus a morphism in $D\mathcal{A}$ is an equivalence precisely if it is an isomorphism in $\A$.
\end{proof}

\begin{cor}\label{cor:cat_from_dblcat}
The canonical model structure on $\Cat$ is right-induced from the adjunction
\begin{tz}
\node (A) at (0,0) {$\DblCat$};
\node (B) at (2.5,0) {$\Cat$};
\node at ($(B.east)-(0,4pt)$) {,};
\draw[->] ($(A.east)+(0,.25cm)$) to [bend left] node[above,scale=0.8]{$PL$} ($(B.west)+(0,.25cm)$);
\draw[->] ($(B.west)+(0,-.25cm)$) to [bend left] node[below,scale=0.8]{$\bbH D$} ($(A.east)-(0,.25cm)$);
\node[scale=0.8] at ($(A.east)!0.5!(B.west)$) {$\bot$};
\end{tz}
where $\DblCat$ is endowed with the model structure of \cref{thm:modelstructonDblCat}. 
\end{cor}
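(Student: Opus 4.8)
The plan is to deduce this from the two facts already established: that the canonical model structure on $\Cat$ is right-induced along $D$ from the Lack model structure on $\TwoCat$ (\cref{prop:PDrightinduced}), and that the Lack model structure on $\TwoCat$ is right-induced along $\bbH$ from the model structure on $\DblCat$ of \cref{thm:modelstructonDblCat} (\cref{thm:LHrightinduced}). The key observation is that right-induced model structures compose: if a model structure on $\cn$ is right-induced along $R\colon \cm\to \cn$ from one on $\cm$, and the one on $\cm$ is in turn right-induced along $R'\colon \cm'\to \cm$ from one on $\cm'$, then the model structure on $\cn$ is right-induced along $RR'\colon \cm'\to \cn$ from the one on $\cm'$. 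Here we would take $\cm'=\DblCat$, $\cm=\TwoCat$, $\cn=\Cat$, $R'=\bbH$ with left adjoint $L$, and $R=D$ with left adjoint $P$; then $RR'=D\circ \bbH$ wait — we must be careful about the direction, so instead take $R'=\bbH$ going $\TwoCat\to\DblCat$...

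Let me restate cleanly. The functor $\bbH\colon \TwoCat\to \DblCat$ is a right adjoint (to $L$) by \cref{thm:LHQuillenadj}, and the Lack model structure on $\TwoCat$ is right-induced along $\bbH$ from $\DblCat$ by \cref{thm:LHrightinduced}; thus a $2$-functor $F$ is a fibration (resp.\ weak equivalence) in $\TwoCat$ iff $\bbH F$ is a fibration (resp.\ weak equivalence) in $\DblCat$. The functor $D\colon \Cat\to \TwoCat$ is a right adjoint (to $P$) and the canonical model structure on $\Cat$ is right-induced along $D$ from $\TwoCat$ by \cref{prop:PDrightinduced}; thus a functor $G$ is a fibration (resp.\ weak equivalence) in $\Cat$ iff $DG$ is one in $\TwoCat$. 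Composing, a functor $G$ in $\Cat$ is a fibration (resp.\ weak equivalence) iff $\bbH D G$ is a fibration (resp.\ weak equivalence) in $\DblCat$. Since $\bbH D$ is right adjoint to $PL$ (as $P\dashv D$ and $L\dashv \bbH$ compose), and since a model structure is uniquely determined by its weak equivalences and fibrations, this exhibits the canonical model structure on $\Cat$ as right-induced along $\bbH D$ from the model structure on $\DblCat$ of \cref{thm:modelstructonDblCat}.

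Concretely, the proof would be short: state that $PL\dashv \bbH D$ (noting this composite adjunction was already observed in the remark preceding the statement), then for a functor $G\colon \C\to \D$ in $\Cat$ chain the two biconditionals — $G$ is an equivalence / isofibration $\iff$ $DG$ is a biequivalence / Lack fibration (by \cref{prop:PDrightinduced}) $\iff$ $\bbH D G$ is a double biequivalence / double fibration (by \cref{thm:LHrightinduced}) — and conclude that the right-induced model structure along $\bbH D$ has precisely the equivalences and isofibrations as its weak equivalences and fibrations, hence coincides with the canonical one.

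I do not anticipate a genuine obstacle here; the only point requiring a sentence of care is the transitivity of right-induction, which is immediate from unwinding the definitions (the classes of weak equivalences and fibrations are defined by preimage, and preimages compose) together with the uniqueness of a model structure given its weak equivalences and fibrations — so no appeal to an existence theorem like \cref{path} is needed, the existence being already guaranteed by \cref{prop:PDrightinduced}.
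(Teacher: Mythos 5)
Your proof is correct and takes essentially the same approach as the paper: the paper's proof is simply the one-line observation that the result follows directly from \cref{prop:PDrightinduced,thm:LHrightinduced}, with the transitivity of right-induction left implicit. You spell this transitivity out explicitly, which is a fine thing to do but adds nothing beyond unwinding what the paper means by ``follows directly.''
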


\begin{proof}
This follows directly from \cref{prop:PDrightinduced,thm:leftandrightindLHH}.
\end{proof}

\section{\texorpdfstring{$\TwoCat$}{2Cat}-enrichment of the model structure on \texorpdfstring{$\DblCat$}{DblCat}} \label{Sec:enrichment}

The aim of this section is to provide a $\TwoCat$-enrichment on $\DblCat$ which is compatible with the model structure introduced in \cref{thm:modelstructonDblCat}. Recall that a model category $\cm$ is said to be \emph{enriched} over a closed monoidal category $\cn$ that is also a model category, if it is a tensored and cotensored $\cn$-enriched category and it satisfies the pushout-product axiom (see for example \cite[\S 5]{Moser} for more details). In particular, the category $\cn$ is said to be a \emph{monoidal model category} if its model structure is enriched over itself.

In \cite{Lack2Cat}, it is shown that the Lack model structure is not monoidal with respect to the cartesian product. However, it is established that such a compatibility exists when considering instead the closed symmetric monoidal structure on $\TwoCat$ given by the Gray tensor product, as stated in \cref{2Catenrichedmodel}.

Similarly, the category of double categories admits two closed symmetric monoidal structures, given by the cartesian product, and by an analogue of the Gray tensor product introduced by B\"ohm in \cite{Bohm}. We show in \cref{subsec:Dblnotmonoidal} that the category $\DblCat$ is not a monoidal model category with respect to either of these monoidal structures. 

Nevertheless, the Gray tensor product on $\DblCat$ is not entirely unrelated to our model structure. By restricting this tensor product in one of the variables to $\TwoCat$ along $\bbH$, we obtain in \cref{subsec:enrichment} a $\TwoCat$-enrichment on $\DblCat$ which is compatible with our model structure.

\subsection{The model structure on \texorpdfstring{$\DblCat$}{DblCat} is not monoidal} \label{subsec:Dblnotmonoidal} As shown in the remark below, a similar argument to Lack's \cite[Example 7.2]{Lack2Cat} also applies in the case of $\DblCat$ to see that the model structure on $\DblCat$ is not monoidal with respect to the cartesian product.

\begin{rmk}
By \cref{prop:alternate_gen_cofibs}, the inclusion $I_2\colon \mathbbm 1\sqcup \mathbbm 1\to \bbH\mathbbm 2$ is a generating cofibration in~$\DblCat$. However, the pushout product $\pushout{I_2}{I_2}{}$ with respect to the cartesian product is the double functor from the non-commutative square of horizontal morphisms to the commutative square of horizontal morphisms, as in \cite[Example 7.2]{Lack2Cat}. Since cofibrations in $\DblCat$ are in particular faithful on horizontal morphisms by \cref{injectivityofcof}, the pushout-product $\pushout{I_2}{I_2}{}$ cannot be a cofibration in $\DblCat$.
\end{rmk}

As we mentioned before, a Gray tensor product for double categories is introduced by B\"ohm in~\cite{Bohm}. In the same vein as in  the $2$-categorical case, the corresponding internal homs make use of the notions of horizontal and vertical pseudo natural transformations, and modifications between them, which were described in \cref{def:pseudohortransf}.

\begin{prop}[{\cite[\S 3]{Bohm}}] \label{prop:Bohm}
There is a symmetric monoidal structure on $\DblCat$ given by the Gray tensor product 
 \[ \otimes_\Gray\colon \DblCat\times \DblCat\to \DblCat. \]
 Moreover, this monoidal structure is closed: for all double categories $\AA$, $\BB$, and $\bC$, there is an isomorphism
\[ \DblCat(\AA\otimes_\Gray \BB, \bC)\cong \DblCat(\AA,[\BB,\bC]_\ps), \]
natural in $\AA$, $\BB$ and $\bC$, where $[-,-]_\ps$ is the pseudo hom double category given in \cref{def:pseudohomdouble}.
\end{prop}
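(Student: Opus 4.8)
The statement to prove is \cref{prop:Bohm}, which asserts that $\DblCat$ carries a symmetric monoidal closed structure given by the Gray tensor product $\otimes_\Gray$, with internal hom $[-,-]_\ps$. Since this is attributed to B\"ohm \cite[\S 3]{Bohm}, the plan is essentially to recall the construction and indicate how the stated properties follow, rather than to reprove everything from scratch.

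The plan is to proceed as follows. First I would construct the tensor product $\AA \otimes_\Gray \BB$ directly on generators and relations: its objects are pairs $(A,B)$, and it is generated by horizontal and vertical morphisms of the form $(a, \id_B)$, $(\id_A, b)$ (and similarly for vertical morphisms and squares coming from each factor), subject to the relations that make each inclusion $\AA \cong \AA \otimes_\Gray \{B\} \hookrightarrow \AA \otimes_\Gray \BB$ and $\BB \cong \{A\} \otimes_\Gray \BB \hookrightarrow \AA \otimes_\Gray \BB$ a double functor, together with \emph{pseudo-commutativity} squares: for each horizontal $a \colon A \to A'$ in $\AA$ and each horizontal $b \colon B \to B'$ in $\BB$, a chosen vertically invertible square interchanging $(a,\id)$ with $(\id,b)$, and analogous comparison cells for the other combinations of directions, all subject to coherence (naturality, functoriality, compatibility with compositions and identities). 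This is the double-categorical analogue of the classical Gray tensor product on $\TwoCat$, and the relations are precisely engineered so that a double functor $\AA \otimes_\Gray \BB \to \bC$ is the same as a double functor $\AA \to [\BB,\bC]_\ps$.

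Next I would establish the adjunction isomorphism $\DblCat(\AA \otimes_\Gray \BB, \bC) \cong \DblCat(\AA, [\BB,\bC]_\ps)$, natural in all three variables. The key point is to match data: a double functor out of $\AA \otimes_\Gray \BB$ assigns to each object $A$ a double functor $\BB \to \bC$ (namely the restriction along $\{A\} \otimes_\Gray \BB \cong \BB$), to each horizontal morphism of $\AA$ a horizontal pseudo transformation (the images of the $(a,\id_B)$ together with the chosen invertible interchange squares provide exactly the component $1$-cells and the pseudo-naturality squares $h_b$ of \cref{def:pseudohortransf}), to each vertical morphism of $\AA$ a vertical pseudo transformation, and to each square of $\AA$ a modification; the relations in $\AA\otimes_\Gray\BB$ translate exactly into the axioms for these transformations and modifications recorded in \cref{def:modif,def:pseudohortransf}. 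Conversely any such assignment extends uniquely over the generators-and-relations presentation. Naturality in $\bC$ is immediate from the description, and naturality in $\AA$ and $\BB$ follows since both sides are defined functorially. This shows $-\otimes_\Gray \BB$ is left adjoint to $[\BB,-]_\ps$, hence $\otimes_\Gray$ is closed in each variable.

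Finally I would verify the symmetric monoidal axioms. The unit is the terminal double category $\mathbbm 1$, with unitors $\mathbbm 1 \otimes_\Gray \AA \cong \AA \cong \AA \otimes_\Gray \mathbbm 1$ obvious from the presentation; the symmetry $\AA \otimes_\Gray \BB \cong \BB \otimes_\Gray \AA$ swaps the two factors, sending each interchange square to the inverse of the corresponding interchange square in the other order (this is where vertical invertibility of the comparison cells is essential). The associativity isomorphism $(\AA \otimes_\Gray \BB) \otimes_\Gray \bC \cong \AA \otimes_\Gray (\BB \otimes_\Gray \bC)$ is the main obstacle: one must check that both triple tensor products have the same universal property, or equivalently present the same generators and relations, and this requires a genuine (if routine) coherence argument tracking how the various interchange squares for pairs of the three factors compose — the classical analogue for $\TwoCat$ already requires care, and the double-categorical version has more cases (horizontal/horizontal, horizontal/vertical, vertical/vertical). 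I would then cite \cite[\S 3]{Bohm} for the full verification of the pentagon and hexagon coherence diagrams, since that reference carries out exactly this bookkeeping. This completes the proof of \cref{prop:Bohm}.
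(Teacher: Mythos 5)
The paper does not supply a proof here; the proposition is attributed directly to B\"ohm \cite[\S 3]{Bohm}, so in citing that reference for the monoidal coherence you are doing the same thing the authors do. Your sketch of the universal property via generators and relations, and the data-matching argument establishing $\DblCat(\AA\otimes_\Gray\BB,\bC)\cong\DblCat(\AA,[\BB,\bC]_\ps)$, is the right picture.

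One imprecision worth correcting: you describe the interchange cells uniformly as ``vertically invertible'' and say this is essential for the symmetry, but B\"ohm's tensor product treats the horizontal and vertical directions symmetrically (as the paper itself stresses in \cref{NotcompatiblewithGray}), so the interchange cells come in three flavors. Unpacking \cref{def:pseudohortransf} and its transposed vertical analogue: a horizontal-against-horizontal interchange is vertically invertible (matching the component $h_a$), a vertical-against-vertical interchange is \emph{horizontally} invertible (the transposed component of a vertical pseudo transformation), and the mixed horizontal/vertical and vertical/horizontal interchanges are not required to be invertible at all (matching the components $h_u$ and $r_a$). The symmetry isomorphism inverts the first two types of cell and simply transports the non-invertible mixed ones without inversion; it is this two-sided invertibility, not ``vertical invertibility'' wholesale, that the symmetry actually uses. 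This is a presentational point rather than a gap, since you do allow for ``analogous comparison cells for the other combinations of directions,'' but the blanket phrase would lead the reader to the wrong structure.
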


Remember from \cref{char:cof} that our cofibrations are not as well behaved in the vertical direction as in the horizontal direction; e.g., the underlying vertical category of a cofibrant double category is only a disjoint union of copies of $\mathbbm 1$ and $\mathbbm 2$ rather than a free category. As a consequence, our model structure is not compatible with the Gray tensor product on $\DblCat$, as we show below. 

\begin{notation}
Let $I\colon \AA\to \BB$ and $J\colon \AA'\to \BB'$ be double functors in $\DblCat$. We write $\pushout{I}{J}{\Gray}$ for their pushout-product
\[ \pushprod{I}{\AA}{\BB}{J}{\AA'}{\BB'}{\Gray} \]
with respect to the Gray tensor product $\otimes_\Gray$ on $\DblCat$.
\end{notation}

\begin{rmk} \label{NotcompatiblewithGray}
The model structure defined in \cref{thm:modelstructonDblCat} is not compatible with the Gray tensor product $\otimes_\Gray$. To see this, recall that $I_3\colon \emptyset\to \vtwo$ is a generating cofibration in $\DblCat$ by \cref{prop:alternate_gen_cofibs}. However the pushout-product \[
\pushout{I_3}{I_3}{\Gray}\colon \emptyset\to \vtwo\otimes_\Gray\vtwo \]
is not a cofibration, where $ \vtwo\otimes_\Gray\vtwo$ is the double category generated by the following data
\begin{tz}
\node (A) at (0,0) {$0$};
\node (B) at (0,-1.5) {$0'$};
\node (C) at (0,-3) {$1'$};
\node (A') at (1.5,0) {$0$};
\node (B') at (1.5,-1.5) {$1$};
\node (C') at (1.5,-3) {$1'$};
\node at ($(C'.east)-(0,4pt)$) {.};
\draw[d] (A) to (A');
\draw[d] (C) to (C');
\draw[->] (A) to (B);
\draw[->] (B) to (C);
\draw[->] (A') to (B');
\draw[->] (B') to (C');

\node at (0,-.75) {$\bullet$};
\node at (1.5,-.75) {$\bullet$};
\node at (0,-2.25) {$\bullet$};
\node at (1.5,-2.25) {$\bullet$};

\node[scale=0.8] at (.75,-1.5) {$\cong$};
\end{tz}
Indeed, since the underlying vertical category of $ \vtwo\otimes_\Gray\vtwo$ has non-trivial composites of vertical morphisms, this is not a cofibrant double category by \cref{char:cofibrant}.
\end{rmk}

\subsection{\texorpdfstring{$\TwoCat$}{2Cat}-enrichment of the model structure on \texorpdfstring{$\DblCat$}{DblCat}}\label{subsec:enrichment} By restricting the Gray tensor product on $\DblCat$ along $\bbH$ in one of the variables, we get rid of the issue concerning the vertical structure that obstructs the compatibility with the model structure of \cref{thm:modelstructonDblCat}. With this variation, we show that $\DblCat$ is a tensored and cotensored $\TwoCat$-enriched category, and that the corresponding enrichment is now compatible with our model structure. 

\begin{defn}\label{def:tensorprod}
We define the tensoring functor $\otimes\colon \TwoCat\times \DblCat\to \DblCat$ to be the composite
\begin{tz}
\node(A) at (-2,0) {$\TwoCat\times \DblCat$};
\node(B) at (2,0) {$\DblCat\times \DblCat$};
\node (C) at (5.25,0) {$\DblCat$.};
\draw[->] (A) to node[above,scale=0.8] {$\bbH\times \id$} (B);
\draw[->] (B) to node[above,scale=0.8] {$\otimes_\Gray$} (C);
\end{tz}
\end{defn}

\begin{prop} \label{DblCattensored}
The category $\DblCat$ is enriched, tensored, and cotensored over $\TwoCat$, with 
\begin{rome}
    \item hom $2$-categories given by $\bfH[\AA,\BB]_\ps$, for all $\AA,\BB\in \DblCat$,
    \item tensors given by $\cc\otimes \AA$, for all $\AA\in \DblCat$ and $\cc\in \TwoCat$, where $\otimes$ is the tensoring functor of \cref{def:tensorprod}, and
    \item cotensors given by $[\bbH\cc,\BB]_\ps$, for all $\BB\in \DblCat$ and $\cc\in \TwoCat$,
\end{rome}
where $[-,-]_\ps$ is the pseudo hom double category of \cref{def:pseudohomdouble}.
\end{prop}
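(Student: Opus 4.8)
The plan is to verify the three pieces of data required for a tensored and cotensored $\TwoCat$-enrichment, and then check the compatibility conditions relating them. The key observation throughout is that all three functors factor through constructions we already understand: the hom-$2$-categories are $\bfH$ applied to the pseudo-hom double categories $[-,-]_\ps$, the tensors are the Gray tensor product $\otimes_\Gray$ precomposed with $\bbH$, and the cotensors are again $[-,-]_\ps$ precomposed with $\bbH$. So the strategy is to reduce each statement to a corresponding property of $\otimes_\Gray$, $[-,-]_\ps$, and $\bbH$ established earlier (notably \cref{prop:Bohm}, \cref{prop:adjHH}, and \cref{lem:HVpreservehom}).

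First I would establish that $\bfH[-,-]_\ps$ defines a $\TwoCat$-enrichment of $\DblCat$: this requires composition $2$-functors $\bfH[\BB,\bC]_\ps\times\bfH[\AA,\BB]_\ps\to\bfH[\AA,\bC]_\ps$ and identity $2$-functors $\mathbbm 1\to\bfH[\AA,\AA]_\ps$, satisfying associativity and unit axioms. These come from the fact that $[-,-]_\ps$ is the internal hom of the closed monoidal structure $(\DblCat,\otimes_\Gray)$ of \cref{prop:Bohm}, so $\DblCat$ is already enriched over itself via $[-,-]_\ps$; applying the lax monoidal (indeed strong monoidal, by \cref{prop:adjHH} together with the interaction of $\bbH$ with $\otimes_\Gray$) functor $\bfH\colon\DblCat\to\TwoCat$ — or more directly, using that $\bfH$ preserves products and there is a canonical comparison $\bfH[\AA,\BB]_\ps\times\bfH[\BB,\bC]_\ps\to\bfH([\AA,\BB]_\ps\times[\BB,\bC]_\ps)$ — transports this to a $\TwoCat$-enrichment. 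Then I would produce the tensor–cotensor–hom adjunction isomorphisms: for $\cc\in\TwoCat$ and $\AA,\BB\in\DblCat$, we need natural isomorphisms
\[
\DblCat(\cc\otimes\AA,\BB)\cong\TwoCat(\cc,\bfH[\AA,\BB]_\ps)\cong\DblCat(\AA,[\bbH\cc,\BB]_\ps).
\]
For the first isomorphism, $\cc\otimes\AA=\bbH\cc\otimes_\Gray\AA$, so by \cref{prop:Bohm} we get $\DblCat(\bbH\cc\otimes_\Gray\AA,\BB)\cong\DblCat(\bbH\cc,[\AA,\BB]_\ps)$, and then by $\bbH\dashv\bfH$ (\cref{prop:adjHH}) this is $\TwoCat(\cc,\bfH[\AA,\BB]_\ps)$. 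For the second isomorphism, $\DblCat(\AA,[\bbH\cc,\BB]_\ps)\cong\DblCat(\AA\otimes_\Gray\bbH\cc,\BB)\cong\DblCat(\bbH\cc\otimes_\Gray\AA,\BB)$ using symmetry of $\otimes_\Gray$, which matches the left-hand side. Chasing these through shows the required compatibility of tensor, cotensor, and hom, and one must also check naturality in all variables, which is routine since each step is natural.

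The main obstacle I expect is not any single deep point but rather the bookkeeping: one must check that the composition and unit $2$-functors for the $\TwoCat$-enrichment are genuinely $2$-functorial (not merely functorial on underlying categories) and that they are compatible with the tensor–cotensor adjunctions in the precise sense demanded by the definition of an enriched, tensored, cotensored category (e.g.\ the associativity pentagon relating $(\cc\otimes\D)\otimes\AA$ and $\cc\otimes(\D\otimes\AA)$, which reduces to associativity of $\otimes_\Gray$ together with $\bbH$ being strong monoidal, i.e.\ $\bbH(\cc\otimes_2\D)\cong\bbH\cc\otimes_\Gray\bbH\D$). I would isolate this last isomorphism — that $\bbH\colon\TwoCat\to\DblCat$ is strong monoidal from the Gray tensor product $\otimes_2$ to $\otimes_\Gray$ — as the one genuinely structural fact needed, and note it follows from \cref{lem:HVpreservehom} (which gives $\bfH[\bbH\B,\AA]_\ps\cong\Psd[\B,\bfH\AA]$) by taking adjoints: for $2$-categories $\A,\B$ and a double category $\bC$,
\[
\DblCat(\bbH\A\otimes_\Gray\bbH\B,\bC)\cong\DblCat(\bbH\A,[\bbH\B,\bC]_\ps)\cong\TwoCat(\A,\bfH[\bbH\B,\bC]_\ps)\cong\TwoCat(\A,\Psd[\B,\bfH\bC]),
\]
and since $\TwoCat(\A,\Psd[\B,\bfH\bC])\cong\TwoCat(\A\otimes_2\B,\bfH\bC)\cong\DblCat(\bbH(\A\otimes_2\B),\bC)$ by \cref{def:Gray2} and \cref{prop:adjHH}, the Yoneda lemma gives $\bbH\A\otimes_\Gray\bbH\B\cong\bbH(\A\otimes_2\B)$ naturally. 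With that in hand, all the coherence axioms transport from the corresponding axioms for $(\DblCat,\otimes_\Gray,[-,-]_\ps)$, and the proof is essentially complete modulo the routine verifications.
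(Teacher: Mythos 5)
Your proof is correct and follows essentially the same route as the paper, which simply cites the universal properties of $\otimes_\Gray$ and the adjunction $\bbH\dashv\bfH$. The one thing worth noting is that the strong-monoidality fact you isolate and derive, namely $\bbH\A\otimes_\Gray\bbH\B\cong\bbH(\A\otimes_2\B)$, is precisely the content of the paper's \cref{lem:Handotimes}, which the paper only records \emph{after} this proposition (since it is needed there for the pushout-product axiom in \cref{2Catenrichment}); so you have effectively done that work slightly earlier, but the logical dependencies are the same.
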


\begin{proof}
This follows directly from the definition of $\otimes$, and the universal properties of the tensor $\otimes_{\Gray}$ and of the adjunction $\bbH\dashv \bfH$. 
\end{proof}

We now present the main result of this section.

\begin{thm} \label{2Catenrichment}
The model structure on $\DblCat$ of \cref{thm:modelstructonDblCat} is a $\TwoCat$-enriched model structure, where the enrichment is given by $\bfH[-,-]_\ps$. 
\end{thm}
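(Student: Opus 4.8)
The plan is to verify the enriched pushout-product axiom for the $\TwoCat$-structure on $\DblCat$ recorded in \cref{DblCattensored}, since the enrichment, tensoring, and cotensoring data are already in place there; the unit axiom is automatic because the unit $\mathbbm 1$ of the Gray tensor product on $\TwoCat$ is cofibrant in the Lack model structure (the map $\emptyset\to\mathbbm 1$ being a generating cofibration by \cite[\S 3]{Lack2Cat}). Concretely, using the two-variable adjunction relating the tensor $\otimes\colon\TwoCat\times\DblCat\to\DblCat$, the cotensor $[\bbH(-),-]_\ps$, and the enrichment $\bfH[-,-]_\ps$ from \cref{DblCattensored}, it suffices to show that for every cofibration $i\colon\cc\to\D$ in $\TwoCat$ and every (trivial) fibration $p\colon\mathbb X\to\mathbb Y$ in $\DblCat$, the induced pullback-cotensor
\[ \widehat{p}\colon [\bbH\D,\mathbb X]_\ps \longrightarrow [\bbH\cc,\mathbb X]_\ps \times_{[\bbH\cc,\mathbb Y]_\ps} [\bbH\D,\mathbb Y]_\ps \]
is a (trivial) double fibration in $\DblCat$, and that moreover it is trivial whenever $i$ is a trivial cofibration.

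First I would apply the detecting functor $(\bfH,\cv)\colon\DblCat\to\TwoCat\times\TwoCat$ to $\widehat p$; by \cref{charfib,chartrivfib} it is enough to prove that both $\bfH\widehat p$ and $\cv\widehat p$ are (trivial) Lack fibrations in $\TwoCat$. Since $\bfH$ and $\cv$ are right adjoints (by \cref{prop:adjHH,prop:adjLV}), they preserve the pullback defining the target of $\widehat p$. Then the natural isomorphisms $\bfH[\bbH\B,\AA]_\ps\cong\Psd[\B,\bfH\AA]$ and $\cv[\bbH\B,\AA]_\ps\cong\Psd[\B,\cv\AA]$ of \cref{lem:HVpreservehom} identify $\bfH\widehat p$ with the pullback-hom of $i$ against $\bfH p$ formed with respect to the Gray tensor product on $\TwoCat$ (whose closed structure is $\Psd[-,-]$, see \cref{def:Gray2}), and likewise identify $\cv\widehat p$ with the pullback-hom of $i$ against $\cv p$.

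Now I would invoke \cref{charfib} (respectively \cref{chartrivfib}): since $p$ is a double fibration (respectively a double trivial fibration), the $2$-functors $\bfH p$ and $\cv p$ are Lack fibrations (respectively trivial fibrations) in $\TwoCat$. By \cref{2Catenrichedmodel}, the Lack model structure is monoidal with respect to the Gray tensor product, so the pullback-hom of the cofibration $i$ with the fibration $\bfH p$ is a fibration, which is trivial as soon as either $i$ or $p$ is a weak equivalence; the same applies with $\cv p$ in place of $\bfH p$. Hence $\bfH\widehat p$ and $\cv\widehat p$ are (trivial) Lack fibrations, so $\widehat p$ is a (trivial) double fibration by \cref{charfib,chartrivfib}, which establishes the enriched pushout-product axiom and completes the proof.

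The main obstacle I anticipate is bookkeeping rather than conceptual: one must check carefully that $\bfH$ and $\cv$ transport $\widehat p$ to the genuine pullback-hom in $\TwoCat$ — that the isomorphisms of \cref{lem:HVpreservehom} are compatible with the pullback squares and with the structure maps induced by $i$ and $p$ — and that the two-variable adjunction underlying \cref{DblCattensored} really does reduce the enriched condition to this pullback-cotensor statement about $\widehat p$. Both steps are naturality verifications, but they should be spelled out explicitly.
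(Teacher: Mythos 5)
Your proof is correct but follows a genuinely different route from the paper. The paper argues on the cofibration side: it invokes the generating (trivial) cofibrations of \cref{lem:gencofIJ}, which are of the form $\bbH j$ and $\bbH j\times \vtwo$, and uses \cref{lem:pushprodcof} (namely $\pushout{i}{\bbH j}{}\cong \bbH(\pushout{i}{j}{2})$ and $\pushout{i}{(\bbH j\times\vtwo)}{}\cong\LV(\pushout{i}{j}{2})$) together with monoidality of the Lack model structure (\cref{2Catenrichedmodel}) and the fact that $\bbH$ and $\LV$ are left Quillen to conclude. You instead pass to the adjoint formulation and argue on the fibration side: you apply the detecting right adjoint $(\bfH,\cv)$ to the pullback-cotensor $\widehat p$, use that right adjoints preserve the limit defining its target, and use the naturality in \cref{lem:HVpreservehom} to identify $\bfH\widehat p$ and $\cv\widehat p$ with the Gray pullback-homs of $i$ against $\bfH p$ and $\cv p$ in $\TwoCat$, then conclude by \cref{2Catenrichedmodel} and the characterization \cref{charfib,chartrivfib}. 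Both arguments are sound and both ultimately invoke \cref{2Catenrichedmodel}; the tradeoff is that your version exploits the right-inducedness of the model structure directly and avoids the generating-set reduction and \cref{lem:pushprodcof}, at the cost of the naturality bookkeeping you flag, whereas the paper's version shifts the work into the explicit compatibility isomorphisms for pushout-products with $\bbH$ and $\LV$. Your remark on the unit axiom (via cofibrancy of $\mathbbm 1$) is also a correct addition; the paper leaves it implicit.
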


The rest of this section is devoted to the proof of this theorem. With that goal, we first prove several auxiliary lemmas.

\begin{notation}
Let $i\colon \A\to \B$ and $j\colon \A'\to \B'$ be $2$-functors in $\TwoCat$, and let $I\colon \AA\to \BB$ be a double functor in $\DblCat$. We denote by $i\square_2 j$ the pushout-product
\[ \pushprod{i}{\A}{\B}{j}{\A'}{\B'}{2} \]
with respect to the Gray tensor product $\otimes_2$ on $\TwoCat$ (see \cref{def:Gray2}), and we denote by $\pushout{i}{I}{}$ the pushout-product 
\[ \pushprod{i}{\A}{\B}{I}{\AA}{\BB}{} \]
with respect to the tensoring functor $\otimes\colon \TwoCat\times \DblCat\to \DblCat$. In particular, we have that $\pushout{i}{I}{}=\pushout{\bbH i}{I}{\Gray}$.
\end{notation}

\begin{lemma} \label{lem:Handotimes}
Let $\A$ and $\B$ be $2$-categories. There is an isomorphism of double categories 
\[ \A\otimes \bbH\B\cong \bbH(\A\otimes_2 \B), \]
natural in $\A$ and $\B$. 
\end{lemma}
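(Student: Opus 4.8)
The plan is to prove the isomorphism $\A\otimes\bbH\B\cong\bbH(\A\otimes_2\B)$ by exploiting the defining universal properties of the two Gray tensor products together with the adjunctions relating $\TwoCat$ and $\DblCat$. Recall from \cref{def:tensorprod} that $\A\otimes\bbH\B=\bbH\A\otimes_\Gray\bbH\B$, so it suffices to establish $\bbH\A\otimes_\Gray\bbH\B\cong\bbH(\A\otimes_2\B)$. First I would compute, for an arbitrary double category $\bC$, the set $\DblCat(\bbH\A\otimes_\Gray\bbH\B,\bC)$ and massage it into $\DblCat(\bbH(\A\otimes_2\B),\bC)$ naturally in $\bC$; the Yoneda lemma then yields the claimed isomorphism, and naturality in $\A$ and $\B$ falls out of the naturality of all the intermediate bijections.

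The chain of natural bijections I would write down is:
\begin{align*}
\DblCat(\bbH\A\otimes_\Gray\bbH\B,\bC)
&\cong \DblCat(\bbH\A,[\bbH\B,\bC]_\ps) && \text{(\cref{prop:Bohm})}\\
&\cong \TwoCat(\A,\bfH[\bbH\B,\bC]_\ps) && \text{(\cref{prop:adjHH})}\\
&\cong \TwoCat(\A,\Psd[\B,\bfH\bC]) && \text{(\cref{lem:HVpreservehom})}\\
&\cong \TwoCat(\A\otimes_2\B,\bfH\bC) && \text{(\cref{def:Gray2})}\\
&\cong \DblCat(\bbH(\A\otimes_2\B),\bC) && \text{(\cref{prop:adjHH}).}
\end{align*}
Each step is an instance of a universal property already recorded in the excerpt, and each is natural in $\bC$ (and in $\A$, $\B$), so the composite is a natural isomorphism $\DblCat(\bbH\A\otimes_\Gray\bbH\B,-)\cong\DblCat(\bbH(\A\otimes_2\B),-)$. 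By the Yoneda lemma this is represented by a unique isomorphism $\bbH\A\otimes_\Gray\bbH\B\cong\bbH(\A\otimes_2\B)$ in $\DblCat$, which is $\A\otimes\bbH\B\cong\bbH(\A\otimes_2\B)$ by \cref{def:tensorprod}.

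The only genuinely delicate step is the third bijection, the identification $\bfH[\bbH\B,\bC]_\ps\cong\Psd[\B,\bfH\bC]$; but this is exactly the first isomorphism of \cref{lem:HVpreservehom}, applied with the double category there taken to be $\bC$. So there is no real obstacle — the work is entirely in lining up the universal properties correctly and checking that the composite of the naturality squares commutes, which is automatic since each factor is natural. I would, however, remark that one could alternatively prove the statement more directly by unwinding the explicit description of the Gray tensor product from \cite{Bohm}: since $\bbH\B$ has only trivial vertical morphisms, the tensor $\bbH\A\otimes_\Gray\bbH\B$ again has only trivial vertical morphisms, and on the horizontal data it restricts to the $2$-categorical Gray tensor product $\A\otimes_2\B$; but the universal-property argument above is cleaner and avoids invoking the combinatorics of $\otimes_\Gray$.
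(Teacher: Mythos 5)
Your proof is correct and takes essentially the same route as the paper's: the paper's proof is a one-sentence sketch appealing to the universal properties of $\otimes$ and $\otimes_2$ together with the isomorphism $\bfH[\bbH\B,\bC]_\ps\cong\Psd[\B,\bfH\bC]$ from \cref{lem:HVpreservehom}, and your chain of natural bijections is precisely the expansion of that sketch (with the two appeals to $\bbH\dashv\bfH$ from \cref{prop:adjHH} supplying the translation between the $\DblCat$- and $\TwoCat$-sides). The Yoneda step at the end is the standard way to convert the representable-level isomorphism into an isomorphism of objects, exactly as intended.
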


\begin{proof}
By the universal properties of $\otimes$ and $\otimes_2$, the adjunction $\bbH\dashv \bfH$, and \cref{lem:HVpreservehom}, we have an isomorphism 
\begin{align*}
    \DblCat(\A\otimes\bbH\B,\bC)&\cong \TwoCat(\A,\bfH[\bbH\B,\bC]_\ps)\cong \TwoCat(\A,\Psd[\B,\bfH\bC]) \\ &\cong \TwoCat(\A\otimes_2\B,\bfH\bC)\cong \DblCat(\bbH(\A\otimes_2\B),\bC),
\end{align*}
for every double category $\bC$, which is natural in $\A$, $\B$, and $\bC$. The result then follows from the Yoneda lemma.
\end{proof}

\begin{rmk}
In particular, the natural isomorphism  $\bfH[\bbH(-),-]_{\ps}\cong \Psd[-,\bfH(-)]$ implies that the adjunction $\bbH\dashv \bfH$ is enriched with respect to the $\TwoCat$-enrichments $\bfH[-,-]_{\ps}$ and $\Psd[-,-]$ of $\DblCat$ and $\TwoCat$, respectively.
\end{rmk}

\begin{lemma} \label{lem:Vtwoandotimes}
Let $\A$ be a $2$-category. There is an isomorphism of double categories 
\[ \A\otimes \vtwo\cong \bbH\A\times \vtwo, \]
natural in $\A$.
\end{lemma}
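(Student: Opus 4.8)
The plan is to derive this as a direct consequence of \cref{lem:Handotimes} together with the explicit description of $\vtwo$ as the image of a $2$-category under $\bbH$ composed with the free double category functor, or more simply, by combining \cref{lem:Handotimes} with the computation of $\A \otimes_2 \mathbbm{2}$ in $\TwoCat$. First I would recall that $\vtwo = \bbH(\mathbbm 2) \times \vtwo$ is \emph{not} literally true — rather $\vtwo$ is the free double category on one vertical morphism — so the cleanest route is different: I would instead use the defining universal property of $\otimes$ from \cref{def:tensorprod}, namely that $\A \otimes \AA = \bbH\A \otimes_\Gray \AA$, and then reduce to a statement purely about the Gray tensor product $\otimes_\Gray$ on $\DblCat$.

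The key steps, in order, would be as follows. Step one: unfold the definition to get $\A \otimes \vtwo = \bbH\A \otimes_\Gray \vtwo$. Step two: establish a general isomorphism $\BB \otimes_\Gray \vtwo \cong \BB \times \vtwo$ for $\BB$ a horizontal double category — this is where the real content lies. To prove this, I would use the closed structure of \cref{prop:Bohm}: it suffices to produce a natural isomorphism $\DblCat(\BB \times \vtwo, \bC) \cong \DblCat(\BB \otimes_\Gray \vtwo, \bC) = \DblCat(\BB, [\vtwo, \bC]_\ps)$ for all $\bC$. Since $\vtwo$ has no nontrivial horizontal morphisms, $[\vtwo,\bC]_\ps = [\vtwo,\bC]$ (this is noted in the proof of \cref{lem:HVpreservehom}), so the right-hand side is $\DblCat(\BB, [\vtwo,\bC])$, which by \cref{prop:adjinternalhom} is $\DblCat(\BB \times \vtwo, \bC)$. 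Hence for \emph{any} double category $\BB$ we in fact get $\BB \otimes_\Gray \vtwo \cong \BB \times \vtwo$; specializing to $\BB = \bbH\A$ gives the claim. Step three: check naturality in $\A$, which follows from naturality of all the isomorphisms used.

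The main obstacle — and the thing I would need to double-check carefully — is the identification $[\vtwo, \bC]_\ps = [\vtwo, \bC]$ and its compatibility with the adjunction of \cref{prop:adjinternalhom}: one must be sure that B\"ohm's closed structure $[-,-]_\ps$ genuinely restricts to the strict internal hom $[-,-]$ on the variable $\vtwo$, i.e.\ that there are no hidden pseudo/lax coherence data lurking in $[\vtwo,\bC]_\ps$ beyond what appears in $[\vtwo,\bC]$. This is plausible precisely because a horizontal pseudo natural transformation out of $\vtwo$ has its only pseudo-naturality data indexed by horizontal morphisms of $\vtwo$, of which there are none but identities; so the pseudo data is forced to be trivial, and likewise for modifications. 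Once that reduction is secured, the rest is a formal chain of adjunction isomorphisms. Alternatively, if one prefers to avoid the closed-structure argument, one could verify the isomorphism $\bbH\A \otimes_\Gray \vtwo \cong \bbH\A \times \vtwo$ by hand from B\"ohm's explicit description of $\otimes_\Gray$ in \cite{Bohm}, checking that the pseudo-naturality squares in the tensor product all become identities because one factor is a horizontal double category and the other has only one vertical morphism; but the closed-structure route is shorter and less error-prone, so that is the one I would write up.
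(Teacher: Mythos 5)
The proposal has a genuine gap, and it is precisely at the spot you flag as the ``main obstacle'' and then dismiss. Your step two claims that $[\vtwo,\bC]_\ps = [\vtwo,\bC]$ as double categories, hence that $\BB \otimes_\Gray \vtwo \cong \BB \times \vtwo$ for \emph{every} double category $\BB$. This is false. You correctly observe that the pseudo-naturality data of \emph{horizontal} pseudo transformations out of $\vtwo$ is indexed by horizontal morphisms of $\vtwo$, of which there are none, so the horizontal morphisms of $[\vtwo,\bC]_\ps$ and $[\vtwo,\bC]$ agree. But the \emph{vertical} morphisms of $[\vtwo,\bC]_\ps$ are vertical pseudo transformations, whose pseudo-naturality data is indexed by \emph{vertical} morphisms of $\vtwo$ --- and $\vtwo$ has a nontrivial vertical morphism $u$. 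A vertical pseudo transformation out of $\vtwo$ thus carries a genuine horizontally invertible comparison cell for $u$, while a strict vertical transformation must satisfy an equation on the nose. So $[\vtwo,\bC]_\ps \ne [\vtwo,\bC]$ in general, and your generalized claim fails: indeed, the paper's own \cref{NotcompatiblewithGray} exhibits $\vtwo \otimes_\Gray \vtwo$ with a nontrivial vertically invertible interchange square, whereas $\vtwo \times \vtwo$ has none; taking $\BB = \vtwo$ refutes your step two.

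The hypothesis that $\A$ is a $2$-category (so $\BB = \bbH\A$ is \emph{horizontal}) is what rescues the argument, and the paper exploits it. Instead of comparing $[\vtwo,\bC]_\ps$ and $[\vtwo,\bC]$ directly, one first applies the adjunction $\bbH \dashv \bfH$: maps $\bbH\A \to [\vtwo,\bC]_\ps$ correspond to $2$-functors $\A \to \bfH[\vtwo,\bC]_\ps$, and $\bfH$ throws away the vertical direction of the hom double category, i.e.\ exactly the problematic vertical pseudo transformations. The correct auxiliary fact is therefore only the $\bfH$-level equality $\bfH[\vtwo,\bC]_\ps = \bfH[\vtwo,\bC]$ (which your horizontal-morphism and modification arguments do establish), not the double-categorical equality. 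Concretely the chain is
\[
\DblCat(\bbH\A \otimes_\Gray \vtwo, \bC)
\cong \DblCat(\bbH\A, [\vtwo,\bC]_\ps)
\cong \TwoCat(\A, \bfH[\vtwo,\bC]_\ps)
= \TwoCat(\A, \bfH[\vtwo,\bC])
\cong \DblCat(\bbH\A, [\vtwo,\bC])
\cong \DblCat(\bbH\A \times \vtwo, \bC),
\]
natural in $\bC$, and Yoneda then gives the lemma. You should rewrite step two this way; the blanket isomorphism $\BB \otimes_\Gray \vtwo \cong \BB \times \vtwo$ is not available and would in fact contradict the pushout-product counterexample used elsewhere in the paper.
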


\begin{proof}
By the universal properties of $\otimes$ and $\times$, and the fact that $\bfH[\vtwo,\BB]_\ps= \bfH[\vtwo,\BB]$ for all $\BB\in \DblCat$ by the proof of \cref{lem:HVpreservehom}, we have an isomorphism
\begin{align*} 
\DblCat(\A\otimes\vtwo,\BB)&\cong \TwoCat(\A,\bfH[\vtwo,\BB]_\ps)= \TwoCat(\A,\bfH[\vtwo,\BB])\\ &\cong \DblCat(\bbH\A, [\vtwo,\BB])\cong \DblCat(\bbH\A\times \vtwo,\BB), 
\end{align*}
for every double category $\BB$, which is natural in $\A$ and $\BB$. The result then follows from the Yoneda lemma.
\end{proof}

\begin{lemma} \label{lem:pushprodcof}
Let $i\colon \A\to \B$ and $j\colon \A'\to \B'$ be $2$-functors in $\TwoCat$. There are isomorphisms
\[ \pushout{i}{\bbH j}{}\cong \bbH(\pushout{i}{j}{2}) \ \ \text{and} \ \ \pushout{i}{(\bbH j\times \vtwo)}{}\cong \bbH(\pushout{i}{j}{2})\times \vtwo \]
in the arrow category $\DblCat^{\mathbbm 2}$.
\end{lemma}

\begin{proof}
Since $\bbH$ is a left adjoint, it preserves pushouts. Moreover, by \cref{lem:Handotimes}, we have that it is compatible with the tensors $\otimes$ and $\otimes_2$. Therefore, $\pushout{i}{\bbH j}{}\cong \bbH(\pushout{i}{j}{2})$. By \cref{lem:Vtwoandotimes}, by associativity of $\otimes_\Gray$, and by the first isomorphism, we then get that
\[ \pushout{i}{(\bbH j\times \vtwo)}{}\cong \pushout{i}{(j\otimes \vtwo)}{}\cong \pushout{(i}{\bbH j)\otimes_\Gray \vtwo}{}\cong (\pushout{i}{j}{2})\otimes \vtwo\cong \bbH(\pushout{i}{j}{2})\times \vtwo. \qedhere \]
\end{proof}

We are now ready to prove \cref{2Catenrichment}.

\begin{proof}[Proof of \cref{2Catenrichment}]
Recall from \cref{lem:gencofIJ} that a set $\mathcal I$ of generating cofibrations and a set $\mathcal J$ of generating trivial cofibrations for the model structure on $\DblCat$ are given by morphisms of the form $\bbH j$ and $\LV j=\bbH j\times \vtwo$, where $j$ is a generating cofibration or a generating trivial cofibration in $\TwoCat$, respectively. 

We show that the pushout-product of a generating cofibration in $\mathcal I$ with any (trivial) cofibration in $\TwoCat$ is a (trivial) cofibration in $\DblCat$, and that the pushout-product of a generating trivial cofibration in $\mathcal J$ with any cofibration in $\TwoCat$ is a trivial cofibration in~$\DblCat$. 

Given cofibrations $i$ and $j$ in $\TwoCat$, we know by \cref{lem:pushprodcof} that 
\[ \pushout{i}{\bbH j}{}\cong \bbH(\pushout{i}{j}{2}) \ \ \text{and} \ \ \pushout{i}{(\bbH j\times \vtwo)}{}\cong \bbH(\pushout{i}{j}{2})\times \vtwo=\LV(\pushout{i}{j}{2}), \]
and by \cref{2Catenrichedmodel} that $\pushout{i}{j}{2}$ is also a cofibration in $\TwoCat$, which is trivial when either~$i$ or~$j$ is. Since $\bbH$ and $\LV$ preserve (trivial) cofibrations by \cref{QuillenadjHH,LVQuillenadj}, then $\bbH(\pushout{i}{j}{2})$ and $\LV(\pushout{i}{j}{2})$ are cofibrations in $\DblCat$, which are trivial if either $i$ or $j$ is. Taking $j$ to be a generating cofibration or generating trivial cofibration in $\TwoCat$, we get the desired results. 
\end{proof}

\section{Comparison with other model structures on \texorpdfstring{$\DblCat$}{DblCat}} \label{Sec:comparison}

In \cite{FPP}, Fiore, Paoli, and Pronk construct several model structures on the category $\DblCat$ of double categories. We show in this section that our model structure on $\DblCat$ is not related to their model structures in the following sense: the identity adjunction on $\DblCat$ is not a Quillen pair between the model structure of \cref{thm:modelstructonDblCat} and any of the model structures of \cite{FPP}. This is not surprising, since our model structure was constructed in such a way that the functor $\bbH\colon \TwoCat\to \DblCat$ embeds the homotopy theory of $\TwoCat$ into that of~$\DblCat$, while there seems to be no such relation between their model structures on~$\DblCat$ and the Lack model structure on $\TwoCat$, e.g.~see end of Section 9 in \cite{FPP}. Further evidence is given by the fact that our double biequivalences are $2$-categorically induced, while the weak equivalences in the model structures of \cite{FPP} are rather $1$-categorically induced. 

We start by recalling the categorical model structures on $\DblCat$ constructed in \cite{FPP}. Since our primary interest is to compare them to our model structure, we only describe the weak equivalences; the curious reader is encouraged to visit their paper for further details.

The first model structure we recall is induced from the canonical model structure on $\Cat$ by means of the \emph{vertical nerve}.

\begin{defn}[{\cite[Definition 5.1]{FPP}}]
The \textbf{vertical nerve} of double categories is the functor 
\[ N_v\colon \DblCat\to \Cat^{\Delta^{\op}} \]
sending a double category $\AA$ to the simplicial category $N_v\AA$ such that $(N_v\AA)_0$ is the category of objects and horizontal morphisms of $\AA$, $(N_v\AA)_1$ is the category of vertical morphisms and squares of $\AA$, and $(N_v\AA)_n=(N_v\AA)_1\times_{(N_v\AA)_0}\ldots\times_{(N_v\AA)_0}(N_v\AA)_1$, for $n\geq 2$.
\end{defn}

\begin{prop}[{\cite[Theorem 7.17]{FPP}}] \label{prop:NvMS}
There is a model structure on $\DblCat$ in which a double functor $F$ is a weak equivalence if and only if $N_v F$ is levelwise an equivalence of categories. 
\end{prop}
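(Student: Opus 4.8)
The plan is to obtain this model structure by right-induction (transfer) along the vertical nerve, using the Quillen Path Object Argument \cref{path}. Equip the diagram category $\Cat^{\Delta^\op}$ with the \emph{projective} model structure associated to the canonical model structure on $\Cat$, so that its weak equivalences and fibrations are the levelwise equivalences of categories and the levelwise isofibrations. This model structure is combinatorial, hence accessible, since the canonical model structure on $\Cat$ is cofibrantly generated and $\Cat$ is locally presentable; moreover all of its objects are fibrant, because all objects of $\Cat$ are fibrant in the canonical model structure. I would next observe that $N_v\colon \DblCat\to \Cat^{\Delta^\op}$ preserves limits: its level-$0$ and level-$1$ components are $U\bfH$ and $U\cv$, each a composite of right adjoints (recall $\cv=\bfH[\vtwo,-]$) and hence limit-preserving, and the higher levels are iterated pullbacks of these, so $N_v$ is limit-preserving and accessible. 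Since $\DblCat$ and $\Cat^{\Delta^\op}$ are locally presentable, the adjoint functor theorem supplies a left adjoint $\tau\dashv N_v$.

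It then remains to verify the path-object hypothesis of \cref{path}. For a double category $\AA$ I propose $\cp\AA\coloneqq [\bbH\mathbb I,\AA]$, where $\mathbb I$ denotes the free-living isomorphism (the category with two objects and an isomorphism between them) and $[-,-]$ is the strict internal hom of \cref{internalHom}; it carries the factorization of the diagonal $\AA\xrightarrow{W}\cp\AA\xrightarrow{P}\AA\times\AA$ induced respectively by the collapse $\bbH\mathbb I\to\mathbbm 1$ and the two inclusions $\mathbbm 1\sqcup\mathbbm 1\to \bbH\mathbb I$, exactly as in \cref{def:pathobject} but with $\mathbb I$ in place of $\Eadj$ and strict homs in place of pseudo ones. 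The crucial computation is that $N_v$ sends this to the levelwise cocylinder: one checks $U\bfH[\bbH\mathbb I,\AA]\cong (U\bfH\AA)^{\mathbb I}$, and, using cartesian closedness of $\DblCat$ to commute the internal homs $[\bbH\mathbb I,-]$ and $[\vtwo,-]$ together with a strict analogue of \cref{lem:HVpreservehom}, also $U\cv[\bbH\mathbb I,\AA]\cong U[\mathbb I,\cv\AA]\cong (U\cv\AA)^{\mathbb I}$; since both $N_v(-)$ and the cotensor $(-)^{\mathbb I}=\mathrm{Fun}(\mathbb I,-)$ respect the pullbacks defining the higher levels, this gives $N_v\cp\AA\cong (N_v\AA)^{\mathbb I}$ computed levelwise, with $N_vW$ and $N_vP$ the standard factorization $N_v\AA\to (N_v\AA)^{\mathbb I}\to N_v\AA\times N_v\AA$. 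Levelwise this is the factorization of the diagonal into an equivalence of categories followed by an isofibration in the canonical model structure on $\Cat$, so $N_vW$ is a projective weak equivalence and $N_vP$ a projective fibration.

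With all hypotheses checked, \cref{path} applied to the adjunction $\tau\dashv N_v$ produces the right-induced model structure on $\DblCat$, in which — by the definition of right-induction — a double functor $F$ is a weak equivalence exactly when $N_vF$ is a projective weak equivalence, that is, a levelwise equivalence of categories. This is the model structure asserted in the statement, and (since model structures are determined by their weak equivalences and fibrations) it is the one of \cite{FPP}.

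The step I expect to demand the most care is the identification $N_v[\bbH\mathbb I,\AA]\cong (N_v\AA)^{\mathbb I}$, and specifically its level-$1$ part: this relies on the strict version of \cref{lem:HVpreservehom} (that strict horizontal natural transformations, and modifications, out of a horizontal double category $\bbH\B$ coincide with $2$-natural transformations and modifications out of $\B$) and on commuting $[\bbH\mathbb I,-]$ past $[\vtwo,-]$ inside the cartesian closed category $\DblCat$. The surrounding ingredients — accessibility of the projective model structure, fibrancy of all its objects, and the existence of the left adjoint $\tau$ — are formal.
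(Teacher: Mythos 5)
The paper does not prove this proposition; it is cited verbatim from Fiore--Paoli--Pronk, so there is no internal proof to compare against. Your argument is a self-contained derivation in the same spirit as the paper's own Theorem~\ref{thm:modelstructonDblCat}: right-induce via the Quillen path object criterion (\cref{path}), with the functor being $N_v$ into $\Cat^{\Delta^\op}$ (projective, from the canonical model structure on $\Cat$) rather than $(\bfH,\cv)$ into $\TwoCat\times\TwoCat$, the interval being $\mathbb I$ rather than $\Eadj$, and strict internal homs rather than pseudo ones. These substitutions are the right ones: the canonical model structure on $\Cat$ is detected by strict natural transformations and the free-living isomorphism, so $[\bbH\mathbb I,\AA]$ suffices, whereas the Lack structure on $\TwoCat$ requires $\Eadj$ and pseudo naturality. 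Your level-$0$ and level-$1$ identifications $U\bfH[\bbH\mathbb I,\AA]\cong(U\bfH\AA)^{\mathbb I}$ and $U\cv[\bbH\mathbb I,\AA]\cong(U\cv\AA)^{\mathbb I}$ are correct (the second via cartesian closedness of $\DblCat$ plus the strict analogue of \cref{lem:HVpreservehom}), and the extension to higher simplicial degrees by pullback-preservation of $(-)^{\mathbb I}$ is sound, though you should also note that the resulting levelwise isomorphisms are natural in $[n]\in\Delta$ so that $N_v\cp\AA\cong(N_v\AA)^{\mathbb I}$ is an isomorphism of simplicial objects; this follows from the naturality of all the adjunctions used. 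The formal ingredients (accessibility, fibrancy, existence of the left adjoint by the adjoint functor theorem) are as you say.

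One small over-claim: the final sentence ``it is the one of \cite{FPP}'' requires knowing that the fibrations agree as well, i.e.\ that \cite{FPP} also transfer from the \emph{projective} model structure on $\Cat^{\Delta^\op}$. The statement you are asked to prove, however, only asserts the existence of a model structure with the given weak equivalences, which your argument delivers; and in the rest of the paper only the weak equivalences of this model structure are ever used, so the over-claim is harmless here.
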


\begin{rmk}\label{Rmk:verticalnerve}
Note that, for a double category $\AA$, the categories $(N_v\AA)_0$ and $(N_v\AA)_1$ are precisely the underlying categories $U\bfH\AA$ and $U\cv\AA$, respectively. One could also define a model structure on $\DblCat$ which is right-induced along a $2$-categorical version of the vertical nerve $\widehat{N_v}\colon \DblCat\to\TwoCat^{\Delta^{\op}}$ from the projective model structure on $\TwoCat^{\Delta^{\op}}$, where $\TwoCat$ is endowed with the Lack model structure. Our model structure can then be seen as a truncated version of this $2$-categorical vertical nerve model structure.
\end{rmk}

The next model structure on $\DblCat$ requires a different perspective. For a $2$-category~$\A$ whose underlying $1$-category $U\A$ admits limits and colimits, there is a model structure on~$U\A$ in which the weak equivalences are precisely the equivalences of the $2$-category $\A$; see~\cite{LackTriv}. When applying this construction to the $2$-category $\DblCat_h$ of double categories, double functors, and horizontal natural transformations, one obtains the following model structure on~$\DblCat$; see \cite[\S 8.4]{FPP}. 

\begin{prop}\label{prop:trivial}
There is a model structure on $\DblCat$, called the \emph{trivial model structure}, in which a double functor $F\colon \AA\to \BB$ is a weak equivalence if and only if it is an equivalence in the $2$-category $\DblCat_h$, i.e.,~there is a double functor $G\colon \BB\to \AA$ and two horizontal natural isomorphisms $\id_\AA\cong GF$ and $FG\cong \id_\BB$. 
\end{prop}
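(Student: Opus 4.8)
The statement is an immediate application of Lack's trivial model structure \cite{LackTriv}, as recalled in the prose above: for any $2$-category $\A$ whose underlying $1$-category $U\A$ is complete and cocomplete, there is a model structure on $U\A$ whose weak equivalences are exactly the internal equivalences of $\A$ (and in which every object is both fibrant and cofibrant). The plan is simply to apply this construction with $\A=\DblCat_h$, the $2$-category of double categories, double functors, and horizontal natural transformations, and then to unwind what ``equivalence in $\DblCat_h$'' means.

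First I would check that $\DblCat_h$ satisfies the hypotheses. Its underlying $1$-category is $\DblCat$, which is locally presentable — as already used in the proof of \cref{thm:modelstructonDblCat} — hence complete and cocomplete. The $2$-category structure of $\DblCat_h$ is the one whose hom-$2$-category $\DblCat_h(\AA,\BB)$ is $\bfH[\AA,\BB]$, i.e.\ double functors, horizontal natural transformations, and modifications between them; this is a genuine $2$-category, being the underlying horizontal $2$-category of the internal hom $[\AA,\BB]$ coming from the cartesian closed structure on $\DblCat$. Any $2$-categorical (co)tensors by interval categories that Lack's construction uses to produce path objects and (co)fibrant replacements are supplied by $[\bbH(-),-]$ together with the completeness of $\DblCat$, so nothing beyond the cited hypotheses is needed.

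Applying \cite{LackTriv} then yields a model structure on $\DblCat$ whose weak equivalences are the equivalences in $\DblCat_h$, and it remains only to translate this description. A double functor $F\colon\AA\to\BB$ is an equivalence in the $2$-category $\DblCat_h$ precisely when there is a double functor $G\colon\BB\to\AA$ and invertible $2$-cells $\id_\AA\cong GF$ and $FG\cong\id_\BB$ in $\DblCat_h$; since the $2$-cells of $\DblCat_h$ are the horizontal natural transformations, an invertible such $2$-cell is exactly a horizontal natural isomorphism. This recovers verbatim the characterization in the statement. There is no serious obstacle here; the one point deserving a little care is confirming that Lack's construction applies to $\DblCat_h$ on the nose, i.e.\ that all the $2$-limits and colimits implicitly invoked in \cite{LackTriv} are available in $\DblCat_h$, which reduces to completeness and cocompleteness of $\DblCat$ together with its cartesian closed structure — all of which are already in hand.
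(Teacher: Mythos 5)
Your proposal follows the same route the paper itself sketches in the prose immediately before the statement: invoke Lack's trivial model structure from \cite{LackTriv} for a $2$-category whose underlying $1$-category is complete and cocomplete, apply it to $\DblCat_h$, and translate ``equivalence in $\DblCat_h$'' into the existence of a (strict) double functor inverse up to horizontal natural isomorphism. The paper gives no separate proof and instead points to \cite[\S 8.4]{FPP}, so your reconstruction is essentially what the paper intends.

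One small but genuine slip is worth flagging: you write that the hom-object $\DblCat_h(\AA,\BB)$ is the \emph{$2$-category} $\bfH[\AA,\BB]$, i.e.\ double functors, horizontal natural transformations, \emph{and modifications}. If that were the hom-object, $\DblCat_h$ would be a $3$-category (indeed the paper notes exactly this promotion elsewhere, using $\bfH[-,-]_\ps$), and ``equivalence'' would then mean inverse up to horizontal pseudo natural \emph{equivalence}, not isomorphism. What Lack's construction is applied to here is the honest $2$-category $\DblCat_h$ whose hom-\emph{categories} are $U\bfH[\AA,\BB]$ — objects are double functors, morphisms are horizontal natural transformations, and that is all. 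Your final unwinding (``since the $2$-cells of $\DblCat_h$ are the horizontal natural transformations, an invertible such $2$-cell is exactly a horizontal natural isomorphism'') uses the correct structure, so the argument still lands in the right place; just correct the mid-paragraph description of the enrichment so the two halves of your paragraph are consistent.
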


\begin{rmk}
By comparing this to our Whitehead theorems (see \cref{subsec:whitehead}), we see that the weak equivalences in the model structure of \cref{prop:trivial} require stricter conditions than double biequivalences. Indeed, the units and counits in the statement above are horizontal \emph{strict} natural \emph{isomorphisms}, while in our Whitehead theorems they are horizontal \emph{pseudo} natural \emph{equivalences}. 
This further supports our claim that the weak equivalences in our model structure are a $2$-categorical analogue, and therefore carry more information, than the weak equivalences already present in the literature. Indeed, the $2$-category $\DblCat_h$, used to define the weak equivalences of \cref{prop:trivial}, can be promoted to a $3$-category with hom $2$-categories given by $\bfH[-,-]_\ps$, and this is the setting in which our work takes place. 
\end{rmk}

The last model structure is of a more algebraic flavor. Let $T$ be a $2$-monad on a $2$-ca\-te\-gory~$\A$. In \cite{LackTriv}, Lack gives a construction of a model structure on the category of $T$-algebras, in which the weak equivalences are the morphisms of $T$-algebras whose underlying morphism in $\A$ is an equivalence. In particular, double categories can be seen as the algebras over a $2$-monad on the $2$-category $\Cat(\mathrm{Graph})$ whose objects are the category objects in graphs; see \cite[\S 9]{FPP}. This gives the following model structure.

\begin{prop}\label{prop:algebra}
There is a model structure on $\DblCat$, called the \emph{algebra model structure}, in which a double functor $F$ is a weak equivalence if and only if its underlying morphism in the $2$-category $\Cat(\mathrm{Graph})$ is an equivalence.
\end{prop}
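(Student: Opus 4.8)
The plan is to obtain this model structure by a routine application of Lack's general machinery for algebras over a $2$-monad, combined with the monadic presentation of $\DblCat$ recalled from \cite[\S 9]{FPP}; since the statement is a recollection of a result from the literature, the work consists in assembling these two ingredients rather than in proving anything new. Recall that in \cite{LackTriv} Lack shows that, for a $2$-monad $T$ on a $2$-category $\A$ satisfying suitable completeness and accessibility hypotheses, the category of strict $T$-algebras and strict $T$-morphisms carries a model structure whose weak equivalences are exactly the strict morphisms whose underlying $1$-cell in $\A$ is an equivalence (with fibrations detected analogously by a lifting/isofibration condition in $\A$).

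First I would recall, following \cite[\S 9]{FPP}, that $\DblCat$ arises in this way: one takes $\A=\Cat(\mathrm{Graph})$, the $2$-category of category objects internal to the category of directed graphs, and the $2$-monad $T$ on it which freely adds the composition in the remaining direction, so that a strict $T$-algebra is precisely a double category and a strict $T$-morphism precisely a double functor. This yields an isomorphism between $\DblCat$ and the category of strict $T$-algebras and strict morphisms, under which the underlying $1$-cell in $\Cat(\mathrm{Graph})$ of a double functor is the morphism described in the statement. Second, I would check — or rather cite from \cite{FPP} — that $\Cat(\mathrm{Graph})$ has the limits and colimits needed and that $T$ is accessible, so that the hypotheses of \cite{LackTriv} are met. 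Transporting Lack's model structure across the isomorphism then gives the algebra model structure on $\DblCat$, and by construction a double functor $F$ is a weak equivalence in it if and only if its underlying morphism in $\Cat(\mathrm{Graph})$ is an equivalence, which is the assertion.

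The only genuinely non-formal point is the verification that this particular $2$-monad $T$ on $\Cat(\mathrm{Graph})$ satisfies the hypotheses of Lack's theorem — i.e.\ that $\Cat(\mathrm{Graph})$ is suitably (co)complete (it is locally presentable) and that $T$ is accessible, and that the cofibrant objects and (trivial) fibrations behave as required. This is exactly the content of \cite[\S 9]{FPP}, so I would simply invoke it; no additional argument is needed beyond citing \cite{LackTriv} for the general construction and \cite[\S 9]{FPP} for its applicability to the $2$-monad presenting $\DblCat$.
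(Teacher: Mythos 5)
Your proposal matches the paper's own treatment: the paper states this proposition as a recollection from the literature, with the preceding paragraph serving as its justification by citing \cite{LackTriv} for the general model structure on $T$-algebras and \cite[\S 9]{FPP} for the presentation of $\DblCat$ as algebras over a $2$-monad on $\Cat(\mathrm{Graph})$. Your assembly of these two ingredients is exactly the intended argument.
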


\begin{rmk} \label{rem:weinalg}
In \cite[Corollary 8.29 and Theorems 8.52 and 9.1]{FPP}, Fiore, Paoli, and Pronk show that the model structures on $\DblCat$ of \cref{prop:NvMS,prop:trivial,prop:algebra} coincide with model structures given by Grothendieck topologies, when double categories are seen as internal categories to $\Cat$. Then, it follows from \cite[Propositions 8.24 and 8.38]{FPP} that a weak equivalence in the algebra model structure is in particular a weak equivalence in the model structure induced by the vertical nerve $N_v$. 
\end{rmk}

\begin{rmk}
At this point, we must mention that \cite{FP,FPP} define other model structures on~$\DblCat$, which are not equivalent to any of the above. However, these are Thomason-like model structures, and are therefore not expected to have any relation to our model structure, which is categorical.
\end{rmk}

We now proceed to compare these three model structures on $\DblCat$ to the one defined in \cref{thm:modelstructonDblCat}. Our strategy will be to find a trivial cofibration in our model structure that is not a weak equivalence in any of the other model structures. Let $\Eadj$ be the free-living adjoint equivalence $2$-category $\{0\xrightarrow{\simeq} 1\}$. By \cref{prop:alternate_gen_cofibs}, the inclusion double functor $J_1\colon \mathbbm{1}\to \bbH\Eadj$ at $0$ is a generating trivial cofibration in our model structure on $\DblCat$.

\begin{lemma} \label{Hjnotwe}
The double functor $J_1\colon \mathbbm{1}\to \bbH\Eadj$ is not a weak equivalence in any of the model structures on $\DblCat$ of \cref{prop:NvMS,prop:trivial,prop:algebra}.
\end{lemma}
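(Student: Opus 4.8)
The plan is to locate a single structural obstruction inside $\bbH\Eadj$ that simultaneously blocks all three candidate notions of weak equivalence. The crucial observation is that, because $\bfH\bbH=\id$, the underlying category of objects and horizontal morphisms of $\bbH\Eadj$ is exactly $U\bfH\bbH\Eadj=U\Eadj$, the underlying $1$-category of the walking adjoint equivalence. Since the $2$-category $\Eadj$ imposes no relations on $1$-cells (the triangle identities are equations between $2$-cells), $U\Eadj$ is the free category on the graph with objects $0,1$ and arrows $f\colon 0\to 1$, $g\colon 1\to 0$. In particular $0$ and $1$ are \emph{not} isomorphic in $U\Eadj$: a path-length argument shows no nonempty composite of $f$'s and $g$'s can be an identity, so neither $f$ nor any other morphism $0\to 1$ is invertible. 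Equivalently, $U\bfH(\bbH j_0)=Uj_0\colon\mathbbm 1\to U\Eadj$, which picks out the object $0$, is not essentially surjective, hence not an equivalence of categories.

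First I would dispatch the vertical nerve model structure of \cref{prop:NvMS}. By \cref{Rmk:verticalnerve} we have $(N_v\AA)_0=U\bfH\AA$, so $(N_v(\bbH j_0))_0=Uj_0\colon\mathbbm 1\to U\Eadj$ is not an equivalence of categories by the previous paragraph; hence $N_v(\bbH j_0)$ is not a levelwise equivalence and $\bbH j_0$ is not a weak equivalence in this model structure. The algebra model structure of \cref{prop:algebra} then comes for free: by \cref{rem:weinalg} every weak equivalence in the algebra model structure is in particular a weak equivalence in the vertical nerve model structure, so $\bbH j_0$ fails there too.

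Next I would handle the trivial model structure of \cref{prop:trivial} by a direct argument. Since $\mathbbm 1$ is terminal in $\DblCat$, the only possible homotopy inverse of $\bbH j_0$ in the sense of \cref{prop:trivial} is the unique double functor $G\colon\bbH\Eadj\to\mathbbm 1$, and then $\bbH j_0\circ G$ is the double functor constant at the object $0$ (sending each object to $0$, each horizontal morphism to $\id_0$, and so on). A horizontal natural isomorphism $\phi\colon \bbH j_0\circ G\Rightarrow\id_{\bbH\Eadj}$ would, being invertible in $\DblCat_h$, have componentwise invertible horizontal components; in particular $\phi_1\colon 0\to 1$ would be an invertible horizontal morphism of $\bbH\Eadj$, i.e.\ an isomorphism $0\cong 1$ in $U\Eadj$ — which does not exist. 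Hence $\bbH j_0$ is not an equivalence in $\DblCat_h$ and therefore not a weak equivalence in the trivial model structure, completing the proof.

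The only step demanding genuine care is the input fact that $0$ and $1$ are non-isomorphic in $U\bfH(\bbH\Eadj)=U\Eadj$ — that is, pinning down the underlying $1$-category of the walking adjoint equivalence precisely enough to see that no $1$-cell between its two objects is invertible. This is entirely standard (it is the familiar point, recalled in the introduction, that the underlying-category functor $U\colon\TwoCat\to\Cat$ does not send the biequivalence $\mathbbm 1\to\Eadj$ to an equivalence), but it is where one should be explicit in the write-up; everything else is bookkeeping.
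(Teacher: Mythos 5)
Your proof is correct and follows essentially the same route as the paper's: the vertical nerve is handled by observing that $(N_v(\bbH j_0))_0 = Uj_0\colon\mathbbm 1\to U\Eadj$ fails to be an equivalence since $U\Eadj$ is the free category on $\{0\rightleftarrows 1\}$, the algebra case is reduced to the nerve case via \cref{rem:weinalg}, and the trivial case is dispatched by noting the only candidate inverse is the unique map to $\mathbbm 1$ and no horizontal natural isomorphism $\id_{\bbH\Eadj}\cong\bbH j_0\,!$ can exist because $0\not\cong 1$ in $\bbH\Eadj$. The only cosmetic difference is that you spell out the freeness/path-length justification for $0\not\cong 1$, which the paper leaves implicit.
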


\begin{proof}
We first prove that $J_1$ is not a weak equivalence in the model structure on $\DblCat$ of \cref{prop:NvMS} induced by the vertical nerve. For this, we need to show that \[ N_v(J_1)\colon N_v(\mathbbm{1})=\Delta\mathbbm{1}\to N_v(\bbH\Eadj) \]
is not a levelwise equivalence of categories. Indeed, the category $N_v(\bbH\Eadj)_0$ is the free category generated by $\{0\rightleftarrows 1\}$ which is not equivalent to $\mathbbm{1}$.

By \cref{rem:weinalg}, a weak equivalence in the algebra model structure on $\DblCat$ of \cref{prop:algebra} is in particular a weak equivalence in the model structure induced by the vertical nerve. Therefore $J_1$ is not a weak equivalence in the algebra model structure either.

Finally, we show that $J_1$ is not a weak equivalence in the trivial model structure on $\DblCat$ of \cref{prop:trivial}. If $J_1$ was an equivalence in the $2$-category $\DblCat_h$, then its weak inverse would be given by the unique double functor $!\colon \bbH\Eadj\to \mathbbm{1}$ and we would have a horizontal natural isomorphism $\id_{\bbH\Eadj}\cong J_1\, !$, where $J_1\, !$ is constant at $0$. But such a horizontal natural isomorphism does not exist since $1$ is not isomorphic to $0$ in $\bbH\Eadj$. Therefore $J_1$ is not an equivalence. 
\end{proof}

\begin{prop}
The identity adjunction on $\DblCat$ is not a Quillen pair between the model structure of \cref{thm:modelstructonDblCat} and any of the model structures of \cref{prop:NvMS,prop:trivial,prop:algebra}.
\end{prop}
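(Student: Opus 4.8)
The plan is to argue by contraposition against each of the three model structures of \cref{prop:NvMS,prop:trivial,prop:algebra}. Suppose, for one of these model structures which we call $\cm$, that the identity adjunction $\id\colon\DblCat\rightleftarrows\DblCat$ were a Quillen pair between the model structure of \cref{thm:modelstructonDblCat} (on the left) and $\cm$ (on the right). Then the left adjoint $\id$ would be left Quillen, hence it would send trivial cofibrations of our model structure to trivial cofibrations — in particular to weak equivalences — of $\cm$. But we have already produced, in \cref{Hjnotwe}, the trivial cofibration $\bbH j_0\colon\mathbbm 1\to\bbH\Eadj$ of our model structure, which is \emph{not} a weak equivalence in any of the model structures of \cref{prop:NvMS,prop:trivial,prop:algebra}. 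This is the desired contradiction.

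So the core of the argument is already contained in \cref{Hjnotwe}, and the only remaining point is to note that the Quillen property fails regardless of which direction of the identity adjunction one takes as left adjoint: if instead one asks that $\id\colon\DblCat\to\DblCat$ be a Quillen pair with $\cm$ on the left and our model structure on the right, then $\id$ would be \emph{right} Quillen from $\cm$ to our model structure, equivalently $\id$ would be \emph{left} Quillen from our model structure to $\cm$ — which is the case just excluded. Hence neither choice of left adjoint can make the identity adjunction into a Quillen pair between the two model structures. I would write this as a short direct proof: assume the identity adjunction is Quillen, identify which functor is left Quillen (either way it is $\id\colon\DblCat_{\text{ours}}\to\DblCat_\cm$), invoke preservation of trivial cofibrations, and cite \cref{Hjnotwe} for the contradiction.

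I do not anticipate any genuine obstacle here: all the work has been offloaded to the explicit computation in \cref{Hjnotwe} (that $N_v(\bbH\Eadj)_0$ is the free-living-isomorphism category, not equivalent to $\mathbbm 1$; and that there is no horizontal natural isomorphism $\id_{\bbH\Eadj}\cong\bbH j_0\,!$ since $0\not\cong 1$ in $\bbH\Eadj$), together with the fact from \cref{lem:gencofIJ} that $\bbH j_0$ is a generating trivial cofibration in our model structure. The only mild subtlety worth spelling out is the symmetry remark above, ensuring we have ruled out \emph{both} possible orientations of ``Quillen pair'' — but since a Quillen pair is determined by which adjoint is left Quillen, and the only candidate left Quillen functor is $\id$ out of our model structure, this is immediate. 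I would therefore keep the proof to two or three sentences.

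\begin{proof}
Suppose the identity adjunction on $\DblCat$ were a Quillen pair between the model structure of \cref{thm:modelstructonDblCat} and one of the model structures $\cm$ of \cref{prop:NvMS,prop:trivial,prop:algebra}. In either orientation, this amounts to saying that the identity functor from $\DblCat$ with the model structure of \cref{thm:modelstructonDblCat} to $\DblCat$ with the model structure $\cm$ is left Quillen, and hence preserves trivial cofibrations, and in particular weak equivalences between them. By \cref{lem:gencofIJ}, the double functor $\bbH j_0\colon \mathbbm 1\to \bbH\Eadj$ is a (generating) trivial cofibration in the model structure of \cref{thm:modelstructonDblCat}; but by \cref{Hjnotwe} it is not a weak equivalence in $\cm$. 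This contradiction shows that the identity adjunction cannot be a Quillen pair between these two model structures.
\end{proof}
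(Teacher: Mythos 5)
There is a genuine gap in your proof: the claim that ``in either orientation, this amounts to saying that the identity functor from $\DblCat$ with the model structure of \cref{thm:modelstructonDblCat} to $\DblCat$ with the model structure $\cm$ is left Quillen'' is false. The two orientations of the identity adjunction are \emph{not} equivalent to one another. In the orientation where our model structure is the left member of the Quillen pair, the candidate left Quillen functor is indeed $\id\colon(\DblCat,\text{ours})\to(\DblCat,\cm)$, and your argument via $\bbH j_0$ applies. But in the other orientation, where $\cm$ is the left member, the candidate left Quillen functor is $\id\colon(\DblCat,\cm)\to(\DblCat,\text{ours})$, i.e.\ $\id$ out of $\cm$; the condition that this functor preserve $\cm$-trivial-cofibrations is a different condition entirely, and $\bbH j_0$ says nothing about it. Equivalently, the second orientation asks whether $\id\colon(\DblCat,\text{ours})\to(\DblCat,\cm)$ is \emph{right} Quillen (preserves (ours)-fibrations and trivial fibrations), which is not the negation you established. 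Your remark in the planning paragraph, that ``a Quillen pair is determined by which adjoint is left Quillen, and the only candidate left Quillen functor is $\id$ out of our model structure,'' mixes up being a left adjoint in the adjunction with being left Quillen as a functor between model categories; the identity adjunction $\id\dashv\id$ allows both placements, giving two genuinely distinct Quillen-pair conditions.

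The paper's proof closes this gap by treating the two cases separately: $\id\colon(\DblCat,\text{ours})\to(\DblCat,\cm)$ is not left Quillen (your argument via $\bbH j_0$), and it is not right Quillen because every object is fibrant in our model structure, so by Ken Brown's Lemma a right Quillen functor out of it would have to preserve \emph{all} weak equivalences — and $\bbH j_0$ is a weak equivalence it does not preserve. You should add this second half. The nice part is that no new example is needed: $\bbH j_0$ does the work in both cases, you just need Ken Brown's Lemma and the fibrancy of all objects to handle the right-Quillen direction.
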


\begin{proof}
We consider the identity functor $\id\colon \DblCat\to \DblCat$ from the model structure of \cref{thm:modelstructonDblCat} to any of the other model structures of \cref{prop:NvMS,prop:trivial,prop:algebra}, and we show that it is neither left nor right Quillen. 

Since $J_1$ is a trivial cofibration in the model structure of \cref{thm:modelstructonDblCat}, but is not a weak equivalence in any of the other model structures by \cref{Hjnotwe}, we see that $\id$ does not preserve trivial cofibrations; therefore, it is not left Quillen. Moreover, every object is fibrant in the model structure of \cref{thm:modelstructonDblCat}, so that if $\id$ was right Quillen, it would preserve all weak equivalences by Ken Brown's Lemma (see \cite[Lemma 1.1.12]{Hovey}). However, it does not preserve the weak equivalence~$J_1$, and thus it is not right Quillen.
\end{proof}

\section{Model structure for weak double categories}\label{Sec:modelstructwkDblCat}

In this section, we turn our attention to \emph{weak} double categories, and show that our results for double categories still hold in this weaker setting. In particular, in \cref{subsec:modelstructwkDblCat} we establish the existence of a model structure on the category $\wkDblCat$ of weak double categories and strict double functors, which is right-induced from two copies of the Lack model structure of~\cite{LackBicat} on $\Bicats$, the category of bicategories and strict functors.  

As in the strict context, the horizontal embedding of bicategories into weak double categories is both left and right Quillen, and homotopically fully faithful; thus the homotopy theory of bicategories can be read off of the homotopy of weak double categories. Moreover, the Lack model structure on $\Bicats$ is  both left- and right-induced from this model structure on weak double categories along the horizontal embedding.

Finally, in \cref{subsec:quilleneq}, we show the interplay between the model structures on the categories considered so far. More precisely, we prove that the inclusion of double categories into weak double categories is the right part of a Quillen equivalence, which gives rise 
to the following commutative square of right Quillen and homotopically fully faithful functors 
\begin{tz}
\node (A) at (0,0) {$\TwoCat$};
\node (B) at (2.5,0) {$\Bicats$};
\node (C) at (0,-1.5) {$\DblCat$};
\node (D) at (2.5,-1.5) {$\wkDblCat$};
\node at ($(D.east)-(0,4pt)$) {.};
\draw[->] (A) to node[above,scale=0.8] {$I_2$} node[below,scale=0.8] {$\simeq_\mathrm{Q}$} (B);
\draw[->] (C) to node[above,scale=0.8] {$I$} node[below,scale=0.8] {$\simeq_\mathrm{Q}$} (D);
\draw[->] (A) to node[left, scale=0.8] {$\bbH$} (C);
\draw[->] (B) to node[right, scale=0.8] {$\bbH^{w}$} (D);
\end{tz}

\subsection{Constructing the model structure for \texorpdfstring{$\wkDblCat$}{wkDblCat}}\label{subsec:modelstructwkDblCat}

Let us first define weak double categories, by which we mean double categorical structures that are weak in the horizontal direction but strict in the vertical one.

\begin{defn}
A \textbf{weak double category} $\BB$ consists of objects, horizontal morphisms, vertical morphisms, and squares, together with horizontal and vertical compositions and identities such that composition of vertical morphisms is strictly associative and unital, as for double categories, but composition of horizontal morphisms is associative and unital up to a vertically invertible square with trivial vertical boundaries.
\end{defn}

\begin{notation}
We write $\wkDblCat$ for the category of weak double categories and strict double functors between them. 
\end{notation}

\begin{rmk}
A bicategory $\B$ can be seen as a horizontal weak double category~$\bbH^{w}\B$ with only trivial vertical morphisms. This induces a functor $\bbH^{w}\colon \Bicats\to \wkDblCat$.
\end{rmk}

\begin{rmk}
As in the case of $\TwoCat$ and $\DblCat$, we have the following adjunctions 
\begin{tz}
\node (B) at (3,0) {$\wkDblCat$};
\node (A) at (0,0) {$\Bicats$};
\node at ($(B.east)-(0,4pt)$) {,};
\draw[->] ($(A.east)+(0,.25cm)$) to [bend left] node[above,scale=0.8]{$\bbH^{w}$} ($(B.west)+(0,.25cm)$);
\draw[->] ($(B.west)+(0,-.25cm)$) to [bend left] node[below,scale=0.8]{$\bfH^{w}$} ($(A.east)-(0,.25cm)$);
\node[scale=0.8] at ($(A.east)!0.5!(B.west)$) {$\bot$};

\node (B) at (9,0) {$\wkDblCat$};
\node (A) at (6,0) {$\Bicats$};
\node at ($(B.east)-(0,4pt)$) {,};
\draw[->] ($(A.east)+(0,.25cm)$) to [bend left] node[above,scale=0.8]{$\LV^{w}$} ($(B.west)+(0,.25cm)$);
\draw[->] ($(B.west)+(0,-.25cm)$) to [bend left] node[below,scale=0.8]{$\cv^{w}$} ($(A.east)-(0,.25cm)$);
\node[scale=0.8] at ($(A.east)!0.5!(B.west)$) {$\bot$};
\end{tz}
where the functors involved are defined as the functors $\bbH$, $\bfH$, $\LV$, and $\cv$ of \cref{section:prelim} on objects, morphisms, and cells. Note that the functor $\cv^{w}$ can be described as $\bfH^{w}[\vtwo^{w},-]$, where $\vtwo^{w}$ is the weak double category having two objects $0$ and $1$, a vertical morphism $u\colon 0\arrowdot 1$, and non-trivial unitors for the horizontal identities.
\end{rmk}

Just as in the case of $\TwoCat$, Lack defines a model structure on $\Bicats$, whose weak equivalences and fibrations are also given by the biequivalences and the Lack fibrations; see \cite{LackBicat}. 
Similarly, in $\wkDblCat$, we define double biequivalences and double fibrations as in \cref{doublequiv,doublefib}. Then, analogues of \cref{charequiv,charfib} hold in this weak context, so that these double biequivalences and double fibrations correspond to the weak equivalences and fibrations in the right-induced model structure on $\wkDblCat$ along $(\bfH^{w},\cv^{w})$; this is the content of the following result. 

\begin{thm}\label{thm:modelstructWeakDbl}
Consider the adjunction 
\begin{tz}
\node (A) at (0,0) {$\Bicats\times \Bicats$};
\node (B) at (3.75,0) {$\wkDblCat$};
\node at ($(B.east)-(0,4pt)$) {,};
\draw[->] ($(A.east)+(0,.25cm)$) to [bend left] node[above,scale=0.8]{$\bbH^{w}\sqcup \LV^{w}$} ($(B.west)+(0,.25cm)$);
\draw[->] ($(B.west)+(0,-.25cm)$) to [bend left] node[below,scale=0.8]{$(\bfH^{w},\cv^{w})$} ($(A.east)-(0,.25cm)$);
\node[scale=0.8] at ($(A.east)!0.5!(B.west)$) {$\bot$};
\end{tz}
where $\Bicats$ is endowed with the Lack model structure. Then the right-induced model structure on $\wkDblCat$ exists. In particular, a strict double functor is a weak equivalence (resp.~fibration) in this model structure if and only if it is a double biequivalence (resp.~double fibration).
\end{thm}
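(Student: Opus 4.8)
The plan is to mirror exactly the proof of \cref{thm:modelstructonDblCat}, replacing $\TwoCat$ with $\Bicats$, $\DblCat$ with $\wkDblCat$, and the Lack model structure on $\TwoCat$ with the one on $\Bicats$ from \cite{LackBicat}. The first step is to record the weak analogues of all the supporting results: namely, that there are isomorphisms $\bfH^{w}[\bbH^{w}\B,\AA]_\ps\cong \Psd[\B,\bfH^{w}\AA]$ and $\cv^{w}[\bbH^{w}\B,\AA]_\ps\cong \Psd[\B,\cv^{w}\AA]$ for a bicategory $\B$ and a weak double category $\AA$ (the weak version of \cref{lem:HVpreservehom}), and the weak analogues of \cref{charequiv,charfib} characterizing double biequivalences and double fibrations in $\wkDblCat$ as those strict double functors $F$ for which $\bfH^{w}F$ and $\cv^{w}F$ are biequivalences, respectively Lack fibrations, in $\Bicats$. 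These are proved exactly as in \cref{section:prelim,subsec:charfibeq}; the only subtlety is that $\vtwo^{w}$ carries non-trivial unitors, but since $\vtwo^{w}$ still has no non-trivial horizontal morphisms, the key fact $[\vtwo^{w},\AA]_\ps=[\vtwo^{w},\AA]$ survives, and the rest of the bookkeeping for (hb3), (vb2--3), (vf2) goes through verbatim.

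Next I would construct the path object: for a weak double category $\AA$, set $\cp\AA\coloneqq [\bbH^{w}\Eadj,\AA]_\ps$, with the factorization $\AA\xrightarrow{W}\cp\AA\xrightarrow{P}\AA\times\AA$ induced by $\bbH^{w}\Eadj\to\mathbbm 1$ and $\mathbbm 1\sqcup\mathbbm 1\to\bbH^{w}\Eadj$, just as in \cref{def:pathobject}. Using the weak analogue of \cref{lem:HVpreservehom} we get commutative squares identifying $\bfH^{w}W$ and $\cv^{w}W$ (respectively $\bfH^{w}P$, $\cv^{w}P$) with the $2$-functors $\Psd[\mathbbm 1,\bfH^{w}\AA]\to\Psd[\Eadj,\bfH^{w}\AA]$ etc., up to isomorphism. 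Since the Lack model structure on $\Bicats$ is also monoidal with respect to the Gray tensor product for bicategories (this is \cite[Theorem 6]{LackBicat}, using that $\Psd[-,-]$ of bicategories is the corresponding internal hom) and every bicategory is fibrant, the same $2$-out-of-$3$ argument as in \cref{prop:pathobj} shows $\bfH^{w}W,\cv^{w}W$ are biequivalences and $\bfH^{w}P,\cv^{w}P$ are Lack fibrations; hence $(\bfH^{w},\cv^{w})W$ is a weak equivalence and $(\bfH^{w},\cv^{w})P$ is a fibration in $\Bicats\times\Bicats$.

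Finally I would invoke \cref{path}. The category $\wkDblCat$ is locally presentable (it is the category of algebras for an accessible monad, or one can cite \cite{FPP} / \cite{Bohm}), $\Bicats$ is locally presentable and the Lack model structure on $\Bicats$ is cofibrantly generated by \cite[Theorem 4]{LackBicat}, so $\Bicats\times\Bicats$ with two copies of it is combinatorial, hence accessible; every object of $\Bicats\times\Bicats$ is fibrant; and the path object above provides, for every $\AA\in\wkDblCat$, the required factorization of the diagonal whose image under $(\bfH^{w},\cv^{w})$ is a weak equivalence followed by a fibration. By \cref{path} the right-induced model structure on $\wkDblCat$ exists, and by construction its weak equivalences (resp.\ fibrations) are the strict double functors $F$ with $\bfH^{w}F$ and $\cv^{w}F$ biequivalences (resp.\ Lack fibrations), i.e.\ exactly the double biequivalences (resp.\ double fibrations) by the weak analogues of \cref{charequiv,charfib}.

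The main obstacle is not any single deep step but rather verifying that all the auxiliary machinery genuinely transports to the weak setting: one must check that $\bfH^{w}[\bbH^{w}(-),-]_\ps\cong\Psd[-,\bfH^{w}(-)]$ still holds when horizontal composition is only weakly associative (the unitors and associators of $\AA$ feed into the pseudo-naturality squares, and one needs that a horizontal pseudo natural transformation out of $\bbH^{w}\B$ is still determined by its data on objects and the pseudo-naturality cells on horizontal morphisms, with no vertical input), and similarly that the proofs of the characterization remarks \cref{Hequiv,Vequiv,Fullfaithcell} — which manipulate pasting diagrams of squares — remain valid when those pastings are reassociated by vertically invertible unitor/associator squares. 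Since all such structure squares are vertically invertible with trivial vertical boundaries, they are absorbed harmlessly, but stating this cleanly (or citing \cite{Moserforth} / \cite{GraPar19} for the weakly invertible square calculus in the bicategorical setting) is where the real work of the proof lies; I would therefore present the argument as ``the proof is identical to that of \cref{thm:modelstructonDblCat}, mutatis mutandis, replacing \cref{charequiv,charfib,prop:pathobj} by their evident weak analogues, whose proofs are in turn obtained from those of \cref{section:prelim,subsec:charfibeq} by absorbing the unitor and associator squares,'' and only spell out the application of \cref{path} in detail.
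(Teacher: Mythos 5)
Your proof takes exactly the same approach as the paper, whose own argument is even terser: it simply invokes \cref{path}, notes that $\Bicats$ is combinatorial with all objects fibrant, and asserts that $[\bbH\Eadj,\BB]_\ps$ is a path object; you have filled in the steps the paper leaves implicit, in the way one would expect by following the template of \cref{prop:pathobj,lem:HVpreservehom,charequiv,charfib}.

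The one place where I would hesitate over your elaboration is the appeal to monoidality of the Lack model structure on $\Bicats$ with respect to a Gray tensor product for bicategories, which you attribute to \cite[Theorem 6]{LackBicat}. I do not believe such a closed monoidal structure on $\Bicats$ (bicategories and \emph{strict} functors, with internal hom $\Psd[-,-]$) is established there; the Gray tensor product for bicategories is substantially more delicate than its $2$-categorical counterpart, and notably the present paper cites monoidality only for $\TwoCat$ (via \cite[Theorem~7.5]{Lack2Cat} in \cref{2Catenrichedmodel}) and never asserts it for $\Bicats$. This is not fatal to your argument, because full monoidality is not actually needed: $\mathbbm 1$, $\mathbbm 1\sqcup\mathbbm 1$, and $\Eadj$ are all strict $2$-categories, so the only facts required are that for any bicategory $\C$ the restriction $\Psd[\Eadj,\C]\to\Psd[\mathbbm 1,\C]=\C$ is a trivial fibration and $\Psd[\Eadj,\C]\to\Psd[\mathbbm 1\sqcup\mathbbm 1,\C]=\C\times\C$ is a fibration in $\Bicats$. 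These can be checked directly from the explicit description of $\Eadj$ as the $2$-category containing an adjoint equivalence (the first is a routine verification of surjectivity on objects, fullness on morphisms, and full faithfulness on $2$-cells; the second is similar). With that substitution for the monoidality step, the argument is sound and structurally identical to the paper's.
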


\begin{proof}
The proof is given by applying \cref{path}. Indeed, the model structure on $\Bicats$ is combinatorial and all objects are fibrant (see \cite{LackBicat}). Furthermore, a path object for a weak double category $\BB$ can be given by $[(\bbH\Eadj)^w,\BB]_\ps$, where $(\bbH\Eadj)^w$ is the free-living horizontal adjoint equivalence \emph{weak} double category. 
\end{proof}

\begin{rmk}
 Note that every weak double category is fibrant in this model structure.
\end{rmk}

Similarly to the strict case, we can show that the horizontal embedding of bicategories into weak double categories has very good homotopical properties with respect to Lack's model structure and the above-constructed model structure. 

\begin{thm}
The adjunctions 
\begin{tz}
\node[](A) {$\Bicats$};
\node[right of=A,xshift=2cm](B) {$\wkDblCat$};
\node at ($(B.east)-(0,4pt)$) {,};
\draw[->] ($(B.west)+(0,.25cm)$) to [bend right=50] node[above,la]{$L^w$} ($(A.east)+(0,.25cm)$);
\draw[->] (A) to node[la,over] {$\bbH^w$} (B);
\draw[->] ($(B.west)+(0,-.25cm)$) to [bend left=50] node[below,la]{$\bfH^w$} ($(A.east)-(0,.25cm)$);
\node[la] at ($(A.east)!0.5!(B.west)+(0,.35cm)$) {$\bot$};
\node[la] at ($(A.east)!0.5!(B.west)-(0,.35cm)$) {$\bot$};
\end{tz}
are both Quillen pairs between the Lack model structure on $\Bicats$ and the model structure on $\wkDblCat$ of \cref{thm:modelstructWeakDbl}. Moreover, the functor $\bbH^w$ is homotopically fully faithful, and the Lack model structure on $\Bicats$ is both left- and right-induced from our model structure on~$\wkDblCat$ along $\bbH^w$. 
\end{thm}

\begin{proof}
The proof proceeds as in \cref{QuillenadjHH,thm:LHQuillenadj,thm:leftandrightindLHH}.
\end{proof}

\begin{rmk}
As a consequence of the above theorem, we get that $\bbH^w\colon \Bicats\to \wkDblCat$ both preserves and reflects all cofibrations, fibrations, and weak equivalences, so that the Lack model structure on $\Bicats$ is completely determined by our model structure on $\wkDblCat$. Furthermore, since $\bbH^w$ is left and right Quillen, and homotopically fully faithful, this says that the homotopy theory of bicategories is reflectively and coreflectively embedded in that of weak double categories.
\end{rmk}

A characterization of the cofibrant objects, similar to that of \cref{char:cofibrant} and inspired by \cite[Lemma 8]{LackBicat}, also holds. In the weak setting, we do not have an underlying horizontal category, since horizontal composition is not well defined when discarding the squares. Instead, the underlying horizontal structure is given by a \emph{compositional graph}; these are directed graphs, with a chosen identity for each object, and a chosen composite for each composable pair of edges, but with neither the associativity law nor the unitality laws assumed to hold. For more details regarding compositional graphs, see \cite[\S 1]{LackBicat}.

\begin{prop} \label{prop:wkdblcof}
A weak double category is cofibrant if and only if it is a retract of a weak double category whose
\begin{rome}
\item underlying horizontal compositional graph is free, and
\item underlying vertical category is a disjoint union of copies of $\mathbbm{1}$ and $\mathbbm{2}$.
\end{rome}
\end{prop}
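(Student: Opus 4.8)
The plan is to mirror the proof of \cref{char:cofibrant}, with the underlying horizontal category replaced by the underlying horizontal compositional graph, and with \cite[Lemma~8]{LackBicat} playing the role that \cite[Theorem~4.8]{Lack2Cat} played in the strict setting. Write $U\bfH^{w}\BB$ for the underlying horizontal compositional graph of a weak double category $\BB$, and $U\bfV^{w}\BB$ for its underlying vertical category (an honest category, since vertical composition is strict).

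The first step is a weak analogue of \cref{char:cof}: a strict double functor $F\colon\AA\to\BB$ in $\wkDblCat$ is a cofibration if and only if $U\bfH^{w}F$ has the left lifting property against all maps of compositional graphs that are surjective on objects and full, and $U\bfV^{w}F$ has the left lifting property against all functors that are surjective on objects and on morphisms. As in the strict case, this rests on the characterization of double trivial fibrations in $\wkDblCat$ (the weak analogues of \cref{doubletrivfib,chartrivfib}) together with right adjoints $R_h^{w}$ and $R_v^{w}$ to $U\bfH^{w}$ and $U\bfV^{w}$, built as the functors $R_h$ and $R_v$ of \cref{section:generatingcofibs} by adjoining a unique vertical morphism and a unique square for every available boundary; one checks that $R_h^{w}$ sends a surjective-on-objects-and-full map to a double trivial fibration, and likewise for $R_v^{w}$.

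Applying this to $\emptyset\to\BB$, a weak double category $\BB$ is cofibrant if and only if $\emptyset\to U\bfH^{w}\BB$ lifts against all surjective-on-objects-and-full maps of compositional graphs and $\emptyset\to U\bfV^{w}\BB$ lifts against all surjective-on-objects-and-morphisms functors. By the argument of \cite[Lemma~8]{LackBicat} the first condition holds exactly when $U\bfH^{w}\BB$ is a retract of a free compositional graph, and by \cref{surjfull} the second holds exactly when $U\bfV^{w}\BB$ is a disjoint union of copies of $\mathbbm 1$ and $\mathbbm 2$. One direction of the proposition is then immediate: if $\BB$ is a retract of a weak double category $\BB'$ satisfying (i) and (ii), then $\BB'$ is cofibrant by this equivalence (a free compositional graph is tautologically a retract of itself), and cofibrant objects are closed under retracts. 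For the converse, given cofibrant $\BB$, I would produce a factorization $\emptyset\to\BB'\to\BB$ in which $\BB'\to\BB$ is a double trivial fibration and $\BB'$ has free underlying horizontal compositional graph and underlying vertical category a disjoint union of copies of $\mathbbm 1$ and $\mathbbm 2$: take the free compositional graph $\Gamma'$ witnessing $U\bfH^{w}\BB$ as a retract for the horizontal structure of $\BB'$, keep the vertical morphisms and (reindexed) squares of $\BB$, and transport the compositors along the retraction $\Gamma'\to U\bfH^{w}\BB$. Since $\BB$ is cofibrant, $\id_\BB$ lifts along $\BB'\to\BB$, exhibiting $\BB$ as a retract of $\BB'$.

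The main obstacle is this last construction: one must verify that the transported data really do assemble into a weak double category $\BB'$ for which both the projection $\BB'\to\BB$ and the chosen section $\BB\to\BB'$ are \emph{strict} double functors, and that $\BB'\to\BB$ is a double trivial fibration. This is the point at which the weakness of horizontal composition forces ``retract of free'' instead of ``free'', and the argument runs parallel to Lack's analysis of cofibrant bicategories in \cite[Lemma~8]{LackBicat}; once the horizontal compositors of $\BB'$ are pinned down, the vertical and square bookkeeping is routine, as the vertical structure is strict and is carried over unchanged.
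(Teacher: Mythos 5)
The paper does not actually provide a proof of this proposition: it is stated after a brief remark saying the argument is ``similar to that of \cref{char:cofibrant} and inspired by \cite[Lemma~8]{LackBicat},'' so there is no written proof to compare against. Your plan is consistent with that hint, and the first two-thirds of it are on solid ground: the weak analogue of \cref{char:cof} via right adjoints $R_h^w$, $R_v^w$ is the correct mechanism, and your intermediate characterization (``$\BB$ is cofibrant iff $U\bfH^w\BB$ is a retract of a free compositional graph and $U\bfV^w\BB$ is a disjoint union of copies of $\mathbbm 1$ and $\mathbbm 2$'') should indeed follow from it together with \cref{surjfull} and a compositional-graph version of \cite[Lemma~8]{LackBicat}. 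The easy direction of the proposition (retract of a weak double category with free/disjoint-union underlying data implies cofibrant) also falls out of that characterization, since $U\bfH^w$ and $U\bfV^w$ preserve retracts.

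The genuine gap is precisely where you flag it: promoting the retract of compositional graphs to a retract of weak double categories. Your hand-built $\BB'$ is under-specified in a way that is not merely cosmetic. First, a general free compositional graph $\Gamma'$ with a surjective-on-objects retraction onto $U\bfH^w\BB$ need not have the same objects as $\BB$, so ``keep the vertical morphisms and (reindexed) squares of $\BB$'' only makes sense after specializing $\Gamma'$ (say to the free compositional graph on the underlying graph of $U\bfH^w\BB$, which is bijective on objects). Second, ``transport the compositors'' needs unpacking: the associators and unitors of $\BB'$ must be squares of $\BB'$, i.e.\ squares of $\BB$ with the appropriate boundaries, and one has to verify the coherence axioms and interchange law for $\BB'$ using those of $\BB$ and the compositional-graph structure of $\Gamma'$; this is plausible but not automatic. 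Third, you also need $\BB' \to \BB$ to be a double \emph{trivial} fibration for the lift to exist, and this hinges on the retraction $\Gamma' \to U\bfH^w\BB$ being full, which again is only clear for the specific choice above. A cleaner route, and the one that \cite[Lemma~8]{LackBicat} actually uses, is to factor $\emptyset \to \BB$ as a relative cell complex $\emptyset \to \BB'$ followed by a trivial fibration $\BB' \to \BB$ via the small object argument on the generating cofibrations $\{\bbH^w i,\; \bbH^w i \times \vtwo^w \mid i \in \mathcal I_2\}$ (the weak analogue of \cref{lem:gencofIJ}), and then observe, using that $U\bfH^w$ and $U\bfV^w$ preserve pushouts and transfinite composition, that each cell attachment preserves freeness of the underlying compositional graph (Lack's Lemma~8 argument for the $\bbH^w i$ and $\bbH^w i \times \vtwo^w$ cells) and preserves the disjoint-union shape of the underlying vertical category. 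Cofibrancy of $\BB$ then gives the lift $\BB \to \BB'$, with no bespoke weak double category to build or verify.
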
 

\begin{rmk} \label{Hpreservecofibrant}
By \cite[Lemma 8]{LackBicat}, a bicategory is cofibrant if and only if it is a retract of a bicategory whose underlying compositional graph is free. It then follows from \cref{prop:wkdblcof} that the functor $\bfH^{w}\colon \wkDblCat\to \Bicats$ preserves cofibrant objects. 
\end{rmk}

\subsection{Quillen equivalence between \texorpdfstring{$\DblCat$}{DblCat} and \texorpdfstring{$\wkDblCat$}{wkDblCat}} \label{subsec:quilleneq}

Now that we have defined a model structure for weak double categories and have compared it to that of bicategories, we study its relation with our model structure on $\DblCat$. This gives a double categorical analogue of the relation between $2$-categories and bicategories proved by Lack, which we now recall.

\begin{rmk}
The full inclusion $I_2\colon \TwoCat\to \Bicats$ has a left adjoint, which we denote by $S_2\colon \Bicats\to \TwoCat$. This functor sends a bicategory $\B$ to the $2$-category $S_2\B$ that has
\begin{rome}
\item the same objects as $\B$, and
\item morphisms and $2$-cells obtained as a quotient of the morphisms and $2$-cells of $\B$, where the associators and unitors are universally made into identities. 
\end{rome}
More precisely, for all composable morphisms $a,b,c$ in $\B$, the composites $(cb)a$ and $c(ba)$ are identified in $S_2\B$, and, for every morphism $a\colon A\to B$ in $\B$, the composites $\id_B a$ and $a \id_A$ are identified with $a$ in $S_2\B$. Furthermore, the unit component ${\eta_2}_\B\colon \B\to I_2S_2\B$ is the quotient strict functor corresponding to the above identifications. 

As Lack explains in \cite[\S 7.5]{LackCompanion}, the functor $S_2$ is not the usual strictification, but rather the ``free strictification". Notably, it is such that, for all bicategories $\B$, the unit components ${\eta_2}_\B$ are strict functors, as opposed to the usual strictification process that produces only pseudo functors.
\end{rmk}

\begin{thm}[{\cite[Theorem 11]{LackBicat}}] \label{QeqLackmodelstruct}
The adjunction 
\begin{tz}
\node (A) at (0,0) {$\Bicats$};
\node (B) at (2.5,0) {$\TwoCat$};
\draw[->] ($(A.east)+(0,.25cm)$) to [bend left] node[above,scale=0.8]{$S_2$} ($(B.west)+(0,.25cm)$);
\draw[->] ($(B.west)+(0,-.25cm)$) to [bend left] node[below,scale=0.8]{$I_2$} ($(A.east)-(0,.25cm)$);
\node[scale=0.8] at ($(A.east)!0.5!(B.west)$) {$\bot$};
\end{tz}
is a Quillen equivalence, where $\TwoCat$ and $\Bicats$ are endowed with the Lack model structure.
\end{thm}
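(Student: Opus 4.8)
\emph{Proof proposal.} The plan is to verify the standard criterion for a Quillen equivalence in the presence of only fibrant objects: namely, that $I_2$ is right Quillen and reflects weak equivalences, and that the unit ${\eta_2}_\B\colon\B\to I_2S_2\B$ is a weak equivalence for every cofibrant bicategory~$\B$.

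First I would observe that $I_2$ is right Quillen. The fibrations in both model structures are the Lack fibrations, and the defining conditions (f1)--(f2) of \cref{Lackfib} refer only to the notions of equivalence and of invertible $2$-cell; since $I_2$ is a full inclusion it both preserves and reflects these notions, so a $2$-functor $F$ between $2$-categories is a Lack fibration in $\TwoCat$ if and only if $I_2F$ is a Lack fibration in $\Bicats$. The same remark applied to \cref{biequiv} shows that $F$ is a biequivalence if and only if $I_2F$ is. Hence $I_2$ preserves fibrations and trivial fibrations (so $S_2\dashv I_2$ is a Quillen pair), and moreover $I_2$ reflects weak equivalences. Since every object is fibrant in both model structures, and $S_2$ — being left Quillen — preserves cofibrant objects, the general criterion for Quillen equivalences (e.g.\ \cite[Corollary~1.3.16]{Hovey}) then reduces the claim to a single statement: for every cofibrant bicategory~$\B$, the unit ${\eta_2}_\B\colon\B\to I_2S_2\B$ is a biequivalence.

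To prove this last statement I would first reduce to the free case. By Lack's characterization of cofibrant bicategories (see \cref{Hpreservecofibrant} and \cite[Lemma~8]{LackBicat}), $\B$ is a retract of a bicategory whose underlying compositional graph is free; since biequivalences are closed under retracts, it suffices to treat $\B$ itself free. For such a $\B$ the quotient strict functor ${\eta_2}_\B$ is the identity on objects and surjective on morphisms — each morphism of $S_2\B$ is by construction the class of a morphism of $\B$ — so conditions (b1) and (b2) of \cref{biequiv} hold at once. The remaining condition (b3), full faithfulness on $2$-cells, is the heart of the argument and is, I expect, the main obstacle: one must show that the component functors $\B(A,B)\to S_2\B(A,B)$ are bijective on $2$-cells, i.e.\ that forcing the associators and unitors to be identities neither collapses distinct parallel $2$-cells nor produces new ones. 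This is precisely an application of the coherence theorem for bicategories (every formal diagram built from constraint $2$-cells commutes; see for instance \cite{JohYau}): freeness of the underlying compositional graph guarantees that there are no relations among $2$-cells beyond those generated by the constraints, so passing to $S_2\B$ identifies exactly the $2$-cells already identified in~$\B$.

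Granting (b3), the unit ${\eta_2}_\B$ is a biequivalence for every cofibrant~$\B$, and combining this with the fact that $S_2\dashv I_2$ is a Quillen pair and that $I_2$ reflects weak equivalences, the cited criterion yields that $S_2\dashv I_2$ is a Quillen equivalence. (Of course one may instead simply invoke \cite[Theorem~11]{LackBicat} verbatim; the sketch above is the route I would take if reproving it.)
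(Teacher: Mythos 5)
This theorem is stated in the paper as a cited external result (Lack's Theorem 11 in \cite{LackBicat}); the paper supplies no proof of its own, so there is nothing internal to compare your argument against line by line. That said, your sketch follows exactly the template the authors use for the analogous double-categorical result, \cref{thm:QeqIS}: observe that the inclusion creates weak equivalences, invoke \cite[Corollary~1.3.16]{Hovey} to reduce to showing the unit at a cofibrant object is a weak equivalence, reduce via retracts to the ``free compositional graph'' case, and handle (b1)--(b2) and fullness by the surjectivity properties of the quotient. Indeed, the paper's proof of \cref{thm:QeqIS} cites \cite[Proposition~2]{LackBicat} precisely for the fact that ${\eta_2}_\B$ is bijective on objects, surjective on morphisms, and \emph{full} on $2$-cells for any bicategory $\B$, leaving faithfulness on $2$-cells as the only piece requiring cofibrancy. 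Your outline correctly isolates the same piece.

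The one place where you should be more careful is the faithfulness step. You phrase it as showing the component functors $\B(A,B)\to S_2\B(A,B)$ are ``bijective on $2$-cells'' and wave at the coherence theorem. The subtlety is that the quotient $S_2\B$ collapses $1$-morphisms as well, so $2$-cells that were not parallel in $\B$ become parallel in $S_2\B$; faithfulness of the map $\B(A,B)(a,c)\to S_2\B(A,B)([a],[c])$ therefore does not follow from coherence alone but requires an analysis of which $2$-cells get identified once their boundaries are. Freeness of the underlying compositional graph is what makes this tractable --- it rules out accidental coincidences among composites of generating $1$-cells --- but one still needs an explicit normal-form or confluence argument, not just the bare statement that constraint diagrams commute. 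Your sketch points at the right ingredient without quite closing the gap; Lack's actual proof carries out this analysis. Since you acknowledge that one may simply cite \cite[Theorem~11]{LackBicat}, and the paper does exactly that, this is a matter of rigor in a reconstruction rather than an error in the paper's logic.
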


We now study the setting of double categories, and show that an analogous result to the $2$-categorical one is also true.

\begin{rmk}
The full inclusion $I\colon \DblCat\to \wkDblCat$ has a left adjoint, which we denote by $S\colon \wkDblCat\to \DblCat$. This functor sends a weak double category $\BB$ to the double category $S\BB$ that has
\begin{rome}
\item the same objects and vertical morphisms as $\BB$, and
\item horizontal morphisms and squares obtained as a quotient of the horizontal morphisms and squares of $\BB$, where the associators and unitors are universally made into identities. 
\end{rome}
More precisely, for all composable horizontal morphisms $a,b,c$ in $\BB$, the composites $(cb)a$ and $c(ba)$ are identified in $S\BB$, and, for every horizontal morphism $a\colon A\to B$ in $\BB$, the composites $\id_B a$ and $a \id_A$ are identified with $a$ in $S\BB$. Furthermore, the unit component $\eta_\BB\colon \BB\to IS\BB$ is described as the quotient strict double functor corresponding to the above identifications.
\end{rmk}

\begin{rmk} \label{equalities}
From the definitions of the functors involved, we directly see that
\[ I_2\bfH=\bfH^{w} I, \quad S_2\bfH^{w}=\bfH S, \quad I_2\cv=\cv^{w} I, \ \ \text{and} \ \ S_2\cv^{w}=\cv S. \]
Furthermore, if we denote by $\eta_2\colon \id\Rightarrow I_2S_2$ and $\eta\colon \id\Rightarrow IS$ the units of the adjunctions $S_2\dashv I_2$ and $S\dashv I$ respectively, then we have that 
\[ \eta_2\bfH^{w}=\bfH^{w}\eta\colon \bfH^{w}\Rightarrow I_2S_2\bfH^{w}=\bfH^{w}IS \ \ \text{and} \ \ \eta_2\cv^{w}=\cv^{w}\eta\colon \cv^{w}\Rightarrow I_2S_2\cv^{w}=\cv^{w}IS. \]
\end{rmk}

\begin{thm} \label{thm:QeqIS}
The adjunction 
\begin{tz}
\node (A) at (0,0) {$\DblCat$};
\node (B) at (3.25,0) {$\wkDblCat$};
\draw[->] ($(A.east)+(0,.25cm)$) to [bend left] node[above,scale=0.8]{$S$} ($(B.west)+(0,.25cm)$);
\draw[->] ($(B.west)+(0,-.25cm)$) to [bend left] node[below,scale=0.8]{$I$} ($(A.east)-(0,.25cm)$);
\node[scale=0.8] at ($(A.east)!0.5!(B.west)$) {$\bot$};
\end{tz}
is a Quillen equivalence, where $\DblCat$ is endowed with the model structure of \cref{thm:modelstructonDblCat} and $\wkDblCat$ with the model structure of \cref{thm:modelstructWeakDbl}. 
\end{thm}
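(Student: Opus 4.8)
The plan is to first establish that $(S, I)$ is a Quillen pair, and then upgrade it to a Quillen equivalence. For the Quillen pair, I would show that the right adjoint $I\colon \wkDblCat \to \DblCat$ preserves fibrations and trivial fibrations. This reduces, via the right-induced descriptions of both model structures (\cref{thm:modelstructonDblCat} and \cref{thm:modelstructWeakDbl}), to checking that $\bfH I$ and $\cv I$ send double fibrations (resp.\ double trivial fibrations) of weak double categories to Lack fibrations (resp.\ trivial fibrations) of $2$-categories. Using the identities $\bfH^{w} I = I_2 \bfH$ and $\cv^{w} I = I_2 \cv$ of \cref{equalities}, together with the fact that $\bfH^w F$ and $\cv^w F$ are Lack fibrations when $F$ is a double fibration in $\wkDblCat$ (the weak analogue of \cref{charfib}), this amounts to knowing that $I_2\colon \Bicats \to \TwoCat$ is right Quillen, which is exactly the content of \cref{QeqLackmodelstruct}. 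Hence $I$ is right Quillen.

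Next, for the Quillen equivalence, since every object is fibrant in both model structures, it suffices to show: (a) the left adjoint $S$ reflects weak equivalences between cofibrant objects, and (b) for every cofibrant $\BB \in \wkDblCat$, the composite $\BB \xrightarrow{\eta_\BB} IS\BB \to I(S\BB)^{\mathrm{fib}}$ is a weak equivalence — but as all objects are fibrant, this reduces to showing that the unit component $\eta_\BB\colon \BB \to IS\BB$ is a double biequivalence for every cofibrant weak double category $\BB$. I would deduce (a) from (b) together with the $2$-out-of-$3$ property and the fact that $IS$ preserves weak equivalences on cofibrant objects. The core of the argument is therefore (b). By the weak analogue of \cref{charequiv}, $\eta_\BB$ is a double biequivalence if and only if $\bfH^{w}\eta_\BB$ and $\cv^{w}\eta_\BB$ are biequivalences in $\TwoCat$ — wait, more precisely $\bfH\,I\,\eta_\BB = \bfH^w \eta_\BB$ and $\cv\,I\,\eta_\BB = \cv^w\eta_\BB$ viewed appropriately; using $\eta_2\bfH^{w} = \bfH^{w}\eta$ and $\eta_2\cv^{w} = \cv^{w}\eta$ from \cref{equalities}, these become ${\eta_2}_{\bfH^w\BB}$ and ${\eta_2}_{\cv^w\BB}$, the unit components of the $2$-categorical strictification adjunction $S_2 \dashv I_2$. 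By \cref{QeqLackmodelstruct}, ${\eta_2}_{\mathcal B}$ is a biequivalence whenever $\mathcal B$ is a cofibrant bicategory, so it remains to check that $\bfH^w\BB$ and $\cv^w\BB$ are cofibrant bicategories whenever $\BB$ is a cofibrant weak double category. For $\bfH^w$ this is \cref{Hpreservecofibrant}; for $\cv^w$ one argues analogously — a cofibrant $\BB$ is a retract of one whose underlying horizontal compositional graph is free and whose underlying vertical category is a disjoint union of $\mathbbm 1$'s and $\mathbbm 2$'s, and one checks directly that $\cv^w\BB = \bfH^w[\vtwo^w,\BB]$ then has free underlying compositional graph, so is cofibrant by \cite[Lemma 8]{LackBicat}.

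Finally, I would assemble these pieces: $I$ is right Quillen (established above); for cofibrant $\BB$, the derived unit is $\eta_\BB$ itself (no fibrant replacement needed), which is a double biequivalence by the argument just given; and for cofibrant $\AA \in \DblCat$, the derived counit $S I \AA \to \AA$ is an isomorphism since $I$ is fully faithful, so in particular a weak equivalence. Thus $(S,I)$ is a Quillen equivalence.

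I expect the main obstacle to be step~(b), specifically verifying that $\cv^w$ preserves cofibrant objects — this requires unwinding the description of $\cv^w\BB = \bfH^w[\vtwo^w,\BB]$ and carefully checking that the underlying compositional graph of this bicategory is free when the underlying horizontal compositional graph of $\BB$ is free, keeping track of how the weak unitors of $\vtwo^w$ interact with those of $\BB$. The rest is a matter of chaining together \cref{QeqLackmodelstruct}, \cref{equalities}, and the right-induced characterizations of the two model structures, which is routine.
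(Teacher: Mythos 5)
Your reduction to showing that the unit $\eta_\BB\colon \BB\to IS\BB$ is a double biequivalence for cofibrant $\BB$, and then to showing $\bfH^w\eta_\BB$ and $\cv^w\eta_\BB$ are biequivalences, matches the paper, as does the $\bfH^w$ case (via \cref{Hpreservecofibrant} and \cref{QeqLackmodelstruct}). The step you flag as ``the main obstacle'' --- that $\cv^w$ preserves cofibrant objects --- is indeed where your proposal breaks down, and it is precisely the point where the paper takes a different route. For $\cv^w\BB$ to have a free underlying compositional graph, the squares of $\BB$ under horizontal composition would have to freely generate a compositional graph; but a square whose horizontal boundaries are composites need not itself be horizontally decomposable, and when it is, uniqueness of the decomposition is far from clear. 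Your assertion that ``one checks directly'' that $\cv^w\BB$ is cofibrant hides a real difficulty: nothing in the hypotheses on cofibrant weak double categories (free horizontal compositional graph, vertical category a disjoint union of copies of $\mathbbm 1$ and $\mathbbm 2$) controls the composition structure on squares in this way.

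The paper sidesteps this entirely. It cites \cite[Proposition 2]{LackBicat}, which shows that the strictification unit ${\eta_2}_{\B}\colon \B\to I_2S_2\B$ is bijective on objects, surjective on morphisms, and full on $2$-cells for \emph{any} bicategory $\B$, with no cofibrancy hypothesis. The only remaining point is faithfulness on $2$-cells, which the paper establishes for $\cv^w\BB$ by noting that a $2$-cell of $\cv^w\BB$ consists of a pair of squares in $\BB$, i.e., a pair of $2$-cells in $\bfH^w\BB$ satisfying a pasting condition, so faithfulness of ${\eta_2}_{\cv^w\BB}$ follows componentwise from faithfulness of ${\eta_2}_{\bfH^w\BB}$, which is already known because $\bfH^w\BB$ is cofibrant. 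This avoids ever needing to know whether $\cv^w\BB$ is cofibrant. To repair your proposal, replace the attempted cofibrancy argument for $\cv^w\BB$ with this componentwise reduction to the $\bfH^w$ case.
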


\begin{proof}
Recall that all objects in the model structures considered in this section are fibrant, and thus the components  of the derived units of the adjunctions $S\dashv I$ and $S_2\dashv I_2$ coincide with the components of their units at cofibrant objects. We use this fact henceforth. 

Since the inclusion $I\colon \DblCat\to \wkDblCat$ reflects weak equivalences, using \cite[Corollary 1.3.16]{Hovey} it is enough to show that for every cofibrant $\BB\in \wkDblCat$ the unit component $\eta_\BB\colon \BB\to IS\BB$ is a double biequivalence in $\wkDblCat$. Let $\BB$ be a cofibrant weak double category. By construction, it suffices to show that both $\bfH^{w} \eta_\BB$ and $\cv^{w} \eta_\BB$ are biequivalences in $\Bicats$.

By \cref{Hpreservecofibrant}, the bicategory $\bfH^{w}\BB$ is cofibrant in $\Bicats$. Therefore, since $I_2\dashv S_2$ is a Quillen equivalence by \cref{QeqLackmodelstruct}, the unit component ${\eta_2}_{\bfH^{w}\BB}\colon \bfH^{w}\BB\to I_2 S_2\bfH^{w} \BB$ is a biequivalence. As $\bfH^{w} \eta=\eta_2 \bfH^{w}$ by \cref{equalities}, this shows that $\bfH^{w} \eta_\BB$ is a biequivalence.

We now prove that $\cv^w \eta_\BB$ is a biequivalence. Since $\cv^{w} \eta=\eta_2 \cv^{w}$ by \cref{equalities}, it is enough to show that the unit component ${\eta_2}_{\cv^{w}\BB}\colon \cv^{w}\BB\to I_2S_2\cv^{w}\BB$ is a biequivalence. By~\cite[Proposition 2]{LackBicat}, we know that ${\eta_2}_{\cv^{w}\BB}$ is bijective on objects, surjective on morphisms, and full on $2$-cells; it remains to show that it is faithful on $2$-cells. For this, recall that a $2$-cell in $\cv^{w}\BB$ comprises the data of two squares in~$\BB$ of the form
\begin{tz}
\node (A) at (0,0) {$A$};
\node (B) at (1.5,0) {$B$};
\node (A') at (0,-1.5) {$A$};
\node (B') at (1.5,-1.5) {$B$};
\draw[->] (A) to node[above, scale=0.8] {$a$} (B);
\draw[->] (A') to node[below, scale=0.8] {$c$} (B');
\draw[d] (A) to (A');
\draw[d] (B) to (B');

\node at (0,-.75) {$\bullet$};
\node at (1.5, -.75) {$\bullet$};

\node[scale=0.8] at (.75,-.75) {$\sigma_0$};

\node (A) at (4,0) {$A'$};
\node (B) at (5.5,0) {$B'$};
\node (A') at (4,-1.5) {$A'$};
\node (B') at (5.5,-1.5) {$B'$};
\draw[->] (A) to node[above, scale=0.8] {$b$} (B);
\draw[->] (A') to node[below, scale=0.8] {$d$} (B');
\draw[d] (A) to (A');
\draw[d] (B) to (B');

\node at (4,-.75) {$\bullet$};
\node at (5.5, -.75) {$\bullet$};

\node[scale=0.8] at (4.75,-.75) {$\sigma_1$};
\end{tz}
satisfying a certain vertical pasting equality. Note that the squares $\sigma_0$ and $\sigma_1$ are $2$-cells in~$\bfH^{w}\BB$, and that, by the first part of the proof, the unit component ${\eta_2}_{\bfH^{w}\BB}\colon \bfH^{w}\BB\to I_2S_2\bfH^{w}\BB$ is faithful on $2$-cells. Then the quotient strict double functor ${\eta_2}_{\cv^{w}\BB}\colon \cv^{w}\BB\to I_2S_2\cv^{w}\BB$ is also faithful on $2$-cells, since it is given by applying ${\eta_2}_{\bfH^{w}\BB}$ to the components $\sigma_0$ and $\sigma_1$ separately. This shows that $\cv^{w}\eta_\BB$ is a biequivalence, and concludes the proof. 
\end{proof}

\bibliographystyle{plain}
\bibliography{Reference}

\end{document}